\numberwithin{equation}{section}
\theoremstyle{plain}
\newtheorem{theorem}[subsection]{Theorem}
\newtheorem{proposition}[subsection]{Proposition}
\newtheorem{lemma}[subsection]{Lemma}
\newtheorem{corollary}[subsection]{Corollary}
\newtheorem{conjecture}[subsection]{Conjecture}
\newtheorem{claim}[subsection]{Claim}
\theoremstyle{definition}
\newtheorem{definition}[subsection]{Definition}
\newtheorem{remark}[subsection]{Remark}
\newtheorem{example}[subsection]{Example}
\renewcommand{\leq}{\leqslant}
\renewcommand{\geq}{\geqslant}
\newsavebox{\proofbox}
\savebox{\proofbox}{\begin{picture}(7,7)%
  \put(0,0){\framebox(7,7){}}\end{picture}}
\newcommand\E{\mathbb{E}}
\newcommand\Z{\mathbb{Z}}
\newcommand\R{\mathbb{R}}
\newcommand\T{\mathbb{T}}
\newcommand\C{\mathbb{C}}
\newcommand\N{\mathbb{N}}
\newcommand\F{\mathbb{F}}
\newcommand\eps{\varepsilon}
\newcommand\rank{\operatorname{rank}}
\newcommand\arank{\operatorname{arank}}
\newcommand\GI{\operatorname{GI}}
\newcommand\GIP{\operatorname{GIP}}
\newcommand\Poly{\operatorname{Poly}}
\newcommand\Fw{{\mathbb{F}^\omega}}
\newcommand\psd{{\operatorname{psd}}}
\newcommand\str{{\operatorname{str}}}
\newcommand\Diff{{\operatorname{Diff}}}
\newcommand\st{{\operatorname{st}}}
\newcommand\id{{\operatorname{id}}}
\renewcommand\mod{{\ \operatorname{mod}\ }}
\newcommand\ultra{{{}^*}}
\newcommand\charac{\operatorname{char}}
\newcommand\BAR{{\operatorname{BAR}}}
\newcommand\BR{{\operatorname{BR}}}
\newcommand\LR{{\operatorname{BR}}}
\newcommand\ER{{\operatorname{ER}}}
\newcommand\HK{{\operatorname{HK}}}
\newcommand\Sym{\operatorname{Sym}}
\newcommand\CSM{\operatorname{CSM}}
\renewcommand\th{{\operatorname{th}}}
\newcommand\ader{{\Delta}}
\newcommand\mder{{\Delta\!\!\!\!\!\hbox{\raisebox{0.2ex}{\tiny\ \textbullet}}\ \!}}
\newcommand\n{{\mathbf{n}}}
\begin{document}

\title[Finite fields in low characteristic]{The inverse conjecture for the Gowers norm over finite fields in low characteristic}

\author{Terence Tao}
\address{UCLA Department of Mathematics, Los Angeles, CA 90095-1596.
}
\email{tao@math.ucla.edu}

\author{Tamar Ziegler}
\address{Department of Mathematics, Technion, Haifa, Israel 32000.}
\email{tamarzr@tx.technion.ac.il}

\thanks{The first author is supported by a grant from the MacArthur Foundation, and by NSF grant CCF-0649473. 
The second author is supported by ISF grant 557/08, and by an Alon fellowship.  }

\subjclass{11B30, 11T06}

\begin{abstract}  We establish the \emph{inverse conjecture for the Gowers norm over finite fields}, which asserts (roughly speaking) that if a bounded function $f: V \to \C$ on a finite-dimensional vector space $V$ over a finite field $\F$ has large Gowers uniformity norm $\|f\|_{U^{s+1}(V)}$, then there exists a (non-classical) polynomial $P: V \to \T$ of degree at most $s$ such that $f$ correlates with the phase $e(P) = e^{2\pi i P}$.  This conjecture had already been established in the ``high characteristic case'', when the characteristic of $\F$ is at least as large as $s$.  Our proof relies on the weak form of the inverse conjecture established earlier by the authors and Bergelson \cite{berg}, together with new results on the structure and equidistribution of non-classical polynomials, in the spirit of the work of Green and the first author \cite{finrat} and of Kaufman and Lovett \cite{kauf}.
\end{abstract}

\maketitle

\section{Introduction}

\subsection{The inverse conjecture}

Let $\F = \F_p$ be a finite field of prime order $\charac(\F) = p$.  Throughout this paper, $\F$ will be considered fixed (e.g. $\F = \F_2$ or $\F = \F_3$), and the term ``vector space'' will be shorthand for ``vector space over $\F$'', and more generally any linear algebra term (e.g. span, independence, basis, subspace, linear transformation, etc.) will be understood to be over the field $\F$ unless otherwise stated.  

If $V$ is a vector space, $f: V \to \C$ is a function, and $h \in V$ is a shift, we define the \emph{multiplicative derivative} $\mder_h f: V \to \C$ of $f$ by the formula
$$ \mder_h f := (T_h f) \overline{f}$$
where the shift operator $T_h$ with shift $h$ is defined by $T_h f(x) := f(x+h)$.  If $V$ is finite, and $d \geq 1$ is an integer, we define the \emph{Gowers uniformity norm} $\|f\|_{U^d(V)}$ by the formula
$$ \|f\|_{U^{d}(V)} := |\E_{h_1,\ldots,h_d, n \in V} \mder_{h_1} \ldots \mder_{h_{d}} f(n)|^{1/2^{d}},$$
where we use the expectation notation $\E_{a \in A} f(a) := \frac{1}{|A|} \sum_{a \in A} f(a)$ for any finite non-empty set $A$, with $|A|$ denoting the cardinality of $A$.   We review some basic properties of the Gowers uniformity norms in Appendix \ref{gowapp}. 

The \emph{inverse conjecture} for the Gowers norm in finite characteristic addresses the question of determining those bounded functions $f: V \to \C$ with large Gowers norm.  To phrase this conjecture correctly in the low characteristic case, we need the notion of a \emph{non-classical polynomial}\footnote{Strictly speaking, ``not necessarily classical polynomial'' would be a more accurate terminology than ``non-classical polynomial''.}:

\begin{definition}[Polynomials]\label{polydef}  Let $V$ be a finite-dimensional vector space, let $d \geq 0$ be an integer, and let $G$ be an additive group.  A function $P: V \to G$ from $V$ to $G$ is said to be a \emph{(non-classical) polynomial} of degree $\leq d$ if one has
$$ \ader_{h_1} \ldots \ader_{h_{d+1}} f(n) = 0$$
for all $h_1,\ldots,h_{d+1},n \in V$, where $\ader_h f(n) := (T_h - 1) f(n) = f(n+h)-f(n)$ is the additive derivative of $f$ in the direction $h$.  We adopt the convention that the zero polynomial $0$ has degree $-\infty$.  We denote the space of all polynomials of degree $\leq d$ as $\Poly_{\leq d}(V \to G)$; this is clearly an additive group, with $\Poly_{\leq d}(V \to G) = \{0\}$ for $d<0$.  
\end{definition}

\begin{remark} In practice the group $G$ will usually be the unit circle $\T := \R/\Z$, the finite field $\F$, or the embedded copy $\iota(\F)$ of $\F$ in $\T$, where $\iota: \F \to \T$ is the additive homomorphism
$$ \iota(j) := \frac{j}{p} \mod 1.$$
In particular, $\iota(\F) = \frac{1}{p}\Z/\Z$ is the group of $p^{\th}$ roots of unity in $\R/\Z$.
\end{remark}

\begin{remark}
Polynomials that take values in $\F$ (and by abuse of notation, $\iota(\F)$) will be referred to as \emph{classical polynomials}; but in this paper, the term ``polynomial'' will be understood to encompass the non-classical case unless otherwise stated.  Clearly $\iota$ induces an isomorphism $\iota_*: \Poly_{\leq d}(V \to \F) \to \Poly_{\leq d}(V \to \iota(\F))$.   We will take advantage of this isomorphism whenever we need to use the multiplicative structure on $\F$, since $\T$ has no multiplicative structure, save for the fact that it is a $\Z$-module (i.e. one can define $n\alpha$ when $\alpha \in \T$ and $n \in \Z$).    
\end{remark}

\begin{remark} These notions of polynomials are part of a larger theory of \emph{polynomial algebra} between (filtered) groups that are not necessarily abelian; see Appendix \ref{poly-alg}.  
\end{remark}

\begin{example}\label{mother}  The map $P: \F_2 \to \T$ with $P(0) := 0$ and $P(1) := 1/2$ is a classical polynomial of degree $\leq 1$.  The map $Q: \F_2 \to \T$ with $Q(0) = 0$ and $Q(1) = 1/4$ is a (non-classical) polynomial of degree $\leq 2$; note that $\ader_1 Q = \frac{1}{4} - P$.  

We can generalise these examples to higher dimensions.  If $n \geq 1$ is an integer, we let $L: \F_2^n \to \Z$ be the function $L(x_1,\ldots,x_n) := |x_1|+\ldots+|x_n|$, where $x \mapsto |x|$ is the obvious map from $\F_2$ to the fundamental domain $\{0,1\}$.  This map is not a polynomial (either in the classical or non-classical sense); however, the function $\frac{L}{2} \mod 1$ is a classical polynomial of degree $\leq 1$ from $\F_2^n$ to $\T$, $\frac{L}{4} \mod 1$ is a (non-classical) polynomial of degree $\leq 2$ from $\F_2^n$ to $\T$, and more generally, for any $k \geq 0$, $\frac{L}{2^{k+1}} \mod 1$ is a (non-classical) polynomial of degree $\leq k$ from $\F_2^n$ to $\T$.  For further generalisation of these examples, see Lemma \ref{polybasic}(ii), (iii) below.
\end{example}

The relevance of (non-classical) polynomials to the Gowers norms can be seen from the easily verified fact that if $f: V \to \C$ is a function on a finite-dimensional vector space $V$ that is bounded in magnitude by $1$ (thus $|f(x)| \leq 1$ for all $x \in V$), and $d \geq 0$ be an integer, then $\|f\|_{U^d(V)} \leq 1$, with equality if and only if $f$ is of the form $f = e(P)$ for some polynomial $P: V \to \T$ of degree at most $d$, where $e: \T \to \C$ is the standard character $e(x) := e^{2\pi ix}$.

We collect the following standard facts about polynomials, setting $V$ to equal the standard finite-dimensional vector space $\F^n$ in order to use coordinates:

\begin{lemma}[Basic facts about polynomials]\label{polybasic}  Let $V = \F^n$ for some natural number $n$, $G$ be an additive group, and $d$ be an integer.
\begin{itemize}
\item[(i)] If $P: V \to G$ is a function and $d \geq 0$, then $P \in \Poly_{\leq d}(V \to G)$ if and only if $\ader_h P \in \Poly_{\leq d-1}(V \to G)$ for all $h \in V$.  In fact, we may replace ``for all $h \in V$'' by ``for all $h$ in a set that generates $V$''.
\item[(ii)] If $d \geq 0$, then a function $P: V \to \F$ is a polynomial of degree $\leq d$ if and only if it has a representation of the form
\begin{equation}\label{iot}
 P(x_1,\ldots,x_n) = \sum_{0 \leq i_1,\ldots,i_n < p: i_1+\ldots+i_n \leq d} c_{i_1,\ldots,i_n} x_1^{i_1} \ldots x_n^{i_n}
\end{equation}
for some coefficients $c_{i_1,\ldots,i_n} \in \F$, and furthermore these coefficients are unique.
\item[(iii)]  If $d \geq 0$, then a function $P: V \to \T$ is a polynomial of degree $\leq d$ if and only if it has a representation of the form
\begin{equation}\label{plan}
 P(x_1,\ldots,x_n) = \alpha + 
 \sum_{\text{\tiny $\begin{array}{ll} & 0 \leq i_1, \ldots,i_n < p; j \geq 0: \\  & 0 < i_1+\ldots+i_n \leq d-j(p-1) \end{array}$}} \frac{c_{i_1,\ldots,i_n,j} |x_1|^{i_1} \ldots |x_n|^{i_n}}{p^{j+1}} \mod 1
\end{equation}
for some coefficients $c_{i_1,\ldots,i_n,j} \in \{0,1,\ldots,p-1\}$ and $\alpha \in \T$, where $x \mapsto |x|$ is the map from $\F$ to the fundamental domain $\{0,1,\ldots,p-1\} \in \Z$.  Furthermore the coefficients $c_{i_1,\ldots,i_n,j}$ and $\alpha$ are unique.
\item[(iv)]  If $R$ is a commutative ring, and $P, Q: V \to R$ are polynomials of degree $\leq d$ and $\leq d'$ respectively, then $PQ: V \to R$ is a polynomial of degree $\leq d+d'$.
\item[(v)]  The map $p: P \mapsto pP$ is a homomorphism from 
$$\Poly_{\leq d}(V \to G) \to \Poly_{\leq \max(d-p+1,0)}(V \to G).$$  If $G=\T$ and $d \geq 0$, then this homomorphism is surjective.
\item[(vi)]  If $P \in \Poly_{\leq d}(V \to \T)$ and $d \geq 0$, then there exists $\alpha \in \T$ such that $P$ takes values in the coset $\alpha + \frac{1}{p^{\lfloor \frac{d-1}{p-1}\rfloor+1}} \Z/\Z$ of the $(p^{\lfloor \frac{d-1}{p-1}\rfloor+1})^{th}$ roots of unity.  In particular, $P$ takes on at most $p^{\lfloor \frac{d-1}{p-1}\rfloor+1}$ distinct values.
\end{itemize}
\end{lemma}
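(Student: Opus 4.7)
Most of the parts reduce to direct manipulations with the additive difference operator $\ader_h$ or to the structural statement in (iii), so the bulk of the work will be concentrated there. Part (i) is immediate from the definition: $\ader_{h_1}\cdots\ader_{h_{d+1}} P = \ader_{h_2}\cdots\ader_{h_{d+1}}(\ader_{h_1}P)$, so $P$ has degree $\leq d$ iff every $\ader_h P$ has degree $\leq d-1$. To allow $h$ to range only in a generating set $S$, I would use the cocycle identities $\ader_{h+k} = \ader_h + T_h \ader_k$ and $\ader_{-h} = -T_{-h}\ader_h$, together with the fact that $\ader_{h'}$ commutes with $T_h$ (so translation preserves the degree filtration); these let me extend the hypothesis from $S$ to $\langle S\rangle = V$. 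Part (iv) follows from the discrete Leibniz rule $\ader_h(PQ) = (T_hP)\,\ader_hQ + (\ader_h P)\,Q$: iterating $d+d'+1$ times and expanding yields a sum of terms each of which contains either $\geq d+1$ derivatives on $P$ or $\geq d'+1$ derivatives on $Q$, and both vanish.

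For part (ii) I would first show using (iv) that each monomial $x_1^{i_1}\cdots x_n^{i_n}$ with $\sum i_j \leq d$ and $i_j < p$ is an $\F$-valued polynomial of degree $\leq \sum i_j$, so the right-hand side of \eqref{iot} is a polynomial of degree $\leq d$. For the converse and uniqueness, I observe that by Fermat's little theorem every function $\F^n \to \F$ has a unique representation in the monomials $x_1^{i_1}\cdots x_n^{i_n}$, $0\leq i_j < p$; it then remains to check that a monomial of total degree $>d$ is not of degree $\leq d$, which follows by differentiating in coordinate directions and reducing to the one-variable case, where $\ader_1^k (x^k) = k!$ is a nonzero constant (recalling $k<p$).

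The main obstacle is (iii), and I would prove it by induction on $d$, with the base case $d=0$ trivial (constants $\alpha\in\T$). For the inductive step, take $P \in \Poly_{\leq d}(V\to \T)$ and choose a standard basis $e_1,\ldots,e_n$. Each $\ader_{e_i}P$ is a polynomial of degree $\leq d-1$ by (i), so admits a representation of the form \eqref{plan} with the parameter $d$ replaced by $d-1$. Integrating these derivatives (starting by choosing the right coefficients to match the $e_1$-direction, then correcting in $e_2$, etc.) reconstructs $P$ up to an additive constant; the key calculation is that the model basis functions $\frac{|x_1|^{i_1}\cdots|x_n|^{i_n}}{p^{j+1}}\mod 1$ in \eqref{plan} are themselves polynomials of degree $\leq i_1+\ldots+i_n + j(p-1)$ (generalising Example \ref{mother} via the identity $\ader_1 \frac{|x|^k}{p^{j+1}} = \frac{(|x|+1)^k - |x|^k}{p^{j+1}} - \frac{1}{p^{j+1}}[x = p-1\text{ in }\F]$ in one variable, and using a carry analysis). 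For uniqueness I would count: the number of free coefficients in \eqref{plan} up to degree $d$ matches, via a direct combinatorial identity, the dimension (in the sense of $\log_p$ of the cardinality of the image mod torsion) of $\Poly_{\leq d}(\F^n\to \T)$ as computed from part (ii) together with the short exact sequence $0 \to \iota(\F) \to \Poly_{\leq d}(V\to \T)/\text{lower} \to \ldots$ obtained from multiplication by $p$; alternatively, one shows directly that no nontrivial combination of the proposed basis vanishes identically, by evaluating on a suitable lattice of test points in $\{0,1,\ldots,p-1\}^n$.

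Granted (iii), parts (v) and (vi) are computations with the explicit representation \eqref{plan}. For (v), multiplying \eqref{plan} by $p$ turns $\frac{c_{\vec i, j}|x|^{\vec i}}{p^{j+1}}$ into $\frac{c_{\vec i, j}|x|^{\vec i}}{p^{j}}$, killing the $j=0$ terms (integer-valued mod $1$) and leaving a representation of the form \eqref{plan} for degree $\leq d-(p-1)$ (shifting $j\mapsto j-1$); surjectivity onto $\Poly_{\leq \max(d-p+1,0)}(V\to\T)$ is proved by running this map in reverse, using divisibility of $\T$ to lift the constant term. For (vi), the representation \eqref{plan} makes $P - \alpha$ land in $\frac{1}{p^{J+1}}\Z/\Z$ where $J$ is the largest $j$ for which a coefficient can be nonzero, i.e.\ the largest $j$ with $0 < i_1+\ldots+i_n \leq d-j(p-1)$ solvable; this forces $j\leq \lfloor (d-1)/(p-1)\rfloor$, giving the claimed bound, and the bound on the number of distinct values is then immediate.
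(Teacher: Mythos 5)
The overall structure of your argument is reasonable and parts (i), (ii), (iv) track the paper closely enough (for (ii) the paper uses a dimension count via a Vandermonde computation and the kernel of $\ader_1$, while you invoke Fermat and kill large-degree monomials with $\ader_{e_1}^{i_1}\cdots\ader_{e_n}^{i_n}$; both are fine). However, there is a genuine gap in your treatment of (v), and (iii) is substantially undersupported.

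\textbf{The gap in (v).} Part (v) asserts that $P\mapsto pP$ sends $\Poly_{\leq d}(V\to G)$ into $\Poly_{\leq\max(d-p+1,0)}(V\to G)$ for an \emph{arbitrary} additive group $G$, and surjectivity is only claimed in the special case $G=\T$. Your argument for the degree drop reads off the conclusion from the representation \eqref{plan}, but \eqref{plan} is a statement about $\T$-valued polynomials and says nothing about a general target group $G$. The general-$G$ case is not cosmetic: the paper applies this part of (v) with $G=\Z/p^{M+1}\Z$ (in the proof of Lemma \ref{symprod}), so it must hold beyond $\T$. The paper's proof deliberately establishes this degree-drop claim \emph{before} proving (iii), using a purely algebraic identity in the ring of formal differential operators (Lemma \ref{pmult}): from $T_h^p = T_{ph} = 1$ one extracts $p\ader_h = \ader_h^p \cdot I_h$ with $I_h$ invertible (a unit of the form $-1 +$ higher order), whence applying $\ader_{h_1}\cdots\ader_{h_{d-p+2}}$ to $pP$ reduces to $\ader_{h_1}^p\ader_{h_2}\cdots\ader_{h_{d-p+2}}P=0$. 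This works for any $G$. You need this style of argument, not a read-off from \eqref{plan}.

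\textbf{Part (iii).} Your route is genuinely different from the paper's. The paper obtains (iii) as a special case of Proposition \ref{erz-class}, a stronger classification of weighted-degree polynomial maps on $\Z^m$ proved later in the paper in the binomial-coefficient basis $\binom{x_1}{i_1}\cdots\binom{x_m}{i_m}$, and then argues uniqueness by induction on $d$ via $pP$ (which already kills all terms with $j\geq 1$, reducing to the classical case (ii)). You propose a direct induction on $d$ over $\F^n$ in the power basis $|x_1|^{i_1}\cdots|x_n|^{i_n}$. This is a legitimate alternative and more self-contained, but as written the two hard steps are only gestured at: (a) the ``integration'' step requires showing that the system of derivatives $\ader_{e_i}P$ can always be matched by a single expression of the form \eqref{plan} (you need to use the symmetry $\ader_{e_i}\ader_{e_j} = \ader_{e_j}\ader_{e_i}$ somewhere, and it is cleanest to integrate one coordinate at a time, restrict to $x_n=0$, and induct on $n$, as the paper does in the proof of Proposition \ref{erz-class}); and (b) your uniqueness argument via a dimension count ``$\log_p$ of the cardinality mod torsion'' is informal and fragile; the paper's route via applying the already-proved uniqueness to $pP$, which has strictly smaller degree and coefficient set, is sharper and avoids the counting. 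If you flesh out (a) and replace (b) with the $pP$-induction, your (iii) should go through, but it is a longer and more delicate argument than the one-paragraph sketch suggests.

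Parts (i), (iv), (vi) are fine and match the paper in spirit. Fixing (v) is the priority, since your current argument only establishes the $G=\T$ case.
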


\begin{proof} See Appendix \ref{nonclass}.
\end{proof}

\begin{remark}
If follows from part (vi) that  if $d<p$ then the set of polynomials of degree $\le d$ in $\T$ coincides (up to constants) with the set of classical polynomials of degree $\le d$.  However, this statement is false in the low characteristic case $p \leq d$.
\end{remark}

\begin{remark}\label{exact} We isolate one of the claims in the above lemma for special comment, namely the surjectivity claim in part (v).  This claim implies that every polynomial $P$ of degree $\leq d$ for some $d \geq 0$ has a $p^{\th}$ root $Q$ (i.e. $pQ = P$) which is a (non-classical) polynomial of degree $\leq d+p-1$.  We refer to the ability to take $p^\th$ roots while losing exactly a factor of $p-1$ in the degree as the \emph{exact roots property}.  The exact roots property plays a crucial role in our proof of the inverse conjecture with the correct degree of polynomials involved.  Unfortunately, this property does not hold in the ergodic theory setting, which is one reason why our arguments here do not proceed via the ergodic theoretic approach; see Appendix \ref{root-counter} for further discussion.
\end{remark}

We now connect non-classical polynomials to the Gowers norms.  From the monotonicity of the Gowers norms and the Cauchy-Schwarz-Gowers inequality (see Lemma \ref{gow}(ii), (vi)) we see that if $f: V \to \C$ correlates with a polynomial $P \in \Poly_{\leq s+1}(V \to \T)$ in the sense that
$$ |\E_{x \in V} f(x) e(-P(x))| \geq \delta$$
for some $\delta > 0$, then we have $\|f\|_{U^{s+1}(V)} \geq \delta$.  

The \emph{inverse conjecture for the Gowers norm} over $\F$ is a converse of this statement:

\begin{conjecture}[Inverse conjecture $\GI(s)$]\label{conj-main}  Let $\delta > 0$ and $s \geq 0$.  Then there exists an $\eps = \eps_{\delta,s,\F} > 0$ such that for every finite-dimensional vector space $V$ and any $1$-bounded function $f: V \to \C$ with $\|f\|_{U^{s+1}(V)} \geq \delta$, there exists $P \in \Poly_{\leq s}(V \to \T)$ such that $$|\E_{x \in V} f(x) e(-P(x))| \geq \epsilon.$$
\end{conjecture}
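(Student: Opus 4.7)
The plan is to bootstrap the weak inverse theorem of the authors and Bergelson \cite{berg}---which, under the hypothesis $\|f\|_{U^{s+1}(V)} \geq \delta$, already produces a (non-classical) polynomial $Q: V \to \T$ of degree $\leq D$ for some $D = D(s,\delta,\F)$ (possibly much larger than $s$) satisfying $|\E_{x \in V} f(x) e(-Q(x))| \geq \eps'$---by using new equidistribution and structure results for non-classical polynomials to lower the obstructing degree from $D$ down to the sharp value $s$. Thus the job that remains after invoking \cite{berg} is the purely structural one of converting a high-degree polynomial obstruction into a degree-$s$ obstruction.

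The first step is to apply a regularization lemma for non-classical polynomials, extending the work of Green--Tao \cite{finrat} and Kaufman--Lovett \cite{kauf} from the classical to the non-classical setting. This replaces $Q$ by a joint factor $\vec R = (R_1,\ldots,R_m)$ of polynomials of degrees $d_1 \leq \cdots \leq d_m \leq D$ that is \emph{regular}, meaning $\vec R$ is jointly equidistributed on its range modulo the intrinsic algebraic relations generated by the $p^{\th}$-power identity of Lemma \ref{polybasic}(v). Expanding $e(Q)$ as a combination of characters $e(c_1 R_1 + \cdots + c_m R_m)$ of this factor and pigeonholing, I would extract a single such character that both correlates with $f$ and has $U^{s+1}$-norm bounded below.

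A Gowers--Cauchy--Schwarz analysis of that character then shows that for $e(c_1 R_1 + \cdots + c_m R_m)$ to have a substantial $U^{s+1}(V)$-norm, the regularity of $\vec R$ forces the ``high-degree'' coefficients $c_i$ (those with $d_i > s$) to satisfy divisibility relations matching the depth of $R_i$. Here the \emph{exact roots property} highlighted in Remark \ref{exact} becomes indispensable: it guarantees that an appropriate $p^k$-multiple of $R_i$ is itself non-classically polynomial of degree $\leq s$, so that the relevant character can be rewritten, modulo the kernel of the $p$-map, as $e(P)$ for a single $P \in \Poly_{\leq s}(V \to \T)$. This rewriting transfers the correlation from $Q$ to $P$ and establishes $\GI(s)$.

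The main obstacle I expect is the regularization/equidistribution step in low characteristic. Unlike in the classical case, a non-classical polynomial of degree $\leq d$ can take values in $p^k$-th roots of unity for $k$ unbounded in terms of $d$ (see Lemma \ref{polybasic}(vi)), so the correct notion of rank/regularity must track both the degree and the depth of each $R_i$, and the equidistribution must be stable under the multiplicative derivatives $\mder_{h_1}\cdots\mder_{h_{s+1}}$ that appear in the $U^{s+1}$-norm. Engineering this equidistribution theory for non-classical polynomials, in a form strong enough to drive the Cauchy--Schwarz argument and compatible with the exact roots operation, is where I expect the bulk of the technical work to lie.
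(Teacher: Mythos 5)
Your proposal correctly identifies the main ingredients — the weak inverse theorem of \cite{berg}, a regularization theory for non-classical polynomials that must track depth as well as degree, and the exact roots property — but the specific reduction you sketch has a genuine gap, and it omits a major technical component. The gap is at the pigeonhole step: after Fourier expanding $e(Q)$ over the regular factor $\vec R$, pigeonholing produces a character $e(\vec c \cdot \vec R)$ with $|\E_{x\in V} f(x)\, e(-\vec c \cdot \vec R(x))| \gg 1$, but nothing forces that character to have $U^{s+1}$-norm bounded below, which is exactly what your subsequent Gowers--Cauchy--Schwarz analysis needs as its hypothesis. A $1$-bounded $f$ with $\|f\|_{U^{s+1}} \gg 1$ can correlate substantially with a high-rank degree-$(s+1)$ phase of infinitesimal $U^{s+1}$-norm (for instance if $f$ is a sum of such a phase and a degree-$s$ phase), so the pigeonhole can hand you the ``wrong'' character and the degree reduction does not get off the ground. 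The paper never tries to lower the degree of the \cite{berg} obstruction directly: it first proves the purely polynomial inverse conjecture $\GIP(s)$ (Theorem \ref{main-p}), and then deduces $\GI(s)$ in Section \ref{deduce} by a minimal-degree contradiction built around an $L^2$ energy decomposition (Lemma \ref{deco}), which approximates the structured part of $f$ by a \emph{bounded} combination of phases $e(P)$ and thus supplies the bounded $\ell^1$ Fourier mass that a bare pigeonhole lacks; the unbounded-rank phases are then killed in $U^{s+1}$ by $\GIP$, and the bounded-rank ones are killed in $L^2$ by minimality.

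Separately, to prove $\GIP(s)$ one needs more than the regularization and exact-roots machinery you describe (these cover the paper's subclaims on multiplication and division by $p$). One also needs the ``classical case'' — Theorem \ref{GICSM} and the multilinear concatenation calculus of Sections \ref{concat-sec}--\ref{multiconc}, which shows that a biased classical symmetric $(s+1)$-linear form is $d^{s+1}Q$ for some bounded-rank classical polynomial $Q$, built from concatenations and symmetric powers of lower-order forms (with the symmetric powers $\Sym^m$ needed precisely because one cannot divide by $m!$ in low characteristic). Your proposal does not address this step, and it cannot be avoided: dividing $pP$ by $p$ via exact roots only recovers $P$ up to a classical polynomial of the same top degree, and it is exactly this classical remainder that Theorem \ref{GICSM} is there to handle.
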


For a fixed $s$, we denote the above conjecture as $\GI(s)$.  We now briefly review the history of progress on this conjecture.  The case $\GI(0)$ is trivial, while the case $\GI(1)$ follows easily from Plancherel's theorem.  The result was established for $\GI(2)$ in \cite{gt:inverse-u3} (for odd characteristic) and \cite{sam} (for even characteristic), and a formulation of Conjecture \ref{conj-main} was then conjectured in both papers.  In that formulation, the polynomial $P$ was assumed to be a classical polynomial rather than a non-classical one.  In subsequent work \cite{finrat}, \cite{lms}, it was shown that this ``classical'' formulation of the conjecture could fail in the low characteristic regime $p \leq s$; however the counterexamples in these papers did not prevent the ``non-classical'' formulation of Conjecture \ref{conj-main} given above from holding in those cases.

The case when $\delta$ is sufficiently close to $1$ (depending on $s$) was treated in \cite{akklr}, while the case when the characteristic $p$ is large compared to $s$ and $\delta$ was established in \cite{stv}.  In \cite{finrat}, Conjecture \ref{conj-main} was also established in the case when $f = e(P)$ for some $P \in \Poly_{< p}(V \to \T)$.  Finally, in \cite{tao-ziegler}, \cite{berg}, Conjecture \ref{conj-main} was established in the high-characteristic case $s < p$, and a weaker version of this conjecture established in the low-characteristic case (see Theorem \ref{gis2}).

The first main result of this paper is to extend the high characteristic result from \cite{tao-ziegler}, \cite{berg} to the low characteristic case also:

\begin{theorem}\label{main}  $\GI(s)$ is true for all choices of $\F$ and $s$.
\end{theorem}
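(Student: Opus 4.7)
The plan is to combine the weak form of the inverse conjecture from \cite{berg} (Theorem \ref{gis2}) with a degree-reduction argument driven by structural and equidistribution theorems for non-classical polynomials. The weak inverse produces, from the hypothesis $\|f\|_{U^{s+1}(V)} \geq \delta$, a polynomial $P \in \Poly_{\leq D}(V \to \T)$ for some effective degree bound $D = D(s,p) \geq s$, such that $f$ correlates with $e(P)$. The task of the paper is then to refine $D$ all the way down to $s$.

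I would proceed by a finite iteration reducing $D$ by one at each step. Writing $P$ in the explicit normal form of Lemma \ref{polybasic}(iii) and grouping terms by their stratum (the denominator $p^{j+1}$), one isolates a top piece that plays the role of a symmetric multilinear form of highest order. Differentiating $P$ sufficiently many times and expanding the Gowers inner product witnessing $\|f\|_{U^{s+1}(V)} \geq \delta$, the top piece is forced to be strongly biased along many tuples of shifts. A Cauchy--Schwarz/symmetrisation argument along the lines of \cite{finrat}, \cite{kauf} then shows that this top-order form has bounded algebraic rank, i.e.\ decomposes into a sum of products of strictly lower-degree polynomials on a structured factor. An equidistribution/regularity step for non-classical polynomials then allows one to substitute $P$ by a polynomial of lower degree (or of lower stratum) that still correlates nontrivially with $f$; iterating this produces a correlating polynomial in $\Poly_{\leq s}(V \to \T)$, as required.

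The main obstacle is executing the degree-reduction step in low characteristic. In the high-characteristic regime $s < p$, every polynomial of degree $\leq s$ is, up to constants, classical (cf.\ Lemma \ref{polybasic}(vi)) and one can absorb top-order multilinear forms by classical coefficient manipulations. In low characteristic, however, the top-order piece may live in a higher stratum and is genuinely non-classical, so naively solving for it would raise the degree. This is where the exact roots property (Remark \ref{exact} and Lemma \ref{polybasic}(v)) becomes essential: one realises the problematic piece as $p$ times a non-classical polynomial whose degree grows by exactly $p-1$, so that the bookkeeping of the iteration closes. Developing the equidistribution theory for non-classical polynomials so that it is compatible with this stratum-by-stratum $p$th-root lift, and pairing it with the rank-reduction machinery, is the main technical content the proof must build, and is the hardest step.
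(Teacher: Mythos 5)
You have the right ingredients in mind (the weak inverse theorem of \cite{berg}, the rank/equidistribution machinery of \cite{finrat}, \cite{kauf}, and the exact roots property of Lemma \ref{polybasic}(v)), but the way they are combined has a genuine gap.

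The claim that ``differentiating $P$ \ldots the top piece is forced to be strongly biased along many tuples of shifts'' does not follow from the hypotheses. Correlation $|\E_x f(x)\,e(-P(x))|\gg 1$ together with $\|f\|_{U^{s+1}(V)}\gg 1$ gives essentially no control on the bias of $d^D P$ when $D>s$. Concretely, take any $P\in\Poly_{\leq D}(V\to\T)$ of high analytic rank, so that $\E_{h_1,\ldots,h_D}e(d^{D}P(h_1,\ldots,h_D))=o(1)$ and also $\E_x e(P(x))=o(1)$, and set $f:=\tfrac12\bigl(1+e(P)\bigr)$. Then $f$ is $1$-bounded, $|\E_x f(x)e(-P(x))|=\tfrac12+o(1)$, and $\|f\|_{U^{s+1}(V)}\ge|\E_x f(x)|=\tfrac12+o(1)$, yet $d^D P$ is equidistributed. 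So the iterative step ``reduce the degree of $P$'' cannot close: when the weak-inverse output $P$ is itself high-rank, the degree-$\le s$ correlating polynomial you are looking for is a completely different polynomial (here, the constant $0$), not any modification or root of $P$.

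The paper avoids this by splitting the argument into two stages that your sketch conflates. It first proves the polynomial inverse conjecture $\GIP(s)$ (Theorem \ref{main-p}): there the hypothesis is $\|e(Q)\|_{U^{s+1}(V)}\gg 1$, which by \eqref{arank-eq} genuinely does force bias of $d^{s+1}Q$, and this is where the exact roots property, equidistribution of non-classical polynomials, the multilinear-concatenation analysis, and the multiplication-by-$p$ step all live. It then deduces $\GI(s)$ from $\GIP(s)$ and Theorem \ref{gis2} by a \emph{global} structural decomposition $f=f_\str+f_\psd$ relative to all degree-$\le d$ polynomials simultaneously (Lemma \ref{deco}): one shows $\|f_\psd\|_{U^{s+1}(V)}=o(1)$ via the weak inverse, Fourier-expands $f_\str$ into phases of degree-$\le d$ polynomials, invokes $\GIP$ to discard the high-rank phases as Gowers-uniform, and derives a contradiction forcing the minimal weak-inverse degree $d$ to be $\le s$. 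It is this global decomposition that permits ``switching'' to a different correlating polynomial; your local iteration on a fixed $P$ cannot supply that step.
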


\begin{remark}  As stated at the beginning of the introduction, we are restricting $\F$ to be a field of prime order.  But finite fields $\F_{p^j}$ of prime power order are also covered by this theorem, since any vector space over $\F_{p^j}$ can also be viewed as a vector space over $\F_p$, with no change in the definition of the Gowers norm or the definition of a (non-classical) polynomial.
\end{remark}

\begin{remark} In \cite{gowers-wolf-3}, Gowers and Wolf used the high characteristic case of Conjecture \ref{conj-main} to compute the true complexity of a system (a concept introduced in \cite{gowers-wolf}) of linear equations in sufficiently high characteristic.  In principle, Theorem \ref{main} would allow the ``sufficiently high characteristic'' condition to be weakened or dropped entirely.  However, this would require adapting the arguments in \cite{gowers-wolf-3} from classical polynomials to non-classical polynomials, and furthermore the high characteristic hypothesis is also used elsewhere in the arguments (in particular, the characteristic was assumed to exceed the ``Cauchy-Schwarz complexity'' of the system being studied).
\end{remark}

\subsection{Rank and analytic rank}

Theorem \ref{main} is established as a consequence of a related result (Theorem \ref{main-p} below), which is the main technical result of the paper.  Before we can state that result, we first need to recall the useful notions of rank and analytic rank, following \cite{gowers-wolf-3}.

\begin{definition}[Rank and analytic rank]  Let $s \geq 0$ be an integer, and let $P \in \Poly_{\leq s+1}(V \to \T)$.
\begin{itemize}
\item The \emph{rank} $\rank(P)= \rank_s(P)$ of $P$ is the least number $m$ of polynomials $Q_1,\ldots,Q_m \in \Poly_{\leq s}(V \to \T)$ of degree $\leq s$ such that $P$ is a function of $Q_1,\ldots,Q_m$, i.e. there exists a function $F: \T^m \to \T$ such that $P = F(Q_1,\ldots,Q_m)$.  (We adopt the convention that $\rank_0(P)$ is infinite if the linear polynomial $P$ is non-constant.)  
\item The \emph{analytic rank} $\arank(P) = \arank_{s}(P)$ of $P$ is defined to be the quantity $\arank(P) := -\log_p \|e(P)\|_{U^{s+1}(V)}^{1/2^{s+1}}$.
\end{itemize}
We define the rank and analytic rank of polynomials $P \in \Poly_{\leq s+1}(V \to \F)$ by using the homomorphism $\iota_*$, thus for instance $\arank(P) := \arank(\iota_* P)$.  (In particular, when defining the rank of a classical polynomial $P$, we allow for $P$ to be represented by \emph{non-classical} polynomials $Q_1,\ldots,Q_m$ of the required degree.)
\end{definition}

The analytic rank is closely related to the \emph{derivative} $d^{s+1} P: V^{s+1} \to \T$ of $P$, defined as
\begin{equation}\label{dsp-def}
d^{s+1} P(h_1,\ldots,h_{s+1}) := \ader_{h_1} \ldots \ader_{h_{s+1}} P(x)
\end{equation}
for any $h_1,\ldots,h_{s+1},x \in V$ (note that the right-hand side is independent of $x$ when $P$ has degree at most $s+1$).  Indeed, a short calculation shows that
\begin{equation}\label{arank-eq}
\E_{h_1,\ldots,h_{s+1} \in V} e(d^{s+1} P(h_1,\ldots,h_{s+1})) = p^{-\arank(P)}.
\end{equation}

Thus for instance $\arank(P)$ vanishes if and only if $P$ is of degree $\leq s$.

For a classical quadratic form $Q: V \to \F$, the rank $\rank(Q) = \rank_1(Q)$ and the analytic rank $\arank(Q) = \arank_1(Q)$ are both equal (at least when $p$ is odd) to the usual concept of the rank of a quadratic form in linear algebra; see \cite{gowers-wolf-3} for further discussion.  

\begin{example}\label{exam}  For each $k \geq 1$, let $S_k: \F_2^n \to \F_2$ be the degree $k$ classical symmetric polynomial
$$ S_k(x_1,\ldots,x_n) := \sum_{1 \leq i_1 < \ldots < i_k \leq n} x_{i_1} \cdots x_{i_k}$$
or equivalently, using the notation $L(x_1,\ldots,x_n) := |x_1|+\ldots+|x_n|$ from Example \ref{mother}, we have
$$ S_k = \binom{L}{k} \mod 2.$$
A classical theorem of Lucas (reflecting the self-similar fractal nature of Pascal's triangle modulo $2$) then gives the identity
$$ S_k = S_{2^{a_1}} \cdots S_{2^{a_m}}$$
whenever $k = 2^{a_1} + \ldots + 2^{a_m}$ is the binary expansion of $k$; thus for instance $S_3 = S_2 S_1$, $S_5 = S_4 S_1$, $S_6 = S_4 S_2$, $S_7 = S_4 S_2 S_1$, etc.  An easy induction on $m$ then shows that each $S^{2^m}$ is a binary coefficient of $L$, or more precisely that
\begin{equation}\label{lam}
L = \sum_{m=0}^\infty |S_{2^m}| 2^m
\end{equation}
(note that only finitely many of the summands are non-zero).

In \cite{lms}, \cite{finrat} it was computed that
$$ \| e(\iota(S_4)) \|_{U^4(\F_2^n)}^4 = \frac{1}{8} + o(1)$$
as $n \to \infty$, and thus $\arank(S_4) = \arank_3(S_4) = 3 + o(1)$.  As for the rank of $S_4$, it can be shown that as $n \to \infty$, $S_4$ cannot be expressed as a function of a bounded number of \emph{classical} cubics; see \cite{lms}, \cite{finrat}.  However, from \eqref{lam} we see that $S_4$ is a function of the expression $\frac{L}{8} \hbox{ mod 1}$, which is a cubic by Lemma \ref{polybasic}(iii).  We conclude that $\rank(S_4)=\rank_3(S_4)=1$.
\end{example}

We collect here some basic observations regarding rank and analytic rank:

\begin{lemma}[Basic properties of rank]\label{basic-rank} \ 
\cite{gowers-wolf-3} Let $s \geq 0$ be an integer, and let $P, Q \in \Poly_{\leq s+1}(V \to \T)$ be polynomials.  
\begin{itemize}
\item[(i)] $\rank(P) = \rank(-P)$ and $\arank(P) = \arank(-P)$.
\item[(ii)] $\rank(P+Q) \leq \rank(P) + \rank(Q)$ and $\arank(P+Q) \leq 2^{s+1} (\arank(P) + \arank(Q))$.
\item[(iii)] $\arank(P) \leq C_s \rank(P)$, for some constant $C_s > 0$ depending only on $s$.
\item[(iv)]  We have the inequality
$$ |\E_{h_1,\ldots,h_{s+1} \in V} e(d^{s+1} P(h_1,\ldots,h_{s+1})) \prod_{j=1}^{s+1} f_j(h_1,\ldots,h_{s+1})| \leq p^{-\arank(P)/2^{s}}$$
whenever $f_j: V^{s+1} \to \C$ are $1$-bounded functions, with each $f_j(h_1,\ldots,h_{s+1})$ independent of the $h_j$ variable.
\end{itemize}
\end{lemma}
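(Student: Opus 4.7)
My plan is to dispatch the four claims in sequence. Parts (i) and the rank half of (ii) are formal, (iv) is an iterated Cauchy-Schwarz, (iii) is a Fourier-analytic computation, and the analytic-rank half of (ii) is the deepest and rests on the rank/analytic-rank machinery from \cite{gowers-wolf-3}.

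For (i), replacing $F$ by $-F$ in a representation $P=F(Q_1,\ldots,Q_m)$ gives one for $-P$, so $\rank(-P)=\rank(P)$; and $e(-P)=\overline{e(P)}$ has the same Gowers $U^{s+1}$-norm as $e(P)$ by conjugation-symmetry of the defining expectation, so $\arank(-P)=\arank(P)$. For the rank portion of (ii), concatenate: if $P=F(Q_1,\ldots,Q_m)$ and $Q=G(R_1,\ldots,R_n)$, setting $H(x,y):=F(x)+G(y)$ exhibits $P+Q=H(Q_1,\ldots,Q_m,R_1,\ldots,R_n)$.

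For (iv), I would apply Cauchy-Schwarz to the variables $h_1,\ldots,h_{s+1}$ in turn. At the $j$-th step, squaring the absolute value and applying Cauchy-Schwarz in $h_j$ (over which $f_j$ is constant by hypothesis) absorbs $f_j$ into $|\cdot|^2$, at the cost of a duplicate variable $h_j'$. Multilinearity of $d^{s+1}P$ gives $d^{s+1}P(\ldots,h_j,\ldots)-d^{s+1}P(\ldots,h_j',\ldots)=d^{s+1}P(\ldots,h_j-h_j',\ldots)$, so after the shift $h_j\mapsto h_j+h_j'$ the original form is restored with one fewer $f_j$ (the remaining $f_k$ for $k>j$ are replaced by $h_j'$-averaged 1-bounded functions that still do not depend on $h_k$). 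After $s+1$ iterations all $f_j$'s are gone, leaving $\E_h e(d^{s+1}P(h))=p^{-\arank(P)}$ on the right and the $(2^{s+1})$-th power of the original quantity on the left, whence the stated bound follows.

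For (iii), write $P=F(Q_1,\ldots,Q_m)$ with $Q_i\in\Poly_{\le s}(V\to\T)$. By Lemma \ref{polybasic}(vi) each $Q_i$ takes at most $K:=p^{\lfloor(s-1)/(p-1)\rfloor+1}$ distinct values, so $(Q_1,\ldots,Q_m)(V)$ has size $\le K^m$. Fourier-expanding $e\circ F$ on this image gives $e(P)=\sum_\xi \hat G(\xi)\,e(R_\xi)$, with each $R_\xi$ a $\Z$-linear combination of the $Q_i$, hence of degree $\le s$ (so $\|e(R_\xi)\|_{U^{s+1}}=1$). Expanding $\|e(P)\|_{U^{s+1}}^{2^{s+1}}$ multilinearly, the diagonal contribution (one $\xi$ repeated at every vertex of the Gowers cube) equals $\sum_\xi|\hat G(\xi)|^{2^{s+1}}\ge K^{-m(2^s-1)}$ by Jensen applied to the probability weights $|\hat G(\xi)|^2$, while the off-diagonal contributions are bounded via Cauchy-Schwarz-Gowers on the $e(R_\xi)$; this yields $\arank(P)\le C_s\,\rank(P)$. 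Finally, the analytic-rank half of (ii) uses $d^{s+1}(P+Q)=d^{s+1}P+d^{s+1}Q$, the subadditivity of partition rank under sums of multilinear forms, and the comparability (up to constants depending on $s$ and $p$) of partition rank to analytic rank from \cite{gowers-wolf-3}; the factor $2^{s+1}$ is the cost of routing through this correspondence. The main obstacle is precisely this last step, since direct subadditivity of analytic rank is unavailable; the off-diagonal Fourier control in (iii) is a secondary technical point, handled by the fact that each $R_\xi$ has degree $\le s$.
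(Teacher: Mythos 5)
Your handling of (i) and the rank half of (ii) matches the paper's dismissal of these as trivial. For (iv) the paper simply cites Gowers--Wolf, and your iterated Cauchy--Schwarz is indeed the standard route; but as sketched (one squaring per variable, $s+1$ in total) it yields $|\cdot|^{2^{s+1}}\leq p^{-\arank(P)}$, i.e.\ the exponent $p^{-\arank(P)/2^{s+1}}$, which is weaker than the stated $p^{-\arank(P)/2^{s}}$. To get the claimed exponent one stops after $s$ Cauchy--Schwarz steps and observes that the remaining inner average $\E_{h_{s+1}}e(d^{s+1}P(k_1,\ldots,k_s,h_{s+1}))$ is $\{0,1\}$-valued, hence the last weight (which is independent of $h_{s+1}$) can be bounded out in modulus without a further squaring.

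The real gap is in (iii). You Fourier-expand $e(P)=\sum_\xi\hat G(\xi)e(R_\xi)$ with each $R_\xi$ of degree $\leq s$ (correct), and you lower-bound the diagonal part of $\|e(P)\|_{U^{s+1}(V)}^{2^{s+1}}$ by $\sum_\xi|\hat G(\xi)|^{2^{s+1}}\geq K^{-m(2^{s}-1)}$ via Jensen (also correct). But the off-diagonal contribution is not, in fact, controlled: Gowers--Cauchy--Schwarz only bounds each off-diagonal Gowers inner product by $1$ in modulus (all $R_\xi$ have degree $\leq s$, so $\|e(R_\xi)\|_{U^{s+1}}=1$), and the total off-diagonal mass can be as large as $\bigl(\sum_\xi|\hat G(\xi)|\bigr)^{2^{s+1}}$, which swamps the diagonal. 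There is no sign information to prevent cancellation, so the claimed lower bound on $\|e(P)\|_{U^{s+1}(V)}$ does not follow; this is not a secondary technical point. The paper's argument sidesteps the expansion entirely: from $\sum_\xi|\hat G(\xi)|^2=1$ and the boundedly many ($\leq p^{mC}$) values of $\xi$, pigeonholing on the correlations $\E_{x\in V}e(P(x))\overline{e(R_\xi(x))}$ produces a single $\xi_0$ with correlation $\geq p^{-mC}$; monotonicity of the Gowers norms then gives $\|e(P-R_{\xi_0})\|_{U^{s+1}(V)}\geq p^{-mC}$, and since $R_{\xi_0}\in\Poly_{\leq s}(V\to\T)$, modulation invariance removes it. Finally, your route for the analytic-rank half of (ii), passing through ``comparability of partition rank and analytic rank,'' is circular in the direction you need: the bound (partition rank $\ll$ analytic rank) is essentially $\GIP(s)$, which cannot be assumed. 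The paper cites a direct Cauchy--Schwarz argument from Gowers--Wolf for that inequality instead.
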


\begin{proof} 
The claim (i) is trivial, as is the first part of claim (ii).  The second part of (ii) is established in \cite[Lemma 5.9]{gowers-wolf-3}.   Now we turn to (iii).  If we write $\rank(P) = m$, then $P$ is a function of polynomials $Q_1,\ldots,Q_m$ of degree $\leq s$.  By Lemma \ref{polybasic}(vi), we may assume that the $Q_1,\ldots,Q_m$ all take values in the $(p^C)^{\th}$ roots of unity for some $C = C(s)$.  By Fourier analysis, we may thus decompose
$$ e(P) = \sum_{0 \leq a_1,\ldots,a_m < p^C} c_{a_1,\ldots,a_m} e(a_1 Q_1 + \ldots + a_m Q_m)$$
where the Fourier coefficients $c_{a_1,\ldots,a_m}$ are bounded in magnitude by $1$ (in fact their $\ell^2$ norm is bounded by $1$).  By the pigeonhole principle, one can thus find $a_1,\ldots,a_m$ such that
$$ |\E_{x \in V} e(P(x)) e( - a_1 Q_1(x) - \ldots - a_m Q_m(x) )| \geq p^{-mC}$$
which by the monotonicity of Gowers norms implies that
$$ \| e(P) e( - a_1 Q_1 - \ldots - a_m Q_m )\|_{U^{s+1}(V)} \geq p^{-mC}.$$
Since the $U^{s+1}(V)$ norm is invariant with respect to modulation by polynomials of degree $\leq s$, we conclude that
$$ \| e(P) \|_{U^{s+1}(V)} \geq p^{-mC}.$$
and thus $\arank(P) \leq 2^s C m$, and the claim follows.

Finally, the claim (iv) is established in \cite[Lemma 5.4]{gowers-wolf-3}.
\end{proof}

Lemma \ref{basic-rank}(iii) asserts that the rank controls the analytic rank.  For each $s \geq 0$, we let $\GIP(s)$ denote the following converse:

\begin{conjecture}[Inverse conjecture $\GIP(s)$ for polynomials]\label{poly-cut}  Let $\delta > 0$.  Then there exists an integer $K = K_{\delta,s,\F} > 0$ such that for every finite-dimensional vector space $V$ and any $Q \in \Poly_{\leq s+1}(V \to \C)$ with $\|e(Q)\|_{U^{s+1}(V)} \geq \delta$, there exists $P_1,\ldots,P_K \in \Poly_{\leq s}(V \to \T)$ such that $Q$ is a function of $P_1,\ldots,P_K$ (i.e. $Q = F(P_1,\ldots,P_K)$ for some function $F: \T^K \to \T$.
\end{conjecture}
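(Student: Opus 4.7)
The plan is to prove $\GIP(s)$ by induction on $s$. The base case $s=0$ is essentially Plancherel: a non-constant degree $\leq 1$ polynomial $Q$ has $\|e(Q)\|_{U^1(V)} = |\E_{x \in V} e(Q(x))| = 0$, so the hypothesis forces $Q$ to be constant and $K = 0$ suffices.

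For the inductive step from $\GIP(s-1)$ to $\GIP(s)$, the starting point is the derivative identity
\[
\|e(Q)\|_{U^{s+1}(V)}^{2^{s+1}} = \E_{h \in V} \|e(\ader_h Q)\|_{U^s(V)}^{2^s}.
\]
Since each summand is at most $1$, a Markov-type argument yields a set $H \subset V$ of density at least some $\delta' = \delta'(\delta,s) > 0$ such that $\|e(\ader_h Q)\|_{U^s(V)} \geq \delta'$ for all $h \in H$. Each $\ader_h Q$ has degree $\leq s$, so by the inductive hypothesis we obtain, for every $h \in H$, a representation $\ader_h Q = F_h(P_{h,1},\ldots,P_{h,K'})$ where the $P_{h,i}$ are polynomials of degree $\leq s-1$ and $K' = K'_{\delta,s,\F}$ is uniformly bounded. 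The task is then to synthesize these ``local'' decompositions of the derivatives into a single global representation of $Q$ as a function of boundedly many degree $\leq s$ polynomials.

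To carry out this synthesis I would combine two further ingredients suggested by the abstract. First, the weak inverse theorem from \cite{berg} (Theorem \ref{gis2}) gives, directly from $\|e(Q)\|_{U^{s+1}(V)} \geq \delta$, a correlation of $e(Q)$ with a bounded-complexity object built out of polynomials of degree $\leq s$. Second, structure and equidistribution theorems for non-classical polynomials, in the spirit of \cite{finrat} and \cite{kauf}, should allow one to regularize the family $\{P_{h,i}\}_{h \in H, \, 1 \leq i \leq K'}$ into a uniform polynomial factor generated by polynomials $P_1, \ldots, P_M$ of degree $\leq s-1$ with respect to which every $\ader_h Q$ (for $h \in H$) is measurable. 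A Bogolyubov / cocycle-type argument --- exploiting that $h \mapsto \ader_h Q$ is a coboundary in $h$ --- should then promote this to measurability of $Q$ itself with respect to a factor generated by $P_1,\ldots,P_M$ together with ``antiderivatives'' that exist as non-classical polynomials of degree $\leq s$ thanks to the exact roots property of Lemma \ref{polybasic}(v).

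The main obstacle is this regularization/integration step. In the high-characteristic regime of \cite{tao-ziegler} classical polynomials suffice and much of the argument can be run via ergodic-theoretic machinery; in the present low-characteristic setting the analogous structural theory must be developed combinatorially for non-classical polynomials, with quantitative bounds depending only on $\delta$, $s$ and $\F$ and not on $\dim V$. Ensuring that the equidistribution and rank-regularity statements are strong enough to support both the patching of the family $\{\ader_h Q\}_{h \in H}$ and the subsequent lift back to $Q$, while losing only the factor of $p-1$ in the degree that the exact roots property allows, is the principal technical difficulty.
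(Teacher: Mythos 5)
Your proposal is not the route the paper takes, and as sketched it has a genuine gap precisely at the step you flag as ``the principal technical difficulty.'' Differentiating $Q$, applying $\GIP(s-1)$ to the derivatives $\ader_h Q$ on a positive-density set $H$, and then trying to regularize and ``integrate'' back up is, in essence, the scheme used in \cite[Proposition 6.1]{finrat} in the high-characteristic case, and it is also the shape of the ergodic-theoretic argument of \cite{tao-ziegler}. The paper is written specifically because this scheme breaks in low characteristic. Two concrete problems with your version: first, the family $\{P_{h,i}\}_{h \in H}$ has cardinality $\gg |V|$, so it is not something any regularity lemma can process into a \emph{bounded} factor unless you first exploit the algebraic dependence of $\ader_h Q$ on $h$ (it is affine in $h$ up to lower order); your sketch does not do this, and doing it carefully is most of the work in \cite{finrat}. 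Second, you invoke Lemma \ref{polybasic}(v) to supply ``antiderivatives,'' but that lemma produces a $p$-th root $R$ with $pR=P$ at the cost of $p-1$ in degree; it says nothing about solving $\ader_h R = P_h$ with a one-degree gain, which is what your integration step actually needs. Indeed Appendix \ref{root-counter} shows that even $p$-th roots cannot be taken degree-efficiently in the ergodic formulation, which is why the cocycle/coboundary route you gesture at is a dead end here. You also misstate Theorem \ref{gis2}: it gives correlation with a phase $e(P)$ where $P$ has degree $\leq d(s,p)$, which may exceed $s$; it does not directly produce degree-$\leq s$ data, and in the paper it is used only to pass from $\GIP(s)$ to $\GI(s)$, not to prove $\GIP(s)$.

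The paper's actual argument avoids the patching problem entirely by a different induction on $s$. Given $Q$ of degree $\leq s+1$ and bounded analytic rank, one multiplies by $p$ to drop the degree to $\leq s-p+2$ while preserving bounded analytic rank (Theorem \ref{ailp}, via the multidimensional Szemer\'edi theorem), applies the inductive hypothesis $\GIP(s-p+1)$ to conclude $pQ$ has bounded rank, uses the exact-roots machinery (Theorem \ref{er}, Sections \ref{regsec}--\ref{erpsec}) to build a bounded-rank $Q'$ with $pQ'=pQ$ on a bounded-index subspace, and reduces to the classical case $Q-Q'$, which is handled by an explicit structure theorem for biased classical symmetric multilinear forms (Theorem \ref{GICSM}, using Kaufman--Lovett and the concatenation/symmetric-power calculus). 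The exact-roots lemma is thus used to take $p$-th roots of a \emph{single} bounded-rank polynomial, losing exactly $p-1$ in degree, rather than to antidifferentiate a family of derivatives -- a use for which the degree bookkeeping actually closes.
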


\begin{remark} This type of conjecture was introduced by Bogdanov and Viola \cite{bv}.  The case $\GIP(0)$ is trivial, and the case $\GIP(1)$ follows from Fourier analysis.  In the high characteristic case $p > s+1$, the result was established in \cite[Proposition 6.1]{finrat}.  
\end{remark}

Our main technical theorem is then

\begin{theorem}\label{main-p}  $\GIP(s)$ is true for all choices of $\F$ and $s$.
\end{theorem}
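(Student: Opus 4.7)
The plan is to induct on $s \geq 0$. The cases $s = 0$ and $s = 1$ are handled trivially and via Fourier analysis, respectively, as noted in the remark following Conjecture \ref{poly-cut}. For the inductive step, suppose $\GIP(s')$ holds for all $s' < s$ and let $Q \in \Poly_{\leq s+1}(V \to \T)$ with $\|e(Q)\|_{U^{s+1}(V)} \geq \delta$; equivalently, $\arank_s(Q) \leq M$ for some $M = M(\delta, s)$.

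The first move is to pass to derivatives. From the Cauchy--Schwarz--Gowers expansion
$$\|e(Q)\|_{U^{s+1}(V)}^{2^{s+1}} = \E_{h \in V} \|e(\ader_h Q)\|_{U^s(V)}^{2^s},$$
and a pigeonhole argument, a proportion $\geq \delta^{2^{s+1}}/2$ of shifts $h \in V$ satisfy $\|e(\ader_h Q)\|_{U^s(V)} \geq \delta'$ for a suitable $\delta' = \delta'(\delta, s) > 0$. Since each derivative $\ader_h Q$ lies in $\Poly_{\leq s}(V \to \T)$, the inductive hypothesis $\GIP(s-1)$ applies and produces, for each such ``good'' $h$, a family of at most $K' = K'(\delta', s-1, \F)$ polynomials of degree $\leq s-1$ of which $\ader_h Q$ is a function.

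The main obstacle, which I expect to be the technical heart of the proof, is to assemble this $h$-varying family of decompositions into a single \emph{uniform} collection $P_1, \ldots, P_K \in \Poly_{\leq s}(V \to \T)$ such that $Q$ is a function of the $P_i$. The natural strategy is an iterative regularization: maintain a polynomial factor $\B$ generated by finitely many degree $\leq s$ polynomials, and at each stage test whether $e(Q)$ is measurable with respect to $\B$; if not, the failure of measurability ought to yield, via the weak form of the inverse conjecture (Theorem \ref{gis2}) established in \cite{berg} applied to an appropriate conditional expectation, a new degree $\leq s$ polynomial to adjoin to $\B$. The convergence of this process is secured by an energy-increment bound, which in turn rests on a quantitative equidistribution theorem for factors generated by non-classical polynomials, generalizing the corresponding results of Green and the first author \cite{finrat} and Kaufman and Lovett \cite{kauf} beyond the classical case.

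The exact roots property (Remark \ref{exact}) is critical at this step in the low-characteristic regime: it guarantees that whenever the structure of $Q$ (through its symbol $d^{s+1}Q$ and the $h$-derivative decompositions) forces the involvement of a polynomial at the boundary of degree $s$, we can always find a representative with the correct degree bound rather than one inflated by a factor related to $p$. This feature—absent in the ergodic setting—is precisely what allows the output to involve $\Poly_{\leq s}$ rather than $\Poly_{\leq s + O_p(1)}$. At termination, the cardinality $K$ of the collection $P_1, \ldots, P_K$ depends only on $\delta, s, \F$, yielding the quantitative bound required by $\GIP(s)$.
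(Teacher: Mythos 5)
Your proposal takes a genuinely different route from the paper's, and it has a gap that I don't believe can be closed along the lines you sketch.

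The strategy you describe — pass to derivatives $\ader_h Q$, invoke $\GIP(s-1)$ on each, and then try to assemble an $h$-varying family of decompositions into a single uniform factor for $Q$ via an energy-increment iteration — does not close the loop. The assembly step you flag as ``the technical heart'' is essentially the entire difficulty of $\GIP(s)$, and your proposed mechanism for it does not preserve the crucial degree bound. You suggest that a failure of measurability of $e(Q)$ with respect to a factor $\B$ of degree $\leq s$ polynomials should, via Theorem~\ref{gis2}, produce a \emph{new degree $\leq s$} polynomial to adjoin. But Theorem~\ref{gis2} is precisely the \emph{weak} inverse theorem: it produces a correlating polynomial of degree $\leq d(s,p)$, where $d(s,p)$ is an unspecified constant that may be much larger than $s$. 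An iteration built on Theorem~\ref{gis2} would produce a factor consisting of polynomials of degree up to $d(s,p)$, not $\leq s$, which is exactly the statement you are trying to improve. Invoking the exact roots property ``at the boundary'' does not repair this: the exact roots lemma (Lemma~\ref{polybasic}(v)) only says that a polynomial of degree $\leq d$ has a $p$-th root of degree $\leq d+p-1$; it provides no way to lower the degree of an already-constructed polynomial of degree $> s$ down to $\leq s$. Separately, even setting aside the degree issue, the jump from ``for many $h$, $\ader_h Q$ is measurable with respect to a factor depending on $h$'' to ``$Q$ is measurable with respect to a single factor'' is a genuine cohomological lifting problem that your outline does not address.

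The paper sidesteps this assembly problem entirely by decomposing $Q$ along a completely different axis. After passing to the ultralimit formulation (Theorem~\ref{main-q}), the proof splits into three subclaims: (i) multiplication by $p$ sends bounded analytic rank of degree $\leq s+1$ to bounded analytic rank of degree $\leq s-p+2$ (Theorem~\ref{ailp}, proved via the multidimensional Szemer\'edi theorem); (ii) division by $p$ preserves bounded rank in the exact degree range (Theorem~\ref{er}, where the exact-roots machinery is actually deployed, in combination with a regularity lemma for non-classical factors and a detailed analysis of polynomial maps on $\Z^m$ with weighted filtrations); and (iii) a classical bounded-analytic-rank polynomial has a bounded-rank representative with the same top-order derivative (Theorem~\ref{GICSM}, handled via classical symmetric multilinear forms, concatenation, and symmetric powers $\Sym^m$). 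Given $P$ of bounded analytic rank, one multiplies by $p$, applies the induction hypothesis $\GIP(s-p+1)$ to conclude $pP$ has bounded rank, takes an exact root $P'$ of bounded rank with $pP'=pP$, observes that $P-P'$ is classical with bounded analytic rank, and applies (iii). Nothing in this chain involves decomposing the derivatives $\ader_h Q$ and assembling the result, and no appeal to Theorem~\ref{gis2} appears in the proof of Theorem~\ref{main-p} at all — Theorem~\ref{gis2} is used only later, in Section~\ref{deduce}, to deduce $\GI(s)$ from $\GIP(s)$.
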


Theorem \ref{main} can be deduced from Theorem \ref{main-p} and the ``weak'' form of the inverse conjecture for the Gowers norm established in \cite{tao-ziegler}, \cite{berg}, together with an argument from \cite{gowers-wolf-3}; we give this (standard) argument in Section \ref{deduce}.

We remark that a different approach using ultrafilters to the structural theory of the Gowers norms is in the process of being carried out in \cite{szeg}, \cite{szeg-finite}, \cite{cs}.

We are indebted to the anonymous referee for a careful reading of the paper and many useful suggestions.

\section{An outline of the argument}\label{outline}

In this section we give an informal outline of how we will prove the inverse conjecture for polynomials $\GIP(s)$ (and hence the full inverse conjecture $\GI(s)$), suppressing many technical details (for instance, we will leave terms such as ``bounded'' vague for now, but such concepts will be made rigorous shortly with the assistance of nonstandard analysis).  We will also identify $\F$ with $\iota(\F) = \frac{1}{p}\Z/\Z$ for the purposes of this discussion.

As with many other arguments in this subject, we will induct on the degree parameter $s$, and assume that the conjecture $\GIP(s')$ has already been proven for all $s' < s$.  For sake of exposition we will work in the low characteristic case $p \leq s$, which is the hardest case.

Informally, the induction hypothesis allows one to obtain a completely satisfactory equidistribution theory for all (non-classical) polynomials of degree less than or equal to $s$, at least in principle.  For instance, if $P: V \to \T$ is a polynomial of degree $s'+1$ for some $s'<s$ that is of high rank, one can use $\GIP(s')$ to describe the equidistribution of the tuples $(P(x+h_1 \omega_1 + \ldots + h_d \omega_d ))_{\omega_1,\ldots,\omega_d \in \{0,1\}} \in \T^{\{0,1\}^d}$ for fixed $d$, as $x,h_1,\ldots,h_d$ ranges uniformly over $V$.  The precise equidistribution results we will need are rather technical to state, and will be formalised in Lemma \ref{symprod} and Lemma \ref{equil}.

The conjecture $\GIP(s)$ asserts, informally, that any non-classical polynomial of degree $\leq s+1$ of bounded analytic rank, also has bounded rank.  This conjecture can be established from three sub-claims, which we informally state as follows:

\begin{enumerate}
\item (Multiplication by $p$) If $P$ is a non-classical polynomial of degree $\leq s+1$ of bounded analytic rank, then $pP$ is a non-classical polynomial of degree $\leq s-p+2$ of bounded analytic rank.
\item (Division by $p$) If $Q$ is a non-classical polynomial of degree $\leq s-p+2$ of bounded rank, then there exists a non-classical polynomial $P'$ of degree $\leq s+1$ of bounded rank such that $pP' = Q$.
\item (Classical case) If $P: V \to \F$ is a classical polynomial of degree $\leq s+1$ of bounded analytic rank, then there exists a classical polynomial $Q:V \to \F$ of degree $\leq s+1$ and bounded rank such that $d^{s+1} P = d^{s+1} Q$, where the top order derivative $d^{s+1} P: V^{s+1} \to \F$ of $P$ was defined in \eqref{dsp-def}.  (In other words, $P$ and $Q$ differ by a polynomial of degree strictly less than $s+1$.) 
\end{enumerate}

Indeed, assume these three claims hold.  Then if $P$ has bounded analytic rank, then by Claim (i), $pP$ has bounded analytic rank, and hence is of bounded rank by induction hypothesis.  By Claim (ii), we can thus find a polynomial $P'$ of bounded rank (and therefore also of bounded analytic rank) such that $pP' = pP$, so that $P-P'$ is classical while still having bounded analytic rank.  By Claim (iii), $P-P'$ differs from a classical bounded rank polynomial $Q$ by a polynomial of degree strictly less than 
$s+1$, and so $P$ has bounded rank also, as required.

It remains to verify the three claims.  After performing a Fourier expansion, Claim (i) will be an easy consequence of the multidimensional Szemer\'edi theorem for vector spaces, first proven by Bergelson, Leibman, and McCutcheon \cite{blm}; we will establish it in Section \ref{pmult-sec}.

We defer discussion of Claim (ii) for now, and move on to Claim (iii), which is somewhat easier to establish.  The strategy here is to obtain as much structural information on the expression $d^{s+1} P$ as possible, so that the bounded rank polynomial $Q$ can be constructed more or less explicitly.  It is easy to see that $d^{s+1} P: V^{s+1} \to \F$ is a symmetric multilinear form.  The fact that $P$ is classical gives an additional cancellation property, namely that $d^{s+1} P(h_1,\ldots,h_{s+1})$ necessarily vanishes whenever at least $p$ of the $h_1,\ldots,h_{s+1}$ are equal to each other.  For instance, if $p=2$ and $s+1=4$, we have
$$ d^{s+1} P(a,a,b,c) = 0,$$
as can be deduced from the identity
$$ \ader_a \ader_a P = (T_{2a} - 2T_a + 1) P = 2P - T_a (2P) = 0$$
since one has $2P=0$ when $P$ is classical.  We will refer to symmetric multilinear forms with this cancellation property as \emph{classical symmetric multilinear forms}.

The fact that $P$ has bounded analytic rank implies (from \eqref{arank-eq}) that the form $d^{s+1} P$ is \emph{biased}, in the sense that $\E_{h_1,\ldots,h_{s+1} \in V} e(d^{s+1} P(h_1,\ldots,h_{s+1}))$ is large.  To exploit this, we use a general equidistribution result of Kaufman and Lovett\cite{kauf} to conclude that $d^{s+1} P$ must be expressible in terms of lower degree classical symmetric multilinear forms.  We then apply a standard ``regularity lemma'' (analogous to those in \cite{finrat}, \cite{kauf}) to make these forms ``independent'' of each other, which makes them jointly equidistributed in a certain technical sense (see Lemma \ref{joint} for a precise statement).  With this equidistribution, one can control the precise manner in which $d^{s+1} P$ is a function of the lower degree forms, and we will end up showing that $d^{s+1} P$ is a certain symmetrised combination of such forms (on a bounded index subspace).  This will be made more precise in Section \ref{concat-sec}, but a typical example occurs when $s+1=4$, in which the expression
$$ d^4 P(a,b,c,d)$$
will be expressed as a linear combination of terms such as
\begin{equation}\label{bab}
 B(a,b) B(c,d) + B(a,c) B(b,d) + B(a,d) B(b,c)
\end{equation}
where $B: V \times V \to \F$ is a (classical) symmetric bilinear form.

To conclude Claim (iii), we thus need to rewrite expressions such as \eqref{bab} in the form $d^{s+1} Q$, where $Q$ is of bounded rank.  For sake of argument let us work specifically with the example \eqref{bab}.  As $B$ is itself classical, it can be expressed as $B = d^2 R$ for some classical quadratic polynomial $R: V \to \F$.  In the high characteristic case $p>2$, one can then proceed simply by setting $Q := R^2/2!$, as the claim can be verified from the discrete Leibniz rule \eqref{leibniz}.  However one cannot proceed so easily in the low characteristic case $p=2$, as one can no longer divide by $2!$ in this case.  Instead, we lift the polynomial $R$ (which takes values in $\F \equiv \Z/2\Z$) to the larger cyclic group $\Z/4\Z$, obtaining a cubic polynomial $\tilde R: V \to \Z/4\Z$ which projects back down to $R$ in the sense that $\tilde R = R \mod 2$.  We then set $Q := \binom{\tilde R}{2} \mod 2$ (instead of $R^2/2!$); the point is that the sequence $n \mapsto \binom{n}{2} \mod 2$ is periodic with period $4$ and so this expression is well defined.  One can show that $Q$ is a quartic polynomial, which is clearly of bounded rank as it depends only on the cubic polynomial $\tilde R$, and one can also compute that
$$ d^4 Q(a,b,c,d) =  B(a,b) B(c,d) + B(a,c) B(b,d) + B(a,d) B(b,c)$$
which gives the desired representation of \eqref{bab}.  The same arguments work in more general degrees and characteristics to give Claim (iii) in general; see Section \ref{concat-sec}.  Note that these constructions rely heavily on the classical nature of $R$, and hence of $B$ and $d^4 P$, and ultimately exploit the multiplicative structure of the classical range $\F$ that is not present in the non-classical range $\R/\Z$.  

Finally, we return to Claim (ii).  Let $Q$ be a non-classical polynomial of degree $\leq s-p+2$ of bounded rank, thus $Q$ is some combination of polynomials of degree strictly less than $s-p+2$.  By a ``regularity lemma'' argument, it will turn out to be possible to express $Q$ as the combination of ``independent'' polynomials $P_1,\ldots,P_k$ of various degrees between $1$ and $s-p+2$, with each $P_j$ taking values in some cyclic group $\Z/p^{J_j}\Z$, thus
$$ Q = F( P_1,\ldots, P_k )$$
for some function $F: (\Z/p^{J_1}\Z) \times \ldots \times (\Z/p^{J_k}\Z) \to \T$.  The precise nature of this independence is somewhat technical to state, but it implies good joint equidistribution properties on the $P_1,\ldots,P_k$ (see Proposition \ref{equil} for a formal statement of this).  The fact that $Q$ has degree $\leq s-p+2$ will imply that a suitable ``weighted degree'' of $F$ is also at most $\leq s-p+2$.

For technical reasons, the polynomials $P_i$ of degree $1$ will cause some difficulty (their $p^{th}$ roots will not have high rank).  But one can trivially eliminate all such polynomials by passing to a finite index subspace on which these polynomials are constant.  As such, one can easily reduce to the case where all polynomials $P_i$ have degree at least $2$.

It would be convenient if we could then find another function $G: (\Z/p^{J_1}\Z) \times \ldots \times (\Z/p^{J_k}\Z) \to \T$ of ``weighted degree'' $\leq s+1$ such that $pG = F$, as the function $P' := G(P_1,\ldots,P_k)$ would then obey the necessary requirements for Claim 2.  Unfortunately, this claim turns out to be false in general.  However, what one can do is first use the exact roots property from Lemma \ref{polybasic}(v) to obtain a $p^{th}$ root $P'_j$ for each $P_j$ with the ``right'' degree, thus $pP'_j = P_j$.  Because the $P_j$ have degree at least $2$, it turns out that the independence properties of the $P_j$ are inherited by the $P'_j$.  One can then rewrite $Q$ as $Q = F'(P'_1,\ldots,P'_k)$ where $F': (\Z/p^{J_1+1}\Z) \times \ldots \times (\Z/p^{J_k+1}\Z) \to \T$ is the pullback of $F$.  By analysing the concept of weighted degree for periodic functions on $\Z^k$ (and in particular by breaking such functions down into multinomials), we will be able to find a function $G': (\Z/p^{J_1+1}\Z) \times \ldots \times (\Z/p^{J_k+1}\Z) \to \T$ of the right weighted degree such that $pG' = P$, and then the function $P' := G'(P'_1,\ldots,P'_k)$ will obey the properties required for Claim (ii).  

The detailed proof of Claim (ii) will occupy Sections \ref{regsec}-\ref{erpsec}.

\section{Taking ultralimits}\label{ultra}

To prove Theorem \ref{main-p} it will be convenient to pass from the finitary setting to an infinitary one, in order to eliminate a profusion of epsilons, deltas, and growth functions (such as the growth functions ${\mathcal F}$ that appear for instance in \cite{finrat}, \cite{kauf}); it also allows us to conveniently make rigorous such phrases as ``the polynomials $P_1,\ldots,P_m$ are linearly independent modulo bounded rank errors'', which would otherwise only make sense heuristically or would need to be quantified with additional parameters.  In \cite{tao-ziegler} the Furstenberg correspondence principle was used to convert the inverse conjecture to a statement in an infinitary branch of mathematics, namely ergodic theory.  Unfortunately, the ergodic theory framework has a drawback in the low characteristic setting, namely that there does not appear to be an easy way to take roots of polynomials in this setting in a degree-efficient manner (see Appendix \ref{root-counter}).  To overcome this technical obstacle we shall take a different infinitary formulation of the problem, namely an \emph{ultralimit} (or \emph{nonstandard analysis}) formulation.  (Such formulations have also appeared in other recent work on the inverse conjecture \cite{gtz}, \cite{szeg}.)

The basic machinery of ultralimits and nonstandard analysis is recalled in Appendix \ref{nsa-app}, as is the asymptotic notation (such as $X \ll Y$ or $X=O(Y)$) associated with this machinery.  

We now translate all of the terminology used for Theorem \ref{main-p} to the ultralimit setting.

Let $V$ be a non-empty limit finite set (i.e. an ultralimit $V = \lim_{\n \to \alpha} V_\n$ of standard non-empty finite sets $V_\n$) and let $f: V \to \ultra \C$ be a limit function on $V$ (thus $f$ is an ultralimit $f = \lim_{\n \to \alpha} f_\n$ of standard functions $f_n: V_\n \to \C$).  Then we can define the expectation $\E_{x \in V} f(x)$ of $f$  on $V$ in the usual fashion by the formula
$$ \E_{x \in V} f(x) := \lim_{\n \to \alpha} \E_{x_\n \in V_\n} f_\n(x_\n).$$
This will be a limit complex number.  If further $V$ is a limit finite-dimensional vector space (i.e. each $V_n$ is a standard finite-dimensional space), and $d \geq 1$ is a standard natural number, then we can similarly define the uniformity norm
$$ \|f\|_{U^d(V)} := \lim_{\n \to \alpha} \|f_\n\|_{U^d(V_\n)}$$
which will be a limit non-negative real number.

Given a limit finite-dimensional space $V$ and a standard integer $d \in \Z$, we let $\Poly_{\leq d}(V \to \ultra \T)$ denote the space of all limit polynomials $P$ of degree $\leq d$ from $V$ to $\ultra \T$, i.e. all ultralimits $P = \lim_{\n \to \alpha} P_\n$ of polynomials $P_\n \in \Poly_{\leq d}(V_\n \to \T)$.  We define $\Poly_{\leq d}(V \to \F)$ and $\Poly_{\leq d}(V \to \iota(\F))$ for limit finite-dimensional $V$ similarly.  

We make the simple but important remark that all the claims in Lemma \ref{polybasic}, that were established for finite-dimensional vector spaces, extend to the limit finite-dimensional setting (replacing all operations by their limit counterparts) by taking ultralimits.  In particular, the surjectivity claim in Lemma \ref{polybasic}(v) extends to the limit finite-dimensional setting.

If $s \geq 0$ is standard and $P \in \Poly_{\leq s+1}(V \to \ultra \T)$, we can define the rank, $\rank(P) = \rank_s(P)$, and analytic rank, $\arank(P) = \arank_s(P)$, which are now non-negative limit integers and non-negative limit real numbers respectively.  We let $\Poly_{\leq s+1, \BR}(V \to \ultra \T)$ and $\Poly_{\leq s+1, \BAR}(V \to \ultra \T)$ denote the limit polynomials $P \in \Poly_{\leq s+1}(V \to \ultra \T)$ of  degree $\le s+1$ that are of bounded rank  and bounded analytic rank respectively.  From Lemma \ref{basic-rank} we see that $\Poly_{\leq s+1, \BR}(V \to \ultra \T)$ and $\Poly_{\leq s+1, \BAR}(V \to \ultra \T)$ are vector spaces with the inclusions
$$ \Poly_{\leq s}(V \to \ultra \T) \leq \Poly_{\leq s+1, \BR}(V \to \ultra \T) \leq \Poly_{\leq s+1, \BAR}(V \to \ultra \T) \leq \Poly_{\leq s+1}(V \to \ultra \T).$$
We similarly define $\Poly_{\leq s+1, \BR}(V \to \F)$ and $\Poly_{\leq s+1, \BAR}(V \to \F)$.

We can now give the ultralimit formulation of $\GIP(s)$:

\begin{theorem}[Ultralimit equivalence of $\GIP(s)$]\label{main-q} Let $s \geq 0$ be standard.  Then $\GIP(s)$ holds if and only if, for every limit finite-dimensional vector space $V$, $\Poly_{\leq s+1, \BR}(V \to \ultra \T) = \Poly_{\leq s+1, \BAR}(V \to \ultra \T)$ (i.e. bounded analytic rank and bounded rank are equivalent).
\end{theorem}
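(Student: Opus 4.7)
The plan is to prove both directions by the standard ultralimit/compactness transfer, treating this proposition as a routine translation of the $\forall\delta\,\exists K$ quantifier structure of $\GIP(s)$ into the equality of two nonstandard subspaces. The identity $\arank(P) = -\log_p \|e(P)\|_{U^{s+1}(V)}^{1/2^{s+1}}$ (from \eqref{arank-eq}) makes the conditions $\arank(P) = O(1)$ and $\|e(P)\|_{U^{s+1}(V)} \geq \delta$ (for some standard $\delta > 0$) equivalent, so $\BAR$ corresponds to a uniform lower bound on the Gowers norm, while $\BR$ corresponds to the uniform existence of a representation $P = F(Q_1,\ldots,Q_K)$ with $K = O(1)$ standard and $Q_j \in \Poly_{\leq s}(V \to \ultra \T)$.

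For the forward direction, assume $\GIP(s)$ and take $P = \lim_{\n\to\alpha} P_\n \in \Poly_{\leq s+1,\BAR}(V \to \ultra \T)$ with $V = \lim_{\n\to\alpha} V_\n$. The hypothesis produces a standard $\delta > 0$ with $\|e(P)\|_{U^{s+1}(V)} \geq \delta$, which by the definition of the ultralimit of real numbers means $\|e(P_\n)\|_{U^{s+1}(V_\n)} \geq \delta$ for $\n$ in a set in $\alpha$. For each such $\n$, $\GIP(s)$ supplies a standard $K = K_{\delta,s,\F}$, polynomials $Q_{1,\n},\ldots,Q_{K,\n} \in \Poly_{\leq s}(V_\n \to \T)$, and a function $F_\n \colon \T^K \to \T$ with $P_\n = F_\n(Q_{1,\n},\ldots,Q_{K,\n})$. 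Forming the ultralimits $Q_j := \lim_{\n\to\alpha} Q_{j,\n}$ and $F := \lim_{\n\to\alpha} F_\n$ yields $Q_j \in \Poly_{\leq s}(V \to \ultra \T)$ and the pointwise identity $P = F(Q_1,\ldots,Q_K)$ on $V$, so $\rank(P) \leq K$ and $P \in \Poly_{\leq s+1,\BR}(V \to \ultra \T)$.

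For the reverse direction, argue by contrapositive. If $\GIP(s)$ fails then some standard $\delta > 0$ witnesses failure: for every standard $K \in \N$ one can choose a standard finite-dimensional vector space $V_K$ and $Q_K \in \Poly_{\leq s+1}(V_K \to \T)$ with $\|e(Q_K)\|_{U^{s+1}(V_K)} \geq \delta$ but $\rank(Q_K) > K$. Indexing along $\n \in \N$ and taking ultralimits with any non-principal $\alpha$ produces $V := \lim_{\n\to\alpha} V_\n$ and $Q := \lim_{\n\to\alpha} Q_\n$. The Gowers norm lower bound passes to the limit, giving $\arank(Q) \leq -\log_p(\delta^{1/2^{s+1}}) = O(1)$, so $Q \in \Poly_{\leq s+1,\BAR}(V \to \ultra \T)$. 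On the other hand, for each fixed standard $K$ the set $\{\n : \rank(Q_\n) > K\}$ is cofinite hence lies in $\alpha$, so $\rank(Q) > K$; since this holds for every standard $K$, $Q \notin \Poly_{\leq s+1,\BR}(V \to \ultra \T)$, contradicting the assumed equality.

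There is no substantial obstacle; the only care needed is to confirm that the property of being a function of $K$ polynomials transfers through the ultralimit. In the forward direction this is the observation that the arbitrary functions $F_\n \colon \T^K \to \T$ admit an ultralimit $F \colon (\ultra\T)^K \to \ultra\T$ that respects composition with limit polynomials. In the reverse direction it is even easier, since the rank of a limit polynomial is by definition the ultralimit of the ranks of its representatives, so unboundedness of rank passes to the limit directly.
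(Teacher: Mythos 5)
Your proof is correct and matches the paper's argument essentially step for step: the forward direction transfers $\GIP(s)$ via ultralimits exactly as in the paper, and the reverse direction is the same compactness argument, just phrased marginally more directly (you observe that $\rank(Q) > K$ for every standard $K$ by the definition of limit rank, whereas the paper re-expands the bounded-rank representation $P = F(Q_1,\ldots,Q_K)$ back to finite level to derive the contradiction — a cosmetic difference only).
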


\begin{proof}
We first assume $\GIP(s)$ and verify that $\Poly_{\leq s+1, \BR}(V \to \ultra \T) = \Poly_{\leq s+1, \BAR}(V \to \ultra \T)$.  Accordingly, we let $P \in \Poly_{\leq s+1, \BAR}(V \to \ultra \T)$ and need to show that $P$ has bounded rank.

By construction, we can write $V = \prod_{\n \to \alpha} V_\n$ as the ultraproduct of finite-dimensional spaces $V_\n$, and similarly write $P = \lim_{\n \to \alpha} P_\n$ as the ultralimit of polynomials $P_\n: V_\n \to \T$.  Since $P$ has bounded analytic rank, the $P_\n$ have bounded analytic rank uniformly in $\n$ (at least for $\n$ sufficiently close to $\alpha$).  Applying $\GIP(s)$, we conclude that the $P_\n$ have bounded rank uniformly in $\n$, thus we can find polynomials $Q_{\n,1},\ldots,Q_{\n,K}: V_\n \to \T$ of degree $\leq s$ and a function $F_\n: \T^K \to \T$ such that $P_\n = F_\n(Q_{\n,1},\ldots,Q_{\n,K})$.  Writing $Q_i := \lim_{\n \to \alpha} Q_{\n,i}$ and $F := \lim_{\n \to \alpha} F_\n$, we conclude that $Q_1,\ldots,Q_K: V \to \ultra \T$ are limit polynomials of degree $\leq s$ and $P = F(Q_1,\ldots,Q_K)$, and so $P$ has bounded rank as desired.

Conversely, suppose that $\Poly_{\leq s+1, \BR}(V \to \ultra \T) = \Poly_{\leq s+1, \BAR}(V \to \ultra \T)$.  We assume for contradiction that $\GIP(s)$ failed.  Thus, there exists a sequence $V_\n$ of finite-dimensional vector spaces and polynomials $P_\n: V_\n \to \T$ of degree $\leq s+1$ whose analytic rank is bounded uniformly in $\n$, but whose rank goes to infinity as $\n \to \infty$.  Setting $V := \prod_{\n \to \alpha} V_\n$ and $P := \lim_{\n \to \alpha} P_\n$, we see that $P: V \to \ultra \T$ is a limit polynomial of degree $\leq s+1$ of bounded analytic rank, and hence of bounded rank by hypothesis, thus $P = F(Q_1,\ldots,Q_K)$ for some bounded $m$, some limit polynomials $Q_1,\ldots,Q_K: V \to \ultra \T$ of degree $\leq s$, and some function $F: \ultra \T^K \to \ultra \T$.  Writing $Q_i := \lim_{\n \to \alpha} Q_{\n,i}$ and $F := \lim_{\n \to \alpha} F_\n$, we conclude that $P_\n = F_\n(Q_{\n,1},\ldots,Q_{\n,m})$ for all $\n$ sufficiently close to $\alpha$, and thus the rank of $P_\n$ is bounded uniformly for such $\n$, which gives the desired contradiction.
\end{proof}

A similar argument (which we omit) gives the ultralimit formulation of $\GI(s)$:

\begin{theorem}\label{gis} Let $s \geq 0$ be standard.  Then $\GI(s)$ holds if and only if, for every limit finite-dimensional vector space $V$ and every bounded limit function $f: V \to \ultra \C$ (where ``bounded'' means that $\sup_{x \in V} |f(x)|$ is bounded), $\|f\|_{U^{s+1}(V)} \gg 1$ if and only if there exists $P \in \Poly_{\leq s}(V \to \ultra \T)$ such that $|\E_{x \in V} f(x) e(-P(x))| \gg 1$. 
\end{theorem}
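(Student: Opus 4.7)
The plan is to mirror the correspondence argument of Theorem \ref{main-q}, exploiting the fact that being ``bounded from below by a standard positive constant'' (the meaning of $\gg 1$) transfers cleanly between the ultraproduct and its factors via \L o\'s's theorem. Writing $V = \prod_{\n \to \alpha} V_\n$ and $f = \lim_{\n \to \alpha} f_\n$ for a bounded limit function, the $U^{s+1}$ norm, the expectation inner product, and the space $\Poly_{\leq s}(V \to \ultra \T)$ are all defined as ultralimits of their finitary counterparts, so almost everything reduces to quoting the relevant transfer principle.

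For the forward direction, suppose $\GI(s)$ holds. First, if there exists $P \in \Poly_{\leq s}(V \to \ultra \T)$ with $|\E_{x \in V} f(x) e(-P(x))| \gg 1$, then since the $U^{s+1}$ norm is invariant under multiplication by phases $e(-P)$ with $\deg P \leq s$ (this is already used in the proof of Lemma \ref{basic-rank}(iii)) and dominates the mean via the monotonicity $\|\cdot\|_{U^1} \leq \|\cdot\|_{U^{s+1}}$, I get $\|f\|_{U^{s+1}(V)} = \|f e(-P)\|_{U^{s+1}(V)} \geq |\E f e(-P)| \gg 1$. Conversely, if $\|f\|_{U^{s+1}(V)} \geq \delta$ for some standard $\delta > 0$, then by \L o\'s's theorem $\|f_\n\|_{U^{s+1}(V_\n)} \geq \delta/2$ for $\n$ in a neighborhood of $\alpha$; applying $\GI(s)$ with parameter $\delta/2$ produces standard polynomials $P_\n \in \Poly_{\leq s}(V_\n \to \T)$ with $|\E f_\n e(-P_\n)| \geq \eps := \eps_{\delta/2, s, \F}$. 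Taking the ultralimit $P := \lim P_\n$ yields an element of $\Poly_{\leq s}(V \to \ultra \T)$ with $|\E f e(-P)| \geq \eps \gg 1$.

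For the reverse direction, assume the ultralimit statement and suppose, for contradiction, that $\GI(s)$ fails. Then there is a standard $\delta > 0$ and sequences $V_\n$ of finite-dimensional vector spaces and $1$-bounded functions $f_\n : V_\n \to \C$ with $\|f_\n\|_{U^{s+1}(V_\n)} \geq \delta$ but
$$\sup_{P \in \Poly_{\leq s}(V_\n \to \T)} |\E_{x \in V_\n} f_\n(x) e(-P(x))| \leq 1/\n.$$
Form $V := \prod_{\n \to \alpha} V_\n$ and $f := \lim_{\n \to \alpha} f_\n$; then $f$ is a bounded limit function with $\|f\|_{U^{s+1}(V)} \geq \delta \gg 1$. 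By hypothesis, there exists $P \in \Poly_{\leq s}(V \to \ultra \T)$ and a standard $c > 0$ with $|\E f e(-P)| \geq c$. Writing $P = \lim P_\n$ with $P_\n \in \Poly_{\leq s}(V_\n \to \T)$, \L o\'s's theorem gives $|\E f_\n e(-P_\n)| \geq c/2$ for all $\n$ in a neighborhood of $\alpha$, contradicting the assumed decay to $0$.

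The argument is essentially routine once the nonstandard dictionary of Appendix \ref{nsa-app} is in hand; there is no real obstacle. The only small point requiring care is to confirm that the correspondence respects all the ingredients simultaneously --- boundedness of $f$, the degree bound on $P$, and the two ``$\gg 1$'' lower bounds --- but each of these is either a first-order property preserved by \L o\'s or (in the case of $\gg 1$) precisely the definition of the ultralimit asymptotic notation.
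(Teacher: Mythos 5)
Your argument is correct and follows the same ultralimit transfer template as the paper's proof of Theorem~\ref{main-q}, which is precisely the ``similar argument (which we omit)'' the authors allude to; the easy direction of the inner equivalence (correlation implies large $U^{s+1}$ norm) correctly uses only Lemma~\ref{gow}(ii), (vi) and needs no input from $\GI(s)$. The one small point left implicit is that $\GI(s)$ as stated requires $1$-bounded functions whereas your $f_\n$ are merely $O(1)$-bounded, but this is harmless after dividing by a standard constant before invoking the conjecture.
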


It remains to establish that $\Poly_{\leq s+1, \BR}(V \to \ultra \T) = \Poly_{\leq s+1, \BAR}(V \to \ultra \T)$.  This is the purpose of the remaining sections of the paper.

\section{Splitting into three subclaims}

To prove the claim $\Poly_{\leq s+1, \BR}(V \to \ultra \T) = \Poly_{\leq s+1, \BAR}(V \to \ultra \T)$ (and hence $\GIP(s)$), we split this theorem into three subclaims as outlined in Section \ref{outline}.  Claim (i) is easily formalised:

\begin{theorem}[Multiplication by $p$]\label{ailp}  Let $V$ be a limit finite-dimensional vector space.  If $k>p$ is a standard integer, then the map $P \mapsto pP$ maps $\Poly_{\leq k,BAR}(V \to \ultra \T)$ to $\Poly_{\leq k-p+1,BAR}(V \to \ultra \T)$.
\end{theorem}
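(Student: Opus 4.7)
The degree bound $pP\in\Poly_{\le k-p+1}(V\to\ultra\T)$ is immediate from the exact roots property (Lemma~\ref{polybasic}(v)) in the limit setting, using $k>p$. It remains to verify that $pP$ has bounded analytic rank. Via the formula \eqref{arank-eq}, this reduces to the implication
$$
\bigl|\E_{h_1,\dots,h_k\in V}\,e(d^k P(h_1,\dots,h_k))\bigr|\gg 1
\;\;\Longrightarrow\;\;
\bigl|\E_{h_1,\dots,h_{k-p+1}\in V}\,e(d^{k-p+1}(pP)(h_1,\dots,h_{k-p+1}))\bigr|\gg 1.
$$

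The plan is to bridge the two quantities via the discrete Taylor identity
$$
\sum_{j=0}^{p-1} P(x+jh_0) \;=\; pP(x) + \ader_{h_0}^{p-1}P(x) + \sum_{i=1}^{p-2}\binom{p}{i+1}\ader_{h_0}^iP(x),
$$
obtained by summing $P(x+jh_0)=\sum_i\binom{j}{i}\ader_{h_0}^iP(x)$ over $j=0,\dots,p-1$ and using the hockey-stick identity $\sum_{j=0}^{p-1}\binom{j}{i}=\binom{p}{i+1}$. Applying $\ader_{h_1}\cdots\ader_{h_{k-p+1}}$ and evaluating at $x=0$ exhibits $d^{k-p+1}(pP)(h')$ (which is independent of $h_0$ since $pP$ has degree $\le k-p+1$), where $h':=(h_1,\dots,h_{k-p+1})$, as a linear combination of the diagonal value $d^kP(h',h_0,\dots,h_0)$ of the top-order derivative of $P$ (with $h_0$ repeated $p-1$ times), the progression sum $\sum_{j=0}^{p-1}\ader_{h_1}\cdots\ader_{h_{k-p+1}}P(jh_0)$, and middle-binomial error contributions.

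Next comes the Fourier/pigeonhole reduction. By Lemma~\ref{polybasic}(vi), $d^kP$ takes values in a standard finite coset of $\ultra\T$, so pigeonholing the bias hypothesis over this coset produces a standard value $\alpha\in\ultra\T$ for which the level set
$$
S \;:=\; \{(h_1,\dots,h_k)\in V^k : d^kP(h_1,\dots,h_k)=\alpha\}
$$
has density $\gg 1$ in $V^k$. The multidimensional Szemer\'edi theorem of Bergelson, Leibman, and McCutcheon~\cite{blm} in its vector-space form, applied to $S$, then supplies a positive density of tuples $(h',h_0)\in V^{k-p+2}$ for which the diagonal configuration $(h',h_0,\dots,h_0)$ lies in $S$, i.e.\ $d^kP(h',h_0,\dots,h_0)=\alpha$. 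A cascade of further pigeonhole steps fixes the finite-valued error contributions on a dense subset $T\subset V^{k-p+2}$, whereupon the Taylor identity becomes a pointwise formula on $T$ expressing $d^{k-p+1}(pP)(h')$ in terms of $\alpha$, fixed constants, and the progression sum. Averaging over the $h_0$-fibers of $T$ and exploiting the $h_0$-independence of the left-hand side converts the density of $T$ into the desired bias $|\E_{h'}e(d^{k-p+1}(pP)(h'))|\gg 1$.

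The principal obstacle is controlling the error terms $\binom{p}{i+1}\ader_{h_1}\cdots\ader_{h_{k-p+1}}\ader_{h_0}^iP(0)$ for $1\le i\le p-2$ in the Taylor identity, which depend nontrivially on $h_0$. Because each such term also takes values in a standard finite coset of $\ultra\T$ (Lemma~\ref{polybasic}(vi) applied to the polynomial $\ader_{h_1}\cdots\ader_{h_{k-p+1}}\ader_{h_0}^iP$), the pigeonhole cascade fixes their contributions on successively dense subsets, and the robustness of the BLM theorem under passage to dense subsets allows the argument to conclude.
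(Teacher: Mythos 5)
Your discrete Taylor identity is correct, but the obstacle you flag at the end is not actually there: for $1\le i\le p-2$ the coefficient $\binom{p}{i+1}$ is a multiple of $p$, and $p\,\ader_{h_0}^i\ader_{h_1}\cdots\ader_{h_{k-p+1}}P=\ader_{h_0}^i\ader_{h_1}\cdots\ader_{h_{k-p+1}}(pP)$ consists of $i+(k-p+1)>k-p+1$ derivatives of the degree-$\leq k-p+1$ polynomial $pP$, hence vanishes identically. So after applying $\ader_{h_1}\cdots\ader_{h_{k-p+1}}$ your identity collapses to $\sum_{j=0}^{p-1}Q(jh_0)=d^{k-p+1}(pP)(h')+d^kP(h',h_0,\ldots,h_0)$ with $Q:=\ader_{h_1}\cdots\ader_{h_{k-p+1}}P$ and $h_0$ repeated $p-1$ times; the ``middle-binomial error contributions'' are zero and there is nothing to pigeonhole away there. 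The paper works instead with the even simpler identity \eqref{dkp}, which equates $-d^{k-p+1}(pP)$ with the $p$-fold diagonal of $d^kP$ and involves no progression sum at all.

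The genuine gap is your invocation of the multidimensional Szemer\'edi theorem. You assert that applying BLM to the level set $S=\{(h_1,\ldots,h_k):d^kP(h_1,\ldots,h_k)=\alpha\}$ produces a positive density of $(h',h_0)\in V^{k-p+2}$ with the diagonal point $(h',h_0,\ldots,h_0)\in S$. This is false for $p\ge 3$: the diagonal $\{(h',h_0,\ldots,h_0)\}$ is a subspace of density $|V|^{-(p-2)}=o(1)$ in $V^k$, so $S$ may be taken to be its complement, which has density $1-o(1)$ but avoids the diagonal entirely. Proposition~\ref{mst} produces cube configurations $\{(h_1+\omega_1h,\ldots,h_k+\omega_kh):\omega\in\F^k\}$ inside a dense set; it does not place points on a proper diagonal subspace. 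The paper circumvents this by applying Proposition~\ref{mst} not to a level set of $d^kP$ but to the set of $(h_1,\ldots,h_{k-1})$ for which $d^kP(h_1,\ldots,h_{k-1},\cdot)$ vanishes \emph{identically} in the last coordinate (dense by Fourier orthogonality in that coordinate), and then uses multilinearity together with the alternating sum $\sum_{\omega\in\{0,1\}^p}(-1)^{|\omega|}$ to pass from the cube configuration to the $p$-fold diagonal. That passage preserves vanishing because $0$ is stable under $\Z$-linear combinations; it would not preserve a constraint $d^kP=\alpha$ with $\alpha\neq 0$. Your plan therefore needs to be rerouted through the vanishing-identically set exactly as the paper does.
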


We prove this theorem in Section \ref{pmult-sec}.  Claim (ii) is also easily formalised, though for technical reasons (having to do with the need to eliminate all linear polynomials that arise in the regularity lemma) it is convenient to weaken the claim to a ``virtual'' version in which one only obtains roots on a bounded index subspace:

\begin{theorem}[Exact roots]\label{er}  Let $s \geq 1$ be a standard integer such that $\GIP(s')$ is true for all $0 \leq s' \leq s$.
Then for every $0 \leq s' \leq s$, every limit finite-dimensional $V$, and every $P \in \Poly_{\leq s'+1,\BR}(V \to \ultra \T)$, there exists a bounded index (limit) subspace $V'$ of $V$ and $Q \in \Poly_{\leq s'+p,\BR}(V' \to \ultra \T)$ such that $pQ=P$ on $V'$.
\end{theorem}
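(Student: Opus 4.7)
The plan is to implement Claim (ii) from Section \ref{outline}. Given $P \in \Poly_{\leq s'+1,\BR}(V \to \ultra\T)$ of bounded rank, we seek a bounded-index limit subspace $V' \subseteq V$ and $Q \in \Poly_{\leq s'+p,\BR}(V' \to \ultra\T)$ with $pQ = P$ on $V'$. The broad architecture has four stages: a regularity decomposition, a reduction to higher-degree atoms, the exact-roots lifting of the atoms, and a lift of the ``combining function'' through multiplication by $p$.

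First, invoking the inductive hypothesis $\GIP(s')$ for all $s' \leq s$, apply a regularity lemma in the spirit of \cite{finrat}, \cite{kauf} (but in the non-classical ultralimit setting) to represent $P = F(P_1, \ldots, P_k)$, where the $P_i$ are jointly ``independent'' limit polynomials of degrees $d_i \in \{1, \ldots, s'+1\}$ taking values in $\iota(\Z/p^{J_i}\Z)$ for various $J_i \geq 1$, and $F \colon \prod_i (\Z/p^{J_i}\Z) \to \T$. The independence will be strong enough to deliver the joint equidistribution of Proposition \ref{equil}, and the degree bound $\deg P \leq s'+1$ will force a corresponding ``weighted degree'' bound on $F$. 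Next, pass to the bounded-index subspace $V'$ on which every degree-$1$ $P_i$ is constant, and henceforth assume $d_i \geq 2$ for all $i$. Now apply the exact roots property (Lemma \ref{polybasic}(v)) to each $P_i$, producing $P'_i$ with $p P'_i = P_i$, $\deg P'_i \leq d_i + p - 1 \leq s'+p$, and values in $\iota(\Z/p^{J_i+1}\Z)$; the hypothesis $d_i \geq 2$ is precisely what lets us transfer the joint equidistribution of $(P_i)$ to $(P'_i)$, since any Fourier obstruction to equidistribution of the $P'_i$ would, on multiplication by $p$, yield one for the $P_i$ or for one of their $\ader$-derivatives at lower level, contradicting the independence already in hand.

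Rewriting $P = F(pP'_1, \ldots, pP'_k) = F'(P'_1, \ldots, P'_k)$, where $F' \colon \prod_i (\Z/p^{J_i+1}\Z) \to \T$ is the pullback of $F$, reduces the problem to constructing a function $G' \colon \prod_i (\Z/p^{J_i+1}\Z) \to \T$ of the appropriate weighted degree (so that $G'(P'_1, \ldots, P'_k) \in \Poly_{\leq s'+p}(V' \to \ultra\T)$) with $pG' = F'$; then $Q := G'(P'_1, \ldots, P'_k)$ satisfies $pQ = P$ and is manifestly of bounded rank. The main obstacle is this final lift of $F'$: it is essentially a combinatorial/algebraic question about $p$-th roots of periodic functions on the finite abelian group $\prod_i \Z/p^{J_i+1}\Z$, which we plan to resolve by decomposing $F'$ into multinomial monomials in the coordinate variables (with denominators $p^{J_i+1}$) and lifting each such monomial via the one-variable exact roots property of Lemma \ref{polybasic}(v). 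Tracking the weighted degree through this lifting is delicate, since raising a coordinate's denominator from $p^{J_i}$ to $p^{J_i+1}$ shifts the degree contribution by $p-1$; the bookkeeping required to ensure that the $\leq s'+1$ weighted-degree bound on $F$ translates to an $\leq s'+p$ weighted-degree bound on $G'$, together with the supporting regularity and equidistribution-lifting lemmas, is what will occupy Sections \ref{regsec}--\ref{erpsec}.
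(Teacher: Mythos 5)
Your proposal matches the paper's approach: the paper implements exactly this four-stage plan (regularity decomposition into a ``regular factor'' of independent polynomials, elimination of the degree-one components by passing to a bounded-index subspace, $p$-th roots of the factor polynomials via Lemma \ref{polybasic}(v) with preservation of regularity exactly because the initial degrees are $\geq 2$, and a lift of the combining function through multiplication by $p$ by reducing to multinomial expansions on $\Z^m$), packaged via the auxiliary property $\ER(s)$ and its technical version, Theorem \ref{ertech}, which is then established through Lemma \ref{rl}, Lemma \ref{equil}, Corollaries \ref{klack}--\ref{klack2}, and Propositions \ref{erz}, \ref{erz-class}. The one point worth flagging explicitly in a full write-up: the paper formalizes the factor with \emph{depths} $J_i$ and performs a depth extension so that the extended degrees land in the window $(s', s'+p-1]$ (which may require more than one level of root-taking when $d_i$ is small relative to $s'$), and the exact-roots and regularity lemmas are proven simultaneously by strong induction on the degree; your sketch describes a single $p$-th root and invokes only $\GIP(s')$ as induction hypothesis, but the degree bookkeeping and the inductive hypothesis $\ER(s')$ for smaller $s'$ are both genuinely needed, as the regularity lemma itself calls the exact-roots property at lower degrees.
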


We will prove this theorem in Sections \ref{regsec}-\ref{erpsec}.

To formalise Claim (iii) properly, we will need some additional notation.

\begin{definition}[Multilinear maps]  Let $V$ be a limit finite-dimensional vector space and $k \geq 1$.
\begin{itemize}
\item A \emph{$k$-linear map} is a limit map $T: V^k \to \F$ such that if one fixes all but one variable $h_i$ of the $k$ variables $h_1,\ldots,h_k \in V$, the remaining map $h_i \mapsto T(h_1,\ldots,h_k)$ is linear.  
\item A $k$-linear map is \emph{symmetric} if it is invariant under permutations of the $k$ variables $h_1,\ldots,h_k$.  
\item A symmetric $k$-linear map is \emph{classical} if $T(h_1,\ldots,h_k)$ vanishes whenever at least $p$ of the $h_1,\ldots,h_k$ agree (this condition is of course vacuous for $k<p$).  Thus, for instance, if $p=2$ and $k=3$, a symmetric trilinear form $T$ is classical if $T(a,a,b) = 0$ for all $a,b \in V$.  
\end{itemize}
We abbreviate ``classical symmetric multilinear'' as $\CSM$.  We denote the space of classical symmetric $k$-linear maps $T: V^k \to \F$ as $\CSM^k(V)$; this is clearly a vector space.
\end{definition}

\begin{example}  Let $V = \F^n$ and $k=2$.  The map 
$$ T( h, h' ) := \sum_{1 \leq i < j \leq n} h_i h'_j$$
where $h = (h_1,\ldots,h_n), h' = (h'_1,\ldots,h'_n)$ is multilinear, but not symmetric or classical.  The map
$$ T'( h, h' ) := \sum_{1 \leq i, j \leq n} h_i h'_j$$
is symmetric and multilinear, but not classical.  The map
$$ T''( h, h' ) := \sum_{1 \leq i, j \leq n: i \neq j} h_i h'_j$$
is symmetric, multilinear, and classical, and thus lies in $\CSM^2(V)$.
\end{example}

Multilinear maps are naturally associated to derivatives of polynomials.  Indeed, from the cocycle equation
$$ \ader_{h+h'} = \ader_h + \ader_{h'} + \ader_h \ader_{h'}$$
and the commutativity identity
$$ \ader_h \ader_{h'} = \ader_{h'} \ader_h$$
we see that for any $P \in \Poly_{\leq k}(V \to \ultra\T)$, $d^k P$ is multilinear and symmetric.  In particular, from multilinearity $d^k P$ must take values in $\iota(\F)$.

From Lemma \ref{pmult} we see that $\ader_h^p$ and $p \ader_h$ differ (multiplicatively) by an invertible formal differential operator, which is equal to $-1$ plus higher order terms.  Applying this to a polynomial $P \in \Poly_{\leq k}(V \to \ultra\T)$ with $k \geq p$, we conclude the identity
\begin{equation}\label{dkp}
 d^k P(h_1,\ldots,h_1,h_2,\ldots,h_{k-p+1}) = - d^{k-p+1} (pP)(h_1,h_2,\ldots,h_{k-p+1})
\end{equation}
for any $h_1,\ldots,h_{k-p+1} \in V$, where $h_1$ appears $p$ times on the left-hand side (recall that $pP$ is of degree $\leq k-p+1$, by Lemma \ref{polybasic}).

This identity has a number of consequences.  For instance, it gives some additional constraints on $d^k P$ beyond symmetry, such as
$$ d^k P(h_1,\ldots,h_1,h_2,\ldots,h_{k-p+1}) = d^k P( h_1, h_2, \ldots, h_2, h_3, \ldots, h_{k-p+1}),$$
where $p$ copies of $h_1$ appear on the left, and $p$ copies of $h_2$ appear on the right.

Another consequence will be important for us:

\begin{lemma}[Derivative of classical polynomials]\label{never}  Let $k \geq 1$ and $V$ be a limit finite-dimensional vector space, then $d^k$ maps $\Poly_{\leq k}(V \to \F)$ to $\CSM^k(V \to \F)$, and furthermore this map is surjective.  In other words, we have the short exact sequence
$$ 0 \to \Poly_{\leq k-1}(V \to \F) \to \Poly_{\leq k}(V \to \F) \stackrel{d^k}{\to} \CSM^k(V) \to 0.$$
\end{lemma}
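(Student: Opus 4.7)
The plan is to verify three things: $d^k$ lands in $\CSM^k(V)$, its kernel is $\Poly_{\le k-1}(V \to \F)$, and it is surjective.

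For the first two, multilinearity of $d^k P$ in each of its $k$ arguments follows from the cocycle identity $\ader_{h+h'} = \ader_h + \ader_{h'} + \ader_h \ader_{h'}$ applied in each slot, because the $\ader_h \ader_{h'}$ cross-term contributes an expression with one more derivative than $P$ can absorb; symmetry comes from the commutativity $\ader_h \ader_{h'} = \ader_{h'} \ader_h$; and the classical property is exactly the identity \eqref{dkp} combined with $pP = 0$ (since $P$ is classical), so that $d^k P(h_1,\ldots,h_1,h_2,\ldots,h_{k-p+1}) = -d^{k-p+1}(pP) = 0$ whenever $h_1$ is repeated $p$ times, and by symmetry this propagates to any $p$ coinciding arguments. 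The kernel computation is immediate from the definition of polynomial degree: $d^k P \equiv 0$ is the same as $\ader_{h_1} \cdots \ader_{h_k} P \equiv 0$, which is precisely the condition $P \in \Poly_{\le k-1}(V \to \F)$.

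Surjectivity is the only real content. First I would reduce to the finite-dimensional case by ultralimits: writing $V = \prod_{\n \to \alpha} V_\n$ and $T = \lim_{\n \to \alpha} T_\n$ with $T_\n \in \CSM^k(V_\n)$, it suffices to construct preimages $P_\n$ of bounded complexity and take their ultralimit. Fixing coordinates $V_\n = \F^n$, Lemma~\ref{polybasic}(ii) identifies $\Poly_{\le k}(V_\n \to \F)/\Poly_{\le k-1}(V_\n \to \F)$ with the span of reduced monomials $x^\alpha$, $\alpha \in \{0,\ldots,p-1\}^n$, $|\alpha| = k$. The main algebraic point is that $\CSM^k(V_\n)$ has the \emph{same} dimension. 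Writing a symmetric $k$-linear $T$ as $\sum c_{l_1,\ldots,l_k} h^{(1)}_{l_1} \cdots h^{(k)}_{l_k}$ with symmetric $c$, setting $h^{(1)} = \cdots = h^{(p)} = h$, grouping by the multiplicity profile $\gamma$ of $(l_1,\ldots,l_p)$, and reducing modulo $h_j^p - h_j$, the coefficient of each resulting monomial carries a multinomial factor $\binom{p}{\gamma}$; but $p \mid \binom{p}{\gamma}$ whenever $\max_j \gamma_j < p$, so all such terms vanish in $\F$, and only the ``fully collapsed'' $\gamma = (p,0,\ldots,0)$ contributes. This forces $c_{j,\ldots,j,l_{p+1},\ldots,l_k} = 0$ for all $j$ and, by symmetry, $c$ is supported exactly on multisets with all multiplicities $<p$.

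To produce the preimage explicitly, given such $T$ I would define
\[
 P(x) := \sum_{\alpha:\ |\alpha|=k,\ \alpha_j < p\ \forall j} \frac{c_\alpha}{\alpha_1!\cdots\alpha_n!}\, x_1^{\alpha_1}\cdots x_n^{\alpha_n},
\]
where $c_\alpha$ denotes the common value of $c_{l_1,\ldots,l_k}$ on any $(l_1,\ldots,l_k)$ of type $\alpha$. The restriction $\alpha_j < p$ makes each $\alpha_j!$ invertible in $\F$, so $P \in \Poly_{\le k}(V_\n \to \F)$ is well defined. A direct finite-difference calculation gives
\[
 d^k(x^\alpha)(h^{(1)},\ldots,h^{(k)}) = (\alpha_1!\cdots\alpha_n!)\sum_{\pi} \prod_{l=1}^k h^{(l)}_{\pi(l)},
\]
where $\pi$ ranges over maps $\{1,\ldots,k\} \to \{1,\ldots,n\}$ with $|\pi^{-1}(j)| = \alpha_j$; summing over $\alpha$ cancels the prefactor and recovers $T$. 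The only nontrivial algebraic input in the whole argument---and the step I expect to be the main obstacle to check carefully---is the mod-$p$ vanishing of $\binom{p}{\gamma}$ for $\max_j\gamma_j < p$, since this is the precise mechanism by which the $\CSM$ condition at the level of multilinear forms is equivalent to the ``each exponent~$<p$'' support condition on reduced classical polynomials.
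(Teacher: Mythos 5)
Your proof is correct and follows essentially the same route as the paper: reduce to finite dimensions via ultralimits, observe that the classical condition forces the coefficients $c_{i_1,\ldots,i_k}$ of $T$ to be supported on index tuples with all multiplicities $<p$, and construct a preimage as a sum of monomials $x^\alpha/\alpha!$ with $\alpha_j<p$ so the factorials are invertible, verifying $d^k P = T$ by a direct finite-difference (equivalently, product-rule) computation. The one place you take a longer path than necessary is the derivation of the coefficient-support condition: the multinomial argument setting $h^{(1)}=\cdots=h^{(p)}=h$ and tracking $\binom{p}{\gamma}$ is valid, but it proves more than is needed, since $c_{i_1,\ldots,i_k}=T(e_{i_1},\ldots,e_{i_k})$ vanishes immediately from the classical property whenever $\geq p$ of the $i_j$ coincide. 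The multinomial computation would be the right tool for the \emph{converse} implication (that the support condition implies $T$ is classical), but you have already obtained that direction independently from \eqref{dkp} together with $pP=0$, so it is not needed.
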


To state the above lemma loosely, classical symmetric multilinear forms are nothing more than the derivatives of classical polynomials.  

\begin{proof}  
Applying \eqref{dkp} to a classical polynomial $P \in \Poly_{\leq k}(V \to \F)$, we have $pP=0$, and thus $d^k P$ is classical, which gives the first claim.

Conversely, suppose that $T \in \CSM^k(V)$.  It suffices to verify the claim when $V$ is finite dimensional, as the limit finite-dimensional case then follows by taking ultralimits.  We take advantage of the finite-dimensionality to write $V = \F^n$ (without loss of generality), then the multilinear form $T$ can be expressed in coordinates as
$$ T(h_1,\ldots,h_k) = \iota\left( \sum_{1 \leq i_1,\ldots,i_k \leq n} c_{i_1,\ldots,i_k} h_{1,i_1} \ldots h_{k,i_k} \right)$$
where $h_{i,1},\ldots,h_{i,n} \in \F$ are the coordinates of $h_i$, and $c_{i_1,\ldots,i_k} \in \F$ are coefficients.  From the symmetric nature of $T$ we know that the $c_{i_1,\ldots,i_k}$ are symmetric with respect to permutations of the indices $i_1,\ldots,i_k$; from the classical nature of $T$ we know that the $c_{i_1,\ldots,i_k}$ vanish whenever $p$ or more of the $i_j$ are equal.  We thus see that $T$ is an integer linear combination of expressions of the form
$$ \iota\left( \sum_{\{i_1,\ldots,i_k\} = A} h_{1,i_1} \ldots h_{k,i_k} \right)$$
where $\{i_1,\ldots,i_k\}$ is the multiset formed by $i_1,\ldots,i_k$, and $A$ is a multiset of $k$ elements taking values in $\{1,\ldots,n\}$, with the multiplicity $a_j$ of each $1 \leq j \leq n$ in $A$ being less than $p$.  A short computation then shows that each such expression can be expressed as $d^k P$ for a polynomial $P \in \Poly_{\leq k}(V \to \F)$, indeed we may take
$$ P(x_1,\ldots,x_n) := \iota\left( \prod_{j=1}^n \frac{x_j^{a_j}}{a_j!} \right).$$
Note how the multiplicity bound $a_j < p$ allows for the factorial $a_j!$ to be inverted in $\F$.  (One could also use the binomial coefficient $\binom{x_j}{a_j}$ in place of $\frac{x_j^{a_j}}{a_j!}$ here if desired.) The claim now follows from linearity of $d^k$.
\end{proof}

Claim (iii) will be a variant of Lemma \ref{never} in the biased case:

\begin{theorem}[Inverse Gowers for classical symmetric multilinear forms]\label{GICSM} Let $s \geq 2$ be such that $\GIP(s')$ is true for all $0 \leq s' < s$, let $V$ be limit finite-dimensional, and let $T \in \CSM^{s+1}(V \to \F)$ be such that $|\E_{h_1,\ldots,h_{s+1} \in V} e(\iota(T(h_1,\ldots,h_{s+1})))| \gg 1$.  Then there exists a bounded index (limit) subspace $V'$ of $V$ and $P \in \Poly_{\leq s+1,\BR}(V' \to \F)$ such that $d^{s+1} P = T$ on $(V')^{s+1}$.
\end{theorem}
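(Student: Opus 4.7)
The plan is to follow the three-step strategy outlined in Section \ref{outline} for Claim (iii): first use the bias hypothesis on $T$ to express it (on a bounded-index subspace) as a combination of lower-degree classical symmetric multilinear forms via a Kaufman--Lovett-type equidistribution argument; second, use the induction hypothesis $\GIP(s')$ for $s' < s$ together with a regularity reduction to pin down the precise combinatorial shape of $T$; third, realize each piece of this shape as the top derivative $d^{s+1} Q$ of an explicit classical polynomial $Q$ of bounded rank, via a $p$-adic lifting and binomial-coefficient construction.

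\emph{Step 1 (extracting lower-degree structure).} The hypothesis $|\E_{h_1,\ldots,h_{s+1} \in V} e(\iota(T(h_1,\ldots,h_{s+1})))| \gg 1$ says that $T$, viewed as a $\T$-valued polynomial of degree $\leq s+1$ on $V^{s+1}$ via the embedding $\iota$, is biased. Invoking the Kaufman--Lovett equidistribution theorem \cite{kauf} (in its ultralimit form) expresses $T$ as a function of boundedly many lower-degree polynomials on $V^{s+1}$. Using the multilinearity and symmetry of $T$, these can be symmetrized and re-organised into classical symmetric multilinear forms $T_1, \ldots, T_m$ with $T_j \in \CSM^{k_j}(V)$ and each $k_j \leq s$, so that on $V^{s+1}$ the form $T$ becomes a function of the $T_j$ evaluated on various sub-tuples of $(h_1,\ldots,h_{s+1})$.

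\emph{Step 2 (regularity and shape).} A regularity lemma in the spirit of \cite{finrat}, \cite{kauf} (adapted to the CSM setting) allows us to pass to a bounded-index limit subspace $V'$ on which the collection $\{T_j\}$ is \emph{independent}, in the sense that no nontrivial classical polynomial combination built from the $T_j$ has bounded rank. Applied via $\GIP(s')$ for $s' < s$, this independence yields joint equidistribution of the tuple $(T_j(h_I))_{j,I}$ for random $h \in (V')^{s+1}$ and $I$ a $k_j$-subset of $\{1,\ldots,s+1\}$. Combining this equidistribution with the $(s+1)$-linearity of $T$ forces its representation to be an $\iota(\F)$-linear combination of \emph{symmetrized monomials}
\[ \sum_{\{1,\ldots,s+1\} = I_1 \sqcup \cdots \sqcup I_r} T_{j_1}(h_{I_1}) \cdots T_{j_r}(h_{I_r}), \]
one for each partition shape with $|I_t| = k_{j_t}$ and $k_{j_1} + \cdots + k_{j_r} = s+1$; these are the general analogues of the expression \eqref{bab} from the outline.

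\emph{Step 3 (realization).} By Lemma \ref{never}, each $T_{j_t}$ equals $d^{k_{j_t}} R_{j_t}$ for some classical $R_{j_t} \in \Poly_{\leq k_{j_t}}(V' \to \F)$. For each symmetrized monomial from Step 2 we produce an explicit $Q \in \Poly_{\leq s+1, \BR}(V' \to \F)$ whose top derivative $d^{s+1} Q$ equals that monomial: lift each needed $R_{j_t}$ via the exact roots property (Lemma \ref{polybasic}(v)) to a non-classical polynomial $\tilde R_{j_t}: V' \to \T$ of controlled degree, and take $Q$ to be an appropriate product of binomial-coefficient expressions of the form $\binom{\tilde R_{j_t}}{m_t} \bmod 1$, with multiplicities $m_t$ matching the monomial. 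Such $Q$ is well-defined modulo $1$ because $\binom{n}{m}$ is periodic in $n$ modulo a suitable power of $p$; it has degree $\leq s+1$ by a degree count using the discrete Leibniz rule together with \eqref{dkp}; it has bounded rank since it depends only on the finitely many lifted polynomials; and a direct calculation shows $d^{s+1} Q$ equals the desired symmetrized product. Summing over all monomials from Step 2 yields the required $P$.

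\emph{Main obstacle.} The principal difficulty lies in Step 3 in low characteristic: when $p \leq s$, the naive construction $Q = \prod_t R_{j_t}^{m_t}/m_t!$ is unavailable because the factorials are not invertible in $\F$. The binomial-coefficient lift $\binom{\tilde R_{j_t}}{m_t}$ is the correct substitute, but simultaneously ensuring its degree stays $\leq s+1$, that it is well-defined modulo $1$, and that its top derivative gives the correct symmetrized product requires a delicate combined use of the exact roots property (Lemma \ref{polybasic}(v)), the identity \eqref{dkp}, and careful bookkeeping of $p$-adic valuations of binomial coefficients. A secondary challenge in Step 2 is formulating the right notion of independence for CSM forms and verifying that $\GIP(s')$ supplies the needed joint equidistribution.
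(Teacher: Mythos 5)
Your proposal follows essentially the same route as the paper: Kaufman--Lovett bias criterion to extract lower-degree CSM forms, a regularity lemma and counting lemma to obtain joint equidistribution and hence multilinearity of the representing function (yielding the decomposition into symmetrized partition monomials, which the paper packages via the concatenation $\ast$ and symmetric power $\Sym^m$ operations), and finally binomial-coefficient lifting as in Lemma~\ref{symprod} to realise each piece as $d^{s+1}Q$ for a bounded-rank classical $Q$. Two small corrections worth flagging so the argument does not stall when fleshed out: the joint equidistribution in Step~2 is a consequence of the Kaufman--Lovett theorem (Proposition~\ref{equi}) rather than of $\GIP(s')$, and the passage to the bounded-index subspace $V'$ is not how one achieves independence (the regularity lemma does that directly) but is done specifically to annihilate the \emph{linear} CSM forms; this is essential because $\Sym^m$ fails to produce a \emph{classical} form when $k=1$, so the realization in Step~3 only goes through once every $S_i$ has degree $k_i\geq 2$.
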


This theorem will be proven in Sections \ref{concat-sec}-\ref{multiconc}.
To state the theorem loosely, classical \emph{biased} symmetric multilinear forms are nothing more than the derivatives of classical \emph{bounded rank} polynomials.

In the remainder of this section, we show how Theorems \ref{ailp}, \ref{er}, \ref{GICSM} imply $\GIP(s)$.  We first need a technical lemma to handle the passage to bounded index limit subspaces.

\begin{lemma}\label{subspace}  Let $k \geq 2$ and $P \in \Poly_{\leq k}(V \to \ultra \T)$, and let $V'$ be a bounded index subspace of $V$.  (Note that such subspaces are automatically limit subspaces.)  Let $P' \in \Poly_{\leq k}(V' \to \ultra \T)$ be the restriction of $P$ to $V'$.
\begin{itemize}
\item[(i)] $P$ has bounded analytic rank if and only if $P'$ has bounded analytic rank.
\item[(ii)] $P$ has bounded rank if and only if $P'$ has bounded rank.
\end{itemize}
If $k=1$, then the ``only if'' portions of the claim continue to hold.
\end{lemma}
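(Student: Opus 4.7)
The plan is to exploit a splitting $V = V' \oplus W$, where $W$ is a limit subspace of bounded dimension $\ell := \log_p [V:V']$; such a splitting exists because $V$ is limit finite-dimensional and $V'$ has bounded index. Write $m := [V:V']$, let $\pi \colon V \to V'$ denote the projection with kernel $W$, let $v_1, \ldots, v_m \in W$ enumerate the cosets of $V'$ in $V$, and let $L_1, \ldots, L_\ell \colon V \to \iota(\F)$ be linear forms vanishing on $V'$ that jointly determine the coset of a point modulo $V'$.

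For part (i), since $P$ has degree $\leq k$ the identity $\|e(P)\|_{U^k(V)}^{2^k} = \E_{h \in V^k} e(d^k P(h))$ holds, and $d^k P'$ is the restriction of $T := d^k P \colon V^k \to \iota(\F)$ to $(V')^k$. Matters therefore reduce to comparing the biases $\textrm{Bias}_V := \E_{h \in V^k} e(T(h))$ and $\textrm{Bias}_{V'} := \E_{h \in (V')^k} e(T(h))$ of this multilinear form. Writing each bias as $|N|/|\cdot|^{k-1}$, where $N$ collects those tuples $(h_2, \ldots, h_k)$ for which the linear form $T(\cdot, h_2, \ldots, h_k)$ vanishes on the appropriate domain, and carrying out a slice-by-slice count (fix $h_2, \ldots, h_{k-1}$ to get an honest linear map in $h_k$ whose kernel is a subspace), one invokes the bounded index $m$ to establish
$$\textrm{Bias}_V \leq \textrm{Bias}_{V'} \leq m^k \cdot \textrm{Bias}_V.$$
Since $m$ and $k$ are standard, this gives the equivalence of boundedness. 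The lower bound $\textrm{Bias}_V \leq \textrm{Bias}_{V'}$ persists at $k=1$ (giving the ``only if'' direction), but the upper comparison degenerates because both biases are $\{0,1\}$-valued and a nonzero linear form on $V$ may restrict to zero on $V'$.

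For part (ii), the ``only if'' direction is immediate: a decomposition $P = F(Q_1, \ldots, Q_M)$ restricts to $P' = F(Q_1|_{V'}, \ldots, Q_M|_{V'})$. For the ``if'' direction with $k \geq 2$, suppose $P' = F(Q'_1, \ldots, Q'_M)$ with $Q'_i \in \Poly_{\leq k-1}(V' \to \ultra\T)$. Define $Q_i := Q'_i \circ \pi$; the identity $\ader_h (Q \circ \pi) = (\ader_{\pi h} Q) \circ \pi$ gives $Q_i \in \Poly_{\leq k-1}(V \to \ultra\T)$. The difference $S := P - F(Q_1, \ldots, Q_M)$ vanishes on $V'$, and on the coset $v_j + V'$ one computes $S(v_j + x') = \ader_{v_j} P(x')$. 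Writing $\tilde R_j := (\ader_{v_j} P) \circ \pi \in \Poly_{\leq k-1}(V \to \ultra\T)$ and using the $L_i$'s to encode the coset index $j(x)$, one obtains
$$P(x) = F(Q_1(x), \ldots, Q_M(x)) + G(\tilde R_1(x), \ldots, \tilde R_m(x), L_1(x), \ldots, L_\ell(x))$$
for a suitable function $G$. This realises $P$ as a function of $M+m+\ell$ polynomials of degree $\leq k-1$ (crucially, the linear $L_i$ have degree $1 \leq k-1$ precisely because $k \geq 2$), so $P$ has bounded rank. At $k=1$ the $L_i$ have degree $1 > 0 = k-1$ and cannot serve as generators, matching the fact that a constant polynomial on $V'$ can extend to a non-constant polynomial on $V$.

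The main obstacle I expect is verifying the multiplicative bias comparison in part (i) cleanly, since for $k \geq 3$ the set $N$ is not a subspace of $V^{k-1}$; the slicing reduction (fixing all but one coordinate to obtain a genuine linear map whose kernel \emph{is} a subspace), together with the containment $(V')^{k-2} \subseteq V^{k-2}$ and the bounded-index bound $|\ker \phi| \leq m \cdot |\ker \psi \cap V'|$ on each slice, is what makes the uniform counting possible.
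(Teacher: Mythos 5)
Your part (ii) argument matches the paper's in substance: after subtracting the pullback $F(Q_1'\circ\pi,\ldots,Q_M'\circ\pi)$ to make $P$ vanish on $V'$, you observe that $P$ agrees with $(\ader_{v_j}P)\circ\pi$ on the coset $v_j+V'$ (each a polynomial of degree $\le k-1$) and glue these using coset-detecting linear forms, which have degree $1\le k-1$ precisely when $k\ge 2$. The paper instead reduces by induction to the hyperplane case and writes $P$ as a sum of $1_{h+V'}P$ over coset representatives, but the mechanism is identical.

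For part (i) you take a genuinely different, more self-contained route. The paper obtains the ``if'' direction by inserting $\prod_i 1_{V'}(h_i)$ into the Gowers--Cauchy--Schwarz estimate of Lemma~\ref{basic-rank}(iv) (imported from Gowers--Wolf), and the ``only if'' direction by Fourier analysis plus a coset-pigeonhole argument exploiting multilinearity. You instead propose, with $T:=d^kP$ and $m:=[V:V']$, the explicit sandwich
$$\E_{h\in V^{k}}e(T(h)) \ \le\ \E_{h\in(V')^{k}}e(T(h)) \ \le\ m^{k}\,\E_{h\in V^{k}}e(T(h)),$$
and both inequalities are in fact correct, so this works. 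One caution: the two halves rest on distinct mechanisms, and the kernel comparison you gesture at is really the engine for the \emph{left} inequality (monotonicity of bias under restriction). One clean proof of it goes by induction on $k$, exchanging orders to express the average over $h_1\in V$ of the partial bias of $T_{h_1}|_{(V')^{k-1}}$ as the probability that $(h_2,\ldots,h_k)\in(V')^{k-1}$ annihilates the whole image of the linear map $h_1\mapsto T_{h_1}|_{(V')^{k-1}}$, a probability that only grows when $h_1$ is restricted to $V'$ because the image then shrinks. The \emph{right} inequality follows from an easier observation: replacing one variable's range from $V'$ to $V$ at a time, nonnegativity of the partial biases $\E_{h_j:\,j\ne i}e(T(h))$ shows each replacement costs at most a factor $m$. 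Net effect: your version of (i) is cleaner and avoids the Gowers--Wolf black box, at the cost of two separate elementary counting arguments.
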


\begin{proof} We begin with (i).  If $P'$ has bounded analytic rank, then by \eqref{arank-eq}
$$
|\E_{h_1,\ldots,h_k \in V} 1_{V'}(h_1) \ldots 1_{V'}(h_k) e( d^k P(h_1,\ldots,h_k) )| \gg 1.$$
Applying Lemma \ref{basic-rank}(iv) we conclude that $P$ has bounded analytic rank, as required.

Conversely, suppose that $P$ has bounded analytic rank, then by \eqref{arank-eq} we have
$$
\E_{h_1,\ldots,h_k \in V} e( d^k P(h_1,\ldots,h_k) ) \gg 1.$$
Applying Fourier analysis, we conclude that for $\gg |V|^{k-1}$ $k-1$-tuples $(h_1,\ldots,h_{k-1})$ in $V^{k-1}$, one has $d^k P(h_1,\ldots,h_{k-1},\cdot) \equiv 0$.  Applying the pigeonhole principle, one can restrict $h_{k-1}$ to a single coset of $V'$ and still have the above claim.  Using multilinearity in $h_{k-1}$, one can in fact restrict $h_{k-1}$ to $V'$.  Iterating this argument one may restrict all of $h_1,\ldots,h_{k-1}$ to $V'$.  Using Fourier analysis we conclude that
$$
\E_{h_1,\ldots,h_k \in V'} e( d^k P'(h_1,\ldots,h_k) ) \gg 1,$$
and hence by \eqref{arank-eq}, $P$ has bounded analytic rank as desired.

Now we show (ii).  The ``only if'' portion is trivial, so we focus on the ``if'' part.  By induction we may assume that $V'$ is a hyperplane in $V$, and we may then write $V = V' \times \F$ without loss of generality.  Suppose that $P'$ is of bounded rank.  Letting $\pi: V \to V'$ be the coordinate projection from $V' \times \F$ to $V'$, it is easy to verify that $P' \circ \pi$ is a bounded rank polynomial of degree $\leq k$.  Subtracting this from $P$, we may assume without loss of generality that $P$ vanishes on $V'$.

On each coset of $h+V'$ of $V'$, $P$ is of degree $\leq k-1$ (since $\ader_h P$ is of degree $\leq k-1$).  Thus $1_{h+V'} P$ is a function of finitely many polynomials of degree $\leq k-1$.  Summing over a set of coset representatives $h$ we obtain the claim.
\end{proof}

Now we can prove $\GIP(s)$ assuming Theorems \ref{ailp}, \ref{er}, \ref{GICSM}.

\begin{proof}[Proof of $\GIP(s)$]
By induction we may assume that $\GIP(s')$ holds for all $0 \leq s' < s$; our task is now to show that $\GIP(s)$ holds.  We will assume $s \geq 2$, as the $s = 0, 1$ cases are well known.  

Let $P \in \Poly_{\leq s+1, \BAR}(V \to \ultra \T)$.  By Theorem \ref{main-q}, it will suffice to show that $P$ has bounded rank.

Let us first suppose that $s+1 > p$ (which is the most difficult case).  By Theorem \ref{ailp}, $p P \in \Poly_{\leq s-p+2, \BAR}(V \to \ultra \T)$, so by $\GIP(s-p+1)$, $pP$ is of bounded rank.  By Theorem \ref{er} (with $s$ replaced by $s-1$), there thus exists $Q \in \Poly_{\leq s+1, \BR}(V' \to \ultra \T) \subset \Poly_{\leq s+1,\BAR}(V' \to \ultra \T)$ such that $pP=pQ$ on a bounded index subspace $V'$ of $V$.  In particular, $P-Q$ takes values in $\iota(\F)$ on $V'$, so by Lemma \ref{never}, $d^{s+1}(P-Q) \in \CSM^{s+1}(V' \to \ultra \T)$.

By Lemma \ref{subspace} $P, Q$ both lie in $\Poly_{\leq s+1,\BAR}(V \to \ultra \T)$, and so $P-Q$ does also.  Thus
$$ \E_{h_1,\ldots,h_{s+1} \in V} e( d^{s+1}(P-Q)(h_1,\ldots,h_{s+1})) \gg 1.$$
Applying Theorem \ref{GICSM} (pulling back by $\iota$) we conclude that $d^{s+1}(P-Q) = d^{s+1}(W)$ on $(V'')^{s+1}$ for some $W \in \Poly_{\leq s+1,\BR}(V'' \to \iota(\F))$ on some bounded index subspace $V''$ of $V$, thus $P-Q-W$ is a polynomial of degree $\leq s+1$ on $V' \cap V''$.  Since $Q, W$ are of bounded rank on $V' \cap V''$, we conclude that $P$ is of bounded rank on $V' \cap V''$, and hence on $V$ by Lemma \ref{subspace}.

The case $s < p$ is similar.  Here, $pP \in \Poly_{\leq 1}(V \to \ultra \T)$, hence on passing to a subspace $pP$ is constant, and in particular we can find a constant $Q$ such that $pP = pQ$ on this subspace.  One then argues as before.
\end{proof}

It remains to prove Theorems \ref{ailp}, \ref{er}, \ref{GICSM}.  This is the purpose of the remaining sections of the paper.

\section{Multiplying by $p$}\label{pmult-sec}

In this section we prove Theorem \ref{ailp}.  The main tool will be the multidimensional Szemer\'edi theorem over finite fields\footnote{It is also possible to prove this proposition using the density Hales-Jewett theorem\cite{fk} instead.} from \cite{blm}, which we can formulate as follows:

\begin{proposition}[Multidimensional Szemer\'edi theorem]\label{mst}  Let $V$ be a limit finite-dimensional vector space, let $k \geq 1$ be a standard integer, and let $A \subset V^k$ be such that $|A| \gg |V|^k$.  Then there exist $\gg |V|^{k+1}$ tuples $(h_1,\ldots,h_k,h) \in V$ such that
$$ \{ (h_1 + h \omega_1, h_2 + h \omega_2, \ldots, h_k + h \omega_k): \omega_1,\ldots,\omega_k \in \F \} \subset A.$$
\end{proposition}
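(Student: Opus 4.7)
The plan is to derive this proposition from the finitary multidimensional Szemer\'edi theorem for vector spaces over finite fields, due to Bergelson, Leibman, and McCutcheon \cite{blm}.

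First, I would transfer the proposition to a standard (finitary) claim via the usual ultralimit-to-finitary correspondence: it suffices to show that for each standard $\delta > 0$ and each standard $k \geq 1$ there exists $c = c(\delta, k, \F) > 0$ such that for every finite-dimensional $\F$-vector space $V$ and every $A \subset V^k$ with $|A|/|V|^k \geq \delta$, the number of tuples $(h_1, \ldots, h_k, h) \in V^{k+1}$ for which
$$ (h_1 + h\omega_1, \ldots, h_k + h\omega_k) \in A \qquad \text{for every } \omega \in \F^k $$
is at least $c |V|^{k+1}$. Writing $V = \prod_{\n \to \alpha} V_\n$ and $A = \prod_{\n \to \alpha} A_\n$, this standard claim applied to each $(V_\n, A_\n)$ with a uniform $c$ gives the ultralimit statement directly.

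Second, I would observe that this finitary claim is precisely of the form handled by the BLM theorem. The configuration in question is a fixed finite pattern inside $V^k$, namely the $p^k$-point set $\{(h_1, \ldots, h_k) + h \cdot \omega : \omega \in \F^k\}$ obtained by shifting the base point $(h_1,\ldots,h_k)$ by coordinate-wise scalar multiples of the single variable $h \in V$. Since $p^k$ depends only on the standard data $\F$ and $k$, and the shape is $\F$-linear in $h$ of bounded complexity, the BLM theorem for vector spaces over finite fields yields the asserted positive-density lower bound on the number of such configurations, with the constant $c$ depending only on $\delta, k$, and $\F$.

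The main (indeed, essentially only) ingredient is the BLM theorem itself; once cited, the remainder of the argument is bookkeeping. The potential subtlety to verify is that BLM is applied with the shift $h$ ranging over all of $V$ (not merely over $\F$), which is why the output density scales as $|V|^{k+1}$ rather than $|V|^k \cdot |\F|$. This is indeed the form in which BLM is proved in \cite{blm}, since the pattern is an $\F$-linear image of a fixed affine configuration in $\F^k$; the footnote's alternative route via density Hales--Jewett \cite{fk} proceeds by identifying $V^k$ with $\F^{nk}$ and locating a combinatorial $k$-dimensional subspace of the requisite shape whose wildcard plays the role of $h$, and would serve equally well.
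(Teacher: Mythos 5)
Your finitary transfer step is correct and matches the way the paper approaches the problem. However, there is a genuine gap in the second step: you assert that the BLM theorem ``yields the asserted positive-density lower bound on the number of such configurations,'' but the version of BLM invoked here (\cite[Corollary 5.4]{blm}) is an \emph{existence} theorem, not a counting theorem. It says: given $\delta > 0$ and $k$, there is a bounded dimension $M = M(\delta,k,\F)$ such that any subset of $W^k$ of density $\geq \delta$ (with $\dim W = M$) contains at least \emph{one} configuration of the required shape. It does not, by itself, produce $\gg |V|^{k+1}$ configurations with a constant depending only on $\delta, k, \F$ when $\dim V$ is arbitrary.

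To bridge that gap one needs a Varnavides-type averaging argument, which is exactly what the paper supplies and which is the actual content of the proof beyond citing BLM: choose $M = M(\delta,k,\F)$ as in BLM; for each $M$-dimensional subspace $W$ of $V$, foliate $V^k$ into cosets of $W^k$; at least a $\delta/2$-fraction of these cosets carry density $\geq \delta/2$, and BLM produces a configuration (with nonzero step $h \in W$) in each such coset; summing over cosets yields $\gg |V|^k$ base points for some fixed $h$, and then averaging over all choices of the subspace $W$ and double counting converts this into $\gg |V|^{k+1}$ tuples $(h_1,\ldots,h_k,h)$. Without this averaging over cosets and over subspaces, the claim that the constant depends only on $\delta, k, \F$ is unsupported; if you instead cite a version of multidimensional Szemer\'edi that already incorporates counting (e.g.\ via a removal lemma), that would also close the gap, but that is not the reference used here, and you should say so explicitly rather than attributing the counting form to \cite{blm}.
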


\begin{proof}  We combine the results from \cite{blm} with an averaging argument of Varnavides \cite{varnavides}.

The claim is trivial if $V$ has bounded dimension, so we may assume that $V$ has unbounded dimension.

Since $|A| \gg |V|^k$, if we write $A = \lim_{\n \to \alpha} A_\n$ and $V = \lim_{\n \to \alpha} V_\n$, then there is a standard $\delta > 0$ such that $|A_\n| \geq \delta |V_\n|^k$ for all $\n$ sufficiently close to $\alpha$.

Fix $\n$ with this property.  Let $M$ be a large standard integer depending on $k, \delta$ to be chosen later.  As $V$ has unbounded dimension, we may assume that $V_\n$ has dimension at least $M$ by taking $\n$ sufficiently close to $\alpha$.

Let $W_\n$ be an arbitrary linear subspace of $V_\n$ of dimension $M$.  We can then foliate $V_\n^k$ into $(|V_\n|/|W_\n|)^k$ cosets $x + W_\n^k$ of $W_\n^k$.  The average value of $|A_\n \cap (x+W_\n^k)|/|W_\n|^k$ in these cosets is at least $\delta$.  Thus, for at least $\delta (|V_\n|/|W_\n|)^k/2$ of these cosets, we have $|A_\n \cap (x+W_\n^k)|/|W_\n|^k \geq \delta/2$.

Consider one of these cosets $x+W_\n^k$ with $|A_\n \cap (x+W_\n^k)|/|W_\n|^k \geq \delta/2$. Applying \cite[Corollary 5.4]{blm}, we see that if $M$ is large enough depending on $k, \delta$, we can thus find a non-zero $h \in W_\n$ and $(h_1,\ldots,h_k) \in x + W_\n^k$ such that
\begin{equation}\label{silly}
\{ (h_1 + h \omega_1, h_2 + h \omega_2, \ldots, h_k + h \omega_k): \omega_1,\ldots,\omega_k \in \F \} \subset A_\n.
\end{equation}
Summing over all such cosets (and using the pigeonhole principle to fix $h$), we can thus find a non-zero $h \in W_\n$ such that \eqref{silly} holds
for at least $c_{k,M,\delta} |V_\n|^k$ tuples $(h_1,\ldots,h_k) \in V_\n^k$, where $c_{k,M,\delta} > 0$ is standard.  Averaging over all possible linear subspaces $W_\n$ of $V_\n$ of dimension $M$, we conclude from a routine double counting argument that in fact \eqref{silly} holds for at least $c'_{k,M,\delta} |V_\n|^{k+1}$ tuples $(h,h_1,\ldots,h_k) \in V_\n^{k+1}$, where $c'_{k,M,\delta}>0$ is standard.  Taking ultralimits as $\n \to \alpha$ we obtain the claim.
\end{proof}

Now we can prove Theorem \ref{ailp}.

\begin{proof}[Proof of Theorem \ref{ailp}]  Let $P \in \Poly_{\leq k,BAR}(V \to \ultra \T)$.  From \eqref{arank-eq} one has
\begin{equation}\label{ailp-eq}
 \E_{h_1,\ldots,h_k \in V} e( d^k P(h_1,\ldots,h_k) ) \gg 1.
\end{equation}
For each tuple $(h_1,\ldots,h_{k-1}) \in V^{k-1}$, the function $e(d^k P(h_1,\ldots,h_{k-1},\cdot))$ is a character on $V$. Thus, by Fourier analysis, for \eqref{ailp-eq} to hold we must have $\gg |V|^{k-1}$ tuples $(h_1,\ldots,h_{k-1}) \in V^{k-1}$, for which the form
$$ d^k P( h_1,\ldots,h_{k-1},\cdot)$$
vanishes identically.  Applying Proposition \ref{mst}, we can thus find $\gg |V|^k$ tuples $(h,h_1,\ldots,h_{k-1}) \in V^k$ such that
$$ d^k P( h_1 + \omega_1 h,\ldots,h_p + \omega_p h, h_{p+1}, \ldots, h_{k-1},\cdot)$$
vanishes identically for all $(\omega_1,\ldots,\omega_p) \in \{0,1\}^{p}$.  (Note how the hypothesis $k>p$ was needed here in order for this expression to make sense.)  Taking an alternating sum of these expressions, we thus have
$$ d^k P(h,\ldots,h,h_{p+1},\ldots,h_{k-1},\cdot)$$
vanishing identically for $\gg |V|^k$ tuples $(h,h_1,\ldots,h_{k-1}) \in V^k$, and thus for $\gg |V|^{k-p+1}$ tuples $(h,h_p,\ldots,h_{k-1}) \in V^{k-p+1}$.  Once again this implies that
$$ \E_{h,h_{p+1},\ldots,h_k \in V} e(d^k P(h,\ldots,h,h_{p+1},\ldots,h_{k-1},h_k)) \gg 1,$$
and the claim follows from \eqref{arank-eq}, and \eqref{dkp}.
\end{proof}

For future use, we record an immediate corollary of Theorem \ref{ailp}:

\begin{corollary}\label{cor} Suppose that $k > p$ and $\GIP(k-p)$ holds.  Then the map $p: P \mapsto pP$ maps 
$\Poly_{\leq k,\BR}(V \to \ultra \T)$ to $\Poly_{\leq k-p+1,\BR}(V \to \ultra \T)$.
\end{corollary}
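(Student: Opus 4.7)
The plan is to chain together three facts already in hand: the containment of bounded rank in bounded analytic rank, Theorem \ref{ailp} applied in the $\BAR$ category, and the ultralimit reformulation of $\GIP$ provided by Theorem \ref{main-q}.

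More precisely, let $P \in \Poly_{\leq k,\BR}(V \to \ultra\T)$. First I would invoke Lemma \ref{basic-rank}(iii), which tells us that analytic rank is controlled by rank up to a constant depending only on the degree; since $P$ has bounded rank, it therefore has bounded analytic rank, so $P \in \Poly_{\leq k,\BAR}(V \to \ultra\T)$. Second, I would apply Theorem \ref{ailp} (the ``multiplication by $p$'' result, which uses $k>p$ exactly as stated in our hypothesis) to conclude that $pP \in \Poly_{\leq k-p+1,\BAR}(V \to \ultra\T)$. Note also that the degree bound $pP \in \Poly_{\leq k-p+1}$ is already built into Lemma \ref{polybasic}(v), so there is nothing to check separately about the degree.

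Third, since we are assuming $\GIP(k-p)$, Theorem \ref{main-q} gives us the equality
\[
\Poly_{\leq k-p+1,\BR}(V \to \ultra\T) = \Poly_{\leq k-p+1,\BAR}(V \to \ultra\T)
\]
for every limit finite-dimensional $V$. Combining this with the previous step, we deduce $pP \in \Poly_{\leq k-p+1,\BR}(V \to \ultra\T)$, which is exactly the claim.

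There is no real obstacle here: the corollary is a formal chaining of Lemma \ref{basic-rank}(iii), Theorem \ref{ailp}, and the ultralimit equivalence Theorem \ref{main-q}. The only thing to keep straight is the bookkeeping of which degree $\GIP$ needs to be applied at: $pP$ has degree $\leq k-p+1$, so the relevant instance is $\GIP(s')$ with $s'+1 = k-p+1$, i.e.\ $s' = k-p$, matching the hypothesis of the corollary exactly.
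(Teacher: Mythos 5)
Your proof is correct and is exactly the chain of implications the paper has in mind (the paper simply labels this an ``immediate corollary of Theorem \ref{ailp}'' without spelling it out). The three ingredients you cite — the inclusion $\Poly_{\leq k,\BR} \subset \Poly_{\leq k,\BAR}$ from Lemma \ref{basic-rank}(iii), Theorem \ref{ailp}, and the ultralimit equivalence of Theorem \ref{main-q} applied at level $s'=k-p$ — are precisely what is needed, and the bookkeeping of degrees is right.
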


We will use Corollary \ref{cor} in the following contrapositive sense: if $k > p$, $\GIP(k-p)$ holds, and $P \in \Poly_{\leq k}(V \to \ultra \T)$ is such that $pP$ has unbounded rank (as a polynomial of degree $\leq k-p+1$), then $P$ has unbounded rank (as a polynomial of degree $\leq k$).  Informally, roots of high rank nonlinear polynomials remain high rank.

We remark that Theorem \ref{ailp} (and Corollary \ref{cor}) fail when $k=p$.  For instance, if $p=2$ and $V = \F_2$, then the function $P(x) := |x|/4 \mod 1$ is a bounded rank polynomial of degree $2$, but $2P(x) = |x|/2 \mod 1$ is an infinite rank polynomial of degree $1$.  Because of the failure of Theorem \ref{ailp} at the endpoint $k=p$, we will need to require certain polynomials to have degree at least two in our arguments; but we will be able to eliminate all linear polynomials from our analysis by exploiting the freedom to pass to finite index subspaces.

\section{Multilinear concatenation}\label{concat-sec}

We now begin the proof of Theorem \ref{GICSM}.  The strategy will be to obtain enough control on the biased form $T \in \CSM^{s+1}(V)$ that one can explicitly write this form as $d^{s+1} P$ on a bounded index subspace $V'$ for some bounded rank polynomial $P \in \Poly_{\leq s+1,\BR}(V')$.

To get some intuition as to what expressions such as $d^{s+1} P$ look like, consider the case when $s+1=4$, $V'=V$, and $P$ takes the form $P = QR$ for some quadratic polynomials $Q,R \in \Poly_{\leq 2,\BR}(V \to \F)$.  Clearly, $P$ is of bounded rank.  A brief computation using the discrete Leibniz rule \eqref{leibniz} then reveals that
\begin{align*}
\ader_a \ader_b \ader_c \ader_d (QR) &= (\ader_a \ader_b Q)(\ader_c \ader_d R) + (\ader_a \ader_c Q) (\ader_b \ader_d R) + (\ader_a \ader_d Q) (\ader_b \ader_c R)\\
&\quad +(\ader_b \ader_c Q)(\ader_a \ader_d R) + (\ader_b \ader_d Q) (\ader_a \ader_c R) + (\ader_c \ader_d Q) (\ader_a \ader_b R)
\end{align*}
for any $a,b,c,d \in V$.  Thus, if we let $B,C \in \CSM^2(V \to \F)$ be the quadratic forms $B := d^2 Q$, $C := d^2 R$, then we have
\begin{align*}
d^4 P(a,b,c,d) &= B(a,b) C(c,d) + B(a,c) C(b,d) + B(a,d) C(b,c) \\ &+ B(b,c) C(a,d) + B(b,d) C(a,c) + B(c,d) C(a,b).
\end{align*}
By Lemma \ref{never}, we thus see that any quadrilinear form $T \in CSM^4(V)$ of the form
\begin{align*} T(a,b,c,d) &= B(a,b) C(c,d) + B(a,c) C(b,d) + B(a,d) C(b,c) \\ &+ B(b,c) C(a,d) + B(b,d) C(a,c) + B(c,d) C(a,b)
\end{align*}
for some $B,C \in \CSM^2(V)$ will be of the desired form $d^4 P$ for Theorem \ref{GICSM}.

To generalise this discussion to higher dimesions, we introduce the following operation.

\begin{definition}[Concatenation]  Let $S \in \CSM^k(V)$ and $T \in \CSM^l(V)$ for some standard integers $k,l \geq 1$.  We define the \emph{concatenation} $S \ast T \in \CSM^{k+l}(V)$ of $S$ and $T$ by the formula
$$ (S \ast T)(h_1,\ldots,h_{k+l}) = \sum_{\{1,\ldots,k+l\} = A \uplus B} S((h_i)_{i \in A}) T((h_j)_{j \in B})$$
where the sum ranges over all partitions of $\{1,\ldots,k+l\}$ into a $k$-element set $A$ and an $l$-element set $B$, and we define $S((h_i)_{i \in A})$ by enumerating $A$ arbitrarily (the precise ordering is not relevant due to the symmetry of $S$), and similarly for $T((h_j)_{j \in B})$.  Thus, for instance, if $S, T \in \CSM^2(V)$, then
$$ \begin{aligned} (S \ast T)(a,b,c,d) &:= S(a,b) T(c,d) + S(a,c) T(b,d) + S(a,d) T(b,c)\\ & + S(b,c) T(a,d) + S(b,d) T(a,c) + S(c,d) T(a,b).
\end{aligned}$$
\end{definition}

It is not hard to see that $S \ast T$ is indeed multilinear and symmetric.  The fact that it is classical is also easily seen after observing that all the binomial coefficients $\binom{p}{j}$ with $1 \leq j < p$ are divisible by $p$ and thus vanish on $\F$.  The operation $\ast$ is also easily seen to be bilinear, commutative, and associative.  As the previous discussion already indicated, this operation is closely related to multiplication on (classical) polynomials.  More precisely, we have:

\begin{lemma}[Product rule]\label{prod}  Let $k,l \geq 1$ be standard integers.  If $P \in \Poly_{\leq k}(V \to \F)$ and $Q \in \Poly_{\leq l}(V \to \F)$, then $d^{k+l}(PQ) = (d^k P) \ast (d^l Q)$.
\end{lemma}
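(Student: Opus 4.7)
The plan is to prove this by iterating the discrete product (Leibniz) rule $k+l$ times and then using degree considerations to collapse the resulting sum into exactly the concatenation sum.

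First I would establish the single-variable rule $\ader_h(fg) = (\ader_h f)\cdot g + f\cdot(\ader_h g) + (\ader_h f)(\ader_h g)$, which follows immediately from expanding $f(x+h)g(x+h) - f(x)g(x)$ via $f(x+h)=f(x)+\ader_h f(x)$ and $g(x+h)=g(x)+\ader_h g(x)$. Each application of $\ader_{h_i}$ to a product thus splits into three terms, corresponding to the derivative ``acting on $f$'', ``acting on $g$'', or ``acting on both''. Iterating and inducting on $n = k+l$, I would obtain the general formula
\begin{equation*}
\ader_{h_1}\cdots \ader_{h_n}(fg) = \sum_{\substack{A,B \subseteq \{1,\ldots,n\}\\ A \cup B = \{1,\ldots,n\}}} \Bigl(\prod_{i \in A}\ader_{h_i}\Bigr)f \cdot \Bigl(\prod_{j \in B}\ader_{h_j}\Bigr)g,
\end{equation*}
where for each index $i$, membership in $A$ only/$B$ only/$A \cap B$ records whether $\ader_{h_i}$ was assigned to $f$, to $g$, or to both in the course of the iteration.

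Next I apply this with $n = k+l$, $f = P$, $g = Q$. Since $P \in \Poly_{\leq k}$, any $|A|$-fold derivative of $P$ vanishes whenever $|A| > k$; similarly the $Q$-factor vanishes unless $|B| \leq l$. But $|A| + |B| \geq |A \cup B| = k+l$, so the surviving terms are exactly those with $|A| = k$, $|B| = l$, and $A \cap B = \emptyset$, i.e. partitions $\{1,\ldots,k+l\} = A \uplus B$ with $|A|=k$ and $|B|=l$. On each such term the $P$-factor equals $d^k P((h_i)_{i \in A})$ and the $Q$-factor equals $d^l Q((h_j)_{j \in B})$, which by definition sum to $(d^k P) \ast (d^l Q)(h_1,\ldots,h_{k+l})$.

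There is no real obstacle here; the only care needed is in the bookkeeping of the iterated Leibniz expansion (ensuring that the ``$\ader_h f \cdot \ader_h g$'' cross-term is correctly interpreted as the case $i \in A \cap B$), which is cleanly handled by induction on $n$. Once the expansion is in place, the degree-counting collapse to the disjoint-partition sum is automatic and matches the definition of $\ast$ exactly.
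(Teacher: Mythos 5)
Your proof is correct and takes essentially the same approach as the paper's: iterate the discrete Leibniz rule to get the full expansion of $\ader_{h_1}\cdots\ader_{h_{k+l}}(PQ)$, then use the degree bounds on $P$ and $Q$ to kill every term except those in which exactly $k$ of the derivatives fall on $P$ and the complementary $l$ on $Q$. The paper states this tersely; your explicit formula indexed by pairs $(A,B)$ with $A\cup B = \{1,\ldots,k+l\}$, together with the observation that $|A|+|B|\geq k+l$ forces $|A|=k$, $|B|=l$, $A\cap B = \emptyset$ among the surviving terms, is a clean way to make the bookkeeping precise.
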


\begin{proof}  We apply the discrete Leibniz rule \eqref{leibniz} repeatedly to expand out the derivative
\begin{equation}\label{hack}
 \ader_{h_1} \ldots \ader_{h_{k+l}}(PQ)
\end{equation}
with $h_1,\ldots,h_{k+l} \in V$.  Note that if $P$ accepts more than $k$ derivatives, or $Q$ accepts more than $l$ derivatives, then the resulting term in the expansion of \eqref{hack} vanishes.  Thus the only terms in \eqref{hack} that survive are those in which $P$ is differentiated exactly $k$ times, and $Q$ differentiated exactly $l$ times.  Collecting all such terms one obtains the claim.
\end{proof}

From this lemma and Lemma \ref{never}, we see that any form $T \in \CSM^{s+1}(V)$ which can be expressed (possibly after passing to a bounded index subspace) as a linear combination of concatenations $T_1 \ast \ldots \ast T_k$ of classical symmetric multilinear forms of degree strictly less than $s+1$, will satisfy the conclusions of Theorem \ref{GICSM}.

In the high characteristic case $p > s+1$, it turns out that these concatenations are the only expressions one needs to consider to establish Theorem \ref{GICSM}.  Unfortunately the situation is more complicated in the low characteristic case $p \leq s+1$.  This can be illustrated by using the symmetric polynomial $S_4 \in \Poly_{\leq 4}(\F_2^n \to \F_2)$ from Example \ref{exam} (this is on a finite-dimensional space rather than a limit finite-dimensional space, but let us ignore this technicality for this discussion).  A routine calculation reveals that the quartilinear form $d^4 S_4 \in \CSM^4(\F_2^n)$ can be expressed as
\begin{equation}\label{df}
d^4 S_4(a,b,c,d) = B(a,b) B(c,d) + B(a,c) B(b,d) + B(a,d) B(b,c)
\end{equation}
for all $a,b,c,d \in \F_2^n$, where $B \in \CSM^2(\F_2^n)$ is the bilinear form $B = d^2 S_2$, thus
$$ B(a,b) := \sum_{1 \leq i,j \leq n: i \neq j} a_i b_j.$$
(The identity \eqref{df}, which was already observed in \cite{lms}, \cite{finrat}, can be established by testing it on generators $a,b,c,d \in \{e_1,\ldots,e_n\}$.)  The right-hand side of \eqref{df} is formally of the form $(B \ast B)/2!$, but the operation of dividing by $2!$ is not well-defined in characteristic two, and so in fact one cannot easily express \eqref{df} in terms of the concatenation operation.  Instead, we have to introduce a new operation to handle expressions of this form:

\begin{definition}[Symmetric power]  Let $T \in \CSM^k(V)$ for some standard integers $k \geq 2$ and $m \geq 1$.  We define the \emph{symmetric power} $\Sym^m(T) \in \CSM^{mk}(V)$ by the formula
$$ \Sym^m(T)(h_1,\ldots,h_{mk}) = \sum_{{\mathcal A}} \prod_{A \in {\mathcal A}} T((h_i)_{i \in A}),$$
where the sum ranges over all partitions ${\mathcal A}$ of $\{1,\ldots,mk\}$ into $m$ subsets $A$ of cardinality $k$ each.  For instance, if $m=k=2$, then
$$ \Sym^2(T)(a,b,c,d) = T(a,b) T(c,d) + T(a,c) T(b,d) + T(a,d) T(b,c).$$
\end{definition}

Again, it is clear that $\Sym^k(T)$ is symmetric and multilinear; the fact that it is classical follows by observing that when $p$ of the arguments of $\Sym^k(T)$ are set to be equal, then the multiplicity of each term is a multiple of $p$ (as it is equal to $p!$ divided by a number of factorials that are strictly less than $p!$; note here we use the hypothesis $k \geq 2$).  Because of our need to avoid the $k=1$ case, we will have to take some care to eliminate all linear forms from the arguments in the next section, by using the trick of passing to a finite index subspace to make these forms vanish.

Observe that
$$ m! \Sym^m(T) = T \ast \ldots \ast T$$
where the right-hand side contains $m$ copies of $T$.  Thus, in high characteristic $p>m$, one can write the symmetric power in terms of the concatenation operation by the formula
\begin{equation}\label{symt}
 \Sym^m(T) = \frac{T \ast \ldots \ast T}{m!}.
\end{equation}
However, in the low characteristic case the symmetric power operation cannot be reduced easily to the concatenation operation, and we need to consider the two operations separately.

There is an analogue of Lemma \ref{prod}:

\begin{lemma}[Symmetric power rule]\label{symprod}  Let $k \geq 2$ and $m \geq 1$ be standard integers, and let $T \in \CSM^k(V)$ be a classical symmetric multilinear form. Then there exists $Q \in \Poly_{\leq mk}(V \to \F)$ such $d^{mk} Q = \Sym^m(T)$.  Furthermore, if $m \geq 2$, then $Q$ has bounded rank.
\end{lemma}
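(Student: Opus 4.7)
The strategy generalises the high-characteristic identity $\Sym^m(T) = T^{\ast m}/m!$, which together with Lemma~\ref{prod} yields $\Sym^m T = d^{mk}(R^m/m!)$ whenever $m!$ is invertible in $\F$; in low characteristic we replace $R^m/m!$ by a binomial coefficient applied to a $p$-adic lift of $R$. Concretely, use Lemma~\ref{never} to choose $R \in \Poly_{\leq k}(V \to \F)$ with $d^k R = T$, and view $R$ as taking values in $\iota(\F) \subset \ultra\T$. Let $J$ be the least positive integer with $p^J > m$, and apply the surjectivity part of Lemma~\ref{polybasic}(v) iteratively $J-1$ times to obtain $\tilde R \in \Poly_{\leq k + (J-1)(p-1)}(V \to \ultra\T)$ with $p^{J-1}\tilde R = \iota \circ R$. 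Since $p^{J-1}\tilde R$ has range in $\tfrac{1}{p}\Z/\Z$, the range of $\tilde R$ itself lies in $\tfrac{1}{p^J}\Z/\Z$, which we identify canonically with $\Z/p^J\Z$; with this identification $\tilde R$ reduces modulo $p$ to $R$. Define
\[
Q(x) := \binom{\tilde R(x)}{m} \bmod p,
\]
which is a well-defined $\F$-valued function on $V$ because Lucas's theorem makes $n \mapsto \binom{n}{m}\bmod p$ factor through $\Z/p^J\Z$.

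The core of the proof is to verify simultaneously that $Q \in \Poly_{\leq mk}(V \to \F)$ and that $d^{mk} Q = \Sym^m T$. Both facts come from a Leibniz-style expansion driven by the Vandermonde identity
\[
\binom{a+b}{m} - \binom{a}{m} = \sum_{j=1}^{m} \binom{a}{m-j}\binom{b}{j}
\]
applied iteratively with $a = \tilde R(x)$ and $b$ an iterated difference of $\tilde R$. By induction on $m$, the $N$-fold derivative $\ader_{h_1}\cdots\ader_{h_N}Q(x)$ is expressible modulo $p$ as a sum, indexed by compositions $(t_1,\ldots,t_r)$ of $m$ and set-partitions $A_1 \sqcup \cdots \sqcup A_r$ of $\{1,\ldots,N\}$, of products $\prod_{i=1}^{r} \binom{\ader_{A_i}\tilde R(x)}{t_i} \bmod p$, where $\ader_{A_i} := \prod_{j \in A_i} \ader_{h_j}$. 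A sub-induction establishes the key degree bound: $\binom{\tilde R(x)}{t}\bmod p$ is a classical polynomial in $x$ of degree $\leq tk$ for each $0 \leq t < p^J$, with the Lucas-theoretic content being that the higher-order $p$-adic corrections coming from the lift cancel after reduction modulo $p$, leaving contributions whose degree matches the naive $R^t/t!$. Summing degrees across the $r$ factors yields $\sum_i t_i k = mk$, whence $Q \in \Poly_{\leq mk}$. For $N = mk$, only terms with each $t_i = 1$ and each $|A_i| = k$ survive, contributing $\binom{d^k R((h_j)_{j\in A_i})}{1} = T((h_j)_{j \in A_i})$; summing over all such partitions recovers $\Sym^m T(h_1,\ldots,h_{mk})$ by definition.

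Finally, for $m \geq 2$, the polynomial $Q$ is a function of the single polynomial $\tilde R$, whose degree $k + (J-1)(p-1)$ is strictly less than $mk$ (using $p^{J-1} \leq m$, $k \geq 2$, and a short case check), hence $\rank_{mk-1}(Q) \leq 1$ and $Q$ has bounded rank. The main obstacle is the degree sub-induction in the middle paragraph: a naive expansion of $\binom{\tilde R}{t}\bmod p$ as a polynomial in $x$ would have degree proportional to $t(k + (J-1)(p-1))$, far larger than the desired $tk$, so one must track precisely which Vandermonde terms lie in $p\Z/p^J\Z$ (and hence vanish mod $p$) using Lucas's or Kummer's theorem.
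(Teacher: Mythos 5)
Your construction of $Q := \binom{\tilde R}{m}\bmod p$ via a $p$-adic lift $\tilde R$ of $R$ is exactly the paper's, and the bounded-rank conclusion for $m\geq 2$ is likewise correct. However, the degree bound $\deg Q\leq mk$ --- which you rightly flag as the main obstacle --- is not actually closed by your sketch. Two interlocking issues arise. First, the claimed expansion of $\ader_{h_1}\cdots\ader_{h_N}Q$ as a sum over \emph{disjoint} set-partitions is false in general: the discrete Leibniz rule $\ader_h(FG)=(\ader_h F)G+F(\ader_h G)+(\ader_h F)(\ader_h G)$ produces cross terms in which a single $h_j$ differentiates more than one binomial factor, so the $A_i$ need not partition $\{1,\ldots,N\}$. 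For $N=mk$ the cross terms do vanish for degree reasons (that is precisely Lemma~\ref{prod}), but one needs the degree bound first, so verifying both facts ``simultaneously'' through the same expansion is circular; the paper proves the degree bound first and only then computes $d^{mk}Q$ by peeling off a single derivative.

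Second, the sub-induction showing $\binom{\tilde R}{t}\bmod p$ has degree $\leq tk$ cannot be run on $\tilde R$ alone: the Vandermonde step yields $\ader_h\binom{\tilde R}{t}=\sum_i\binom{\ader_h\tilde R}{i}\binom{\tilde R}{t-i}$, whose first factor involves $\ader_h\tilde R$, not $\tilde R$. The inductive hypothesis must therefore quantify over all iterated derivatives of $\tilde R$. The paper's proof establishes the stronger claim that $\binom{\ader_{h_1}\cdots\ader_{h_j}\tilde R}{m'}$ has degree $\leq k-j+(m'-1)\max(k-j,1)$ for all $j\geq 0$ and $0\leq m'<p^J$, by downward induction on $j$ together with an induction on $m'$; the degenerate case (negative target degree) is disposed of not by Lucas's theorem as such, but by combining Lemma~\ref{polybasic}(v) (which shows iterated derivatives of $\tilde R$ become divisible by high powers of $p$ once the naive degree is negative) with the elementary fact that $p\mid\binom{n}{m'}$ whenever $p^a\mid n$ and $m'<p^a$. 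Your instinct that a Kummer-type divisibility input is needed is right, but without the refined inductive statement the degree bound does not follow.
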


\begin{proof}  By Lemma \ref{never} we may write $T = d^k P$ for some $P \in \Poly_{\leq k}(V \to \F)$.

By a limiting argument it suffices to establish the claim when $V$ is finite dimensional, as long as the bound in ``bounded rank'' depends only on $p,k,m$ and not on the dimension of $V$.

Heuristically, in view of \eqref{symt} and Lemma \ref{prod}, it is natural to try to set $Q$ equal to $P^m/m!$.  This works in the high characteristic case $m<p$, but not in the low characteristic case $p \geq m$ due to the non-invertibility of $m!$.  To get around this, we will use the binomial coefficient $\binom{P}{m}$ instead of $P^m/m!$; but this requires lifting $P$ to a larger group than $\F = \Z/p\Z$.

We turn to the details.  We let $M \geq 0$ be the first integer such that $m<p^{M+1}$, thus $p^M \leq m$.  Using Lemma \ref{polybasic}(v) repeatedly, we may find a polynomial $\tilde P_M \in \Poly_{\leq k + M(p-1)}(V \to \T)$ such that $p^M \tilde P_M = \iota(P)$.  In particular, $\tilde P_M$ takes values in the $(p^{M+1})^\th$ roots of unity.  We may thus pull $\tilde P_M$ back to the cyclic group $\Z/p^{M+1}\Z$ to obtain a polynomial $P_M \in \Poly_{\leq k+M(p-1)}(V \to \Z/p^{M+1}\Z)$ such that $P_M = P \mod p$.

An inspection of the formula $\binom{n}{m} = \frac{n(n-1)\ldots(n-m+1)}{m (m-1)\ldots 1}$ for a binomial coefficient reveals that the map $n \mapsto \binom{n}{m} \mod p$ is periodic with period $p^{M+1}$ whenever $m < p^{M+1}$.  In particular, by abuse of notation we may define the binomial coefficient $\binom{n}{m} \mod p \in \F$ whenever $m < p^{M+1}$ and $n \in \Z/p^{M+1}\Z$.  We then set $Q := \binom{P_M}{m} \mod p$.

We first verify that $Q$ is a polynomial of degree $\leq mk$.  For inductive reasons, we will prove the more general claim that for any $j \geq 0$, any $h_1,\ldots,h_j \in V$, and any $0 \leq m' < p^{M+1}$, the expression $\binom{\ader_{h_1} \ldots \ader_{h_j} P_M}{m'}$ has degree at most $k-j + (m'-1) \max(k-j,1)$.  Clearly, this implies the previous claim by setting $j := 0$ and $m' := m$.

We first address the degenerate case when $k-j + (m'-1) \max(k-j,1)$ is negative, so in particular $m' \leq j-k$.  The polynomial $\ader_{h_1} \ldots \ader_{h_j} P_M$ has degree $\leq k+M(p-1)-j$, so by Lemma \ref{polybasic}(v), it is divisible by $p^{a+1}$ whenever $0 \leq a \leq M$ and $k+a(p-1)-j < 0$.  In particular, $\ader_{h_1} \ldots \ader_{h_j} P_M$ is divisible by $p^{\lfloor \frac{m'-1}{p-1} \rfloor + 1}$.  On the other hand, observe that $\binom{n}{m'}$ is divisible by $p$ whenever $n$ is divisible by $p^a$ and $m' < p^a$.  Since $m' < p^{\lfloor \frac{m'-1}{p-1} \rfloor + 1}$, we obtain the claim.

To handle the non-degenerate cases when $k-j + (m'-1) \max(k-j,1) \geq 0$, we use downward induction on $j$.  The claim is vacuously true for $j$ sufficiently large, so we assume inductively that the claim is proven for all larger values of $j$; for fixed $j$, we also assume inductively that the claim is proven for all smaller values of $m'$.  By Lemma \ref{polybasic}(i), it suffices to show that the expression
\begin{equation}\label{pam}
 \ader_{h_{j+1}} \binom{\ader_{h_1} \ldots \ader_{h_j} P_M}{m'}
\end{equation}
has degree at most $k-j + (m'-1) \max(k-j,1) - 1$ for all $h_{j+1} \in V$.

From the combinatorial identity
$$ \binom{n+n'}{m'} = \sum_{i=0}^{m'} \binom{n}{i} \binom{n'}{m'-i}$$
we see that
\begin{equation}\label{hafm}
 \ader_h \binom{F}{m'} = \sum_{i=1}^{m'} \binom{\ader_h F}{i} \binom{F}{m'-i}
\end{equation}
whenever $h \in V$ and $F: V \to \Z/p^{M+1}\Z$.  We may therefore expand \eqref{pam} as
$$ \sum_{i=1}^{m'} \binom{\ader_{h_1} \ldots \ader_{h_{j+1}} P_M}{i} \binom{\ader_{h_1} \ldots \ader_{h_{j}} P_M}{m'-i}.$$
In each summand, we apply the two induction hypotheses to conclude that the first factor in the summand has degree $\leq k-j-1 + (i-1) \max(k-j-1,1)$, and the second factor has degree $\leq k-j + (m'-i-1) \max(k-j,1)$.  A routine computation shows that
$$ (k-j-1 + (i-1) \max(k-j-1,1)) + (k-j + (m'-i-1) \max(k-j,1)) \leq k-j + (m'-1) \max(k-j,1) - 1$$
whenever $i \geq 1$ (treating the cases $k-j > 1$, $k-j \leq 1$ separately), and the claim then follows from Lemma \ref{polybasic}(iv).  Thus $Q$ has degree $\leq mk$ as desired.

Now we compute the derivative $d^{mk} Q$ of $Q = \binom{P_M}{m}$.  Using \eqref{hafm} we have
$$ \ader_h  \binom{P_M}{m} = \sum_{i=1}^m \binom{\ader_h P_M}{i} \binom{P_M}{m-i}$$
for any $h \in V$.  By the above computations, the polynomial $\binom{\ader_h P_M}{i} \binom{P_M}{m-i}$ has degree 
$$ \leq ((k-1) + (i-1) (k-1)) + (k + (m-i-1)k) = mk - i.$$
In particular, all the terms with $i>1$ have degree strictly less than $mk-1$ and thus will not contribute to $d^{mk}  \binom{P_M}{m}$.  The $i=1$ term can be simplified as $\Delta_h P \binom{P_M}{m-1}$.  We conclude that
$$ d^{mk} \binom{P_M}{m}(h_1,\ldots,h_{mk}) = d^{mk-1} ( (\ader_{h_{mk}} P)
 \binom{P_M}{m-1} ) (h_1,\ldots,h_{mk-1}).$$
Expanding this out using Lemma \ref{prod} we have
\[
\begin{aligned} & d^{mk} \binom{P_M}{m}(h_1,\ldots,h_{mk})
= (d^{k-1}(\ader_{h_{mk}}P)\ast d^{mk-k}  \binom{P_M}{m})(h_1,\ldots,h_{mk-1})\\
& = \sum_{1 \leq i_1 < \ldots < i_{k-1} < mk} d^k P(h_{i_1},\ldots,h_{i_{k-1}},h_{mk}) d^{(m-1)k} \binom{P_M}{m-1}(h_{j_1},\ldots,h_{j_{(m-1)k}})
\end{aligned}
\]
where $1 \leq j_1 < \ldots < j_{(m-1)k} < mk$ are the ordered enumeration of the set $\{1,\ldots,mk-1\} \backslash \{i_1,\ldots,i_{k-1}\}$.  The claim $d^{mk} \binom{P_M}{m} = \Sym^m(d^k P)$ then follows by induction on $m$.
\end{proof}

\begin{example}  We illustrate the above lemma with $p=m=k=2$, with $V = \F^n$ and $P = S_2$ the symmetric polynomial from Example \ref{exam}, thus $S_2: V \to \F$ is the classical quadratic polynomial
$$ S_2(x_1,\ldots,x_n) = \sum_{1 \leq i < j \leq n} x_i x_j.$$
The bilinear form $B := d^2 S_2 : V^2 \to \F$ is then given as
$$ B((x_1,\ldots,x_n),(y_1,\ldots,y_n)) = \sum_{1 \leq i,j \leq n: i \neq j} x_i y_j;$$
this is a classical symmetric bilinear form.

Lemma \ref{symprod} asserts the existence of a classical quartic $Q: V \to \F$ with
\begin{equation}\label{qbb}
d^4 Q(a,b,c,d) = B(a,b) B(c,d) + B(a,c) B(b,d) + B(a,d) B(b,c).
\end{equation}
By \eqref{df} we see that we can take $Q=S_4$.  To connect this with the proof of Lemma \ref{symprod}, we recall that $P = S_2 = L \mod 2$, so if we set $P_1 := L \mod 4$, then $P = P_1 \mod 2$, and
$S_4 = \binom{P_1}{2} \mod 2$.
\end{example}

Theorem \ref{GICSM} now follows from a more explicit claim:

\begin{theorem}[Explicit inverse Gowers for classical symmetric multilinear forms]\label{GICSME} Let $s \geq 2$ be such that $\GIP(s')$ is true for all $0 \leq s' < s$, let $V$ be limit finite-dimensional, and let $T \in \CSM^{s+1}(V)$ be such that $|\E_{h_1,\ldots,h_{s+1} \in V} e(\iota(T(h_1,\ldots,h_{s+1})))| \gg 1$.  Then there exists a bounded index subspace $V'$ of $V$ such that on $(V')^{s+1}$, $T$ is a linear combination (over $\F$) of a bounded number of expressions of the form
\begin{equation}\label{symmr}
\Sym^{m_1}(S_1) \ast \ldots \ast \Sym^{m_r}(S_r)
\end{equation}
for some  $m_1,\ldots,m_r \geq 1$ and $2 \leq k_1,\ldots,k_r < s+1$ and $S_i \in \CSM^{k_i}(V')$ for $i=1,\ldots,r$ with
$$ m_1 k_1+\ldots+ m_r k_r = s+1.$$
\end{theorem}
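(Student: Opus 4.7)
The strategy is to use Lemma \ref{never} to write $T = d^{s+1} P$ for some classical polynomial $P \in \Poly_{\leq s+1}(V \to \F)$, noting that the bias hypothesis on $T$ translates (via \eqref{arank-eq}) to the statement that $P$ has bounded analytic rank. The idea is then to decompose $P$ explicitly in terms of jointly equidistributed lower-degree classical polynomials and compute $d^{s+1} P$ directly from this decomposition.

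I would then invoke an equidistribution / regularity lemma in the spirit of Kaufman--Lovett \cite{kauf} (see also \cite{finrat}), using the inductive hypothesis $\GIP(s')$ for $s' < s$ to collapse any factor of bounded rank that arises during the regularization, to write
\[
P = F(R_1, \ldots, R_m)
\]
where $R_1, \ldots, R_m$ are classical polynomials of degrees $d_1, \ldots, d_m \in \{1, \ldots, s\}$ and the tuple $(R_1, \ldots, R_m) : V \to \F^m$ is jointly equidistributed in the technical sense needed (the forthcoming Lemma \ref{joint}). Passing to the bounded index limit subspace $V'$ obtained by intersecting the kernels of $R_i - R_i(0)$ for the indices $i$ with $d_i = 1$ makes every linear $R_i$ constant on $V'$; after absorbing these constants into $F$, I may assume all surviving degrees satisfy $d_i \geq 2$.

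Next, I expand $F$ as an $\F$-polynomial with each exponent in the range $[0,p)$:
\[
F(y_1, \ldots, y_m) = \sum_\alpha c_\alpha\, y_1^{\alpha_1} \cdots y_m^{\alpha_m}, \qquad 0 \leq \alpha_i < p.
\]
Joint equidistribution of the $R_i$ ensures both that this expansion is unique and that the weighted degree $\sum_i \alpha_i d_i$ of any monomial with $c_\alpha \neq 0$ coincides with the polynomial degree of $R_1^{\alpha_1} \cdots R_m^{\alpha_m}$ on $V'$; combined with $\deg P \leq s+1$ this forces $\sum_i \alpha_i d_i \leq s+1$ whenever $c_\alpha \neq 0$. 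Applying $d^{s+1}$ to both sides and iterating the product rule of Lemma \ref{prod} annihilates every monomial of weighted degree strictly less than $s+1$, leaving
\[
T = \sum_{\alpha:\; \sum_i \alpha_i d_i = s+1} c_\alpha\; (d^{d_1} R_1)^{\ast \alpha_1} \ast \cdots \ast (d^{d_m} R_m)^{\ast \alpha_m}
\]
on $(V')^{s+1}$. Since each $\alpha_i < p$, the elementary identity $X^{\ast \alpha_i} = \alpha_i!\, \Sym^{\alpha_i}(X)$ with $\alpha_i! \in \F^\times$ converts every concatenation power into a symmetric power. Setting $S_i := d^{d_i} R_i \in \CSM^{d_i}(V')$ (classical by the first half of Lemma \ref{never}) and discarding trivial factors with $\alpha_i = 0$, each surviving summand has the desired form $\Sym^{m_1}(S_{i_1}) \ast \cdots \ast \Sym^{m_r}(S_{i_r})$ with $m_j = \alpha_{i_j} \geq 1$, $k_j = d_{i_j} \in [2, s]$, and $\sum_j m_j k_j = s+1$.

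The principal obstacle is the joint equidistribution step. One needs it strong enough to guarantee both the uniqueness of the polynomial expansion of $F$ and the equality between its weighted degree and the polynomial degree of $P$; this forces an iterated regularity lemma in which, at each stage, a bias-to-rank statement of the form $\GIP(s')$ for some $s' < s$ is invoked (via the inductive hypothesis) to collapse any factor that turns out to be of bounded rank. A subsidiary technical point is to ensure that the required equidistribution (and the degree-$\geq 2$ reduction) is preserved under the passage to the bounded index subspace $V'$ that eliminates the linear factors.
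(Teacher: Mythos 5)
There is a genuine gap at the very first substantive step of your argument, and it is a circularity.

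After writing $T = d^{s+1}P$ and observing that $P \in \Poly_{\leq s+1}(V \to \F)$ has bounded analytic rank, you propose to ``invoke an equidistribution / regularity lemma $\dots$ to write $P = F(R_1,\ldots,R_m)$ where $R_1,\ldots,R_m$ are classical polynomials of degrees $d_1,\ldots,d_m \in \{1,\ldots,s\}$.'' But the existence of such a representation (with all $d_i \leq s$) is precisely the statement that $P$ has bounded \emph{rank} in $\Poly_{\leq s+1}$. Passing from bounded analytic rank to bounded rank for a degree-$(s+1)$ polynomial is exactly $\GIP(s)$ applied to $P$; the hypotheses of Theorem \ref{GICSME} give you only $\GIP(s')$ for $s' < s$, and $\GIP(s)$ is what the whole argument (of which Theorem \ref{GICSM}/\ref{GICSME} is one ingredient) is ultimately trying to establish. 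The Kaufman--Lovett theorem you cite (as used in Proposition \ref{equi}) is a statement about \emph{biased} polynomials, i.e.\ $|\E_x e(\iota(Q(x)))| \gg 1$; it does not apply directly to $P$, since bounded analytic rank only tells you that $\|e(\iota(P))\|_{U^{s+1}(V)} \gg 1$, not that $P$ itself is biased (e.g.\ any nonconstant classical linear form has Gowers $U^2$ norm $1$ but mean $0$). No regularity lemma in the style of \cite{finrat} or \cite{kauf} can express a single \emph{high-rank} degree-$(s+1)$ polynomial in terms of lower-degree ones, because high rank means exactly that this is impossible.

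The paper circumvents this by never reducing to a decomposition of $P$ on $V$. Instead it applies Kaufman--Lovett (Proposition \ref{equi}) directly to $T$, viewed as a \emph{biased} degree-$(s+1)$ polynomial on $V^{s+1}$: the bias hypothesis $|\E_{h_1,\ldots,h_{s+1}} e(\iota(T))| \gg 1$ is exactly the form in which Kaufman--Lovett applies. This expresses $T$ as a function of boundedly many derivatives $\ader_v T$ for $v \in V^{s+1}$, and then \emph{multilinearity} of $T$ lets those derivatives collapse to lower-arity CSM forms $S_{k,i}(h_{j_1},\ldots,h_{j_k})$ evaluated at subtuples. That decomposition is obtained unconditionally, with no appeal to $\GIP(s)$. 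The paper then proves that the combining function is multilinear in a suitable sense (Proposition \ref{fmulti}) and groups by the $\Sym(\{1,\ldots,s+1\})$ symmetry to produce the $\Sym^{m_i} \ast \cdots$ structure. Your back-end ideas --- expanding in monomials with exponents below $p$, the weighted-degree argument, iterating Lemma \ref{prod}, and converting $X^{\ast\alpha}$ to $\alpha!\,\Sym^\alpha(X)$ with $\alpha! \in \F^\times$ --- are reasonable and broadly parallel to Proposition \ref{fmulti} and the subsequent symmetrisation, but they are moot without a legitimate way to obtain the decomposition of $T$ in the first place. To salvage your plan you would need to replace the regularisation of $P$ on $V$ by a regularisation of $T$ on $V^{s+1}$ via Proposition \ref{equi} and the CSM-system machinery of Section \ref{equimulti}; at that point your argument essentially becomes the paper's.
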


Indeed, by repeatedly applying Lemma \ref{never} we find that $S_i=d^{k_i}P_i$ for some $P_i \in \Poly_{\le k_i}(V \to \F)$.  By Lemma \ref{symprod} $\Sym^{m_i}(d^{k_i}P_i)=d^{k_im_i}Q_i$ for some $Q_i \in \Poly_{\le m_ik_i}(V \to \F)$, and $Q_i$ is of bounded rank if $m_i \ge 2$. Now by  Lemma \ref{prod}, we see that any expression of the form \eqref{symmr} can be expressed as $d^{s+1} Q$ on $V'$ where 
$Q=\prod_i Q_i \in \Poly_{\leq s+1,\BR}(V' \to \F)$ (note that the product of at least two polynomials of degree $\ge 1$ is necessarily of bounded rank), and Theorem \ref{GICSM} follows by linearity.

It remains to establish Theorem \ref{GICSME}.  To illustrate the type of result one is seeking here, in the case $s+1=6$, one has a classical symmetric sextilinear form $T \in \CSM^6(V)$ which is biased in the sense that
$$ |\E_{a,b,c,d,e,f \in V} e(\iota(T(a,b,c,d,e,f)))| \gg 1,$$
and one wishes to conclude that on a bounded index subspace $V'$ of $V$, $T(a,b,c,d,e,f)$ can be decomposed into a bounded number of pieces such as the expression
$$ B(a,b) B(c,d) B(e,f)$$
plus $\frac{6!}{2! 2! 2! 3!}-1 = 14$ other permutations (for some $B \in \CSM^2(V')$), adding up to $\Sym^3(B)$, or
$$ B(a,b) B(c,d) B'(e,f)$$
plus $\frac{6!}{2! 2! 2! 2!}-1 = 44$ other permutations (for some $B, B' \in \CSM^2(V')$), adding up to $\Sym^2(B) \ast B'$, or
$$ C(a,b,c) C(d,e,f)$$
plus $\frac{6!}{3! 3! 2!}-1 = 9$ other permutations (for some $C \in \CSM^3(V')$), adding up to $\Sym^2(C)$, or
$$ B(a,b) D(c,d,e,f)$$
plus $\frac{6!}{2! 4!} - 1 = 14$ other permutations (for some $B \in \CSM^2(V')$ and $D \in \CSM^4(V')$), adding up to $B \ast D$.

\section{Equidistribution of multilinear maps}\label{equimulti}

In order to establish Theorem \ref{GICSME} (and thus Theorem \ref{GICSM}) we will need an equidistribution theory for classical symmetric multilinear maps, analogous to that in \cite{finrat}, \cite{kauf}.  We introduce some definitions:

\begin{definition}[Bounded rank for multilinear forms]  Let $k \geq 1$ and let $V$ be limit finite-dimensional.  A form $T \in \CSM^k(V)$ is said to be \emph{bounded rank} if there exist a bounded number of forms $S_i \in \CSM^{k_i}(V)$, $i=1,\ldots,m$ for some $1 \leq k_i < k$ such that for $h_1,\ldots,h_k \in V$, the expression $T(h_1,\ldots,h_k)$ is a function of expressions of the form $S_i(h_{j_1}, \ldots, h_{j_{k_i}})$ for some $1 \leq i \leq m$ and $1 \leq j_1 < \ldots < j_{k_i} \leq k$, and \emph{unbounded rank} otherwise.  The space of bounded rank forms will be denoted $\CSM^k_{\LR}(V)$; it is clearly a subspace of $\CSM^k(V)$.
\end{definition}

Thus, for instance, if $k=4$ and $T$ takes the form
$$ T(a,b,c,d) = B(a,b) B(c,d) + B(a,c) B(b,d) + B(a,d) B(b,c)$$
(i.e. $T = \Sym^2(B)$) for some $B \in \CSM^2(V)$, then $T$ would be bounded rank.

Our starting point will be the following result of Kaufman and Lovett \cite{kauf} (which in turn is based on the earlier paper \cite{finrat}), which links bounded rank with bias:

\begin{proposition}[Bias criterion]\label{equi}  Let $k \geq 1$ be a standard integer, and let $V$ be limit finite-dimensional.  
Then a form $T \in \CSM^k(V)$ is bounded rank if and only if
$$ |\E_{h_1,\ldots,h_k \in V} e( \iota(T( h_1,\ldots,h_k) ) )| \gg 1.$$
\end{proposition}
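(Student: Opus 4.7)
The plan is to follow Kaufman and Lovett \cite{kauf}, handling the two directions of the equivalence separately. The easy direction (bounded rank implies bias) is a direct calculation exploiting the multilinear structure; the hard direction (bias implies bounded rank) proceeds by induction on the arity $k$, combining Fourier analysis in one variable with a regularity argument on the resulting family of lower-arity forms.

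For the easy direction, suppose $T = F(S_1,\ldots,S_m)$ with each $S_i \in \CSM^{k_i}(V)$ of arity $k_i < k$ applied to a subtuple $h_{J_i}$ with $J_i \subsetneq \{1,\ldots,k\}$. Fourier-expanding $e \circ \iota \circ F : \F^m \to \C$ and exploiting the multilinearity of the $S_i$ reduces $\E_h e(\iota T)$ to a weighted sum of character sums involving polynomials of degree $< k$ on $V^k$. A direct calculation, exemplified by $T(h_1,h_2) = \sum_{i=1}^m \ell_i(h_1)\ell_i'(h_2)$ which yields bias $p^{-m}$ when the $\ell_i,\ell_i'$ are linearly independent, produces a lower bound depending only on $m$.

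For the hard direction, induct on $k$. The base $k = 1$ is vacuous since a biased linear form is identically zero. For $k \geq 2$, since $h_k \mapsto T(h_1,\ldots,h_{k-1},h_k)$ is a character of $V$, Fourier analysis in the last variable gives
\begin{equation*}
\E_{h_1,\ldots,h_k} e(\iota T) = \frac{|A|}{|V|^{k-1}}, \qquad A := \{(h_1,\ldots,h_{k-1}) : T(h_1,\ldots,h_{k-1},\cdot) \equiv 0\}.
\end{equation*}
For each $v \in V$, define $T_v \in \CSM^{k-1}(V)$ by $T_v(h_1,\ldots,h_{k-1}) := T(h_1,\ldots,h_{k-1},v)$. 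The assignment $v \mapsto T_v$ is $\F$-linear, so picking a basis $T_{v_1},\ldots,T_{v_m}$ of its image together with coordinate linear functionals $c_1,\ldots,c_m : V \to \F$ yields
\begin{equation*}
T(h_1,\ldots,h_k) = \sum_{i=1}^m c_i(h_k)\, T_{v_i}(h_1,\ldots,h_{k-1}),
\end{equation*}
exhibiting $T$ as a function of $m$ linear forms in $h_k$ and $m$ elements of $\CSM^{k-1}(V)$. Thus $T$ has bounded rank once $m$ is bounded.

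Bounding $m$ is the main obstacle. Since $A = \{h : T_{v_1}(h) = \ldots = T_{v_m}(h) = 0\}$, the density assumption gives $\Pr_h[T_{v_i}(h) = 0 \text{ for all } i] \geq \delta$. Converting this into $m = O_\delta(1)$ requires a regularity procedure on $\{T_{v_1},\ldots,T_{v_m}\}$: iteratively invoke the induction hypothesis to replace any $T_{v_i}$ that is itself of bounded rank as a $\CSM^{k-1}$-form by its lower-arity constituents (absorbing these into the ambient decomposition of $T$), and refine dependent or biased $\F$-linear combinations of the remaining forms until the surviving collection is jointly equidistributed in the sense that $\Pr[T_{v_i}(h) = t_i \,\forall i] = p^{-m'} + o(1)$ uniformly in $(t_i)$. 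In the regularized state the joint vanishing probability is at most $p^{-m'} + o(1)$, forcing $m' = O(\log 1/\delta)$. The technical heart of the argument lies in showing that this regularization can be carried out while respecting the classical symmetric multilinear structure, so that the lower-arity constituents one absorbs remain members of $\CSM^{k_i}(V)$ for suitable $k_i < k$; this is exactly what the Kaufman–Lovett machinery, adapted from arbitrary polynomials to the $\CSM$ setting, provides.
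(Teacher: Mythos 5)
There is a gap in your easy direction. After Fourier-expanding $e\circ\iota\circ F$ one obtains $\E_h e(\iota T) = \sum_{\xi \in \F^m} \hat F(\xi)\, \E_h e\bigl(\iota\sum_i \xi_i S_i(h_{J_i})\bigr)$, and the individual character sums on the right have no sign and no evident lower bound; a priori the whole sum could cancel. Your example $T=\sum_i\ell_i(h_1)\ell_i'(h_2)$ is the special case where $T$ is itself the linear combination (so only the $\xi$ dual to that combination contributes); it says nothing about a general $F$. The paper's actual argument supplies the missing idea: start from the trivial identity $\E_h e(\iota T)\overline{e(\iota T)}=1$, expand only the second factor via the Fourier decomposition of $F$, apply pigeonhole to find a single $\xi^*$ with $|\E_h e(\iota T)\,e(-\iota(\xi^*\cdot S))|\geq p^{-m}$, factorise $e(-\iota(\xi^*\cdot S))$ as $\prod_{j=1}^k f_j$ with each $f_j$ independent of $h_j$ (possible because each $|J_i|<k$), and then invoke the multilinear Cauchy--Schwarz--Gowers inequality (Lemma \ref{basic-rank}(iv)) to deduce $\arank(T)=O(1)$ and hence $\E e(\iota T)\gg 1$. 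Without some device of this kind the cancellation issue is not addressed.

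Your hard direction is a genuinely different route from the paper's, but it is also the longer one and you leave its crux unfilled. The paper simply applies \cite[Theorem 2]{kauf} to $T$ viewed as a degree-$k$ polynomial on $V^k$, concluding that $T$ is a function of boundedly many derivatives $\ader_v T$ with $v\in V^k$; it then observes that by multilinearity each $\ader_v T$ expands (via the cube expansion of $T(h+v)$) as a sum over nonempty $S\subset\{1,\dots,k\}$ of forms $(h_i)_{i\notin S}\mapsto T((h_i)_{i\notin S},(v_j)_{j\in S})$, each of which is a classical symmetric form of arity $k-|S|<k$ applied to a subtuple. Thus the adaptation to the $\CSM$ category is automatic and no fresh regularity argument inside $\CSM$ is needed. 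You instead propose an induction on $k$ with Fourier analysis in the last variable and a Kaufman--Lovett-style regularization of the family $\{T_v\}$, and you explicitly flag the regularization that respects the classical symmetric multilinear structure as ``the technical heart''---but that is precisely the part you do not carry out, and it is exactly what the paper avoids having to do by the multilinearity observation. Also note that bounding $m=\dim\,\mathrm{span}\{T_v\}$ is not the right goal: $m$ can be unbounded even for bounded-rank $T$ (since many $T_{v_i}$ may themselves be low rank); what must be bounded is the number of regular, high-rank forms surviving regularization, which again is what \cite[Theorem 2]{kauf} packages for you.
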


\begin{proof} The ``only if'' part is easy: if $T$ is bounded rank, then by Fourier analysis, the function  $e( \iota(T(h_1,\ldots,h_k)) )$ is a bounded linear combination of functions of the form $e( \iota( \sum_{i=1}^m c_i S_i( h_{j_1},\ldots,h_{j_{k_i}} ) ) )$ for $c_i \in \F$.  Such functions can be factorised as $\prod_{j=1}^d f_j( h_1,\ldots,h_k)$ where the $f_j$ are bounded functions not depending on $h_j$.  On the other hand, we clearly have
$$ \E_{h_1,\ldots,h_k \in V} e( \iota(T( h_1,\ldots,h_k) ) ) \overline{e( \iota(T( h_1,\ldots,h_k) ) )} = 1.$$
Applying the pigeonhole principle we conclude that
$$ |\E_{h_1,\ldots,h_k \in V} e( \iota(T( h_1,\ldots,h_k) ) ) \prod_{j=1}^d \overline{f_j(h_1,\ldots,h_k)}| \gg 1$$
for at least one collection $f_1,\ldots,f_d$ of bounded functions; the claim then follows from Lemma \ref{basic-rank}(iv) (and Lemma \ref{never}).

We now turn to the ``if'' part.  This result follows easily from \cite[Theorem 2]{kauf}.  Indeed, applying that theorem, we see that $T(h_1,\ldots,h_k)$ is a function of a bounded number of polynomials $\ader_v T(h_1,\ldots,h_k)$ for $v \in V^k$, but by the multilinearity of $T$ we see that all such derivatives are functions of expressions of the form $S_i(h_{j_1}, \ldots, h_{j_{k_i}})$ for some $1 \leq j_1 < \ldots < j_{k_i} \leq k$, as required.
\end{proof}

Using the above proposition, we can obtain a criterion for joint equidistribution for certain systems of $\CSM$ forms; the precise definition of joint equidistribution is given in Definition \ref{equidef}.  We need another definition:

\begin{definition}[Systems of CSM forms]  Let $k_0 \geq 1$ be standard, and let $V$ be limit finite-dimensional.  A \emph{$\CSM$-system} $T = (T_{k,i})_{1 \leq k < k_0; 1 \leq i \leq m_k}$ of degree $<k_0$ is a collection of forms $T_{k,i} \in \CSM^k(V)$ for $1 \leq k < k_0$ and $1 \leq i \leq m_k$, where $m_1,\ldots,m_{k-1}$ are standard natural numbers.  A $\CSM$-system is said to be \emph{regular} if, for each $1 \leq k < k_0$, the forms $T_{k,1},\ldots,T_{k,m_k} \in \CSM^k(V)$ are linearly independent modulo $\CSM^k_\LR(V)$, thus one has
$$ a_1 T_{k,1} + \ldots + a_k T_{k,m_k} \not \in \CSM^k_\LR(V)$$
whenever $a_1,\ldots,a_k \in \F$ are not all zero.

Let $S \in \CSM^d(V)$ be a classical symmetric multilinear form of some (standard) degree $d \geq 1$.  We say that $S$ is \emph{measurable} with respect to a $\CSM$-system $T = (T_{k,i})_{1 \leq k < k_0; 1 \leq i \leq m_k}$ if there is a functional relationship of the form
$$ S = F( (T_\alpha)_{\alpha \in A} )$$
where $A$ is the set of tuples 
$$ \alpha = (k_\alpha, i_\alpha, j_{\alpha,1},\ldots,j_{\alpha,k_\alpha})$$
with $1 \leq k_\alpha < k_0$, $1 \leq i_\alpha \leq m_{k_\alpha}$, and $1 \leq j_{\alpha,1} < j_{\alpha,2} < \ldots < j_{\alpha,k_\alpha} \leq d$ (in particular, this forces $k_\alpha \leq d$), and $F: \F^A \to \F$ is a function.
\end{definition}

\begin{example}  A collection $L_1,\ldots,L_{m_1}: V \to \F$ of linear forms, together with a collection $B_1,\ldots,B_{m_2}: V^2 \to \F$ of classical symmetric bilinear forms, will form a $\CSM$-system of degree $<3$.  In order for this system to be regular, the linear forms $L_1,\ldots,L_{m_1}$ must be linearly independent, and no non-trivial linear combination of the $B_1,\ldots,B_{m_2}$ can be of bounded rank (i.e. expressible in terms of boundedly many linear forms).  A quartilinear form such as $B_1 \ast B_2$, $B_1 \ast L_1 \ast L_2$, $B_1 \ast \Sym^2(L_1)$, $\Sym^2(B_1)$, or linear combinations thereof, will be measurable with respect to $T$ (provided that $m_1,m_2$ are large enough so that these expressions make sense, of course).
\end{example}

Proposition \ref{equi} can now be recast as follows:

\begin{corollary}[Bias criterion, again]\label{equi-2}  Let $k \geq 1$ be a standard integer, and let $V$ be limit finite-dimensional.  
Let $T \in \CSM^k(V)$.  Then one has
$$ |\E_{h_1,\ldots,h_k \in V} e( \iota(T( h_1,\ldots,h_k) ) )| \gg 1$$
if and only if $T$ is measurable with respect to a $\CSM$-factor $S = (S_{k',i})_{1 \leq k' < k; 1 \leq i \leq m_{k'}}$ of degree $<k$.
\end{corollary}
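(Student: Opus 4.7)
The ``if'' direction is immediate from Proposition \ref{equi}: suppose $T$ is measurable with respect to a $\CSM$-system $S = (S_{k',i})_{1 \le k' < k;\ 1 \le i \le m_{k'}}$ of degree $<k$. Then by definition $T(h_1,\ldots,h_k)$ is a function of the values $S_{k_\alpha,i_\alpha}(h_{j_{\alpha,1}},\ldots,h_{j_{\alpha,k_\alpha}})$ with $k_\alpha < k$ and $1 \le j_{\alpha,1} < \ldots < j_{\alpha,k_\alpha} \le k$. This is exactly the condition that $T$ be bounded rank in $\CSM^k(V)$, so Proposition \ref{equi} yields $|\E_{h_1,\ldots,h_k \in V} e(\iota(T(h_1,\ldots,h_k)))| \gg 1$.

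For the converse, apply the other direction of Proposition \ref{equi} to conclude that $T$ is a bounded rank $\CSM$ form: there exist boundedly many forms $S_i \in \CSM^{k_i}(V)$ with $1 \le k_i < k$ such that $T(h_1,\ldots,h_k)$ is a function of the values $S_i(h_{j_1},\ldots,h_{j_{k_i}})$ for $1 \le j_1 < \ldots < j_{k_i} \le k$. Grouping the $S_i$ by degree produces an initial $\CSM$-system of degree $<k$ with respect to which $T$ is measurable, but this system need not be regular.

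The plan is then to regularize the system by processing the degrees $k' = k-1, k-2, \ldots, 1$ in decreasing order. At each stage, if the forms $\{S_{k',i}\}_i$ at degree $k'$ fail to be linearly independent modulo $\CSM^{k'}_\LR(V)$, pick a nontrivial combination lying in $\CSM^{k'}_\LR(V)$ and rearrange it to express one form, say $S_{k',j}$, as $\sum_{i \ne j} b_i S_{k',i} + R$ with $R \in \CSM^{k'}_\LR(V)$. By the very definition of bounded rank, $R(h_1,\ldots,h_{k'})$ is a function of boundedly many expressions $R_m(h_{l_1},\ldots,h_{l_{k_m}})$ with $R_m \in \CSM^{k_m}(V)$, $k_m < k'$, and $1 \le l_1 < \ldots < l_{k_m} \le k'$. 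Substituting this identity everywhere $S_{k',j}(h_{j_{\alpha,1}},\ldots,h_{j_{\alpha,k'}})$ appears in the functional expression for $T$, one eliminates $S_{k',j}$ at the cost of introducing finitely many new forms of degree strictly less than $k'$ (evaluated at appropriate subsets, composing the inclusions $l_\bullet \mapsto j_{\alpha,l_\bullet}$ to recover ordered subsets of $\{1,\ldots,k\}$). Iterating this at level $k'$ terminates after at most $m_{k'}$ eliminations, leaving a system that is regular at degree $k'$; we then descend to degree $k'-1$.

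The main (modest) obstacle is the bookkeeping required to check that after substitution, $T$ retains the correct measurable form (namely a function of lower-degree $\CSM$ forms evaluated at \emph{ordered subsets} of $\{1,\ldots,k\}$), and that the downward induction does not undo previously achieved regularity. Both points are handled by the observation that eliminations at degree $k'$ add forms only at degrees strictly below $k'$, so the regularity already secured at degrees $>k'$ is untouched, and the composition of ordered inclusions remains an ordered inclusion. The process terminates with a regular $\CSM$-system of degree $<k$ with respect to which $T$ is measurable, completing the proof.
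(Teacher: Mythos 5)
Your first two paragraphs are exactly the intended content of this corollary: being measurable with respect to a $\CSM$-system of degree $<k$ is, by direct inspection of the definitions, the same condition as being bounded rank in the sense used in Proposition \ref{equi} (one simply groups the forms $S_i$ by degree in one direction, and flattens the double-indexed family in the other). The corollary is therefore just a rewording of that proposition, and the paper offers no separate proof for precisely this reason — it introduces the statement with the phrase ``Proposition \ref{equi} can now be recast as follows.'' So this part of your argument is correct and is essentially identical to what the paper has in mind.

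However, the third and fourth paragraphs over-prove the statement. Corollary \ref{equi-2} as written asserts only measurability with respect to \emph{some} $\CSM$-factor of degree $<k$; it does not require that factor to be regular. The regular version is the subsequent Corollary \ref{equi-3}, which the paper obtains by applying the separately-stated regularity lemma (Lemma \ref{rego}) to the output of Corollary \ref{equi-2}. Your later paragraphs in effect re-derive Lemma \ref{rego} by a top-down elimination, one dependent form at a time, which is close in spirit to the paper's downward induction but duplicates work the paper does once in a reusable form. The elimination argument itself appears sound (substitutions introduce forms only at strictly lower degree, so the process terminates, and composing ordered inclusions keeps the index tuples ordered), but it belongs with Lemma \ref{rego} and Corollary \ref{equi-3} rather than here.
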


The $\CSM$-factor $S$ given by the above corollary is not necessarily regular, but we may always regularise it as follows:

\begin{lemma}[Regularity lemma]\label{rego}  Let $k_0 \geq 1$ be a standard integer, and let $T = (T_{k,i})_{1 \leq k < k_0; 1 \leq i \leq m_k}$ be a $\CSM$-factor of degree $<k_0$.  Then there exists a \emph{regular} $\CSM$-factor $S = (S_{k,i})_{1 \leq k < k_0; 1 \leq i \leq m'_k}$ of degree $<k_0$, such that every multilinear form $T_{k,i}$ in $T$ is measurable with respect to $S$.
\end{lemma}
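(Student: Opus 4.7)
The plan is a greedy regularization by downward induction on the degree. Initialize a working factor $S$ as a copy of $T$, and maintain throughout the invariant that every original form $T_{k,i}$ is measurable with respect to the current $S$. Sweep over degrees $k = k_0-1, k_0-2, \ldots, 1$ in turn; once a degree has been made regular it is not touched again, and any lower-degree forms added while processing higher degrees are handled later when we reach their degree.

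Fix a degree $k$ and suppose the current degree-$k$ collection $S_{k,1}, \ldots, S_{k,m'_k}$ fails to be linearly independent modulo $\CSM^k_{\LR}(V)$. Pick a nontrivial relation $\sum_i a_i S_{k,i} = R$ with $R \in \CSM^k_{\LR}(V)$. By the definition of bounded rank for multilinear forms, $R(h_1,\ldots,h_k)$ is a function of finitely many evaluations $U_\ell(h_{j_1},\ldots,h_{j_{k_\ell}})$ with $U_\ell \in \CSM^{k_\ell}(V)$, $k_\ell < k$, and $1 \leq j_1 < \cdots < j_{k_\ell} \leq k$. Choosing an index $i_0$ with $a_{i_0} \neq 0$ and inverting $a_{i_0}$ in $\F$ yields a functional identity expressing $S_{k,i_0}(h_1,\ldots,h_k)$ as a function $G$ of the evaluations $\{S_{k,i}(h_1,\ldots,h_k)\}_{i \neq i_0}$ together with the evaluations of the $U_\ell$'s on their respective subtuples. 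Update $S$ by deleting $S_{k,i_0}$ from the degree-$k$ slot and appending each $U_\ell$ to the slot at degree $k_\ell$. To see that the invariant persists, observe that any form $S'$ previously measurable with respect to $S$ had a functional expression involving evaluations $S_{k,i_0}(h_{j'_1},\ldots,h_{j'_k})$ on various length-$k$ subtuples of its own argument list; applying the identity above to each such subtuple rewrites those evaluations in terms of evaluations of the surviving degree-$k$ forms and of the $U_\ell$'s on sub-subtuples of the original arguments, which are themselves valid indices in the set $A$ of the measurability definition. Composing the outer function with $G$ produces a new functional expression for $S'$ with respect to the updated $S$, so in particular each original $T_{k',i'}$ remains measurable.

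Each iteration at fixed $k$ strictly decreases the count $m'_k$ by one, so regularity at degree $k$ is achieved after at most the starting value of $m'_k$ many iterations; we then descend to degree $k-1$, whose slot may have grown but remains finite. After $k_0 - 1$ stages the factor $S$ is regular across all degrees, and by the maintained invariant every original $T_{k,i}$ is measurable with respect to it. The only substantive subtlety is the substitution step preserving measurability; it works because every participating form is multilinear and the substitution is always carried out on subtuples of the ambient arguments, so the index pattern required by the definition of measurability is automatically respected. Processing top-down (rather than bottom-up) is essential here: inserting lower-degree forms may spoil regularity at degrees we have not yet handled, but never at degrees already finalized.
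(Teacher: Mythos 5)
Your proposal is correct and follows essentially the same route as the paper: both process degree $k_0-1$ first, replace the degree-$(k_0-1)$ forms by a collection that is linearly independent modulo $\CSM^{k_0-1}_{\LR}(V)$, absorb the resulting bounded-rank errors by adjoining boundedly many lower-degree forms, and then descend to lower degrees. The only cosmetic difference is that you prune the top-degree slot one form at a time (greedy elimination) and carry a single top-to-bottom sweep, whereas the paper picks a basis of the quotient all at once and phrases the descent as an induction on $k_0$; the substitution argument you give for preserving measurability is exactly the point the paper leaves implicit, and your discussion of it is correct.
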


\begin{proof}  We assume inductively that the claim has already been proven for all smaller values of $k_0$ (this hypothesis is vacuous for $k_0=1$).

We consider the top-order forms $T_{k_0-1,1},\ldots,T_{k_0-1,m_{k_0-1}} \in \CSM^{k_0-1}(V)$, projected to the quotient space $\CSM^{k_0-1}(V) / \CSM^{k_0-1}_\LR(V)$.  As every finitely generated vector space has a finite basis, we may thus find forms $S_{k_0-1,1},\ldots,S_{k_0-1,m'_{k_0-1}} \in \CSM^{k_0-1}(V)$ that are linearly independent modulo $\CSM^{k_0-1}_\LR(V)$, thus that each $T_{k_0-1,i}(h_1,\ldots,h_k)$ is a linear combination (over $\F$) of the $S_{k_0-1,1}(h_1,\ldots,h_k),\ldots,S_{k_0-1,m'_{k_0-1}}(h_1,\ldots,h_k)$, plus a bounded rank form of $h_1,\ldots,h_k$, which by definition of bounded rank can be expressed as a function of a bounded family of forms $U( h_{i_1},\ldots,h_{i_{k'}})$ with $1 \leq k' < k$, $U \in \CSM^{k'}(V)$, and $1 \leq i_1 < \ldots < i_{k'} \leq k'$.  We may add all such forms $U$ to the list of lower order forms $T_{k',1},\ldots,T_{k',m_{k'}}$.  Applying the induction hypothesis to those lower order forms we then obtain the claim.
\end{proof}

Applying this regularity lemma to Corollary \ref{equi-2} we obtain the following improvement:

\begin{corollary}[Regularised bias criterion]\label{equi-3}  Let $k \geq 1$ be a standard integer, and let $V$ be limit finite-dimensional.  
Let $T \in \CSM^k(V)$.  Then one has
$$ |\E_{h_1,\ldots,h_k \in V} e( \iota(T( h_1,\ldots,h_k) ) )| \gg 1$$
if and only if $T$ is measurable with respect to a \emph{regular} $\CSM$-factor $S = (S_{k',i})_{1 \leq k' < k; 1 \leq i \leq m_{k'}}$ of degree $<k$.
\end{corollary}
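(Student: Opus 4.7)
The statement is essentially Corollary \ref{equi-2} with the added requirement that the witnessing $\CSM$-factor be regular. So the plan is to combine Corollary \ref{equi-2} with Lemma \ref{rego}, the only non-trivial point being the transitivity of the measurability relation.

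First, the ``if'' direction is immediate: a regular $\CSM$-factor is in particular a $\CSM$-factor, so if $T$ is measurable with respect to a regular $\CSM$-factor of degree $<k$, then it is measurable with respect to some $\CSM$-factor of degree $<k$, and hence $|\E_{h_1,\ldots,h_k \in V} e(\iota(T(h_1,\ldots,h_k)))| \gg 1$ by Corollary \ref{equi-2}.

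For the ``only if'' direction, suppose that $T$ is biased. Apply Corollary \ref{equi-2} to produce some $\CSM$-factor $R = (R_{k',j})_{1 \leq k' < k; \, 1 \leq j \leq n_{k'}}$ of degree $<k$ such that $T$ is measurable with respect to $R$; then apply Lemma \ref{rego} to this $R$ to produce a \emph{regular} $\CSM$-factor $S = (S_{k',i})_{1 \leq k' < k; \, 1 \leq i \leq m_{k'}}$ of degree $<k$ such that every $R_{k',j}$ is measurable with respect to $S$. It remains to verify that $T$ is also measurable with respect to $S$.

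This last step is a routine substitution using the definition of measurability. Unpacking the definitions, the measurability of $T$ with respect to $R$ gives a function $F$ and a set of tuples $A$ with
$$ T(h_1,\ldots,h_k) = F\bigl( \bigl( R_{k_\alpha, j_\alpha}(h_{j_{\alpha,1}}, \ldots, h_{j_{\alpha,k_\alpha}}) \bigr)_{\alpha \in A} \bigr),$$
while the measurability of each $R_{k_\alpha, j_\alpha}$ with respect to $S$ gives, for any input vectors $g_1,\ldots,g_{k_\alpha} \in V$, a functional expression
$$ R_{k_\alpha, j_\alpha}(g_1,\ldots,g_{k_\alpha}) = G_\alpha\bigl( \bigl( S_{k'_\beta, i_\beta}(g_{\ell_{\beta,1}}, \ldots, g_{\ell_{\beta,k'_\beta}}) \bigr)_{\beta \in B_\alpha} \bigr).$$
Setting $g_r := h_{j_{\alpha,r}}$, each $S_{k'_\beta, i_\beta}(g_{\ell_{\beta,1}}, \ldots, g_{\ell_{\beta,k'_\beta}})$ becomes $S_{k'_\beta, i_\beta}(h_{j_{\alpha,\ell_{\beta,1}}}, \ldots, h_{j_{\alpha,\ell_{\beta,k'_\beta}}})$, which is a value of some $S$-form on an increasing subtuple of $(h_1,\ldots,h_k)$ (after reordering using symmetry of the $S$-forms). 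Substituting back into $F$ therefore expresses $T(h_1,\ldots,h_k)$ as a function of values $S_{k',i}(h_{m_1},\ldots,h_{m_{k'}})$ for $1 \leq k' < k$, $1 \leq i \leq m_{k'}$, and $1 \leq m_1 < \cdots < m_{k'} \leq k$. This is exactly the measurability of $T$ with respect to $S$, completing the proof. No new ideas beyond Corollary \ref{equi-2} and Lemma \ref{rego} are needed; the only potential pitfall is the bookkeeping in the transitivity argument, which is purely formal.
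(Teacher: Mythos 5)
Your proposal is correct and follows exactly the approach the paper intends: the paper states Corollary \ref{equi-3} immediately after the sentence ``Applying this regularity lemma to Corollary \ref{equi-2} we obtain the following improvement,'' so the intended proof is precisely Corollary \ref{equi-2} plus Lemma \ref{rego}, with the transitivity-of-measurability bookkeeping (which you carry out correctly) left implicit.
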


To use this criterion, we need the following \emph{counting lemma} which complements the regularity lemma.

\begin{lemma}[Counting lemma]\label{joint}  Let $k_0 \geq 1$ be a standard integer, and let $T = (T_{k,i})_{1 \leq k < k_0; 1 \leq i \leq m_k}$ be a regular $\CSM$-system.  Then for any $d \geq 1$, the expressions 
$$ T_{k,i,j_1,\ldots,j_k}: (h_1,\ldots,h_d) \mapsto T_{k,i}( h_{j_1}, \ldots, h_{j_k} ),$$ 
where $1 \leq k < k_0$, $1 \leq i \leq m_k$, and $1 \leq j_1 < \ldots < j_k \leq d$, as functions from $V^d$ to $\F$, are jointly equidistributed (as defined in Definition \ref{equidef}).
\end{lemma}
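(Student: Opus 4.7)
The plan is to reduce joint equidistribution to an equidistribution-of-a-single-form problem and derive a contradiction with the regularity hypothesis using the bias criterion of Proposition~\ref{equi}.

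By Fourier analysis on $\F$, the asserted joint equidistribution of the expressions $T_{k,i,j_1,\ldots,j_k}$ is equivalent to showing that for every non-trivial $\F$-linear combination
$$ S(h_1,\ldots,h_d) \;:=\; \sum_{k,i,\,j_1 < \ldots < j_k} c_{k,i,j_1,\ldots,j_k}\, T_{k,i}(h_{j_1},\ldots,h_{j_k}), $$
the bias $\E_{h \in V^d} e(\iota(S(h)))$ is infinitesimal. Suppose for contradiction it is $\gg 1$. Grouping the summands by the index set $A = \{j_1,\ldots,j_k\} \subseteq \{1,\ldots,d\}$ gives a decomposition $S = \sum_A S_A$ with $S_A \in \CSM^{|A|}(V)$. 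Let $d^*$ be the largest value of $|A|$ for which $S_A \not\equiv 0$, and, after relabeling the indices $\{1,\ldots,d\}$, assume $A^* = \{1,\ldots,d^*\}$ is such a maximal index set. Every other $A$ contributing to $R := S - S_{A^*}$ satisfies $|A| \leq d^*$ and $A \neq A^*$, hence $A$ misses at least one index in $\{1,\ldots,d^*\}$.

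Choosing, for each such $A$, one missed index $\sigma(A) \in \{1,\ldots,d^*\}\setminus A$ and regrouping, we may write $R(h_1,\ldots,h_d) = \sum_{j=1}^{d^*} R_j(h_1,\ldots,h_d)$ with each $R_j$ independent of $h_j$. By the triangle inequality and pigeonhole we may fix $(h_{d^*+1},\ldots,h_d)$ so that
$$ \left|\, \E_{h_1,\ldots,h_{d^*}\in V}\, e\!\left(\iota(S_{A^*}(h_1,\ldots,h_{d^*}))\right) \prod_{j=1}^{d^*} e(\iota(R_j)) \,\right| \;\gg\; 1. $$
Since each factor $e(\iota(R_j))$ is $1$-bounded and independent of $h_j$, the Gowers--Cauchy--Schwarz inequality for the box norm $\|\cdot\|_{\square^{d^*}}$ on $V^{d^*}$ yields
$$ \left|\, \E_{h_1,\ldots,h_{d^*}}\, e(\iota(S_{A^*})) \prod_{j=1}^{d^*} e(\iota(R_j)) \,\right| \;\leq\; \|e(\iota(S_{A^*}))\|_{\square^{d^*}}. $$
A direct computation exploiting the multilinearity of $S_{A^*}$ (substituting $h_j^{(\omega_j)} = a_j + \omega_j b_j$ and collapsing the alternating product over $\omega \in \{0,1\}^{d^*}$ via multilinearity) identifies $\|e(\iota(S_{A^*}))\|_{\square^{d^*}}^{2^{d^*}}$ with the bias $\E_{b_1,\ldots,b_{d^*}\in V} e(\iota(S_{A^*}(b_1,\ldots,b_{d^*})))$; so this bias is also $\gg 1$.

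By Proposition~\ref{equi} we conclude $S_{A^*} \in \CSM^{d^*}_{\LR}(V)$. However, by our choice of $A^*$, the form $S_{A^*}$ is a non-trivial $\F$-linear combination of $T_{d^*,1},\ldots,T_{d^*,m_{d^*}}$, which by the regularity hypothesis on the $\CSM$-system cannot lie in $\CSM^{d^*}_{\LR}(V)$ -- contradiction. The main obstacle is executing the Gowers--Cauchy--Schwarz step cleanly: one must both (a) verify that maximality of $d^*$ (and our choice of $A^*$) forces every remainder piece to be free of some variable in $\{1,\ldots,d^*\}$, and (b) use the symmetry and multilinearity of the classical form $S_{A^*}$ to transfer the bias on the whole $V^d$ to a genuine ``top-degree'' bias for $S_{A^*}$ on $V^{d^*}$ that the regularity assumption can then refute.
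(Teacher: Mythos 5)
Your proof is correct and takes essentially the same approach as the paper's: Weyl's criterion reduces to bounding the bias of a nontrivial $\F$-linear combination, the top-degree index set is isolated, the remaining terms are grouped so each is independent of one of $h_1,\ldots,h_{d^*}$, and the Gowers--Cauchy--Schwarz (box-norm) inequality together with Proposition~\ref{equi} contradicts regularity. The only cosmetic differences are that the paper absorbs the extra variables $h_{d^*+1},\ldots,h_d$ into the auxiliary factors and invokes Lemma~\ref{basic-rank}(iv) directly, whereas you pigeonhole them away and recompute the box norm by hand; both are the same underlying estimate.
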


\begin{proof}  
By the Weyl equidistribution criterion (Lemma \ref{weyl-equi}), it suffices to show that
\begin{equation}\label{head}
 \E_{h_1,\ldots,h_d \in V} e( \iota( \sum_{1 \leq k < k_0} \sum_{i=1}^{m_k} \sum_{1 \leq j_1 < \ldots < j_k \leq d} c_{k,i,j_1,\ldots,j_k} T_{k,i}(h_{j_1},\ldots,h_{j_k})) = o(1)
\end{equation}
whenever $c_{k,i,j_1,\ldots,j_k} \in \F$ are not all zero.

Let $1 \leq k_1 < k_0$ be the largest $k$ for which there is a non-zero coefficient $c_{k,i,j_1,\ldots,j_k}$.  By relabeling we may assume that it is $c_{k_1,1,1,\ldots,k_1}$ which is non-zero.  We may then factorise the left-hand side of \eqref{head} as
$$ \E_{h_1,\ldots,h_d \in V} e( \iota(T_*(h_1,\ldots,h_{k_1}))) \prod_{i=1}^{k_1} f_i(h_1,\ldots,h_d)$$
where $f_i: V \to \ultra \C$ are limit functions bounded in magnitude by $1$ which are independent of the $h_i$ variable, and $T_*: V^{k_1} \to \F$ is the multilinear form
$$ T_*(h_1,\ldots,h_{k_1}) := \sum_{i=1}^{m_1} c_{k_1,i,1,\ldots,k_1} T_{k_1,i}(h_1,\ldots,h_{k_1}).$$
By hypothesis, $T_*$ has unbounded rank.  By Proposition \ref{equi}, we conclude that
$$ \E_{h_1,\ldots,h_d \in V} e( \iota(T_*(h_1,\ldots,h_{k_1})) ) = o(1).$$
Using Lemma \ref{basic-rank} (and Lemma \ref{never}) we conclude that
$$ \E_{h_1,\ldots,h_d \in V} e( \iota(T_*(h_1,\ldots,h_{k_1}) ) \prod_{i=1}^{k_1} f_i(h_1,\ldots,h_d)) = o(1)$$
and the claim follows.
\end{proof}

\begin{remark} The counting lemma is essentially asserting that a regular $\CSM$-factor is equidistributed on cubes in the sense of Definition \ref{strong-equi}, but to formalise this rigorously, one needs to develop a theory of polynomial maps in several variables, that generalises the material in Section \ref{poly-alg} to groups filtered by $\N^d$ rather than $\N$.  This can be done (see \cite{gtz}), but we will not introduce this additional notation here.
\end{remark}

In the next section we will use the counting and regularity lemmas to finish off the proof of Theorem \ref{GICSME} and thus Theorem \ref{GICSM}.

\section{Conclusion of the multilinear inverse conjecture}\label{multiconc}

We are now ready to complete the proof of Theorem \ref{GICSME}.  Let $s, T, V$ be as in that theorem.  Applying Corollary \ref{equi-3}, we conclude that we can write 
$$T = F((S_\alpha)_{\alpha \in A}),$$
where $(S_{k,i})_{1 \leq k \leq s; 1 \leq i \leq m_k}$ is a regular $\CSM$-system, $A$ is the set of tuples
$$ \alpha = (k_\alpha, i_\alpha, j_{\alpha,1},\ldots,j_{\alpha,k_\alpha})$$
with $1 \leq k_\alpha \leq s$, $1 \leq i_\alpha \leq m_{k_\alpha}$, and $1 \leq j_{\alpha,1} < j_{\alpha,2} < \ldots < j_{\alpha,k_\alpha} \leq s+1$, $F: \F^A \to \F$ is a function, and
for each $\alpha \in A$, $S_\alpha \in \CSM^{k_\alpha}(V)$ is the form
$$ S_\alpha(h_1,\ldots,h_{k_\alpha}) := S_{k_\alpha,i_\alpha}( h_{j_{\alpha,1}}, \ldots, h_{j_{\alpha,k_\alpha}} ).$$

The linear forms $S_{1,1},\ldots,S_{1,m_{1}}$ in the regular $\CSM$-system can be eliminated by observing that they are simultaneously constant on some bounded index subspace of $V$, and so by passing to that subspace (using Lemma \ref{subspace}) we may assume that $m_1=0$, i.e. all forms $S_{k,i}$ in the $\CSM$-system are bilinear or higher in order.  Of course, the $\CSM$-system remains regular after doing so.

For each $S_\alpha$, we refer to the set $J_\alpha := \{j_{\alpha,1},\ldots,j_{\alpha,k_\alpha}\} \subset \{1,\ldots,s\}$ as the \emph{support} of $\alpha$.

The permutation group $\operatorname{Sym}(\{1,\ldots,s+1\})$ acts on the variables $h_1,\ldots,h_{s+1}$, and thus permutes the index set $A$.  Because $T$ is symmetric (and $(S_\alpha)_{\alpha \in A}: V \to \F^A$ is surjective, by Lemma \ref{joint}), we see that $F$ is symmetric with respect to this action.

Next, we show that $F$ is also multilinear:

\begin{proposition}\label{fmulti}  $F: \F^A \to \F$ is a linear combination (over $\F$) of monomials
\begin{equation}\label{mono}
 (x_\alpha)_{\alpha \in A} \mapsto x_{\alpha_1} \ldots x_{\alpha_r}
\end{equation}
where $\alpha_1,\ldots,\alpha_r$ are elements of $A$ whose supports $J_{\alpha_1}, \ldots, J_{\alpha_r}$ partition $\{1,\ldots,s+1\}$.
\end{proposition}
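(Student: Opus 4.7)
The approach is to use the multilinearity of $T$ in each variable $h_i$ to force the monomials of $F$ to be supported by partitions of $\{1,\ldots,s+1\}$. Since $\F = \F_p$ is finite, any function $\F^A \to \F$ admits a unique representation as a polynomial in $(x_\alpha)_{\alpha \in A}$ in which every variable appears with degree at most $p-1$; the plan is to work with this reduced representation of $F$ throughout.

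For each fixed $i \in \{1,\ldots,s+1\}$, set $B_i := \{\alpha \in A : i \in J_\alpha\}$ and $C_i := A \setminus B_i$. The first step is to translate the multilinearity identity
\[ T(h_1,\ldots,h_i+h'_i,\ldots,h_{s+1}) = T(h_1,\ldots,h_i,\ldots,h_{s+1}) + T(h_1,\ldots,h'_i,\ldots,h_{s+1}) \]
into a polynomial identity for $F$. Multilinearity of each $S_\alpha$ gives $S_\alpha(\ldots,h_i+h'_i,\ldots) = x_\alpha + y_\alpha$ for $\alpha \in B_i$, where $y_\alpha$ denotes the evaluation of $S_\alpha$ with $h'_i$ substituted for $h_i$, while $x_\alpha$ is unchanged for $\alpha \in C_i$. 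The joint tuple $\bigl((x_\alpha)_{\alpha \in A},(y_\alpha)_{\alpha \in B_i}\bigr)$ arises from evaluating the forms in the regular $\CSM$-system $S$ on various subsets of the $(s+2)$-tuple $(h_1,\ldots,h_{s+1},h'_i)$, so Lemma~\ref{joint} applied to this enlarged family (which remains regular, as regularity is a property of $S$ itself) produces joint equidistribution on $\F^A \times \F^{B_i}$. Hence the additivity
\[ F\bigl((x_\alpha+y_\alpha)_{\alpha \in B_i},(x_\alpha)_{\alpha \in C_i}\bigr) = F\bigl((x_\alpha)_{\alpha \in A}\bigr) + F\bigl((y_\alpha)_{\alpha \in B_i},(x_\alpha)_{\alpha \in C_i}\bigr) \]
is promoted from a relation on the image of the evaluation map to a genuine polynomial identity on all of $\F^A \times \F^{B_i}$.

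Next I will invoke the standard fact that any polynomial $g:\F_p^N \to \F_p$ satisfying $g(y+z) = g(y)+g(z)$, when reduced to degree at most $p-1$ in each variable, must be $\F_p$-linear, i.e.\ of the form $\sum c_\alpha y_\alpha$. Applied with $y$ and $z$ ranging over the $B_i$-coordinates, this forces every monomial of the reduced $F$ to contain at most one variable from $B_i$, with exponent exactly $1$. Combined with the vanishing $F\bigl(0_{B_i},(x_\alpha)_{\alpha \in C_i}\bigr) = 0$ (which follows from the fact that $T$ vanishes when $h_i = 0$, together with the surjectivity of the evaluation $(h_j)_{j \neq i} \mapsto (x_\alpha)_{\alpha \in C_i}$ furnished by Lemma~\ref{joint}), every monomial of $F$ must contain \emph{exactly} one variable from $B_i$.

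Running this argument simultaneously for every $i \in \{1,\ldots,s+1\}$ yields the conclusion: each monomial $x_{\alpha_1} \cdots x_{\alpha_r}$ appearing in $F$ has the property that each $i \in \{1,\ldots,s+1\}$ lies in exactly one $J_{\alpha_j}$, so the supports $J_{\alpha_1},\ldots,J_{\alpha_r}$ partition $\{1,\ldots,s+1\}$. The main subtlety I anticipate is the promotion of the additivity relation to a genuine polynomial identity in the second paragraph; this is precisely where the regularity of the $\CSM$-system and the counting lemma are essential, since a priori the identity only holds on the image of the evaluation map rather than on the full product $\F^A \times \F^{B_i}$.
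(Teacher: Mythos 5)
Your proof is correct, and the first half (deriving the additivity identity for $F$ from the multilinearity of $T$ in each $h_i$, using Lemma~\ref{joint} to pass from the image of the evaluation map to all of $\F^A \times \F^{B_i}$) is essentially identical to the paper's derivation of \eqref{lineari} and \eqref{snot}. Where you diverge is in extracting the monomial structure from these additivity relations: the paper runs an induction on $s$, differentiating $F$ in each coordinate direction $e_\alpha$ with $\alpha \in A_{\ni s+1}$, observing that $\ader_{e_\alpha}F$ descends to a function on $\F^{A_{J_\alpha^\perp}}$, invoking the induction hypothesis, and then recovering $F$ as $\sum_\alpha x_\alpha F_\alpha$. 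You instead invoke, one coordinate $i$ at a time, the structure theorem that an additive map $\F_p^{B_i}\to\F_p$ is necessarily $\F_p$-linear, and then combine the unique reduced polynomial representations across all $i$ to conclude that each monomial uses exactly one variable from each $B_i$ with exponent one, which forces the supports to partition $\{1,\dots,s+1\}$. Your route avoids the induction on $s$ and is arguably more direct; it does depend on uniqueness of the reduced representation, but that is standard. One small remark: once you have genuine additivity in the $B_i$ variables, setting $y=z=0$ already forces $F(0_{B_i},\cdot)=0$, so the separate surjectivity argument you give to show the constant-in-$B_i$ part vanishes is harmless but redundant; relatedly, your phrasing ``at most one variable from $B_i$'' should really read ``exactly one,'' since the $\F_p$-linear form $\sum_{\alpha\in B_i} c_\alpha(\cdot)\,x_\alpha$ has no constant term.
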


For instance, if $s+1=4$, this proposition asserts that $T(a,b,c,d)$ is a linear combination of expressions such as
$$ B(a,b) B'(c,d)$$
for bilinear forms $B, B' \in \CSM^2(V)$ in the $\CSM$-system, but not expressions such as
$$ B(a,b) B'(a,c)$$
or 
$$ B(a,b)^2 B(c,d).$$

\begin{proof}  Split $A = A_{\ni s+1} \cup A_{\not \ni s+1}$, where $A_{\ni s+1}$ consist of those $\alpha \in A$ whose support $J_\alpha$ contains $s+1$, and $A_{\not \ni s+1} := A \backslash A_{\ni s+1}$.  We split $\F^A = \F^{A_{\ni s+1}} \times \F^{A_{\not \ni s+1}}$ in the obvious manner.  We claim the linearity statement
\begin{equation}\label{lineari}
F( x_{\ni s+1} + y_{\ni s+1}, x_{\not \ni s+1}) = F( x_{\ni s+1}, x_{\not \ni s+1}) + F( y_{\ni s+1}, x_{\not \ni s+1}) 
\end{equation}
whenever $x_{\ni s+1}, y_{\ni s+1} \in \F^{A_{\ni s+1}}$ and $x_{\not \ni s+1} \in \F^{A_{\not \ni s+1}}$.

To prove \eqref{lineari}, it suffices by the linearity of 
$$T = F( (S_\alpha)_{\alpha \in A_{\ni s+1}}, (S_\alpha)_{\alpha \in A_{\not \ni s+1}})$$ 
in the $h_{s+1}$ variable to locate $h_1,\ldots,h_{s},h_{s+1},h'_{s+1} \in V$ such that
\begin{align*}
S_{\ni s+1}( h_1,\ldots, h_{s}, h_{s+1} ) &= x_{\ni s+1} \\
S_{\ni s+1}( h_1,\ldots, h_{s}, h'_{s+1} ) &= y_{\ni s+1} \\
S_{\not \ni s+1}( h_1,\ldots, h_{s}, h_{s+1} ) &= x_{\not \ni s+1} 
\end{align*}
since this implies that
$$
S_{\ni s+1}( h_1,\ldots, h_{s}, h_{s+1} +h'_{s+1}) = x_{\ni s+1} +y_{\ni s+1}$$
and
$$ S_{\not \ni s+1}( h_1,\ldots, h_{s}, h'_{s+1} ) = x_{\not \ni s+1}.$$
But the existence of  $h_1,\ldots,h_{s},h_{s+1},h'_{s+1} \in V$ with these properties follows immediately from Lemma \ref{joint}.

By symmetry, \eqref{lineari} generalises to
\begin{equation}\label{snot}
F( x_{\ni j} + y_{\ni j}, x_{\not \ni j}) = F( x_{\ni j}, x_{\not \ni j}) + F( y_{\ni j}, x_{\not \ni j}) 
\end{equation}
whenever $1 \leq j \leq s+1$, $x_{\ni j}, y_{\ni j} \in \F^{A_{\ni j}}$ and $x_{\not \ni j} \in \F^{A_{\not \ni j}}$, where $A_{\ni j}, A_{\not\ni j}$ are defined analogously to $A_{\ni s+1}, A_{\not \ni s+1}$ by replacing $s+1$ with $j$, and $\F^A$ is identified with $\F^{A_{\ni j}} \times \F^{A_{\not \ni j}}$ in the obvious manner.  We claim that the identities \eqref{snot} imply that $F$ is a linear combination of the monomials \eqref{mono} (note that the converse claim is clear).

To establish this implication, we induct on $s$.  The case $s=0$ is easily verified, so suppose $s \geq 1$ and the claim has already been proven for smaller $s$.  For every $\alpha \in A_{\ni s+1}$, we consider the derivative $\ader_{e_\alpha} F: \F^A \to \F$ of $F$ in the basis direction $e_\alpha$.  From \eqref{snot} (applied to each $j$ in the support of $\alpha$) we see that $\ader_{e_\alpha} F(x)$ is independent of any coefficient $x_\beta$ whose support $J_\beta$ intersects $J_\alpha$, and thus descends to a function $F_\alpha$ on $\F^{A_{J_\alpha^\perp}}$, where $A_{J_\alpha^\perp}$ is the set of $\beta \in A$ whose support lies in $\{1,\ldots,s+1\} \backslash J_\alpha$.  The linearity properties \eqref{snot} for $j \not \in J_\alpha$ descend from $F$ to $F_\alpha$, so by the induction hypothesis (and relabeling) each $F_\alpha$ is a linear combination of monomials \eqref{mono} with $J_{\alpha_1},\ldots,J_{\alpha_r}$ partitioning $\{1,\ldots,k\} \backslash J_\alpha$.  In particular, $x_\alpha F_\alpha$ is a linear combination of monomials of the desired form.  The function $F - \sum_{\alpha \in A_{\ni s+1}} x_\alpha F_\alpha$ is then invariant in the $e_\alpha$ direction for all $\alpha \in A_{\ni s+1}$ and thus vanishes by \eqref{snot} (which implies in particular that $F(0,x_{\not \ni s+1}) = 0$ for all $x_{\not \ni s+1} \in \F^{A_{\not \ni s+1}}$).  The claim follows.
\end{proof}

\begin{remark} Proposition \ref{fmulti} can be viewed as a variant of Corollary \ref{equi-factor2}, but to make this connection precise one would have to generalise the machinery in Appendix \ref{poly-alg} to $\N^s$-filtered groups, as in \cite{gtz}.
\end{remark}

From Proposition \ref{fmulti} we have
$$ F( (x_\alpha)_{\alpha \in A} ) = \sum_{\{\alpha_1,\ldots,\alpha_r\}} c_{\{\alpha_1,\ldots,\alpha_r\}} x_{\alpha_1} \ldots x_{\alpha_r} $$
for some coefficients $c_{\{\alpha_1,\ldots,\alpha_r\}}$, where $\{\alpha_1,\ldots,\alpha_r\}$ ranges over all unordered collections of elements $\alpha_1,\ldots,\alpha_r$ of $A$ whose supports partition $\{1,\ldots,s+1\}$.  The multilinear forms $x_{\alpha_1} \ldots x_{\alpha_r}$ are clearly linearly independent, and so the coefficients $c_{\{\alpha_1,\ldots,\alpha_r\}}$ are uniquely determined by $F$.  In particular, since $F$ is symmetric with respect to the permutation action of $\operatorname{Sym}(\{1,\ldots,s\})$, the coefficients $c_{\{\alpha_1,\ldots,\alpha_r\}}$ must be symmetric also.  Substituting $x_\alpha := S_\alpha$, we now write
$$ T = \sum_{\{\alpha_1,\ldots,\alpha_r\}} c_{\{\alpha_1,\ldots,\alpha_r\}} S_{\alpha_1} \ldots S_{\alpha_r}.$$
From the symmetry of the coefficients $c_{\{\alpha_1,\ldots,\alpha_r\}}$, we may split this sum into orbits of the action of the permutation group, and conclude that $T$ is in fact a linear combination of the basic symmetric monomials
$$ \Sym^{m_1}(S_{k_1,i_1}) \ast \ldots \ast \Sym^{m_l}(S_{k_l,i_l})$$
with $m_1 k_1 + \ldots + m_l k_l = s+1$.  Note that all the $k_1,\ldots,k_l$ are at least $2$, because we have deleted all the linear forms from the $\CSM$-system.  This gives Theorem \ref{GICSME} as required.

\section{A regularity lemma and equidistribution for non-classical polynomials}\label{regsec}

It remains to establish Theorem \ref{er}.  To do this, it is convenient for inductive reasons to establish a technical strengthening of Theorem \ref{er}. We first need an analogue of the notion of a regular $\CSM$-system, but now for polynomials instead of multilinear forms:

\begin{definition}[Regular factor]  Let $V$ be a limit finite-dimensional vector space.  A \emph{factor} is a bounded family $P = (P_{i,j})_{1 \leq i \leq m; 0 \leq j \leq J_i}$ of polynomials $P_{i,j} \in \Poly_{\leq D_i+j(p-1)}(V \to \ultra \T)$, where $m \geq 0$ and $J_1,\ldots,J_m \geq 0$ and $D_1,\ldots,D_m \geq 2$ are standard natural numbers, obeying the relations
\begin{equation}\label{peo}
p P_{i,j} = P_{i,j-1}
\end{equation}
for all $1 \leq i \leq m$ and $0 \leq j \leq J_i$ (with the convention $P_{i,-1} = 0$).  In particular, each $P_{i,j}$ takes values in the $(p^{j+1})^\th$ roots of unity $\frac{1}{p^{j+1}} \Z /\Z$.  We refer to $m$ as the \emph{dimension} of the factor, $D_1,\ldots,D_m$ as the \emph{initial degrees}, and $J_1,\ldots,J_m$ as the \emph{depths}.  The \emph{degree} of the factor is the quantity $\sup_{1 \leq i \leq m} D_i + J_i(p-1)$.

A \emph{depth extension} of $P$ is a factor of the form $P' = (P_{i,j})_{1 \leq i \leq m; 0 \leq j \leq J'_i}$, where $J'_i \geq J_i$ for each $1 \leq i \leq m$, and the polynomials $P_{i,j}$ in $P'$ agree with their counterparts in $P$ for $j \leq J_i$.
If $P'$ is a depth extension of $P$, we call $P$ a \emph{depth retraction} of $P'$.  If $d$ is a standard integer, we define the \emph{degree $\leq d$ depth retraction} $P_{\leq d}$ of $P$ to be the retraction formed by deleting all $P_{i,j}$ with $D_i + J_i(p-1) > d$.

If, for every standard integer $k \geq 2$, the polynomials $P_{i,j}$ with $D_i + j(p-1) = k$ are linearly independent in $\Poly_{\leq k}(V \to \ultra\T)$ modulo $\Poly_{\leq k,\BR}(V \to \ultra \T)$, we say that the factor $P$ is \emph{regular}.

A function $Q: V \to \ultra \T$ is said to be \emph{measurable} with respect to the factor $P$ if one has $Q = F( P_{1,J_1},\ldots,P_{m,J_m} )$ for some function $F: \ultra \T^m \to \ultra \T$. 
\end{definition}

When working exclusively with classical polynomials, one can set all the depths $J_i$ to zero, and the notion of a factor and a regular factor then become essentially the same as those considered in \cite{finrat} (see also \cite{kauf} for a variant of the notion of regularity in the low characteristic case).  However, when trying to regularise non-classical polynomials, one unfortunately needs to consider factors of positive depth, which are more technical to study.

Note that in our definition of a regular factor, the degrees $D_1,\ldots,D_m$ are at least two.  This is because we can eliminate any linear polynomials that arise in the analysis by passing to a finite index subspace.  We will need to eliminate the linear case in order to avoid the $k=p$ case of Corollary \ref{cor}, which is false.

\begin{example}\label{ppp}  Suppose one has three polynomials $P_{1,0} \in \Poly_{\leq D_1}(V \to \frac{1}{p}\Z/\Z)$, $P_{1,1} \in \Poly_{\leq D_1+p-1}(V \to \frac{1}{p^2}\Z/\Z)$, and $P_{2,0} \in \Poly_{\leq D_2+p-1}(V \to \frac{1}{p}\Z/\Z)$ for some natural numbers $D_1,D_2 \geq 2$, with $p P_{1,1} = P_{1,0}$, then $P = (P_{1,0}, P_{1,1}, P_{2,0})$ would be a factor of dimension $2$, initial degrees $D_1,D_2$, depths $1,0$, and degree $\max(D_1+p-1,D_2)$.  One can view this factor as a map $x \mapsto (P_{1,1}(x),P_{2,0}(x))$ from $V$ from $(\frac{1}{p^2}\Z/\Z) \times (\frac{1}{p}\Z/\Z)$ (the polynomial $P_{1,0}$ can be omitted from this map as it is determined by $P_{1,1}$).  

From Lemma \ref{polybasic}(iv), one can find roots $P_{1,2} \in \Poly_{\leq D_1+2(p-1)}(V \to \frac{1}{p^3}\Z/\Z)$ and $P_{2,1} \in \Poly_{\leq D_2+(p-1)}(V \to \frac{1}{p^2}\Z/\Z)$ of $P_{1,1}$ and $P_{2,0}$ respectively (thus $pP_{1,2} = P_{1,1}$ and $pP_{2,1} = P_{2,0}$), then $P' = (P_{1,0}, P_{1,1}, P_{1,2}, P_{2,0}, P_{2,1})$ is a depth extension of the factor $P = (P_{1,0}, P_{1,1}, P_{2,0})$ (or equivalently, $P$ is a depth retraction of $P'$), in which the depths have been increased from $J_1=1,J_2=0$ to $J'_1=2,J'_2=1$.  Of course, one can iterate this procedure and perform depth extensions of $P$ to arbitrary depths.  If $P$ is interpreted as a map from $V$ to $(\frac{1}{p^2}\Z/\Z) \times (\frac{1}{p}\Z/\Z)$, one can view $P'$ as a lift of that map to $(\frac{1}{p^3}\Z/\Z) \times (\frac{1}{p^2}\Z/\Z)$, with the original map factoring through the map $(x,y) \mapsto (px,py)$ from $(\frac{1}{p^3}\Z/\Z) \times (\frac{1}{p^2}\Z/\Z)$ to $(\frac{1}{p^2}\Z/\Z) \times (\frac{1}{p}\Z/\Z)$.

For sake of concreteness, let us now suppose that $D_2=D_1+p-1$.  Then $P$ is regular precisely when the degree $D_1$ polynomial $P_{1,0}$ has unbounded rank, and all non-trivial linear combinations of the degree $D_2=D_1+p-1$ polynomials $P_{1,1}$, $P_{2,0}$ have unbounded rank.  In order for $P'$ to be regular, one must also add the additional property that no non-trivial linear combination of the degree $D_2+p-1=D_1+2(p-1)$ polynomials $P_{1,2}, P_{2,1}$ have unbounded rank.  In this case, $P$ is the degree $\leq D_2$ depth retraction of $P'$.
\end{example}

A convenient property of depth extensions is that they preserve regularity:

\begin{lemma}[Depth extensions preserve regularity]\label{depth-reg}
Let $P = (P_{i,j})_{1 \leq i \leq m; 0 \leq j \leq J_i}$ be a regular factor, and let $P' = (P'_{i,j})_{1 \leq i \leq m'; 0 \leq j \leq J'_i}$ be a depth extension of $P$.  Then $P'$ is also regular.
\end{lemma}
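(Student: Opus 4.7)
The plan is to reduce to a single-step depth extension and derive a contradiction by applying multiplication by $p$ to an assumed failure of regularity. First I would observe that by iterating, it suffices to handle the case where $P'$ is obtained from $P$ by extending the depth of exactly one index $i_0$ by exactly one, i.e.\ $J'_{i_0} = J_{i_0}+1$ and $J'_i = J_i$ for $i \neq i_0$. The only genuinely new polynomial in $P'$ is then $P_{i_0, J_{i_0}+1}$, which is a $p^{\text{th}}$ root of $P_{i_0, J_{i_0}}$ and whose degree is $k := D_{i_0} + (J_{i_0}+1)(p-1)$. Since $D_{i_0} \geq 2$, this forces $k \geq p+1 > p$, a fact that will be essential later.

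Next I would suppose for contradiction that $P'$ fails to be regular at some degree $k' \geq 2$, giving a non-trivial $\F$-linear combination of polynomials $P_{i,j}$ in $P'$ with $D_i + j(p-1) = k'$ that equals some $Q \in \Poly_{\leq k', \BR}(V \to \ultra\T)$. If the relation involved only polynomials already present in $P$, it would directly contradict regularity of $P$ at degree $k'$; hence it must involve the unique new polynomial, so $k' = k$ and the coefficient $c_0$ of $P_{i_0, J_{i_0}+1}$ is nonzero.

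Now I would apply $p$ to both sides. Using $pP_{i,j} = P_{i,j-1}$ (with the convention $P_{i,-1} = 0$), the $j=0$ terms drop out and the surviving terms on the left all lie in the original factor $P$ at the reduced degree $k-p+1 \geq 2$, with the coefficient $c_0$ on $P_{i_0, J_{i_0}}$ still nonzero. The right-hand side becomes $pQ$. Invoking Corollary \ref{cor} (available since $k > p$ and, within the inductive framework in which this lemma is used for Theorem \ref{er}, $\GIP(k-p)$ holds) yields $pQ \in \Poly_{\leq k-p+1, \BR}(V \to \ultra\T)$. This produces a non-trivial linear relation among polynomials of $P$ of degree $k-p+1$ modulo bounded rank, contradicting regularity of $P$ at that degree and closing the argument.

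The main obstacle is precisely the step where $pQ$ must be shown to have bounded rank \emph{at the lower degree} $k - p + 1$: the naive representation of $Q$ as a function of boundedly many polynomials of degree $<k$ only gives $pQ$ bounded rank as a polynomial of degree $\leq k$, not of degree $\leq k-p+1$. Corollary \ref{cor}, which in turn rests on $\GIP(k-p)$, is exactly the tool that upgrades this, which is why the lemma is naturally proved inside the induction on $s$ establishing Theorem \ref{er} rather than in isolation.
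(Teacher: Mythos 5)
Your proof is correct and takes essentially the same approach as the paper: reduce to a single-step depth extension, observe that a failure of regularity must involve a new polynomial (forcing $k > p$ since $D_i \geq 2$), multiply the linear dependence by $p$ via \eqref{peo} and Corollary \ref{cor}, and contradict the regularity of $P$ at degree $k-p+1$. The only cosmetic difference is that the paper's reduction allows several depths to increase by one at once rather than isolating a single index $i_0$, but this changes nothing in substance; your closing observation that the lemma quietly relies on the inductive availability of $\GIP(k-p)$ via Corollary \ref{cor} (a hypothesis not written into the lemma's statement) is accurate and worth flagging.
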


\begin{proof} By induction, it suffices to verify the claim in the case when $J'_i \leq J_i+1$.  

Suppose for contradiction that $P'$ is not regular, then there exists $k \geq 0$ such that the polynomials $P'_{i,j}$ with $D_i + j(p-1) = k$ have a non-trivial linear dependence in $\Poly_{\leq k}(V \to \ultra\T)$ modulo $\Poly_{\leq k,\BR}(V \to \ultra \T)$, thus
$$ \sum_{1 \leq i \leq m; 0 \leq j \leq J_i; D_i + j(p-1) = k} c_{i,j} P'_{i,j} \in \Poly_{\leq k,\BR}(V \to \ultra \T)$$
for some coefficients $c_{i,j} \in \Z$, not all zero.

Suppose that $c_{i,j}$ vanished whenever $j=J_i+1$, then this linear dependence already occured in $P$, implying that $P$ was also not regular, a contradiction.  Thus we have $c_{i,J_i+1} \neq 0$ for at least one $i$; since $D_i \geq 2$, this also forces $k > p$.  (It is here that we crucially need to prevent $D_i$ from being equal to $1$.)  We now multiply the above linear dependence by $p$ using Corollary \ref{cor} and \eqref{peo} to conclude that
$$ \sum_{1 \leq i \leq m; 0 \leq j \leq J_i; D_i + j(p-1) = k} c_{i,j} P'_{i,j-1} \in \Poly_{\leq k-p+1,\BR}(V \to \ultra \T).$$
But this is again a non-trivial linear dependence in $P$, again yielding contradiction.
\end{proof}

We now localise the exact roots property from Theorem \ref{er} to depth extensions of a regular factor.  More precisely, we introduce the following property:

\begin{definition}[Exact roots property]  Let $s$ be a standard integer.  We say that the \emph{exact roots property} $\ER(s)$ holds if, whenevever $P = (P_{i,j})_{1 \leq i \leq m; 0 \leq j \leq J_i}$ is a regular factor on a limit finite-dimensional vector space $V$ of some initial degrees $D_1,\ldots,D_m$, and $Q \in \Poly_{\leq s+1}(V \to \ultra \T)$ is a function of $P$, and $P' = (P_{i,j})_{1 \leq i \leq m; 0 \leq j \leq J'_i}$ is a depth extension of $P$ with $D_i+J'_i(p-1) > s$ for all $1 \leq i \leq m$, then there exists $R \in \Poly_{\leq s+p}(V' \to \ultra \T)$ that is a function of $P'$ such that $pR = Q$ on $V$.  Furthermore, $R$ can be taken to be a linear combination of those $P_{i,j}$ in $P'$ with $D_i + j(p-1) = s+p$, plus a function of those $P_{i,j}$ with $D_i + j(p-1) < s+p$.
\end{definition}

\begin{example} We continue Example \ref{ppp}, again supposing that $D_2=D_1+p-1$ for concreteness.  Assume that $P$ is regular.  Let $s < D_1 + 2(p-1)$ be such that $\ER(s)$ holds, and suppose that we have a polynomial $Q \in \Poly_{\leq s+1}(V \to \ultra \T)$ that is a function of the factor $P$, thus $Q = F( P_{1,1}, P_{2,0})$ for some $F: (\frac{1}{p^2}\Z/\Z) \times (\frac{1}{p}\Z /\Z) \to \ultra \T$.  Then we can find a polynomial $R \in \Poly_{\leq s+p}(V \to \ultra \T)$ that is a root of $Q$ (thus $pR=Q$), which is a function of $P'$ (thus $R = F'(P_{1,2},P_{2,1})$ for some $F': (\frac{1}{p^3}\Z/\Z) \times (\frac{1}{p^2}\Z \times \Z) \to \ultra \T$).  If $s$ was equal to $D_1+p-2$, then $R$ would be a linear combination of $P_{1,2}, P_{2,1}$ plus a functin $F''(P_{1,1},P_{2,0})$ of the lower degree polynomials $P_{1,2}, P_{2,1}$.
\end{example}

One consequence of the exact roots property is that it allows for a regularity lemma:

\begin{lemma}[Regularity lemma]\label{rl}  Let $s_0 \geq 1$ be a standard integer such that $\ER(s)$ and $\GIP(s)$ hold for all $0 \leq s < s_0$.  Suppose that $P = (P_1,\ldots,P_m)$ are a bounded tuple of polynomials $P_1,\ldots,P_m \in \Poly_{\leq s_0}(V \to \ultra \T)$ on a limit finite-dimensional vector space $V$.  Then there exists a bounded index subspace $V'$ of $V$ and a regular factor $Q = (Q_{i,j})_{1 \leq i \leq m; 0 \leq j \leq J_i}$ on $V'$ of degree at most $s_0$ such that all the $P_i$ are measurable with respect to $Q$ on $V'$.  Furthermore, on $V'$, if $P_i$ has degree $d_i$, then $P_i$ is a linear combination (over $\Z$) of those polynomials $Q_{i,j}$ with $D_i + j(p-1)=d_i$, plus a function of the depth retraction $Q_{\leq d_i-1}$ of $Q$.
\end{lemma}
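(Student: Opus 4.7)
The proof proceeds by a greedy algorithm that processes $P_1,\ldots,P_m$ in order of decreasing degree, building up a regular factor one degree at a time. A standing convention is that whenever a polynomial of degree $\leq 1$ arises---either among the input or produced later by applications of $\GIP$---we pass to the bounded-index subspace on which it becomes constant; since this happens only boundedly many times in total, the cumulative subspace remains of bounded index in $V$. This reduces us to working exclusively with polynomials of degree at least $2$, matching the requirement $D_i \geq 2$ in the definition of a regular factor.

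Maintain a pool of polynomials (initially $\{P_1,\ldots,P_m\}$) and a growing factor $Q$ (initially empty). Iterate $d$ from $s_0$ down to $2$. For each pool element $P$ of degree exactly $d$, test whether $P - \sum c_{i,j} Q_{i,j}$ has bounded rank for some $\Z$-linear combination over the current factor elements with $D_i + j(p-1) = d$. If no such combination exists, adjoin $P$ to the factor: if further $pP$ is already a function of $Q$, then invoke $\ER(d-p)$ to obtain a $p$-th root of $pP$ inside a depth extension of $Q$, and by the ``linear combination plus lower'' shape of the root guaranteed by $\ER$, this root differs from $P$ only by a polynomial of degree $<d$, so after adjusting by lower-degree pool additions we may slot $P$ itself into the factor as the relevant $Q_{i,j}$; otherwise $P$ is adjoined as a fresh initial polynomial $Q_{i,0}$ with $D_i = d$. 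If instead a bounded-rank combination does exist, then by Lemma \ref{basic-rank}(iii) the remainder $P - \sum c_{i,j} Q_{i,j}$ has bounded analytic rank at degree $d$, so $\GIP(d-1)$ expresses it as a function of boundedly many polynomials of degree $\leq d-1$; these are added to the pool to be processed at lower degrees.

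The iteration terminates because the pool stays bounded in size at each degree: only boundedly many linearly independent additions occur at any fixed degree, and each invocation of $\GIP$ spawns boundedly many new pool elements. Regularity of $Q$ is maintained throughout: new initial additions are chosen to be linearly independent modulo bounded rank by construction, and any depth extensions introduced via $\ER$ preserve regularity by Lemma \ref{depth-reg}. At the end of the process, each original $P_i$ has been written as a $\Z$-linear combination of factor elements at degree exactly $d_i$ plus a function of elements of lower degree in the retraction $Q_{\leq d_i - 1}$, which is the decomposition promised.

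The main obstacle lies in the depth-extension subcase: a polynomial $P$ of degree $d$ can be genuinely new modulo the factor at its own degree while $pP$ is already captured by the factor at degree $d-p+1$, in which case naively adjoining $P$ as a fresh $Q_{i,0}$ would violate the defining relation $pQ_{i,j} = Q_{i,j-1}$. The exact roots property $\ER$ is precisely tailored to this scenario: it produces the required root inside a depth extension of the current factor, with the correct top-degree-plus-lower-degree shape, allowing $P$ to be inserted without destroying either the regularity of $Q$ or the structured form of the final decomposition.
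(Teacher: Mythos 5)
Your greedy algorithm processes degrees from $s_0$ down to $2$, and that direction breaks the argument. When you reach degree $d$ and find a pool element $P$ that is new modulo the factor $Q$ built so far, every $Q_{i,j}$ currently in $Q$ has initial degree $D_i \geq d$ (initial degrees are assigned at their processing step, and you have only processed degrees $\geq d$), hence $\deg Q_{i,j}=D_i+j(p-1)\geq d$. By the differential characterisation of polynomiality on a regular factor (Proposition \ref{diff} via Corollary \ref{klack2}), the only functions of $Q$ that are polynomials of degree $<d$ are constants, since already $\deg(e_i)=D_i\geq d$ for every generator. But $pP$ has degree $\leq \max(d-p+1,0)<d$; so unless $pP$ is constant, the hypothesis ``$pP$ is already a function of $Q$'' can never hold, your $\ER$ branch never fires, and you always land in the ``adjoin $P$ as a fresh initial $Q_{i,0}$'' branch. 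That step is illegal for non-classical $P$: the factor relation $pQ_{i,0}=Q_{i,-1}=0$ forces every $Q_{i,0}$ to be classical. Consequently your procedure can never validly insert a non-classical polynomial of degree $\geq p$ into the factor, which is exactly the case the lemma exists to handle.

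The paper avoids this by inducting on $s_0$ and handling lower-degree data \emph{first}. One chooses representatives $P'_1,\dots,P'_{m'}$ and classical $R_1,\dots,R_{m''}$ of the given polynomials modulo bounded rank, applies the inductive hypothesis to the degree-$\leq s_0-1$ polynomials (which include the $pP'_l$) to obtain a regular factor of degree $\leq s_0-1$, and only then invokes $\ER(s_0-p)$ inside a fresh depth extension of that factor — so the hypothesis ``$pP'_l$ is a function of the factor'' is actually guaranteed before the root-taking begins. Even then, the roots $U_l$ produced by $\ER$ satisfy $pU_l=pP'_l$ but need not equal $P'_l$: the difference $P'_l-U_l$ is a classical polynomial of degree $\leq s_0$, not ``of degree $<d$'' as your sketch asserts, and the paper must prove that the top-order coefficient vectors $\vec c_l$ of the $U_l$ are linearly independent over $\F$ in order to twist the depth-extended generators $Q_{i,J_i+1}$ by classical polynomials and force $U_l=P'_l$ exactly. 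This linear-independence-and-adjustment step, along with the subsequent regularity check when adjoining the $R_l$, is absent from your proposal.
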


\begin{proof}  We induct on $s_0$.  The claim is trivial for $s_0 \leq 1$ (note that any polynomial of degree $1$ can be made constant by passing to a bounded index subspace), so suppose that $s_0 \geq 2$ and that the claim has already been proven for smaller $s_0$.  Observe from Lemma \ref{polybasic} that $p \Poly_{\leq s_0}(V \to \ultra \T) \subset \Poly_{\leq s_0,\BR}(V \to \ultra \T)$, thus the abelian group $\Poly_{\leq s_0}(V \to \ultra \T)/\Poly_{\leq s_0,\BR}(V \to \ultra \T)$ is in fact a vector space.  This vector space contains $\Poly_{\leq s_0}(V \to \iota(\F))/\Poly_{\leq s_0,\BR}(V \to \iota(\F))$ as a subspace.

As every finitely generated vector space has a finite basis, we may thus represent $P_1,\ldots,P_m$ as a linear combination (over $\Z$) of a bounded number of polynomials $P'_1,\ldots,P'_{m'} \in \Poly_{\leq s_0}(V \to \ultra \T)$ that are linearly independent modulo $\Poly_{\leq s_0,\BR}(V \to \ultra \T) + \Poly_{\leq s_0}(V \to \iota(\F))$, a bounded number of classical polynomials $R_1,\ldots,R_{m''} \in \Poly_{\leq s_0}(V \to \iota(\F))$ that are linearly independent modulo $\Poly_{\leq s_0,\BR}(V \to \iota(\F))$, and a bounded number of bounded rank polynomials $S_1,\ldots,S_{m'''} \in \Poly_{\leq s_0, \BR}(V \to \ultra \T)$.   By definition of $\Poly_{\leq s_0, \BR}(V \to \ultra \T)$, the $S_1,\ldots,S_{m'''}$ are in turn functions of a bounded number of polynomials $S'_1,\ldots,S'_{m''''} \in \Poly_{\leq s_0-1}(V \to \ultra \T)$.  

By Lemma \ref{polybasic}, the polynomials $pP'_1,\ldots,pP'_{m'}$ have degree at most $\max(s_0-p+1,0)$, and in particular have degree $\leq s_0-1$.
Applying the induction hypothesis, and passing to a bounded index subspace $V'$ of $V$, the polynomials $S'_1,\ldots,S'_{m''''}, pP'_1,\ldots,pP'_{m'}$, when restricted to $V'$, are then all functions of a single regular factor $Q = (Q_{i,j})_{1 \leq i \leq m'''''; 0 \leq j \leq J_i}$ of degree $\leq s_0-1$ with initial degrees $D_1,\ldots,D_{m'''''} \geq 2$.  

Henceforth all polynomials will be understood to be restricted to $V'$.  Using Lemma \ref{subspace}, we see that the various linear independence properties on $P'_1,\ldots,P'_{m'}$ and $R_1,\ldots,R_{m''}$ descend from $V$ to $V'$.

By using Lemma \ref{polybasic}(v) to perform depth extensions on $Q$ as necessary, we may assume that 
$$ s_0-p < D_i + J_i(p-1) \leq s_0-1$$
for all $i=1,\ldots,m$; of course, this keeps the degree of $Q$ to be $\leq s_0-1$.  Note that $Q$ also remains regular, thanks to Lemma \ref{depth-reg}.

We now perform one further depth extension to $Q$ to obtain $Q' := (Q_{i,j})_{1 \leq i \leq m'''''; 0 \leq j \leq J_i+1}$ by choosing $Q_{i,J_i+1}$ to be a polynomial in $\Poly_{\leq D_i + (J_i+1)(p-1)}(V' \to \ultra \T)$ obeying $p Q_{i,J_i+1} = Q_{i,J_i}$; such a polynomial is available thanks to Lemma \ref{polybasic}(v).  This is clearly a factor.  By Lemma \ref{depth-reg}, $Q'$ is also regular.

Note that when constructing $Q'$, we have the freedom to modify each $Q_{i,J_i+1}$ additively by a classical polynomial from $\Poly_{\leq D_i + (J_i+1)(p-1)}(V' \to \iota(\F))$; this freedom will be important later on.

Now consider the polynomials $pP'_1,\ldots,pP'_{m'}$.  By Lemma \ref{polybasic}, they have degree $\leq \max(s_0-p+1,0)$, and are also functions of $Q$.  Applying the hypothesis $\ER(s_0-p)$ (and refining $V'$ if necessary), we conclude that we can find polynomials $U_1,\ldots,U_{m'} \in \Poly_{\leq s_0}(V' \to \ultra \T)$ that are functions of $Q'$ and such that
$$ p P'_l = p U_l$$
for all $1 \leq l \leq m'$.  In other words, $P'_l$ and $U_l$ differ by a classical polynomial in $\Poly_{\leq s_0}(V' \to \iota(\F))$.  (Note that these claims are trivial when $s_0 \leq p-1$; the hypothesis $\ER(s_0-p)$ is only needed when $s_0 > p-1$). Furthermore,  each $U_l$ is a linear combination of those $Q_{i,J_i+1}$ with $D_i + (J_i+1)(p-1) = s_0$, plus a function of $Q'_{\leq s_0-1}$, thus
\begin{equation}\label{pil}
 U_l = \sum_{i \in A} c_{l,i} Q_{i,J_i+1} + E_l
\end{equation}
for some coefficients $c_{l,i} \in \Z$, where $E_l$ is a function of $Q_{\leq s_0-1}$, and $A$ is the set of those $1 \leq i \leq m'''''$ with $D_i + (J_i+1)(p-1) = s_0$.  We claim that the vectors $\vec c_l := (c_{l,i} \mod p)_{i \in A} \in \F^A$ for $l=1,\ldots,m'$ are linearly independent.  Indeed, suppose for contradiction that we had a non-trivial linear dependence 
$$ a_1 \vec c_1 + \ldots + a_{m'} \vec c_{m'} = 0$$ 
in $\F^A$ for some $a_1,\ldots,a_{m'} \in \{0,\ldots,p-1\}$, not all zero.  Then by \eqref{pil}, the polynomial
$$ a_1 U_1 + \ldots + a_{m'} U_{m'}$$
is a function of $Q_{\leq s_0-1}$, and of degree $\leq s_0$.  As the $U_l$ differ from $P'_l$ by an element of
$\Poly_{\leq s_0}(V' \to \iota(\F))$, we conclude that the $P'_1,\ldots,P'_{m'}$ are linearly dependent modulo $\Poly_{\leq s_0, \BR}(V' \to \ultra \T) + \Poly_{\leq s_0}(V' \to \iota(\F))$, contradicting the construction of $P'_1,\ldots,P'_{m'}$.  Thus the $\vec c_1,\ldots,\vec c_{m'}$ are linearly independent.

We would like to modify $Q'$ so that the $U_l$ will agree with $P_l'$ exactly. To do this, 
recall that we had the freedom to modify each of the $Q_{i,J_i+1}$ by an arbitrary classical polynomial in $\Poly_{\leq s_0}(V' \to \iota(\F))$; this modifies the $U_l$ by a corresponding classical polynomial in $\Poly_{\leq s_0}(V' \to \iota(\F))$.  Because the $\vec c_1,\ldots,\vec c_{m'}$ are linearly independent, the $U_l$ can be so modified independently.  Since $P'_l$ and $U_l$ already only differed by such a classical polynomial, we can thus modify each of the $Q_{i,J_i+1}$ so that the $U_l$ are equal to $P'_l$ simultaneously for all $1 \leq i \leq m'$.  Having done so, we now see that the $P'_l$ are functions of $Q'_{\leq s_0}$.

Now we extend $Q'_{\leq s_0}$ to a further factor $Q''$ by adjoining the classical polynomials $R_1,\ldots,R_{m''}$ as new dimensions of degree $s_0$ and depth $0$.  This is still a factor; we claim that it remains regular.  To see this, we need to show that the $R_1,\ldots,R_{m''}$, together with the $Q_{i,j}$ with $D_i + j(p-1)=s_0$, are linearly independent modulo $\Poly_{\leq s_0,\BR}(V' \to \ultra \T)$.  Note from construction that all such $Q_{i,j}$ must be of the form $Q_{i,J_i+1}$.  Suppose for contradiction that there was a non-trivial linear dependence.  As the $R_1,\ldots,R_{m''}$ were already linearly independent modulo $\Poly_{\leq s_0,\BR}(V \to \ultra \T)$, this dependence must involve at least one of the $Q_{i,J_i+1}$.  Multiplying by $p$ and using Corollary \ref{cor}, we see that there is a dependence among those $Q_{i,J_i}$ with $D_i + (J_i+1)(p-1)=s_0$ modulo $\Poly_{\leq s_0-p+1, \BR}(V' \to \ultra \T)$, but this contradicts the regularity of $Q$.

By construction, all the polynomials $P'_1,\ldots,P'_{m'},R_1,\ldots,R_{m''},S_1,\ldots,S_{m'''}$ are functions of the regular factor $Q''$, which has degree $\leq s_0$, and so $P_1,\ldots,P_m$ are functions of $Q''$ also.  A careful inspection of the above argument also shows that each $P_i$ was in fact an integer linear combination of those $Q''_{i,j}$ in $Q''$ with $D_i + j(p-1)=d_i$, plus a function of $Q''_{\leq d_i-1}$ (the cases $d_i=s_0$ and $d_i<s_0$ have to be treated separately).  The claim follows.
\end{proof}

In view of the regularity lemma, Theorem \ref{er} now follows from

\begin{theorem}[Exact roots, technical version]\label{ertech}  Let $s_0 \geq 1$  be such that $\ER(s_0-1)$ holds, and $\GIP(s)$ holds for all $s \leq s_0$.  Then $\ER(s_0)$ holds.
\end{theorem}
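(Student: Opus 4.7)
The strategy follows the sketch in Section~\ref{outline}: use equidistribution of the regular factor to reduce the problem to a purely combinatorial statement about functions on products of cyclic $p$-groups, then construct an explicit $p$-th root monomial-by-monomial in a suitable multinomial basis.

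Since $Q \in \Poly_{\leq s_0+1}(V \to \ultra \T)$ is a function of $P$, one may write $Q = F(P_{1,J_1},\ldots,P_{m,J_m})$ for some $F: \prod_{i=1}^m \tfrac{1}{p^{J_i+1}}\Z/\Z \to \ultra \T$. The first step is to establish that, because $P$ is a regular factor with all $D_i \geq 2$, the tuple $(P_{1,J_1}(x),\ldots,P_{m,J_m}(x))$ is equidistributed in $\prod_i \tfrac{1}{p^{J_i+1}}\Z/\Z$ as $x$ ranges over $V$.  This is a polynomial analogue of Lemma~\ref{joint}, and I would prove it by the Weyl criterion, combining the inductive hypothesis $\GIP(s)$ for $s\le s_0$, Lemma~\ref{basic-rank}, and Corollary~\ref{cor} (which converts linear independence modulo bounded rank at the top level into linear independence at every deeper level). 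Granted this equidistribution, the condition $Q \in \Poly_{\leq s_0+1}$ translates into a weighted-degree bound on $F$: expanding $F$ in the multinomial basis suggested by Lemma~\ref{polybasic}(iii), every surviving basis monomial must have weighted degree $\leq s_0+1$, where a factor involving $P_{i,j}$ raised to multiplicity $a<p$ contributes $a\cdot D_i + (\text{digit correction})\cdot(p-1)$, and higher multiplicities are handled by Lucas-type base-$p$ splitting exactly as in the proof of Lemma~\ref{polybasic}(iii).

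Next, pass to the depth extension $P'$ furnished by the hypothesis: by Lemma~\ref{polybasic}(v) the polynomials $P_{i,J'_i}$ with $pP_{i,J'_i}=P_{i,J'_i-1}$ exist, and by Lemma~\ref{depth-reg} $P'$ remains regular, so the equidistribution of the previous paragraph applies to $P'$ as well. Pulling $F$ back through the multiplication-by-$p^{J'_i-J_i}$ maps gives $Q = F'(P_{1,J'_1},\ldots,P_{m,J'_m})$ for a function $F'$ on $\prod_i \tfrac{1}{p^{J'_i+1}}\Z/\Z$ of the same weighted degree $\leq s_0+1$ (now measured in the finer $P'$-grading). I would then build $G'$ with $pG'=F'$ monomial-by-monomial in this multinomial basis: taking a $p$-th root of a single digit raises its index $j$ by one and hence raises the weight by exactly $p-1$, so the antiderivative of a weighted-degree-$d$ basis monomial has weighted degree $\le d+(p-1)\le s_0+p$. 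The depth hypothesis $D_i+J'_i(p-1)>s_0$ guarantees enough room in $P'$ to carry out all root-takings without introducing new variables. Setting $R:=G'(P_{1,J'_1},\ldots,P_{m,J'_m})$, the equidistribution of $P'$ converts the weighted-degree bound on $G'$ back into the polynomial-degree bound $R\in\Poly_{\leq s_0+p}(V\to\ultra\T)$, and the ``furthermore'' clause is read directly off the construction: only a monomial of $G'$ that is (up to a constant) linear in a single top-level $P_{i,j}$ can achieve weighted degree $s_0+p$.

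The main obstacle is the joint-equidistribution step together with its translation from weighted degree to polynomial degree; this is the analogue of Lemma~\ref{joint} in the polynomial (rather than multilinear) setting, and it is precisely where one pays for working with non-classical polynomials. It is also where the hypothesis $D_i\ge 2$ is essential, via the failure of Corollary~\ref{cor} at $k=p$: were a $D_i$ allowed to equal $1$, its $p$-th root could drop in rank and destroy the regularity required for $P'$, which is why linear polynomials were eliminated in the regularity lemma. Once equidistribution is in hand, the construction of $G'$ reduces to a careful integration of multinomials on the groups $\Z/p^{J'_i+1}\Z$, and the remaining verifications (including the bookkeeping for the ``furthermore'' clause) are routine.
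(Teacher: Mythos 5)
Your proposal is correct and follows essentially the same route as the paper: regularize, translate the polynomial-degree bound on $Q$ into a weighted-degree bound on a periodic function on $\Z^m$ (the paper's Corollary~\ref{klack2}), take a $p$-th root monomial-by-monomial in the binomial-coefficient basis with the key observation that replacing $p^{r+1}$ by $p^{r+2}$ in a denominator costs exactly $p-1$ in weighted degree (the paper's Propositions~\ref{erz} and~\ref{erz-class}), then translate back using the depth hypothesis. One point worth flagging: the pointwise equidistribution of $(P_{1,J_1}(x),\ldots,P_{m,J_m}(x))$ is not by itself enough to convert a degree bound on $Q$ into a weighted-degree bound on $F$ — one needs equidistribution on Host--Kra cubes (Lemma~\ref{equil}(ii) and Corollary~\ref{equi-factor2}), because degree is detected by derivative behaviour on cubes, not by the image of a single point; you gesture at this (``analogue of Lemma~\ref{joint}'') but should make the cube structure explicit if writing this out in full.
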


Indeed, assuming Theorem \ref{ertech}, then in the situation in Theorem \ref{er}, we have $\ER(s')$ for all $0 \leq s' \leq s$ by strong induction (the case $\ER(0)$ being trivial).  If $P \in \Poly_{\leq s'+1, \BR}(V \to \ultra \T)$ for some $0 \leq s' \leq s$, then by 
Lemma \ref{rl}, $P$ is a function of a regular factor $R$ of degree $\leq s'$.  We then create a depth extension $R'$ of $R$ by extending all the depths $J'_i$ so that $s' < D_i + J'_i(p-1) \leq s'+p-1$; the existence of such a depth extension is guaranteed by Lemma \ref{polybasic}(v).  Applying $\ER(s)$, we conclude that we can find $Q \in \Poly_{\leq s'+p}(V \to \ultra \T)$ that is a function of $R'$ such that $pQ = P$.  Since $R'$ has degree $\leq s'+p-1$, $Q$ has bounded rank, and Theorem \ref{er} follows.

It remains to establish Theorem \ref{ertech}.   To do this, we first require an equidistribution lemma, analogous to (but more complicated than) Lemma \ref{joint}.

Let $P = (P_{i,j})_{1 \leq i \leq m; 0 \leq j \leq J_i}$ be a factor with initial degrees $D_1,\ldots,D_m$.  By \eqref{peo}, the polynomials in $P$ are in fact functions of $\tilde P := (P_{i,J_i})_{1 \leq i \leq m}$, which we interpret as a map from $V$ to $\ultra \T^m$.  

It is now convenient to use the machinery of polynomial algebra, which is reviewed in Section \ref{poly-alg}.  We define a filtration $(\ultra \T^m)_\N$ on $\ultra \T^m$ by defining $(\ultra \T^m)_k$ for each standard natural number $k$ to be the (finite) group generated by the elements $\frac{1}{p^{J_i-j+1}} e_i \mod \ultra \Z^m$ for which $1 \leq i \leq m$, $0 \leq j \leq J_i$, and $k \leq D_i + j(p-1)$, where $e_1,\ldots,e_m$ is the standard basis of $\R^m$.  This is easily seen to be a filtration (see Example \ref{filt-ab}).  We call this the \emph{filtration with depths $J_1,\ldots,J_m$ and initial degrees $D_1,\ldots,D_m$}.

\begin{lemma}[Equidistribution of regular factors]\label{equil}  Suppose that $\GIP(s)$ is true for all $0 \leq s \leq s_0$, let $P = (P_{i,j})_{1 \leq i \leq m; 0 \leq j \leq J_i}$ be a factor of degree $\leq s_0$, and let $k \geq 0$ be a standard integer.    Set $\tilde P := (P_{i,J_i})_{1 \leq i \leq m}$, and let $\ultra \T^m$ be given the filtration with depths $J_1,\ldots,J_m$ and initial degrees $D_1,\ldots,D_m$.  We give $V$ the maximal degree $\leq 1$ filtration (see Example \ref{filt-ab}).
\begin{itemize}
\item[(i)] $\tilde P: V \to \ultra \T^m$ is a polynomial map (see Definition \ref{diffeo}).
\item[(ii)] If furthermore $P$ is regular, then $\tilde P: V \to \ultra \T^m$ is equidistributed on cubes (see Definition \ref{strong-equi}).
\end{itemize}
\end{lemma}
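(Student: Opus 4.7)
The plan is to check both parts by unwinding the definitions of the filtration on $\ultra\T^m$, using a Weyl criterion to reduce (ii) to the inverse conjecture hypotheses. For part (i), a map between filtered abelian groups is polynomial precisely when each iterated additive derivative $\ader_{h_1}\cdots\ader_{h_k}\tilde P(x)$ lies in $(\ultra\T^m)_k$. Coordinatewise, the contribution of the $i$th component to $(\ultra\T^m)_k$ is the cyclic subgroup $\frac{1}{p^{J_i-j^*+1}}\Z/\Z$, where $j^*:=\max(0,\lceil(k-D_i)/(p-1)\rceil)$ is the smallest $j\in\{0,\ldots,J_i\}$ with $D_i+j(p-1)\geq k$ (the subgroup being trivial if no such $j$ exists, which forces the derivative to vanish by degree considerations). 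Multiplying by $p^{J_i-j^*+1}$ and iterating \eqref{peo} gives $p^{J_i-j^*+1}P_{i,J_i}=P_{i,j^*-1}$ with the convention $P_{i,-1}=0$; when $j^*=0$ this is $0$, and when $j^*\geq1$ the polynomial $P_{i,j^*-1}$ has degree $\leq D_i+(j^*-1)(p-1)<k$ by minimality of $j^*$, so $k$ additive derivatives annihilate it. This gives (i).

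For part (ii) I would invoke the Weyl equidistribution criterion (Lemma \ref{weyl-equi}): equidistribution on cubes follows once we show that for each non-trivial character $\chi$ on the cube group of the filtered target, the cube character average
\[
\E_{x,h_1,\ldots,h_k\in V}\,\chi\bigl((\tilde P(x+\omega\cdot h))_{\omega\in\{0,1\}^k}\bigr)
\]
is $o(1)$. Using the generators $\frac{1}{p^{J_i-j+1}}e_i$ of the filtration, $\chi$ expands as the exponential of an $\F$-linear combination $\sum_{i,j,\omega} a_{i,j,\omega}\,P_{i,j}(x+\omega\cdot h)$, and non-triviality of $\chi$ modulo the cube relations means that after collecting terms there is a surviving top-order $\F$-linear combination $\sum_{(i,j)} a_{i,j}P_{i,j}$ with all $D_i+j(p-1)$ equal to a common value $k'\leq s_0$ and the coefficients not all zero. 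Regularity of $P$ places this combination outside $\Poly_{\leq k',\BR}(V\to\ultra\T)$.

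By $\GIP(k'-1)$ (available from the hypothesis, since $k'\leq s_0$) combined with Theorem \ref{main-q}, the Gowers norm $\|e(\sum a_{i,j}P_{i,j})\|_{U^{k'}(V)}$ is $o(1)$: a non-negligible norm would, by the ultralimit inverse conjecture, force the combination to be a function of boundedly many degree-$\leq k'-1$ polynomials, contradicting its exclusion from $\Poly_{\leq k',\BR}$. A repeated application of the Cauchy-Schwarz-Gowers inequality on the cube average, using Lemma \ref{basic-rank}(iv) to absorb the lower-order $P_{i,j}$ contributions as bounded multiplicative weights independent of one of the $h$-directions, reduces the character sum to one dominated by this $U^{k'}$ norm, yielding the desired $o(1)$ bound. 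The main obstacle is the symbolic bookkeeping of the preceding paragraph: one must correctly identify which $\F$-combination at which top level $k'$ survives the cube differencing for a given non-trivial $\chi$, and verify that the regularity of $P$ (an assumption phrased modulo $\Poly_{\leq k',\BR}$) is precisely what is needed to invoke $\GIP$; once this correspondence is made clean, the Gowers-norm step follows routine lines.
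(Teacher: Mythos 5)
For part (i) your argument is essentially the paper's, just reorganized: you verify $\ader_{h_1}\cdots\ader_{h_k}\tilde P(x) \in (\ultra\T^m)_k$ coordinatewise by multiplying $P_{i,J_i}$ by the appropriate power of $p$ and invoking \eqref{peo} and the degree bound on the lower-depth $P_{i,j}$; the paper does the same computation in-place. This part is fine.

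For part (ii) you correctly identify the outline — Weyl criterion, expanding the character in terms of $P_{i,j}(x+\omega\cdot h)$, isolating a surviving top-order combination, invoking regularity and $\GIP$, then a Cauchy--Schwarz--Gowers reduction — but you explicitly defer the step you call ``symbolic bookkeeping,'' and that step is where the actual content of the lemma lives. Concretely, what is missing: a nonzero character $\eta$ on $\HK^k(\ultra\T^m)$ can be represented by integers $c_{i,\omega}$ in many ways (it only becomes unique after quotienting by the relations \eqref{pj1}--\eqref{piji} coming from Proposition~\ref{hk-dest}), and without first reducing the $c_{i,\omega}$ modulo those relations one cannot conclude that there is any ``surviving'' nonzero top-order combination at all. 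The paper's proof first puts the coefficients into reduced form ($0\le c_{i,\omega}<p^{J_i-j}$ whenever $|\omega|>D_i+j(p-1)$), then picks $\omega_*$ of maximal support, extracts the $p$-adic valuation $j_i$ of $c_{i,\omega_*}$, and proves from the reduced-form bound that $D:=\sup_{i\in I}(D_i+j_i(p-1))\ge K=|\omega_*|$; that last inequality is precisely what makes the Gowers-norm monotonicity step go through, and it is not routine. Your sketch also contains two smaller inaccuracies worth flagging: the surviving combination $Q=\sum_{i\in I}a_iP_{i,j_i}$ does \emph{not} have all $D_i+j_i(p-1)$ equal to one value $k'$ — only the top-degree subcollection is controlled by regularity, the rest are absorbed into $\Poly_{\le D,\BR}$; and the absorption of the $\omega\ne\omega_*$ terms is done via the second Cauchy--Schwarz--Gowers inequality, Lemma~\ref{gow}(v), not via Lemma~\ref{basic-rank}(iv) (which is the estimate used in the multilinear counting lemma, Lemma~\ref{joint}, a separate result).
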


\begin{proof}  To prove (i), it suffices by Definition \ref{diffeo} to verify that
\begin{equation}\label{aorta}
 \ader_{h_1} \ldots \ader_{h_k} \tilde P(x) \in (\ultra \T^m)_k
\end{equation}
whenever $k \in \N$ and $h_1,\ldots,h_k, x \in V$.  But if $1 \leq i \leq m$, then we clearly have
$$ \ader_{h_1} \ldots \ader_{h_k} P_{i,J_i}(x) \in \frac{1}{p^{J_i+1}} \Z / \Z$$
since $P_{i,J_i}$ takes values in $\frac{1}{p^{J_i+1}} \Z / \Z$; and if
$ D_i + j(p-1) < k \le  D_i + (j+1)(p-1)$ for some $0 \leq j \leq J_{i}$ then
$$ \ader_{h_1} \ldots \ader_{h_k} P_{i,j}(x) = 0$$
and since $p^{J_i-j} P_{i,J_i} =P_{i,j}$ we get 
$$ \ader_{h_1} \ldots \ader_{h_k} P_{i,J_i}(x) \in \frac{1}{p^{J_i-j}} \Z / \Z.$$
Comparing this with the definition of $(\ultra \T^m)_k$ we obtain \eqref{aorta}.

Now we verify (ii), which is trickier.  We need to show that the map $\HK^k(\tilde P): \HK^k(V) \to \HK^k(\ultra \T^m)$ is equidistributed.  By the Weyl equidistribution criterion (Lemma \ref{weyl-equi}), it suffices to show that
\begin{equation}\label{eep}
 \E_{x,h_1,\ldots,h_k \in V} e(\eta(\HK^k(\tilde P)(x,h_1,\ldots,h_k))) = o(1)
\end{equation}
whenever $\eta: \HK^k(\ultra \T^m) \to \T$ is a non-zero homomorphism.  

Observe that $\HK^k(\ultra \T^m)$ is a (finite) subgroup of $(\T^m)^{\{0,1\}^k}$. By Pontryagin duality, $\eta$ must therefore be the restriction of a homomorphism from  $(\T^m)^{\{0,1\}^k}$ to $\T$.  In other words, we can find (non-unique) integers $c_{i,\omega}$ for $1 \leq i \leq m$ and $\omega \in \{0,1\}^m$ such that
\begin{equation}\label{cio}
 \eta( (t_{i,\omega})_{1 \leq i\leq m; \omega \in \{0,1\}^k} ) = \sum_{i=1}^m \sum_{\omega \in \{0,1\}^k} c_{i,\omega} t_{i,\omega}
\end{equation}
for all $(t_{i,\omega})_{1 \leq i\leq m; \omega \in \{0,1\}^k}$ in $\HK^k(\ultra \T^m)$.  In particular, the left-hand side of \eqref{eep} becomes
\begin{equation}\label{eep-2}
 \E_{x,h_1,\ldots,h_k \in V} e(\sum_{i=1}^m \sum_{\omega \in \{0,1\}^k} c_{i,\omega} P_{i,J_i}( x + \omega_1 h_1 +\ldots + \omega_k h_k )).
\end{equation}

The coordinates $t_{i,\omega} \in \ultra \T$ of points $(t_{i,\omega})_{1 \leq i\leq m; \omega \in \{0,1\}^k}$ in $\HK^k(\ultra \T^m)$ obey a number of constraints.  Firstly, for each $\omega$, $(t_{i,\omega})_{1 \leq i \leq m}$ must lie in $(\ultra \T^m)_0$, or in other words we have
\begin{equation}\label{pj1}
 p^{J_i+1} t_{i,\omega} = 0
\end{equation}
whenever $1 \leq i \leq m$ and $\omega \in \{0,1\}^k$.  Secondly, from Proposition \ref{hk-dest} we see that
\begin{equation}\label{piji}
 p^{J_i-j} \sum_{\omega \in F} (-1)^{|\omega|} t_{i,\omega} = 0
\end{equation}
whenever $1 \leq i \leq m$, $\omega \in \{0,1\}^k$, $0 \leq j \leq J_i$, and $F$ is a face in $\{0,1\}^k$ of dimension greater than $D_i + j(p-1)$.  (In fact, Proposition \ref{hk-dest} asserts that these are the \emph{only} constraints on the $t_{i,\omega}$.)

We can use these constraints to place the coefficients $c_{i,\omega}$ in a ``reduced form'', as follows.  First observe from \eqref{piji} that if there exist $1 \leq i \leq m$, $\omega \in \{0,1\}^k$, and $0 \leq j \leq J_i$ with $|\omega| > D_i + j(p-1)$ and $c_{i,\omega} \geq p^{J_i-j}$ or $c_{i,\omega} < 0$ then by adding a suitable multiple of \eqref{piji} to \eqref{cio}, one can place $c_{i,\omega}$ in the interval $\{0,\ldots,p^{J_i-j}-1\}$, at the expense of changing the values of $c_{i,\omega'}$ for various $\omega'$ with $|\omega'| < |\omega|$.  Iterating this procedure (starting with those $\omega$ with large values of $|\omega|$ and then working downward) we may assume without loss of generality that 
\begin{equation}\label{down}
0 \leq c_{i,\omega} < p^{J_i-j}
\end{equation}
whenever $1 \leq i \leq m$, $\omega \in \{0,1\}^k$, and $0 \leq j \leq J_i$ is such that $|\omega| > D_i + j(p-1)$.  

In a similar spirit, by using \eqref{pj1}, we may assume that
$$ 0 \leq c_{i,\omega} < p^{J_i+1}$$
for all $1 \leq i \leq m$ and $\omega \in \{0,1\}^k$.

Since $\eta$ is non-zero, at least one of the $c_{i,\omega}$ is non-zero.  Let $\omega_* \in \{0,1\}^k$ be such that $c_{i,\omega_*}$ is non-zero for at least one $1 \leq i \leq m$, and such that $|\omega_*|$ is maximal with respect to this property.  By permutation symmetry we may assume that $\omega_* = 1^K 0^{k-K} = (1,\ldots,1,0,\ldots,0)$ for some $0 \leq K \leq k$.  The expression \eqref{eep-2} can then be factored as
$$
 \E_{x,h_1,\ldots,h_k \in V} e(Q(x+h_1+\ldots+h_K)) \prod_{l=1}^K f_l(x,h_1,\ldots,h_k)$$
where $Q: V \to \ultra \T$ is the expression
$$ Q := \sum_{i=1}^m c_{i,\omega_*} P_{i,J_i}$$
and each $f_l: V^k \to \ultra \C$ is a limit function bounded in magnitude by $1$ and independent of $h_l$.  Using the second Cauchy-Schwarz-Gowers inequality (Lemma \ref{gow}(v)), it thus suffices to show that
$$ \|e(Q)\|_{U^K(V)} = o(1).$$

Let $I := \{ 1 \leq i \leq m: c_{i,\omega_*} \neq 0 \}$, then $I$ is non-empty.
For each $i \in I$, let $0 \leq j_i \leq J_i$ be the least integer $j$ such that $p^{J_i-j} | c_{i,\omega_*}$.  Since $p^{J_i-j_i} P_{i,J_i} = P_{i,j_i}$, we thus have
\begin{equation}\label{qform}
 Q = \sum_{i \in I} a_i P_{i,j_i}
\end{equation}
for some integers $a_i$ that are not divisible by $p$.

Let
$$ D := \sup_{i \in I} D_i + j_i(p-1).$$
As $P$ has degree $\leq s_0$, we have $D \leq s_0$.  Also, since each $P_{i,j_i}$ has degree $D_i + j_i(p-1)$, we see that $Q$ has degree $\leq D$.

If one had
$$ D_i+j_i(p-1) < |\omega_*| = K$$
for some $i \in I$, then from \eqref{down} we would have $c_{i,\omega_*} < p^{J_i-j}$, a contradiction; so we must have $D_i+j_i(p-1) \geq K$ for all $i \in I$.  In particular, $D \geq K$.  By the monotonicity of the Gowers norms (Lemma \ref{gow}(ii)), it thus suffices to show that
$$ \|e(Q)\|_{U^D(V)} = o(1).$$
Applying the induction hypothesis $\GIP(D-1)$, it thus suffices to show that $Q \not \in \Poly_{\leq D,\BR}(V \to \ultra \T)$.  But, as $P$ is regular, the polynomials $P_{i,j_i}$ with $D_i+j_i(p-1)=D$ are linearly independent over $\F$ modulo $\Poly_{\leq D,\BR}(V \to \ultra \T)$, and the polynomials $P_{i,j_i}$ with $D_i+j_i(p-1)<D$ already lie in $\Poly_{\leq D,\BR}(V \to \ultra \T)$.  Since there is at least $i \in I$ with $D_i+j_i(p-1)=D$, and all coefficients $a_i$ in \eqref{qform} are nonzero modulo $p$, we obtain the desired claim.
\end{proof}

From the above corollary and Corollary \ref{equi-factor2} we conclude

\begin{corollary}[Polynomials on a regular factor]\label{klack}  Suppose that $\GIP(s)$ is true for all $0 \leq s \leq s_0$, let $P = (P_{i,j})_{1 \leq i \leq m; 0 \leq j \leq J_i}$ be a regular factor of degree $\leq s_0$, let $\tilde P: V \to \ultra \T^m$ be the associated map $\tilde P = (P_{i,J_i})_{1 \leq i \leq m}$, let $d \geq 0$ be an integer, and let $f: \ultra \T^m \to \ultra \T$ be a function.  Then the following are equivalent:
\begin{itemize}
\item $f(\tilde P): V \to \ultra \T$ is a polynomial of degree $\leq d$.
\item $f: \ultra \T^m \to \ultra \T$ is a polynomial map (where we give $\ultra \T^m$ the filtration with depths $J_1,\ldots,J_m$ and initial degrees $D_1,\ldots,D_m$, and $\ultra \T$ the maximal degree $\leq d$ filtration, see Example \ref{filt-ab}).
\end{itemize}
\end{corollary}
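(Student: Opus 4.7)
The plan is to derive both implications directly from Lemma \ref{equil} combined with Corollary \ref{equi-factor2}. Lemma \ref{equil}(i) already identifies $\tilde P$ as a polynomial map from $V$ (with the maximal degree $\leq 1$ filtration) to $\ultra\T^m$ (equipped with the filtration of depths $J_1,\ldots,J_m$ and initial degrees $D_1,\ldots,D_m$), and Lemma \ref{equil}(ii) adds that $\tilde P$ is equidistributed on cubes. Thus $\tilde P$ is an equidistributed polynomial map between filtered abelian groups, placing us precisely in the hypothesis of Corollary \ref{equi-factor2}.

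For the easier direction, suppose $f:\ultra\T^m \to \ultra\T$ is a polynomial map with respect to the stated filtrations. Then I would simply compose with $\tilde P$: the composition of two polynomial maps of filtered abelian groups is again a polynomial map (this is a standard fact in polynomial algebra, and the equidistribution is not needed for this direction). Since the target filtration on $\ultra\T$ is the maximal degree $\leq d$ one, a polynomial map $V \to \ultra\T$ is the same thing as an element of $\Poly_{\leq d}(V \to \ultra\T)$, so $f(\tilde P)$ is as required.

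For the converse, $f(\tilde P) \in \Poly_{\leq d}(V \to \ultra\T)$ means $\ader_{h_1}\ldots\ader_{h_{d+1}}(f\circ \tilde P)(x) = 0$ for all $x,h_1,\ldots,h_{d+1} \in V$; unpacked, this is a linear relation among the values of $f$ at the vertices of the cube $\HK^{d+1}(\tilde P)(x,h_1,\ldots,h_{d+1}) \in \HK^{d+1}(\ultra\T^m)$ that holds for every cube in $V$. By the equidistribution of $\HK^{d+1}(\tilde P):\HK^{d+1}(V)\to \HK^{d+1}(\ultra\T^m)$ (which in the nonstandard setting forces surjectivity onto $\HK^{d+1}(\ultra\T^m)$), the same relation must hold for \emph{every} cube in $\HK^{d+1}(\ultra\T^m)$; via Proposition \ref{hk-dest} this is exactly the condition that $f$ be a polynomial map into $\ultra\T$ with the maximal degree $\leq d$ filtration. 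All of this is the content of Corollary \ref{equi-factor2}, so the only step to verify is that the filtration on $\ultra\T^m$ invoked by that corollary is the same as the one specified here, which holds by construction since both are defined from the depths and initial degrees of $P$. This bookkeeping match is the only real obstacle, and it is immediate.
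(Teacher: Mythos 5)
Your proposal follows exactly the paper's route: the corollary is obtained by combining Lemma \ref{equil} (which gives that $\tilde P$ is a polynomial map and, by regularity, is equidistributed on cubes) with Corollary \ref{equi-factor2}, which then yields the equivalence directly. The extra detail you supply about the two directions is a correct unwinding of Corollary \ref{equi-factor2} rather than a different argument.
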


We now lift the conclusion of Corollary \ref{klack} from $\ultra \T^m$ to $\Z^m$.  Given natural numbers $D_1,\ldots,D_m$, we define the \emph{filtration $(\Z^m)_\N$ of initial degrees $D_1,\ldots,D_m$} on $\Z^m$ by setting $(\Z^m)_k$, for each natural number $k$, to be the subgroup of $\Z^m$ generated by those elements $p^j e_i$ for which $1 \leq i \leq m$, $j \in \N$, and $k \leq D_i + j(p-1)$; this is easily seen to be a filtration.

\begin{corollary}[Polynomials on a regular factor, again]\label{klack2}  Suppose that $\GIP(s)$ is true for all $0 \leq s \leq s_0$, let $P=(P_{i,j})_{1 \leq i \leq m; 0 \leq j \leq J_i}$ be a regular factor of degree $\leq s_0$ and initial degrees $D_1,\ldots,D_m$, let $d \geq 0$ be an integer.  Let $Q: V \to \ultra \T$ be a limit function.  Then the following are equivalent:
\begin{itemize}
\item $Q$ is measurable with respect to $P$, and is a polynomial of degree $\leq d$.  
\item There exists a polynomial map $\tilde f: \Z^m \to \ultra \T$ from $\Z^m$ (with the filtration of initial degrees $D_1,\ldots,D_m$) to $\ultra \T$ (with the maximal degree $\leq d$ filtration), such that $\tilde f$ is periodic with period $p^{J_i+1} e_i$ for each $1 \leq i \leq m$, and such that one has
\begin{equation}\label{qlift}
 Q(x) = \tilde f( a_1,\ldots,a_m ),
\end{equation}
whenever $x \in V$ and $a_1,\ldots,a_m \in \Z$ are such that
\begin{equation}\label{qlift-2}
P_{i,J_i}(x) = \frac{a_i}{p^{J_i+1}}\mod 1,
\end{equation}
for all $1 \leq i \leq m$.
\end{itemize}
\end{corollary}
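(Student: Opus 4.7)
The plan is to exhibit $\tilde f$ as the pullback of the function on $\ultra\T^m$ provided by Corollary~\ref{klack}, via the natural quotient map
\[ \pi : \Z^m \to \ultra\T^m, \qquad \pi(a_1,\ldots,a_m) := \left(\frac{a_i}{p^{J_i+1}} \bmod 1\right)_{i=1}^m. \]
The key preparatory observation is that $\pi$ is a surjection of filtered abelian groups whose filtrations match exactly on generators: for each $k \geq 0$, the generator $p^j e_i$ of $(\Z^m)_k$ (with $k \le D_i + j(p-1)$) is sent to $\frac{1}{p^{J_i - j + 1}} e_i \bmod 1$ when $0 \le j \le J_i$ (which is precisely the matching generator of $(\ultra\T^m)_k$), and to $0$ when $j > J_i$. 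Thus $\pi((\Z^m)_k) = (\ultra\T^m)_k$ for every $k$, and $\ker \pi = \bigoplus_i p^{J_i+1}\Z$.

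For the forward implication, suppose $Q: V \to \ultra\T$ is measurable with respect to $P$ and is a polynomial of degree $\le d$. Measurability provides a function $F: \ultra\T^m \to \ultra\T$ with $Q = F(\tilde P)$, and Corollary~\ref{klack} then guarantees that $F$ is a polynomial map from $\ultra\T^m$ (with its filtration) to $\ultra\T$ (with the maximal degree $\le d$ filtration). Set $\tilde f := F \circ \pi$. As a composition of filtered polynomial maps, $\tilde f$ is itself polynomial; and since each $p^{J_i+1} e_i$ lies in $\ker \pi$, $\tilde f$ is periodic with period $p^{J_i+1} e_i$ in each variable. Equation~\eqref{qlift} then follows, because whenever \eqref{qlift-2} holds one has $\pi(a_1,\ldots,a_m) = \tilde P(x)$, so $\tilde f(a_1,\ldots,a_m) = F(\tilde P(x)) = Q(x)$.

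For the converse, suppose $\tilde f: \Z^m \to \ultra\T$ is a polynomial map periodic with the indicated periods. Periodicity ensures that $\tilde f$ descends through $\pi$ to a well-defined function $F$ on $\mathrm{Im}(\pi) = (\ultra\T^m)_0 = \prod_i \frac{1}{p^{J_i+1}}\Z/\Z$, which we extend to all of $\ultra\T^m$ arbitrarily (the extension will not affect $Q$ since $\tilde P$ takes values in $\mathrm{Im}(\pi)$). Defining $Q$ by \eqref{qlift}, we have $Q = F \circ \tilde P$, so by Corollary~\ref{klack} it suffices to verify that $F$ is a polynomial map of the required filtered type. This is the descent step, and is the main technical obstacle: one must invoke the general fact from the polynomial algebra of filtered groups (Appendix~\ref{poly-alg}) that when $\pi : G \to G'$ is a surjective group homomorphism satisfying $\pi(G_k) = G'_k$ for all $k$, a function $F$ on $G'$ is a polynomial map iff $F \circ \pi$ is. The hypothesis $\pi((\Z^m)_k) = (\ultra\T^m)_k$ verified above is exactly the input to this fact, so $F$ is a polynomial map, and $Q$ is a polynomial of degree $\le d$ by Corollary~\ref{klack}; it is measurable with respect to $P$ by construction.
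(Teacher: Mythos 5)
Your proof is correct and follows the paper's own argument closely: the forward direction is identical (compose the filtered polynomial map from Corollary~\ref{klack} with the filtered homomorphism $\pi$, the paper's $\phi$), and the converse is handled by the same descent principle. The ``general fact'' you invoke is exactly the paper's Lemma~\ref{equi-factor} (factorisation through a weakly equidistributed polynomial map), and your observation that $\pi((\Z^m)_k)=(\ultra\T^m)_k$ for every $k$ is precisely what shows that $\pi$ is weakly equidistributed on cubes in the sense of Definition~\ref{weak-equi}, since it implies that each $\HK^k(\pi)$ maps generators onto generators and hence is surjective.
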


\begin{proof}  If $Q$ is measurable with respect to $P$ and is a polynomial of degree $\leq d$, then by Corollary \ref{klack}, we can write $Q = f(\tilde P)$, where $f: \ultra \T^m \to \ultra \T$ is a polynomial map from $\ultra \T^m$ (with the filtration of initial degrees $D_1,\ldots,D_m$ and depths $J_1,\ldots,J_m$) to $\ultra \T$ (with the maximal degree $\leq d$ filtration).  

Let $\phi: \Z^m \to \ultra \T^m$ be the map
$$ \phi( a_1,\ldots,a_m) := ( \frac{a_1}{p^{J_1+1}} \mod 1, \ldots, \frac{a_m}{p^{J_m+1}} \mod 1 ).$$
One easily verifies that this is a polynomial map (indeed, it is a filtered homomorphism) from $\Z^m$ (with the filtration of initial degrees $D_1,\ldots,D_m$) to $\ultra \T^m$ (with the filtration of initial degrees $D_1,\ldots,D_m$ and depths $J_1,\ldots,J_m$).  Thus the function $\tilde f: \Z^m \to \ultra \T$ defined by $\tilde f := f \circ \phi$ is also a polynomial map from $\Z^m$ (with the filtration of initial degrees $D_1,\ldots,D_m$) to $\ultra \T$ (with the maximal degree $\leq d$ filtration).  It is also periodic with period $p^{J_i+1} e_i$ for each $1 \leq i \leq m$, because $\phi$ is also periodic with these periods.  By construction one also has \eqref{qlift} whenever \eqref{qlift-2}.  This proves one implication of the corollary.  The other implication follows by reversing the above argument (noting that $\phi$ is weakly equidistributed on cubes in the sense of Definition \ref{weak-equi}, so that one can apply Lemma \ref{equi-factor}).
\end{proof}

In view of the above corollary, Theorem \ref{ertech} can now be deduced from an analogous result on the integer lattice $\Z^m$, which we formulate precisely as follows:

\begin{proposition}[Exact roots in $\Z^m$]\label{erz} Let $m \geq 0$, $D_1,\ldots,D_m \geq 1$, and $d \geq 0$ be standard natural numbers.
Let $\tilde f: \Z^m \to \ultra \T$ be a polynomial map from $\Z^m$ (with the filtration of initial degrees $D_1,\ldots,D_m$) to $\ultra \T$ (with the maximal degree $\leq d$ filtration).  Then we can find a polynomial map $\tilde g: \Z^m \to \ultra \T$ from $\Z^m$ (with the filtration of initial degrees $D_1,\ldots,D_m$) to $\T$ (with the maximal degree $\leq d+p-1$ filtration) such that $p\tilde g = \tilde f$.  Furthermore, $\tilde g$ is a linear combination (over $\Z$) of the functions $(a_1,\ldots,a_m) \mapsto \frac{a_i}{p^{j+1}} \mod 1$ with $1 \leq i \leq m$ and $D_i + j(p-1) = d+p-1$, plus a function of the expressions $a_i \mod p^{j+1}$ with $1 \leq i \leq m$ and $D_i + j(p-1) < d+p-1$.  In particular, $\tilde g$ is periodic with period $p^{j+1} e_i$ whenever $1 \leq i \leq m$ and $D_i + j(p-1) > d$.
\end{proposition}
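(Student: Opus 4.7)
The plan is to proceed by an explicit construction in the binomial basis. Any limit function $\tilde f: \Z^m \to \ultra\T$ decomposes uniquely (after fixing a convention) as
$$\tilde f(a_1,\ldots,a_m) = \sum_{\vec k \in \N^m} c_{\vec k} \prod_{i=1}^m \binom{a_i}{k_i} \mod 1$$
with $c_{\vec k} \in \ultra\T$. First I would show that the polynomial-map condition with initial degrees $D_1,\ldots,D_m$ and target degree $\leq d$ forces, for each multi-index $\vec k$, that $c_{\vec k}$ lies in the $p^{M(\vec k)}$-torsion subgroup of $\ultra\T$, where $M(\vec k) := \lfloor(d - \sum_i k_i D_i)/(p-1)\rfloor + 1$, with $c_{\vec k} = 0$ whenever $\sum_i k_i D_i > d$. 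This follows from the identity $\ader_{h_1}\cdots\ader_{h_k}\binom{a}{k} = \prod_i h_i$, combined with the filtration rule that $\ader_{p^j e_i}$ contributes weight $D_i + j(p-1)$ in each application.

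Next I would construct $\tilde g$ by taking $p$-th roots coefficient-by-coefficient: for each $\vec k$, pick $\tilde c_{\vec k} \in \ultra\T$ with $p\tilde c_{\vec k} = c_{\vec k}$ and order dividing $p^{M(\vec k)+1}$ (available since $\ultra\T$ is divisible, echoing Lemma \ref{polybasic}(v)), and set $\tilde g := \sum_{\vec k} \tilde c_{\vec k}\prod_i\binom{a_i}{k_i} \mod 1$. Then $p\tilde g = \tilde f$, and a direct degree computation gives weighted degree of $\tilde g$ equal to $\sum_i k_i D_i + M(\vec k)(p-1) \leq d + p - 1$ term-by-term.

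The last and hardest step is the structural claim: that $\tilde g$ splits as a $\Z$-linear combination of the top-degree functions $a_i/p^{j+1}$ with $D_i + j(p-1) = d+p-1$, plus a function of the expressions $a_i \mod p^{j+1}$ with $D_i + j(p-1) < d+p-1$. The top-degree linear contributions come transparently from $\vec k = e_i$ with $M(e_i) = j+1$. For the non-linear terms ($\sum k_i \geq 2$), a naive appeal to periodicity of $\binom{a}{k}/p^N \mod 1$ gives period $p^{N+\lfloor\log_p k\rfloor}$, which can be too large. The resolution, and the main obstacle to grind through, is to re-expand each non-linear monomial (using, e.g., $\binom{a}{k} = a^k/k! + \text{lower order}$, tracking $p$-adic valuations as in the toy identity $\binom{a}{2}/4 \equiv a^2/8 - a/8 \pmod 1$ in characteristic $2$), so that any residual linear-in-$a_i$ contribution buried inside a non-linear monomial is extracted and absorbed into the top-degree linear part, while what remains is periodic in each $a_i$ with period $p^{j+1}$ for some $j$ with $D_i + j(p-1) < d+p-1$. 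All the delicate bookkeeping lies in making this rearrangement uniform across $\vec k$ and confirming the claimed periodicity moduli.
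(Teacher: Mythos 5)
Your plan is essentially the paper's: expand $\tilde f$ in the multivariate Newton (binomial) basis with coefficients $c_{\vec k} = \ader_{e_1}^{k_1}\cdots\ader_{e_m}^{k_m}\tilde f(0)$, bound the torsion order of each $c_{\vec k}$, and take $p^{\text{th}}$ roots coefficient-by-coefficient. The paper packages exactly this into the classification result Proposition \ref{erz-class}, which says a function has weighted degree $\leq d$ if and only if it is a free constant plus a $\Z$-linear combination of multinomials $\frac{1}{p^{r+1}}\binom{x_1}{i_1}\cdots\binom{x_m}{i_m}$ with $\sum_j D_j i_j + r(p-1)\leq d$; the exact root is then produced term-by-term, precisely as you propose.

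The gap is that the two steps you present as consequences of easy identities are in fact the entire content of Proposition \ref{erz-class}. (1) The torsion bound $p^{M(\vec k)}c_{\vec k}=0$ does not ``follow from'' the identity $\ader_{h_1}\cdots\ader_{h_k}\binom{a}{k}=\prod_i h_i$ together with the filtration rule; that only tells you which derivatives of $\tilde f$ vanish, not that iterated $\ader_{e_i}$-derivatives are torsion (on $\Z$ one has no analogue of Lemma \ref{pmult}, since $T_{e_i}^p\neq 1$). The correct argument is a descending induction on $\sum_i k_i$: expand $\ader_{p^{M}e_i}=\sum_{l\geq 1}\binom{p^M}{l}\ader_{e_i}^l$ inside a vanishing derivative of total weight $>d$, isolate the $l=1$ term $p^M\ader_{e_i}$, and kill the $l\geq 2$ terms by comparing $v_p\binom{p^M}{l}=M-v_p(l)$ with the inductively-known torsion orders, using $\lfloor(l-1)/(p-1)\rfloor\geq v_p(l)$. (Also note $\vec k=\vec 0$ must be excluded and the constant handled separately, since $c_{\vec 0}=\tilde f(0)$ is free in $\ultra\T$.) (2) The bound ``weighted degree of each monomial $\tilde c_{\vec k}\prod_i\binom{a_i}{k_i}$ is at most $\sum k_iD_i+M(p-1)$'' is not a direct computation either; it is the (ii)$\Rightarrow$(i) direction of Proposition \ref{erz-class}, which the paper proves by its own induction using the divisibility $p^{k-v_p(l)}\mid\binom{p^k}{l}$ and the inequality $D_m(l-1)\geq t(p-1)$. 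Neither claim you make is false, but each requires a genuine induction with $p$-adic bookkeeping of binomial coefficients rather than being immediate.

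Conversely, you overestimate the difficulty of the structural clause of the Proposition. Once one knows $\tilde g$ has weighted degree $\leq d+p-1$, the splitting into a $\Z$-linear combination of the top-degree linear pieces $a_i/p^{j+1}$ (for $D_i+j(p-1)=d+p-1$) plus a function of $a_i\bmod p^{j+1}$ (for $D_i+j(p-1)<d+p-1$) is a generic structural/periodicity statement about weighted-degree polynomials on $\Z^m$ --- this is Lemma \ref{wt}(ii) of the paper, applied to $\tilde g$ rather than $\tilde f$. Since $\ader_{p^je_i}\tilde g=0$ whenever $D_i+j(p-1)>d+p-1$, the relevant period is $p^j$ directly; the coarser period $p^{N+\lfloor\log_p k\rfloor}$ coming from looking at a single monomial $\binom{a}{k}/p^N$ in isolation never enters, and the term-by-term re-expansion and cancellation you envision is unnecessary.
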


We remark that in contrast with the previous arguments, the above proposition holds even when the $D_i$ are equal to $1$ (indeed, Lemma \ref{polybasic}(iii) can be viewed as a special case in which $D_1=\ldots=D_m=1$ and the functions are periodic with period $p\Z^m$).

We will prove this Proposition in the next section.  For now, we show how Proposition \ref{erz} implies Theorem \ref{ertech} and thus Theorem \ref{er}.

\begin{proof}[Proof of Theorem \ref{ertech} assuming Proposition \ref{erz}]  Let $s_0 \geq 1$  be such that $\ER(s_0-1)$ holds, and $\GIP(s)$ holds for all $s \leq s_0$.  Let $P$ be a regular factor of degree at most $s_0$ and some initial degrees $D_1,\ldots,D_m$, let $Q \in \Poly_{\leq s_0+1}(V \to \ultra \T)$ be a function of $P$, and let $P' = (P_{i,j})_{1 \leq i \leq m; 0 \leq j \leq J'_i}$ be a depth extension of $P$ with $D_i+J'_i(p-1) > s_0$ for all $1 \leq i \leq m$.   Our objective is to find a polynomial $R \in \Poly_{\leq s+p}(V \to \ultra \T)$ that is a function of $P'$ such that $pR = Q$.

Note from Lemma \ref{depth-reg} that $P'$ is automatically regular.
  
By Corollary \ref{klack2}, we can find a function $\tilde f: \Z^m \to \ultra \T$ of weighted degree $\leq s_0+1$ which is periodic with period $p^{J_i+1} e_i$ for each $i=1,\ldots,m$, such that one has
$$ Q(x) = \tilde f( a_1,\ldots,a_m )$$
whenever $x \in V$ and $a_1,\ldots,a_m \in \Z$ are such that
$$ P_{i,J_i}(x) = \frac{a_i}{p^{J_i+1}} \mod 1$$
for all $1 \leq i \leq m$.

Applying Proposition \ref{erz}, one can find a function $\tilde g: \Z^m \to \ultra \T$ of weighted degree $\leq s_0+p$ which is periodic with period $p^{j+1} e_i$ whenever $1 \leq i \leq m$ and $D_i + j(p-1) > s_0+1$, such that $p \tilde g = \tilde f$.  In particular, $\tilde g$ is periodic with period $p^{J'_i+1} e_i$ for each $1 \leq i \leq m$.  We may therefore define the function $R: V \to \ultra \T$ by setting
$$ R(x) := \tilde g(a_1,\ldots,a_m)$$
whenever $x \in V$ and $a_1,\ldots,a_m \in \Z$ are such that
$$ P_{i,J'_i}(x) = \frac{a_i}{p^{J'_i+1}} \mod 1$$
for all $1 \leq i \leq m$; the periodicity properties of $\tilde g$ ensure that $R$ is well-defined.  By Corollary \ref{klack2}, $R$ is a function of $P'$ which is a polynomial of degree $\leq s_0+p$. By construction, one has $pR=Q$, and the claim follows.
\end{proof}

The only remaining task is to establish Proposition \ref{erz}.  This will be the subject of the next section.

\section{Exact roots for polynomials on $\Z^m$}\label{erpsec}

We now prove Proposition \ref{erz}.  Throughout this section the dimension $m\geq 0$ and initial degrees $D_1,\ldots,D_m \geq 1$ are fixed.

It is convenient to rephrase the polynomiality condition in terms of derivatives.  Define a \emph{basic generator} $v$ to be an element of $\Z^m$ of the form $p^j e_i$, where $1 \leq i \leq m$ and $j \geq 0$.  Define a \emph{multigenerator} to be a tuple $\vec v = (v_1,\ldots,v_r)$ of basic generators, where $r \geq 0$ is a standard natural number.  We associate to each basic generator $p^j e_i$ a \emph{weighted degree} $\deg(p^j e_i) := D_i + j(p-1)$, and associate to each multigenerator $\vec v = (v_1,\ldots,v_r)$ a weighted degree $\deg(\vec v) := \deg(v_1) + \ldots + \deg(v_r)$.  We also associate to $\vec v$ the differential operator
$$ \ader_{\vec v} := \ader_{v_1} \ldots \ader_{v_r}.$$
We say that a function $\tilde f: \Z^m \to \ultra \T$ has \emph{weighted degree} $\leq d$ if
one has $\ader_{\vec v} \tilde f = 0$ whenever $\deg(\vec v) > d$.  In other words, we have
$$ (\prod_{i=1}^m \prod_{j=0}^\infty \ader_{p^j e_i}^{a_{i,j}}) \tilde f = 0$$
whenever $a_{i,j}$ are natural numbers (at most finitely many of which are non-zero) with $\sum_{i=1}^m \sum_{j=0}^\infty a_{i,j}(D_i+j(p-1)) > d$.

From Proposition \ref{polygen} we have

\begin{proposition}[Differential characterisation of polynomiality]\label{diff}  Let $\tilde f: \Z^m \to \ultra \T$ be a function, and let $d \geq 0$ be a standard natural number.  Then the following are equivalent:
\begin{itemize}
\item[(i)] $\tilde f$ is a polynomial map from $\Z^m$ (with the filtration of initial degrees $D_1,\ldots,D_m$) to $\ultra \T$ (with the maximal degree $\leq d$ filtration).
\item[(ii)] $\tilde f$ has weighted degree $\leq d$.
\end{itemize}
\end{proposition}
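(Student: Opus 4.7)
My plan is to deduce this from the general characterisation of polynomial maps between filtered abelian groups provided by Proposition \ref{polygen}, which reduces the polynomial-map condition to checking iterated derivatives along a prescribed generating family of the source filtration. In the present setting, the natural generating family for $(\Z^m)_\N$ with the filtration of initial degrees $D_1,\ldots,D_m$ is exactly the collection of basic generators: each $p^j e_i$ lies in $(\Z^m)_{D_i + j(p-1)}$ by construction, and indeed $(\Z^m)_k$ is by definition the subgroup of $\Z^m$ generated by basic generators of weighted degree $\geq k$. This is precisely the hypothesis one needs to apply Proposition \ref{polygen}.

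The implication (i)$\Rightarrow$(ii) is then the easy direction. If $\vec v = (v_1,\ldots,v_r)$ is a multigenerator with $\deg(\vec v) > d$, each $v_\ell$ lies in $(\Z^m)_{\deg(v_\ell)}$, so the filtered polynomial-map condition (part of Definition \ref{diffeo}) gives
$$\ader_{\vec v}\tilde f = \ader_{v_1}\cdots\ader_{v_r}\tilde f \in \ultra\T_{\deg(v_1) + \cdots + \deg(v_r)} = \ultra\T_{\deg(\vec v)}.$$
Since $\ultra\T$ carries the maximal degree $\leq d$ filtration, the right-hand side is $\{0\}$ whenever $\deg(\vec v) > d$, which is exactly weighted degree $\leq d$.

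For the converse (ii)$\Rightarrow$(i), I would invoke Proposition \ref{polygen} directly: it says that to verify $\tilde f$ is a polynomial map it is enough to check $\ader_{v_1}\cdots\ader_{v_r}\tilde f \in \ultra\T_{\deg(v_1)+\cdots+\deg(v_r)}$ on basic generators $v_1,\ldots,v_r$ of the source filtration, and this is exactly hypothesis (ii) once we translate the maximal degree $\leq d$ filtration on $\ultra\T$ back into the weighted-degree condition. The engine behind Proposition \ref{polygen} is the expansion of an arbitrary iterated derivative $\ader_{h_1}\cdots\ader_{h_r}\tilde f$ with $h_\ell \in (\Z^m)_{k_\ell}$ into a $\Z$-linear combination of terms $\ader_{\vec v}\tilde f$ over multigenerators $\vec v$ with $\deg(\vec v) \geq k_1+\cdots+k_r$, using the cocycle identity $\ader_{h+h'} = \ader_h + \ader_{h'} + \ader_h\ader_{h'}$, the commutativity $\ader_h\ader_{h'} = \ader_{h'}\ader_h$, and the integer-multiple rule $\ader_{nh} = \sum_{k \geq 1}\binom{n}{k}\ader_h^k$ obtained by iterating the cocycle. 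The main (and only) subtlety is purely bookkeeping: verifying that each monomial produced by these expansions corresponds to a multigenerator whose weighted degree is at least the total filtration weight of the original arguments, so that condition (ii) really does kill every contributing term when $k_1+\cdots+k_r > d$. Since this is already packaged inside Proposition \ref{polygen}, no further work is required.
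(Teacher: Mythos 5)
Your argument is correct and takes exactly the route the paper does: the paper states Proposition~\ref{diff} with the single-line justification ``From Proposition~\ref{polygen} we have,'' and you have simply unpacked that citation — identifying the basic generators $p^j e_i$ as the generating family for the filtration, noting (i)$\Rightarrow$(ii) is immediate from Definition~\ref{diffeo}, and getting (ii)$\Rightarrow$(i) from Proposition~\ref{polygen} together with the observation that the target filtration $(\ultra\T)_k$ is either all of $\ultra\T$ (so nothing to check) or $\{0\}$ (where weighted degree $\leq d$ does the work). One tiny imprecision worth noting: Proposition~\ref{polygen} asks you to verify $\ader_{h_1}\cdots\ader_{h_r}\tilde f\in\ultra\T_{i_1+\cdots+i_r}$ where $h_j\in E_{i_j}$, and a basic generator $v$ can sit in $E_i$ for any $i\leq\deg(v)$; your write-up uses $\ultra\T_{\deg(v_1)+\cdots+\deg(v_r)}$, which is a subset of the required group since the filtration is nested, so the argument goes through, but strictly one should say that $\deg(\vec v)\geq i_1+\cdots+i_r$ and hence (ii) kills the term whenever $i_1+\cdots+i_r>d$.
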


One nice feature of a weighted  bounded degree polynomials is that they have some periodicity properties:

\begin{lemma}[Periodicity properties]\label{wt} Let $\tilde f: \Z^m \to \ultra \T$ be of weighted degree $\leq d$.  
\begin{itemize}
\item[(i)] For any $1 \leq i \leq m$, $\tilde f$ is periodic with period $p^j e_i$ whenever $1 \leq i \leq m$ and $j \geq 0$ are such that $D_i + j(p-1) > d$.
\item[(ii)]  $\tilde f$ is a linear combination (over $\Z$) of the functions $(a_1,\ldots,a_m) \mapsto \frac{a_i}{p^{j+1}} \mod 1$ with $1 \leq i \leq m$ and $D_i + j(p-1) = d$, plus a function of the expressions $a_i \mod p^{j+1}$ with $1 \leq i \leq m$ and $D_i + j(p-1) < d$. 
\end{itemize}
\end{lemma}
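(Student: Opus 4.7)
\textbf{Part (i)} is immediate from unpacking the definition of weighted degree: whenever $D_i + j(p-1) > d$, the single-entry multigenerator $(p^j e_i)$ has weight $>d$, so $\ader_{p^j e_i} \tilde f \equiv 0$, which is exactly periodicity with period $p^j e_i$.

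The plan for \textbf{Part (ii)} is to peel off the ``top-weight'' contributions in the form of the linear functionals $L_{i,j}(a) := a_i/p^{j+1} \mod 1$, indexed by $I_d := \{(i,j) : D_i + j(p-1) = d\}$, and then show that what remains is sufficiently periodic to descend to a function of the claimed modular reductions. The first step is to show that for each $(i,j) \in I_d$, the derivative $\ader_{p^j e_i} \tilde f$ is a constant $c_{i,j} \in \tfrac{1}{p}\Z/\Z$. Constancy holds because for any nonempty multigenerator $\vec v$, $\ader_{\vec v} \ader_{p^j e_i} \tilde f = \ader_{(\vec v, p^j e_i)} \tilde f$ has weight $>d$ and hence vanishes; since all $D_i \ge 1$, a function on $\Z^m$ whose every basic-generator derivative vanishes is constant. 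To see $p c_{i,j}=0$, use the elementary identity $\ader_{pv} f = p \cdot \ader_v f$ valid whenever $\ader_v f$ is constant, applied to $v = p^j e_i$: this yields $p c_{i,j} = \ader_{p^{j+1} e_i} \tilde f$, which vanishes because $D_i + (j+1)(p-1) > d$. Write $c_{i,j} = c'_{i,j}/p \mod 1$ for some integer $c'_{i,j}$.

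Now set $g := \tilde f - \sum_{(i,j) \in I_d} c'_{i,j} L_{i,j}$. A direct computation gives $\ader_{p^k e_l} L_{i,j} = 1/p^{j+1-k} \mod 1$ if $l = i$, and $0$ otherwise; in particular this vanishes in $\T$ whenever $l \ne i$ or $k > j$. Crucially, for each fixed $i$ the relation $D_i + j(p-1) = d$ has at most one solution in $j$, so distinct members of $I_d$ have distinct first coordinates. Combining these two facts with the choice of $c'_{i,j}$, one obtains $\ader_{p^j e_i} g = 0$ for \emph{every} pair $(i,j)$ with $D_i + j(p-1) \ge d$. Setting $j_i := \min\{j \ge 0 : D_i + j(p-1) \ge d\}$, we conclude that $g$ is invariant under translation by $p^{j_i} e_i$ for each $i$, hence factors through $\prod_{i=1}^m \Z/p^{j_i}\Z$. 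This is exactly a function of the expressions $a_i \mod p^{j+1}$ with $(i,j) \in I_{<d}$: for each $i$ the largest such $j$ is $j_i-1$, contributing modulus $p^{j_i}$ (and if $D_i > d$ then $I_{<d}$ contains no pair with first coordinate $i$ and $j_i = 0$, consistent with $g$ being independent of $a_i$). Rearranging yields the claimed decomposition.

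The only delicate point is the uniqueness-of-first-coordinate observation for $I_d$; without it the subtractions $c'_{i,j} L_{i,j}$ at different indices would interact under $\ader_{p^{j'} e_{i'}}$, and one would have to solve a small linear system to choose the coefficients. With this observation in hand the top-weight corrections are mutually independent, and the remainder of the argument is essentially bookkeeping with the $p$-adic structure of $\Z^m$.
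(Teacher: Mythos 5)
Your proof is correct and follows essentially the same route as the paper's: establish that the derivatives $\ader_{p^{j}e_i}\tilde f$ at weight exactly $d$ are constants in $\frac1p\Z/\Z$ (the paper obtains this by noting these derivatives have weighted degree $\leq 0$ and using part (i); you use the identity $\ader_{pv}=p\ader_v$ on constants, which is the same computation), subtract the corresponding integer linear combination of the monomials $a_i/p^{j+1}$, and observe that the remainder is periodic with period $p^{j_i}e_i$. Your explicit note about the uniqueness of the $j$-coordinate in $I_d$ is a useful clarification that the paper leaves implicit (it indexes by $i$ rather than by $(i,j)$), and your write-up in fact corrects a minor slip in the published proof, which states that $\tilde g$ rather than $\tilde f-\tilde g$ is periodic.
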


\begin{proof} If $D_i+j(p-1) > d$ then $\ader_{p^j e_i} \tilde f = 0$ by Proposition \ref{diff}.  This proves (i).

Now we turn to (ii).  Let $I$ denote the set of all $1 \leq i \leq m$ for which there is a natural number $j_i$ for which $D_i+j_i(p-1)=d$.  If $i \in I$, then $\ader_{p^{j_i} e_i} \tilde f$ has weighted degree $\leq 0$ and is thus constant.  On the other hand, by (i), $\tilde f$ is periodic with period $p^{j_i+1} e_i$.  We conclude that $\ader_{p^{j_i} e_i} \tilde f = c_i/p$ for some $c_i \in \{0,\ldots,p-1\}$.  Write $\tilde g := \sum_{i \in I} c_i \frac{a_i}{p^{j_i+1}} \mod 1$, then we see that $\tilde g$ is periodic with period $p^{j_i} e_i$ for each $i \in I$, and thus (by (i)), is also periodic with periodic $p^j e_i$ whenever $1 \leq i \leq m$ and $D_i + j(p-1) < d$.
\end{proof}

From Proposition \ref{diff} and Lemma \ref{wt} we may thus rephrase Proposition \ref{erz} as follows:

\begin{proposition}[Exact roots in $\Z^m$, again]\label{erz-again} Let $d \geq 0$ be standard.
Let $\tilde f: \Z^m \to \ultra \T$ be a map of weighted degree $\leq d$.  Then we can find a map $\tilde g: \Z^m \to \ultra \T$ of weighted degree $\leq d+p-1$ such that $p\tilde g = \tilde f$. 
\end{proposition}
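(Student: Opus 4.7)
The plan is to induct on $d$, reducing to an explicit ``weighted Mahler basis'' for which lifting is transparent. The base case $d = 0$ is immediate: $\tilde f$ is constant and the divisibility of $\ultra \T$ supplies a $p$-th root, which has weighted degree $0 \leq p - 1$.

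For the inductive step, the strategy rests on two pillars. First, I would establish a Mahler-type structure theorem for weighted-degree-$\leq d$ functions on $\Z^m$, analogous to Lemma \ref{polybasic}(iii): every such $\tilde f$ admits a finite expansion
$$ \tilde f(a_1,\ldots,a_m) = \alpha + \sum_{(\vec n, \ell) \in S_d} c_{\vec n, \ell}\, B_{\vec n, \ell}(a_1,\ldots,a_m) \mod 1,$$
where $B_{\vec n, \ell}(a_1,\ldots,a_m) := \binom{a_1}{n_1} \cdots \binom{a_m}{n_m}/p^{\ell+1} \mod 1$, $\alpha \in \ultra \T$, $c_{\vec n, \ell} \in \{0, \ldots, p-1\}$, and
$$ S_d := \Bigl\{(\vec n, \ell) : \vec n \in \N^m \setminus \{0\},\ \ell \geq 0,\ \textstyle\sum_i n_i D_i + \ell(p-1) \leq d\Bigr\}.$$
This follows by combining the classical Mahler expansion of polynomial maps $\Z^m \to$ abelian group (as finite $\Z$-linear combinations of products of binomials $\prod_i \binom{a_i}{n_i}$) with a $p$-adic analysis of the coefficients: the weighted-degree hypothesis forces each non-constant Mahler coefficient to lie in the $p$-power torsion subgroup of $\ultra\T$, whose $p$-adic digit expansion recovers the $c_{\vec n, \ell}$. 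Second, I would verify the exact weighted degree $\deg B_{\vec n, \ell} = \sum_i n_i D_i + \ell(p-1)$.

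Given these ingredients the lift is explicit: take $\tilde g := \alpha' + \sum_{(\vec n, \ell) \in S_d} c_{\vec n, \ell}\, B_{\vec n, \ell + 1}$, where $\alpha' \in \ultra\T$ satisfies $p \alpha' = \alpha$ (again by divisibility). Then $p \tilde g = \tilde f$ is immediate, and each $B_{\vec n, \ell+1}$ has weighted degree $\sum_i n_i D_i + (\ell+1)(p-1) \leq d + (p-1)$, so $\tilde g$ meets the bound demanded by Proposition \ref{erz-again}. The refined structural assertion in Proposition \ref{erz} (a $\Z$-linear combination of top linear monomials $a_i/p^{j+1}$ with $D_i + j(p-1) = d+p-1$ plus a function of lower-weight expressions $a_i \bmod p^{j+1}$) then follows automatically from Lemma \ref{wt}(ii) applied to $\tilde g$, or alternatively by separating the $|\vec n| = 1$ summands in the explicit formula for $\tilde g$ from the $|\vec n| \geq 2$ summands (which are manifestly periodic in every $a_i$ with period of strictly smaller weight than $d + p - 1$).

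The main obstacle I expect will be pinning down $\deg B_{\vec n, \ell}$ precisely: one must show that an iterated derivative $\ader_{p^{k_1}e_{i_1}} \cdots \ader_{p^{k_s} e_{i_s}} B_{\vec n, \ell}$ vanishes identically exactly when $\sum_t (D_{i_t} + k_t(p-1)) > \sum_i n_i D_i + \ell(p-1)$ or when the multiplicity of some $i$ among $i_1, \ldots, i_s$ exceeds $n_i$. This calls for Vandermonde's identity $\binom{a+p^k}{n} = \sum_r \binom{p^k}{r} \binom{a}{n-r}$ combined with Kummer's theorem on the $p$-adic valuation of $\binom{p^k}{r}$ in order to track how discrete derivatives interact with the $p^{\ell+1}$ denominator. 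Once this bookkeeping is in place, the Mahler structure theorem and the lifting procedure — and hence Proposition \ref{erz-again} — follow by $\Z$-linearity.
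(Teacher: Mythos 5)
Your proposal is correct and follows essentially the same route as the paper: the core step is to establish the binomial-coefficient classification of weighted-degree-$\leq d$ functions on $\Z^m$ (the paper's Proposition \ref{erz-class}, which is precisely your ``weighted Mahler basis'' structure theorem), after which the lift is obtained exactly as you propose, by replacing each denominator $p^{\ell+1}$ with $p^{\ell+2}$ and taking a $p$-th root of the constant. The one place where your sketch is lighter than the paper is in proving the structure theorem itself: you propose a single Mahler expansion followed by a $p$-adic digit analysis of the coefficients, whereas the paper carries out a double induction on $d$ and $m$, peeling off $\ader_{e_m}\tilde f$, invoking Pascal's identity, and handling the resulting term $\alpha x_m$ separately via the periodicity Lemma \ref{wt}; the torsion bound on a general Mahler coefficient $c_{\vec n}$ (that $p^{r+1}c_{\vec n}=0$ with $\sum_i n_i D_i + r(p-1)\leq d$) is not immediate and requires the Vandermonde/Kummer bookkeeping you anticipate, fed into some induction of exactly this kind, so the two arguments end up doing the same work.
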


In order to prove Proposition \ref{erz-again}, we use the following explicit description of those functions $\tilde f: \Z^m \to \ultra \T$ of a given weighted degree, which generalises Lemma \ref{polybasic}(iii):

\begin{proposition}[Classification of polynomials]\label{erz-class}  Let $d \geq 0$ be standard, and let $\tilde f: \Z^m \to \ultra \T$ be a map.  Then the following are equivalent:
\begin{itemize}
\item[(i)] $\tilde f$ has weighted degree $\leq d$.
\item[(ii)]  $\tilde f$ can be expressed as
$$ \tilde f(x_1,\ldots,x_m) = \alpha + \sum_{\text{\tiny $\begin{array}{ll} & i_1,\ldots,i_m \geq 0; r \geq 0: \\ &(\sum_{j=1}^m D_j i_j)+r(p-1) \leq d\end{array}$}} \frac{c_{i_1,\ldots,i_m,r}}{p^{r+1}} \binom{x_1}{i_1} \ldots \binom{x_m}{i_m} \mod 1$$
for some $\alpha \in \ultra \T$ and integers $c_{i_1,\ldots,i_m,r}$.
\end{itemize}
\end{proposition}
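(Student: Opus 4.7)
The plan is to prove the two implications of the equivalence separately.

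For (ii)$\Rightarrow$(i) I would verify that each summand $\frac{c}{p^{r+1}}\prod_k\binom{x_k}{i_k}$ with $\sum_k D_k i_k + r(p-1)\le d$ has weighted degree at most that same quantity. Since $\ader_{p^j e_k}$ affects only the $k$th factor, this reduces to a one-variable claim: $\frac{1}{p^{r+1}}\binom{x}{i}$ has weighted degree $\le Di+r(p-1)$ on $\Z$ with initial degree $D$. Expanding $\ader_{p^j}\binom{x}{i}$ via Vandermonde $\binom{x+p^j}{i} = \sum_l \binom{p^j}{i-l}\binom{x}{l}$ produces terms $\frac{\binom{p^j}{t}}{p^{r+1}}\binom{x}{i-t}$ which, after reducing modulo $1$, take the same shape $\frac{c'}{p^{r'+1}}\binom{x}{i-t}$ with weighted degree decreased by at least $D+j(p-1)$. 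The essential input is the elementary inequality $v_p(t)(p-1)\le t-1$ (coming from $t\ge p^{v_p(t)}\ge 1+(p-1)v_p(t)$) together with the standard identity $v_p\binom{p^j}{t} = j - v_p(t)$ for $1\le t\le p^j$; iterating gives the bound for any multigenerator.

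For (i)$\Rightarrow$(ii) I would begin with the multivariate Mahler expansion
\[
\tilde f(x_1,\ldots,x_m) = \sum_{\vec i\ge \vec 0} a_{\vec i}\binom{x_1}{i_1}\cdots\binom{x_m}{i_m},\quad a_{\vec i} = (\ader_{e_1}^{i_1}\cdots\ader_{e_m}^{i_m}\tilde f)(\vec 0)\in \ultra\T.
\]
The weighted-degree hypothesis forces $a_{\vec i} = 0$ whenever $\sum_k D_k i_k > d$ (since then the differential operator has weighted degree exceeding $d$), and the vanishings $\ader_{e_k}^{i_k}\tilde f=0$ for $i_k>d/D_k$ render the sum finite. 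The crux is then to show that for each $\vec i\ne \vec 0$ with $\sum_k D_k i_k\le d$, the coefficient $a_{\vec i}$ has order dividing $p^{r+1}$, where $r = \lfloor(d-\sum_k D_k i_k)/(p-1)\rfloor$. Once this is known, one writes $a_{\vec i} = c_{\vec i}/p^{r+1}\bmod 1$ for some integer $c_{\vec i}$ and absorbs $a_{\vec 0}=\tilde f(\vec 0)$ into the free constant $\alpha$.

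To establish this $p$-torsion bound I would prove by induction on $s\ge 0$ the following stronger statement: for every non-empty multigenerator $\vec v$ with $\deg(\vec v)+s(p-1)>d$, one has $p^s\ader_{\vec v}\tilde f\equiv 0$. The base case $s=0$ is the weighted-degree hypothesis. For the inductive step I would write $\vec v = (\vec v', p^j e_k)$ and invoke the operator identity
\[
p\,\ader_{p^j e_k} = \ader_{p^{j+1}e_k} - \ader_{p^j e_k}^p - \sum_{l=2}^{p-1}\binom{p}{l}\ader_{p^j e_k}^l,
\]
derived from $(1+\ader_{p^j e_k})^p = 1+\ader_{p^{j+1}e_k}$ together with $p\mid \binom{p}{l}$ for $1\le l\le p-1$. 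Multiplying through by $p^{s-1}\ader_{\vec v'}$ and applying to $\tilde f$ produces two terms of the form $p^{s-1}\ader_{\vec v_*}\tilde f$ with $\deg(\vec v_*)\ge \deg(\vec v)+(p-1)$ (annihilated by the outer induction at $s-1$), plus terms $p^s\ader_{\vec v_C^l}\tilde f$ with $\deg(\vec v_C^l)\ge \deg(\vec v)+1$, which are handled at the same $s$ by an inner downward induction on $\deg(\vec v)$ whose base case $\deg(\vec v)>d$ is immediate. Specializing at last to $\vec v=(\underbrace{e_1,\ldots,e_1}_{i_1},\ldots,\underbrace{e_m,\ldots,e_m}_{i_m})$ and evaluating at $\vec 0$ yields $p^{r+1}a_{\vec i}=0$ as required.

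The main obstacle is the bookkeeping in this nested double induction (outer on $s$, inner downward on $\deg(\vec v)$ at fixed $s$), and in verifying that the operator identity genuinely trades each factor of $p$ for a weighted-degree increase of at least $p-1$ in the two principal terms, while the remaining ``exchange'' terms stay at the same level of $s$ but at strictly higher weighted degree so that the inner induction can close.
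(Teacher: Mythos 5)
Your proof is correct, and while the direction (ii)$\Rightarrow$(i) runs along essentially the same lines as the paper (same Vandermonde expansion, same computation showing $v_p\binom{p^j}{t}$ loses at most $(t-1)/D$ to the drop in $i$), your argument for (i)$\Rightarrow$(ii) is genuinely different from the paper's. The paper proceeds by a double induction on the weighted degree $d$ and the dimension $m$: it differentiates in the last variable, applies the $d$-induction to get a representation of $\ader_{e_m}\tilde f$, constructs a term-by-term antiderivative $\tilde g$, identifies $\tilde f - \tilde g - \alpha x_m$ as $m{-}1$-dimensional, and finally controls the linear term $\alpha x_m$ through the periodicity lemma (Lemma \ref{wt}). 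You instead write down the finite multivariate Mahler expansion all at once and reduce the whole statement to a $p$-torsion bound on the Mahler coefficients, which you prove via a separate double induction (outer on the power $s$ of $p$, inner downward on $\deg(\vec v)$) using the commutative-operator identity $p\ader_h = \ader_{ph} - \ader_h^p - \sum_{l=2}^{p-1}\binom{p}{l}\ader_h^l$. The bookkeeping you worried about does close: the first two terms of the identity raise the degree by at least $p-1$ (using $D_k\geq 1$) and so fall to the outer induction at $s-1$, while the middle-binomial terms carry a visible factor of $p$ and raise the degree strictly, so they fall to the inner downward induction; specialization to $\vec v$ with $i_k$ copies of $e_k$ and $s=r+1$ gives $(r+1)(p-1) > d - \sum D_k i_k$, hence $p^{r+1}a_{\vec i}=0$. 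Your route has the advantage of avoiding the dimension induction and of producing the representation in one shot (the constants are just the Taylor coefficients at $\vec 0$), at the cost of a somewhat more delicate induction scheme; the paper's route avoids the torsion lemma altogether except for the single coefficient $\alpha$, but has to construct the antiderivative $\tilde g$ explicitly and recurse in both parameters. Both ultimately rest on the single identity $T_h^p = T_{ph}$.
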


Proposition \ref{erz-class} immediately implies Proposition \ref{erz-again}, since any element $\alpha \in \ultra \T$ has a $p^{\th}$ root, and any ``monomial'' $\frac{c_{i_1,\ldots,i_m,r}}{p^{r+1}} \binom{x_1}{i_1} \ldots \binom{x_m}{i_m}$ of degree $\leq d$ has a $p^{\th}$ root $\frac{c_{i_1,\ldots,i_m,r}}{p^{r+2}} \binom{x_1}{i_1} \ldots \binom{x_m}{i_m}$ of degree $\leq d+p-1$.

\begin{proof}  We first show that (ii) implies (i).  As each constant function $\alpha$ clearly is of degree $\leq d$, it suffices by linearity to show that the multinomial
$$ M: (x_1,\ldots,x_m) \mapsto \frac{1}{p^{r+1}} \binom{x_1}{i_1} \ldots \binom{x_m}{i_m} \mod 1$$
has weighted degree $\leq (\sum_{j=1}^m D_j i_j) + r(p-1)$ for any $i_1,\ldots,i_m \geq 0$ and $r \in \Z$.  (For inductive reasons we include the case when $r$ is negative, but the claim is trivial in those cases as the multinomial $M$ then vanishes modulo $1$.)

We prove this by induction on the weighted degree $d_M := (\sum_{j=1}^m D_j i_j) + r(p-1)$.  When $d_M \leq 0$ or less the claim is trivial, so suppose that $d_M$ is positive, and that the claim has already been proven for smaller values of $d_M$.

To show that $M$ has weighted degree $\leq d_M$, it then suffices to show that $\ader_{p^k e_j} M$ has weighted degree $\leq d_M - D_j - k(p-1)$ for each $1 \leq j \leq m$ and $k \geq 0$.  For sake of argument we shall just verify this when $j=m$, though the other cases are of course similar.  From the binomial identity \eqref{hafm} one has
$$ \ader_{p^k e_m} M(x_1,\ldots,x_m) = \sum_{l=1}^{i_m} \frac{1}{p^{r+1}} \binom{x_1}{i_1} \ldots \binom{x_m}{i_m-l} \binom{p^k}{l}\ \mod 1,$$
so it will suffice to show that each term
\begin{equation}\label{pear}
 \frac{1}{p^{r+1}} \binom{x_1}{i_1} \ldots \binom{x_m}{i_m-l} \binom{p^k}{l}\ \mod 1
\end{equation}
with $1 \leq l \leq i_m$ has weighted degree $\leq d_M - D_m - k(p-1)$.

Fix $l$.  We may assume that $l \leq p^k$, since the binomial coefficient $\binom{p^k}{l}$ vanishes otherwise. Let $t$ be the largest natural number such that $p^t$ divides $l$, then $t \leq k$.  Inspecting the binomial coefficient
$$ \binom{p^k}{l} = \frac{p^k}{l} \frac{p^k-1}{1} \frac{p^k-2}{2} \ldots \frac{p^k-l+1}{l-1}$$
we see that $p^{k-t}$ divides $\binom{p^k}{l}$.  Absorbing this factor into the $\frac{1}{p^{r+1}}$ term in \eqref{pear} and using the induction hypothesis, we conclude that \eqref{pear} has weighted degree
$$ \leq (\sum_{j=1}^{m-1} D_j i_j) + D_m (i_m - l) + (r-k+t)(p-1).$$
But note that
$$ D_m (l-1) \geq (l-1) \geq p^t-1 \geq t(p-1),$$
and the claim follows.

Next, we show that (i) implies (ii).  This claim is trivial for $d \leq 0$, so suppose inductively that $d > 0$ and that the claim has been proven for smaller values of $d$.  We then fix $d$ and assume as a second induction hypothesis that the claim has already been proven for smaller dimensions than $m$.  We may assume that $m > 0$, since the $m=0$ case is trivial.

Let $\tilde f$ be of weighted degree $\leq d$, and consider the derivative $\ader_{e_m} \tilde f$.  By (i), this function has weighted degree $\leq d-D_m$, and thus by the induction hypothesis has a representation of the form
$$ \ader_{e_m} \tilde f(x_1,\ldots,x_m) = \alpha + \sum_{\text{\tiny $\begin{array}{ll} & i_1,\ldots,i_m \geq 0; r \geq 0: \\ &(\sum_{j=1}^m D_j i_j)+r(p-1) \leq d-D_m\end{array}$}} \frac{c_{i_1,\ldots,i_m,r}}{p^{r+1}} \binom{x_1}{i_1} \ldots \binom{x_m}{i_m} \mod 1.$$
We now introduce the function
$$ \tilde g(x_1,\ldots,x_m) := \sum_{\text{\tiny $\begin{array}{ll} & i_1,\ldots,i_m \geq 0; r \geq 0: \\ &(\sum_{j=1}^m D_j i_j)+r(p-1) \leq d-D_m\end{array}$}} \frac{c_{i_1,\ldots,i_m,r}}{p^{r+1}} \binom{x_1}{i_1} \ldots \binom{x_{m-1}}{i_{m-1}} \binom{x_m}{i_m+1} \mod 1.$$
As (ii) implies (i), we know that $\tilde g$ has weighted degree $\leq d$.  From Pascal's identity we have
$$ \ader_{e_m} \tilde g(x_1,\ldots,x_m) = \sum_{\text{\tiny $\begin{array}{ll} & i_1,\ldots,i_m \geq 0; r \geq 0: \\ &(\sum_{j=1}^m D_j i_j)+r(p-1) \leq d-D_m\end{array}$}} \frac{c_{i_1,\ldots,i_m,r}}{p^{r+1}} \binom{x_1}{i_1} \ldots \binom{x_m}{i_m} \mod 1$$
and thus
$$ \ader_{e_m} \tilde f = \alpha + \ader_{e_m} \tilde g.$$
We thus have
$$ \tilde f(x_1,\ldots,x_m) = \alpha x_m + \tilde f(x_1,\ldots,x_{m-1},0) + \tilde g(x_1,\ldots,x_m).$$
As $\tilde f$ has weighted degree $\leq d$, the $m-1$-dimensional function $(x_1,\ldots,x_{m-1}) \mapsto \tilde f(x_1,\ldots,x_{m-1},0)$ does also.  By the second induction hypothesis, $\tilde f(x_1,\ldots,x_{m-1},0)$ is already of the required form for (ii), while $\tilde g$ is also of the required form by construction.  It remains to show that the term $\alpha x_m$ has the required form.

By linearity, $\alpha x_m$ is of weighted degree $\leq d$.  By Lemma \ref{wt}, we thus see that $p^j \alpha = 0$ whenever $D_m + j(p-1) > d$.  If we thus let $j$ be the first natural number for which $D_m + j(p-1) > d$, then $\alpha x_m$ is a multiple of $\frac{1}{p^j} \binom{x_m}{1}$; as (ii) implies (i), this has degree $\leq D_m + (j-1)(p-1) \leq d$, and the claim follows.
\end{proof}

The proof of Proposition \ref{erz-again} (and thus Proposition \ref{erz}) is now complete.

\section{Deducing the inverse conjecture from the inverse conjecture for polynomials}\label{deduce}

In this section we deduce Theorem \ref{main} from Theorem \ref{main-p}.  This deduction can be done in either a finitary or an infinitary setting.  In the finitary setting, one uses structural decomposition theorems as in \cite{tao-stoc}, \cite{gowers-reg}, \cite{finrat}, \cite{gowers-wolf-2}, \cite{gowers-wolf-3}; the arguments in \cite{gowers-wolf-3} are particularly close to those here.  In the infinitary setting one can proceed by analogous decomposition theorems based on conditional expectation.  We shall follow the latter approach here, in order to illustrate the parallel nature of the two arguments.  (This latter approach is also adopted in \cite{szeg}.)

We first give a general abstract structural decomposition.

\begin{lemma}[Decomposition]\label{deco}  Let $V$ be a limit finite set, and let ${\mathcal S}$ be a family of limit functions $P: V \to X$ on $V$, each of which takes only a finite number of values.  (We do not assume that ${\mathcal S}$ is itself a limit set.)  Let $f: V \to \ultra \C$ be a limit function bounded in magnitude by some standard real $A$.  Then one can decompose
$$ f = f_\str + f_\psd$$
where $f_\str, f_\psd: V \to \ultra \C$ are limit functions bounded in magnitude by $A$ and $2A$ respectively, with the following properties:
\begin{itemize}
\item ($f_\str$ almost structured)  For every standard $\eps > 0$, one can find a function $f_\eps: V \to \ultra \C$ that is a function of boundedly many functions from ${\mathcal S}$, such that $f_\eps$ is bounded in magnitude by $A$ and $\|f_\str-f_\eps\|_{L^2(V)} := (\E_{x \in V} |f_\str(x) - f_\eps(x)|^2)^{1/2} \leq \eps$.
\item ($f_\psd$ pseudorandom)  For every function $g: V \to \ultra \C$ that depends on only boundedly many functions from ${\mathcal S}$, one has $\langle f_\psd,g\rangle_{L^2(V)} := \E_{x \in V} f_\psd(x) \overline{g(x)} = o(1)$.
\end{itemize}
\end{lemma}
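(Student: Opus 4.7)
The plan is to carry out a nonstandard Koopman--von Neumann / energy-increment decomposition: $f_\str$ will arise as an $L^2$-limit of conditional expectations of $f$ onto algebras generated by finite subfamilies of $\mathcal{S}$, and $f_\psd := f - f_\str$.

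For every standard finite $\mathcal{S}_0 \subseteq \mathcal{S}$, the joint level sets of the elements of $\mathcal{S}_0$ partition $V$ into an internally finite collection of atoms, giving an orthogonal projection $\pi_{\mathcal{S}_0}$ on $L^2(V)$ whose range is the space of functions of $\mathcal{S}_0$; since $|f|\le A$ pointwise, so is $|\pi_{\mathcal{S}_0}f|$, and in particular $\|\pi_{\mathcal{S}_0}f\|_{L^2(V)}^2\le A^2$. Let $M$ be the supremum of $\|\pi_{\mathcal{S}_0}f\|_{L^2(V)}^2$ over all standard finite $\mathcal{S}_0$; this is a standard real, at most $A^2$. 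For each standard $n \ge 1$ I select such an $\mathcal{S}_n$, nested in $n$ (by replacing $\mathcal{S}_n$ with $\mathcal{S}_1\cup\cdots\cup\mathcal{S}_n$), with $\|\pi_{\mathcal{S}_n}f\|_{L^2(V)}^2 \ge M - 4^{-n}$, and write $f_n := \pi_{\mathcal{S}_n}f$. Pythagoras for nested orthogonal projections yields $\|f_m - f_n\|_{L^2(V)}^2 = \|f_m\|_{L^2(V)}^2 - \|f_n\|_{L^2(V)}^2 \le 4^{-n}$ for all standard $m \ge n$.

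I then extract $f_\str$ by saturation. The countable family of internal conditions $|f_N|\le A$ together with $\|f_N - f_n\|_{L^2(V)} \le 2\cdot 2^{-n}$ for each $n \in \N$ is finitely satisfiable (any given finite subset is realised by taking $f_N := f_{n_0}$ for $n_0$ large), so by countable saturation of the ultrapower (reviewed in Appendix \ref{nsa-app}) there exists a single limit function $f_\str$ obeying all of them simultaneously. Set $f_\psd := f - f_\str$, giving $|f_\psd| \le 2A$. For almost-structuredness, given standard $\eps > 0$ take $n$ standard with $2\cdot 2^{-n}\le\eps$; then $f_\eps := f_n$ is a function of the finitely many elements of $\mathcal{S}_n$, is bounded by $A$, and $\|f_\str - f_\eps\|_{L^2(V)}\le\eps$. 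For pseudorandomness, suppose $g:V\to\ultra\C$ is bounded and depends only on $Q_1,\ldots,Q_\ell \in \mathcal{S}$. For each standard $n$, put $\mathcal{S}'_n := \mathcal{S}_n \cup \{Q_1,\ldots,Q_\ell\}$; since $g$ lies in the range of $\pi_{\mathcal{S}'_n}$, one has $\langle f - \pi_{\mathcal{S}'_n}f,\,g\rangle_{L^2(V)} = 0$. The maximality of $M$ together with Pythagoras bound $\|\pi_{\mathcal{S}'_n}f - f_n\|_{L^2(V)}^2 = \|\pi_{\mathcal{S}'_n}f\|_{L^2(V)}^2 - \|f_n\|_{L^2(V)}^2 \le 4^{-n}$, and combined with $\|f_n - f_\str\|_{L^2(V)}\le 2\cdot 2^{-n}$ and Cauchy--Schwarz this yields $|\langle f_\psd, g\rangle_{L^2(V)}| \le 3\cdot 2^{-n}\|g\|_{L^2(V)}$ for every standard $n$; since $\|g\|_{L^2(V)}$ is a fixed limit real, the inner product is infinitesimal.

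The only genuinely delicate step is the saturation argument that produces $f_\str$ as a single limit function from the $L^2$-Cauchy sequence $(f_n)$ of internal functions. Since $L^2(V)$ is not complete in the nonstandard sense, this step really does use countable saturation of the ultralimit, and is one of the reasons the nonstandard framework is convenient here; everything else is just Pythagoras, Cauchy--Schwarz, and the energy increment.
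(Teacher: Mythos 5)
Your argument is correct and follows the same energy-increment / Koopman--von Neumann strategy as the paper's proof: maximize the energy $\|\pi_{\mathcal{S}_0}f\|_{L^2(V)}^2$ over finite subfamilies, extract a nested $L^2$-Cauchy sequence of conditional expectations via Pythagoras, and derive pseudorandomness from maximality plus Cauchy--Schwarz. The one place you diverge is the construction of $f_\str$ as a genuine limit function from the Cauchy sequence $(f_n)$: the paper does this by hand, writing $V=\prod_{\n\to\alpha}V_\n$, $f=\lim_{\n\to\alpha}f_\n$, $\mathcal{S}_n = \prod_{\n\to\alpha}\mathcal{S}_{n,\n}$ and setting $f_\str := \lim_{\n\to\alpha}\E(f_\n\mid\mathcal{B}(\mathcal{S}_{n_\n,\n}))$ for a slowly increasing choice of $n_\n$, deferring the details as routine. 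You instead invoke countable ($\aleph_1$-) saturation of the ultrapower to produce a single internal function satisfying all the constraints $|f_N|\le A$ and $\|f_N-f_n\|_{L^2(V)}\le 2\cdot 2^{-n}$ simultaneously. These are essentially the same device packaged differently -- the slowly-increasing-diagonal construction is precisely how one proves countable saturation for an ultraproduct over $\N$ -- so you have made explicit the ``routine details'' the paper elides. One small inaccuracy to flag: countable saturation is \emph{not} actually stated or reviewed in Appendix \ref{nsa-app} of the paper (that appendix only sets up ultralimits, limit objects, and asymptotic notation); it is a standard fact about ultrapowers over a countably incomplete ultrafilter, and it does hold here, but you should not attribute it to the appendix. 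Also, in the last sentence of the pseudorandomness argument, ``$\|g\|_{L^2(V)}$ is a fixed limit real'' should read ``bounded limit real'' -- the boundedness of $g$ (which you assumed, as does the paper) is what makes $3\cdot 2^{-n}\|g\|_{L^2(V)}$ infinitesimal after taking $n\to\infty$; a merely ``fixed'' limit real could be unbounded.
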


\begin{proof}  Given any finite subset ${\mathcal S}_0$ of ${\mathcal S}$, let ${\mathcal B}({\mathcal S}_0)$ be the $\sigma$-algebra of $V$ generated by the level sets of the functions of ${\mathcal S}_0$; this is a finite $\sigma$-algebra, with every atom being a limit subset of $V$.  Given such a $\sigma$-algebra, we can define the conditional expectation $\E(f|{\mathcal B}({\mathcal S}_0)): V \to \ultra \C$ of $f$ by the formula
$$ \E(f|{\mathcal B}({\mathcal S}_0))(x) := \E_{y \in {\mathcal B}({\mathcal S_0})(x)} f(y)$$
for all $x \in V$, where ${\mathcal B}({\mathcal S_0})(x)$ is the atom of ${\mathcal B}({\mathcal S_0})$ that contains $x$.  Clearly $\E(f|{\mathcal B}({\mathcal S}_0))$ is bounded in magnitude by $A$, and the \emph{energy}
$$ {\mathcal E}({\mathcal S}_0) := \| \E(f|{\mathcal B}({\mathcal S}_0)) \|_{L^2(V)}^2$$
is a non-negative real number between $0$ and $A^2$.  Let $E_{\max}$ denote the supremum of ${\mathcal E}({\mathcal S}_0)$; as the energy is monotone in ${\mathcal S}_0$ we can thus (using the axiom of choice) find an increasing sequence ${\mathcal S}_n$ for $n \in \N$ such that ${\mathcal E}({\mathcal S}_n) \to E_{\max}$.

From Pythagoras' theorem we have
$$ \st \|\E(f|{\mathcal B}({\mathcal S}_{n'}) - \E(f|{\mathcal B}({\mathcal S}_{n})\|_{L^2(V)}^2 = {\mathcal E}({\mathcal S}_{n'}) - {\mathcal E}({\mathcal S}_n)$$
for any $n' > n$, where $\st x$ denotes the standard part of the limit real $x$.  Thus the $\E(f|{\mathcal B}({\mathcal S}_{n})$ are an $L^2$ Cauchy sequence in the sense that
$$ \lim_{n,n' \to \infty} \st \|\E(f|{\mathcal B}({\mathcal S}_{n'})) - \E(f|{\mathcal B}({\mathcal S}_{n}))\|_{L^2(V)}^2 = 0$$
We claim that this implies the existence of a limit function $f_\str: V \to \C$, bounded in magnitude by $A$, such that
$$ \lim_{n \to \infty} \st \|f_\str - \E(f|{\mathcal B}({\mathcal S}_{n}))\|_{L^2(V)}^2 = 0.$$
Indeed, if we write $V = \prod_{\alpha \to \alpha_\infty} V_\alpha$, $f = \lim_{\alpha \to \alpha_\infty} f_\alpha$, and ${\mathcal S}_n = \prod_{\alpha \to \alpha_\infty} {\mathcal S}_{n,\alpha}$, one can set
$$ f_\str := \lim_{\alpha \to \alpha_\infty} \E(f_\alpha|{\mathcal B}({\mathcal S}_{n_\alpha,\alpha}))$$
and the claim will follow if $n_\alpha$ increases to infinity at a sufficiently slow rate; we omit the routine details.

Now let $g$ depend on a bounded number ${\mathcal S}'$ of functions from ${\mathcal S}$, such that $g$ is bounded in magnitude by $B$.  Then for any standard natural $n$, one can rewrite
$$ \st \langle f - \E(f|{\mathcal B}({\mathcal S}_n)), g \rangle_{L^2(V)}$$
as
$$ \st \langle \E(f|{\mathcal B}({\mathcal S}_n \cup {\mathcal S}')) - \E(f|{\mathcal B}({\mathcal S}_n)), g \rangle_{L^2(V)}$$
which by the Cauchy-Schwarz inequality is bounded in magnitude by
$$ B \|\E(f|{\mathcal B}({\mathcal S}_n \cup {\mathcal S}')) - \E(f|{\mathcal B}({\mathcal S}_n))\|_{L^2(V)}$$
which by Pythagoras' theorem and definition of $E_{\max}$ is bounded by
$$ B (E_{\max} - {\mathcal E}({\mathcal S}_n))^{1/2}$$
and thus
$$ \lim_{n \to \infty} \st \langle f - \E(f|{\mathcal B}({\mathcal S}_n)), g \rangle_{L^2(V)} = 0.$$
Taking limits using the Cauchy-Schwarz and triangle inequalities, we conclude that
$$ \lim_{n \to \infty} \st \langle f - f_\str, g \rangle_{L^2(V)} = 0.$$
Setting $f_\psd := f - f_\str$, we now obtain the claim.
\end{proof}

\begin{remark} Although we will not need this fact here, it is often useful to observe that if $f$ is non-negative, then $f_\str$ and $f_\eps$ can be taken to be non-negative also.  One can also establish this lemma using the machinery of \emph{Loeb measure} \cite{loeb}: if $\mu$ is Loeb measure on $V$, then $f_\str$ is essentially the conditional expectation (in $L^2(\mu)$) of $f$ with respect to the $\sigma$-algebra generated by ${\mathcal S}$.  See \cite{szeg} for an implementation of this approach (and \cite{towsner} for some further discussion of the role of Loeb measure in the nonstandard version of the Gowers norms).
\end{remark}

Next, we recall one of the main theorems from \cite{berg}, phrased in the ultralimit setting:

\begin{theorem}[Weak inverse Gowers conjecture]\label{gis2} Let $s \geq 0$ be standard.  Then there exists a standard integer $d = d(s,p) \geq 0$ such that for every limit finite-dimensional vector space $V$ and every bounded limit function $f: V \to \ultra \C$ with $\|f\|_{U^{s+1}(V)} \gg 1$, there exists $P \in \Poly_{\leq d}(V \to \ultra \T)$ such that $|\E_{x \in V} f(x) e(-P(x))| \gg 1$. 
\end{theorem}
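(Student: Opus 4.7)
The plan is to establish Theorem \ref{gis2} by passing from the ultralimit/finite setting to an ergodic-theoretic setting via a Furstenberg-type correspondence principle, and then invoking the structural theory of characteristic factors for measure-preserving actions of the countable vector space $V$. First I would use the standard (or Loeb-measure) correspondence to produce, from the data $(V, f)$, a probability-preserving $V$-system $(X, \mu, (T_v)_{v \in V})$ together with a bounded measurable function $F$ on $X$ whose Host--Kra--Gowers seminorm $\|F\|_{U^{s+1}(X)}$ is bounded below whenever $\|f\|_{U^{s+1}(V)} \gg 1$. Thus a non-trivial component of $F$ survives projection to the $U^{s+1}$-characteristic factor $Z_s = Z_s(X)$.

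Next I would invoke the structure theorem for characteristic factors of $\F$-vector space actions established in \cite{berg}: every such $Z_s$ is an inverse limit of \emph{phase polynomial systems}, i.e.\ systems built from $\ultra\T$ (or finite tori) on which each $T_v$ acts by multiplication by $e(P_v)$ for some non-classical polynomial $P_v$ whose degree is uniformly bounded by some $d = d(s, p)$. The factor is constructed inductively: $Z_0$ is trivial, and $Z_{k+1}$ is an abelian extension of $Z_k$ whose defining cocycle is coerced, via a Mackey-type argument combined with $U^{k+2}$-control, to be cohomologous to a polynomial phase. Crucially in the low characteristic regime, taking $p$-th roots of such cocycles costs a factor of $p-1$ in degree (cf.\ Lemma \ref{polybasic}(v)), which forces one to work with non-classical polynomials and ultimately is what prevents $d$ from being taken equal to $s$; the bound $d(s,p)$ accumulates these losses across the tower.

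With the structure theorem in place, every bounded measurable function on $Z_s$ can be approximated in $L^2$ by finite linear combinations of polynomial phases $e(Q)$ of degree at most $d$. Applying this to the conditional expectation $\E(F \mid Z_s)$ (which already has $L^2$ norm $\gg 1$ by the characteristic factor property) and expanding via Cauchy--Schwarz, at least one phase must satisfy $|\langle F, e(Q)\rangle_{L^2(X)}| \gg 1$. Translating back through the correspondence and taking standard parts produces a limit polynomial $P \in \Poly_{\le d}(V \to \ultra\T)$ with $|\E_{x \in V} f(x) e(-P(x))| \gg 1$, which is the desired conclusion.

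The principal obstacle, and the heart of the argument from \cite{berg}, is the structure theorem for $Z_s$: one must show that the successive abelian extensions producing the tower are necessarily polynomial (rather than arbitrary measurable) extensions. This requires a careful descent argument in which one differentiates cocycles, uses $U^k$-hypotheses to force degree bounds, and then takes $p$-th roots in the space of non-classical polynomials — precisely the ergodic-theoretic analogue of the difficulties overcome combinatorially in Section \ref{erpsec} of the present paper, and the reason the conclusion is only ``weak'' (degree $d(s,p)$ rather than $s$).
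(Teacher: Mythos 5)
Your proposal follows essentially the same route as the paper: the paper's proof of Theorem \ref{gis2} is simply a citation of the finitary result \cite[Corollary 1.23]{berg} followed by the standard transfer to the ultralimit setting as in Section \ref{ultra}, and the ergodic-theoretic machinery you describe (Furstenberg/Loeb correspondence, the structure theorem for characteristic factors of $\Fw$-actions as towers of phase-polynomial extensions, with degree accumulation in low characteristic) is precisely the internal content of \cite{berg}. Your sketch is an accurate expansion of what lies behind that citation, though the paper treats the finitary weak inverse theorem as a black box and does not reprove it.
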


\begin{proof} See \cite[Corollary 1.23]{berg}.  The translation to the ultralimit setting proceeds exactly as in Section \ref{ultra}.
\end{proof}

We are now ready to deduce Theorem \ref{main} from Theorem \ref{main-p}.  Fix $s, p$, and let $d$ be the minimal integer for which Theorem \ref{gis2} holds for this value of $s, p$.  If $d \leq s$, then we are done by Theorem \ref{gis}, so suppose for sake of contradiction that $d>s$.

By construction of $d$, we can find a limit finite-dimensional vector space $V$ and a bounded limit function $f: V \to \ultra \C$ such that $\|f\|_{U^{s+1}(V)} \gg 1$, but such that
$$ \langle f, e(P) \rangle_{L^2(V)} = o(1)$$
whenever $P$ is a polynomial of degree $\leq d-1$.  By Fourier analysis (and Lemma \ref{polybasic}(vi)), this implies that
$$ \langle f, g \rangle_{L^2(V)} = o(1)$$
whenever $g$ is a function of a bounded number of polynomials of degree $\leq d-1$.  In particular we have
$$ \langle f, e(P) \rangle_{L^2(V)} = o(1)$$
whenever $P$ is a polynomial of degree $\leq d$ of bounded rank.

Using Lemma \ref{deco} (and Lemma \ref{polybasic}(vi)), we can decompose $f = f_\str + f_\psd$, where $f_\str, f_\psd: V \to \ultra \C$ are bounded limit functions, we have
$$ \langle f_\psd, g \rangle_{L^2(V)} = o(1)$$
whenever $g$ is a function of a bounded number of polynomials of degree $\leq d$, and for every $\eps > 0$ we can approximate $f_\str$ by a bounded limit function $f_\eps$ that is a function of a bounded number of polynomials of degree $\leq d$.  In particular, by Theorem \ref{gis2}, one has
$$ \|f_\psd\|_{U^{s+1}(V)} = o(1)$$
and hence by the triangle inequality for $U^{s+1}(V)$ (Lemma \ref{gow}(i)) one has
$$ \|f_\str\|_{U^{s+1}(V)} \gg 1.$$
Also from the triangle inequality we see that
\begin{equation}\label{fa}
\langle f_\str, e(P) \rangle_{L^2(V)} = o(1)
\end{equation}
whenever $P$ is a polynomial of degree $\leq d$ of bounded rank.

As the $f_\str - f_\eps$ are uniformly bounded in $\eps$, and have (the standard part of the) $L^2$ norm going to zero as $\eps \to 0$, we see (using Lemma \ref{gow}(iii)) that
$$ \st \|f_\str - f_\eps \|_{U^{s+1}(V)} \to 0$$
as $\eps \to 0$. For all sufficiently small $\eps > 0$, we thus have
\begin{equation}\label{feps-large}
 \|f_\eps\|_{U^{s+1}(V)} \gg 1
\end{equation}
uniformly in $\eps$.  

By Fourier analysis, we can express $f_\eps$ as a bounded linear combination of phases $e(P)$, where the $P$ are polynomials of degree $\leq d$.  We separate $f_\eps = f'_\eps + f''_\eps$, where $f'_\eps$ is a linear combination of phases $e(P)$ of unbounded rank, and $f''_\eps$ is a linear combination of phases of $e(P)$ bounded rank.  

From Theorem \ref{main-p} (and Theorem \ref{main-q}) we see that $\|e(P)\|_{U^d(V)} = o(1)$ whenever $P$ has unbounded rank, and in particular (by Lemma \ref{gow}(ii)) $\|e(P)\|_{U^{s+1}(V)} = o(1)$ and $\E_{x \in V} e(P(x)) = o(1)$.  Since the difference of a degree $d$ polynomial of unbounded rank and a degree $d$ polynomial of bounded rank remains of unbounded rank, we also have $\E_{x \in V} e(P(x)-Q(x)) = o(1)$ whenever $Q$ is of bounded rank.  We conclude that $f'_\eps, f''_\eps$ are essentially orthogonal in the sense that
$$ \langle f'_\eps, f''_\eps \rangle_{L^2(V)} = o(1)$$
and hence
$$ \|f''_\eps\|_{L^2(V)}^2 = \langle f_\eps, f''_\eps \rangle_{L^2(V)} + o(1).$$
On the other hand, from \eqref{fa} one has
$$ \langle f_\str, f''_\eps \rangle_{L^2(V)} = o(1)$$
while from the Cauchy-Schwarz inequality we see that
$$ \st \langle f_\str - f_\eps, f''_\eps \rangle_{L^2(V)} \to 0$$
as $\eps \to 0$.  We conclude that
$$ \st \|f''_\eps\|_{L^2(V)} \to 0$$
as $\eps \to 0$, which in particular implies (by Lemma \ref{gow}(iii)) that
$$ \st \|f''_\eps\|_{U^{s+1}(V)} \to 0$$
as $\eps \to 0$.  Also, as $f'_\eps$ is a bounded linear combination of $e(P)$ for $P$ of unbounded rank, and thus of infinitesimal $U^{s+1}(V)$ norm, we see from the triangle inequality (Lemma \ref{gow}(i)) that
$$ \|f'_\eps\|_{U^{s+1}(V)} = o(1)$$
and hence
$$ \st \|f_\eps\|_{U^{s+1}(V)} \to 0$$
as $\eps \to 0$, contradicting \eqref{feps-large}.  This concludes the deduction of Theorem \ref{main} from Theorem \ref{main-p}.

\appendix

\section{Basic theory of ultralimits}\label{nsa-app}

In this appendix we review the machinery of ultralimits.  

We will assume the existence of a \emph{standard universe} ${\mathfrak U}$ which contains all the objects and spaces of interest for Theorem \ref{main} or Theorem \ref{main-p}, such as the natural numbers, standard finite-dimensional vector spaces $\F^n$ and their elements, the unit circle $\T$ and its elements, functions from the former spaces to the latter (such as polynomials $P \in \Poly_{\leq d}(\F^n \to \T)$), and so forth.  The precise construction of this universe is not important, so long as it forms a set.  We refer to objects and spaces inside the standard universe as \emph{standard objects} and \emph{standard spaces}, with the latter being sets whose elements are in the former category.  Thus for instance, elements of $\N$ are standard natural numbers, and for every standard natural number $n$, $\F^n$ is a standard finite-dimensional vector space.  Strictly speaking, the universe ${\mathfrak U}$ cannot contain \emph{all} finite-dimensional vector spaces, as the class of such spaces is not a set, but for the purposes of proving Theorem \ref{main} or Theorem \ref{main-p} we only need to pick one representative of each isomorphism class of such spaces, such as $\F^n$ for $n=0,1,2,\ldots$, and these certainly form a set.

The one technical ingredient we need is the following:

\begin{lemma}[Ultrafilter lemma]  There exists a collection $\alpha$ of subsets of the natural numbers $\N$ with the following properties:
\begin{enumerate}
\item (Monotonicity) If $A \in \alpha$ and $B \supset A$, then $B \in \alpha$.
\item (Closure under intersection) If $A,B \in \alpha$, then $A \cap B \in \alpha$.
\item (Maximality) If $A \subset \N$, then either $A \in \alpha$ or $\N \backslash A \in \alpha$, but not both.
\item (Non-principality) If $A \in \alpha$, and $A'$ is formed from $A$ by adding or deleting finitely many elements to or from $A$, then $A' \in \alpha$.
\end{enumerate}
\end{lemma}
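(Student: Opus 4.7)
The plan is to obtain $\alpha$ as a maximal proper filter on $\N$ containing the Fr\'echet filter, via Zorn's lemma; this is the standard construction of a non-principal ultrafilter. Call a collection $\mathcal{F}$ of subsets of $\N$ a \emph{filter} if it satisfies properties (1) and (2) from the statement and does not contain the empty set. The Fr\'echet filter $\mathcal{F}_0 := \{A \subset \N : \N \setminus A \text{ is finite}\}$ is easily verified to be a filter, since the intersection of two cofinite sets is cofinite and any superset of a cofinite set is cofinite.

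Next, I would consider the poset ${\mathcal P}$ of all filters on $\N$ containing $\mathcal{F}_0$, ordered by inclusion. Any chain $(\mathcal{F}_i)_i$ in ${\mathcal P}$ has an upper bound given by $\bigcup_i \mathcal{F}_i$: this union is clearly closed upwards, does not contain $\emptyset$, and is closed under binary intersection because any two of its elements already lie in a common $\mathcal{F}_i$ by the chain property. Zorn's lemma then produces a maximal element $\alpha \in {\mathcal P}$, which by construction satisfies (1) and (2), and contains $\mathcal{F}_0$.

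For property (3), the ``not both'' part is immediate since $\emptyset \notin \alpha$. For the other part, suppose for contradiction that some $A \subset \N$ has both $A \notin \alpha$ and $\N \setminus A \notin \alpha$. The main point, which is the one real step of the argument, is to observe that every $C \in \alpha$ must intersect $A$; otherwise $C \subset \N \setminus A$ would force $\N \setminus A \in \alpha$ by (1). Hence $\alpha' := \{B \subset \N : B \supset C \cap A \text{ for some } C \in \alpha\}$ is a filter strictly larger than $\alpha$ (it contains $A$), contradicting maximality.

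Finally, for property (4), suppose $A \in \alpha$ and $A'$ differs from $A$ in only finitely many elements, so $A \triangle A'$ is finite and $\N \setminus (A \triangle A')$ is cofinite, hence lies in $\mathcal{F}_0 \subset \alpha$. By (2), $A \cap (\N \setminus (A \triangle A')) \in \alpha$, and this set is a subset of $A'$, so (1) gives $A' \in \alpha$. The main conceptual obstacle is only the step establishing (3), where one must verify that the enlargement $\alpha'$ really is a filter (closed under intersection and not containing $\emptyset$), using the observation that no $C \in \alpha$ is disjoint from $A$; the rest is bookkeeping.
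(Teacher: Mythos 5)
Your proof is correct and follows the same route as the paper: start from the Fr\'echet (cofinite) filter, apply Zorn's lemma to obtain a maximal filter containing it, and then verify maximality yields property (3) while containment of the cofinites yields property (4). You have simply spelled out the details that the paper leaves implicit.
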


\begin{proof}  The collection of subsets of $\N$ which are cofinite (i.e. whose complement is finite) already obeys the monotonicity, closure under intersection, and non-principality properties.  Using Zorn's lemma\footnote{By using this lemma, our results thus rely on the axiom of choice, which we will of course assume throughout this paper.  On the other hand, it is possible to rephrase Theorem \ref{main} and Theorem \ref{main-p} in the language of Peano arithmetic.  Applying a famous theorem of G\"odel\cite{godel}, we then conclude that Theorem \ref{main-p} is provable in ZFC if and only if it is provable in ZF.  In fact, it is possible (with some effort) to directly translate these ultrafilter arguments to a (lengthier) argument in which ultrafilters or the axiom of choice is not used.  We will not do so here, though, as the translation is quite tedious.  In particular, the regularity lemma and equidistribution arguments in this paper will become messier, resembling those that appear in \cite{finrat} or \cite{kauf}.}, one can enlarge this collection to a maximal collection, which then obeys all the required properties.
\end{proof}

Throughout the paper, we fix a non-principal ultrafilter $\alpha$.  A property $P(\n)$ depending on a natural number $\n$ is said to hold \emph{for $\n$ sufficiently close to $\alpha$} if the set of $\n$ for which $P(\n)$ holds lies in $\alpha$.

Once we have fixed this ultrafilter, we can now define limit objects and spaces:

\begin{definition}[Limit objects]
Given a sequence $(x_\n)_{\n \in \N}$ of standard objects in ${\mathfrak U}$, we define their \emph{ultralimit} $\lim_{\n \to \alpha} x_\n$ to be the equivalence class of all sequences $(y_\n)_{\n \in \N}$ of standard objects in ${\mathfrak U}$ such that $x_\n = y_\n$ for $\n$ sufficiently close to $\alpha$.  Note that the ultralimit $\lim_{\n \to \alpha} x_\n$ can also be defined even if $x_\n$ is only defined for $\n$ sufficiently close to $\alpha$.

An ultralimit of standard natural numbers is known as a \emph{limit natural number}, an ultralimit of standard real numbers is known as a \emph{limit real number}, etc.

For any standard object $x$, we identify $x$ with its own ultralimit $\lim_{\n \to \alpha} x$.  Thus, every standard natural number is a limit natural number, etc.

Any operation or relation on standard objects can be extended to limit objects in the obvious manner.  For instance, the sum of two limit real numbers $\lim_{\n \to \alpha} x_\n$, $\lim_{\n \to \alpha} y_\n$ is the limit real number
$$\lim_{\n \to \alpha} x_\n + \lim_{\n \to \alpha} y_\n = \lim_{\n \to \alpha} x_\n + y_\n,$$
and the statement $\lim_{\n \to \alpha} x_\n < \lim_{\n \to \alpha} y_\n$ means that $x_\n < y_\n$ for all $\n$ sufficiently close to $\alpha$.
\end{definition}

\begin{remark} A famous theorem of {\L}os asserts that any statement in first-order logic which is true about standard objects, is automatically true for limit objects as well.  For instance, the standard real numbers form an ordered field, and so the limit real numbers do also, because the axioms of an ordered field can be phrased in first-order logic.  We will use this theorem in the sequel without further comment.
\end{remark}

\begin{definition}[Limit spaces and functions]  Let $(X_\n)_{\n \in \N}$ be a sequence of standard spaces $X_\n$ in ${\mathfrak U}$ indexed by the natural numbers.  The \emph{ultraproduct} $\prod_{\n \to \alpha} X_\n$ of the $X_\n$ is defined to be the space of all ultralimits $\lim_{\n \to \alpha} x_\n$, where $x_\n \in X_\n$ for all $\n$.  Note $X_\n$ only needs to be well-defined for $\n$ sufficiently close to $\alpha$ in order for the ultraproduct to be well-defined.  If $X$ is a set, the set $\prod_{\n \to \alpha} X$ is known as the \emph{ultrapower} of $X$ and is denoted $\ultra X$.  Thus for instance $\ultra \N$ is the space of all limit natural numbers, $\ultra \R$ is the space of all limit reals, etc.

We define a \emph{limit set} to be an ultraproduct of sets, a \emph{limit group} to be an ultraproduct of groups, a \emph{limit finite set} to be an ultraproduct of finite sets, and so forth.  A \emph{limit subset} of a limit set $X = \prod_{\n \to \alpha} X_\n$ is a limit set of the form $Y = \prod_{\n \to \alpha} Y_\n$, where $Y_\n$ is a standard subset of $X_\n$ for all $\n$ sufficiently close to $\alpha$.

Given a sequence of standard functions $f_\n: X_\n \to Y_\n$ between standard sets $X_\n, Y_\n$, we can form the \emph{ultralimit} $f = \lim_{\n \to \alpha} f_\n$ to be the function $f: \prod_{\n \to \alpha} X_\n \to \prod_{\n \to \alpha} Y_\n$ defined by the formula
$$ f( \lim_{\n \to \alpha} x_\n ) := \lim_{\n \to \alpha} f_\n(x_\n).$$
We refer to $f$ as a \emph{limit function} or \emph{limit map}.
\end{definition}

\begin{remark} In the nonstandard analysis literature, limit natural numbers are known as \emph{nonstandard natural numbers}, limit sets are known as \emph{internal sets}, and limit functions are known as \emph{internal functions}. We have chosen the limit terminology instead as we believe that it is less confusing and emphasises the role of ultralimits in the subject.

It is important to note that not every subset of a limit set is again a limit set, for instance $\N$ is not a limit subset of $\ultra \N$ (this fact is known as the \emph{overspill principle}).  Indeed, one can think of the limit subsets of a limit set as being analogous to the measurable subsets of a measure space.  In a similar vein, not every function between two limit sets is a limit function; in this regard, limit functions are analogous to measurable functions.  This analogy can be deepened by using the theory of \emph{Loeb measures}, but we will avoid using this machinery here.
\end{remark}

\subsection{Asymptotic notation}

By taking ultralimits, one can formalise asymptotic notation, such as the $O()$ notation, in a manner that requires no additional quantifiers:

\begin{definition}[Asymptotic notation]    A limit complex number $X$ is said to be \emph{bounded} if one has $|X| \leq C$ for some standard real number $C$, in which case we also write $X=O(1)$ or $|X| \ll 1$.  More generally, given a limit complex number $X$ and limit non-negative number $Y$, we write $|X| \ll Y$, $Y \gg |X|$, or $X = O(Y)$ if one has $|X| \leq CY$ for some standard real number $C$.  We write $X = o(Y)$ if one has $|X| \leq \eps Y$ for every standard $\eps > 0$.  Observe that for any $X, Y$ with $Y$ positive, one has either $|X| \gg Y$ or $X = o(Y)$.  We say that $X$ is \emph{infinitesimal} if $X=o(1)$, and \emph{unbounded} if $1/X = o(1)$.  Thus for instance any limit complex number $X$ will either be bounded or unbounded.
\end{definition}

\begin{example}  The limit real $\lim_{\n \to \alpha} 1/\n$ defines an infinitesimal, but non-zero, limit real number $x$; its reciprocal $\lim_{\n \to \alpha} \n$ is an unbounded limit real.  
\end{example}

From the Bolzano-Weierstrass theorem, every bounded limit complex number $x$ can be expressed uniquely as the sum of a standard real number $\st(x)$ and an infinitesimal $x-\st(x)$; we refer to $\st(x)$ as the \emph{standard part} of $x$.

\section{Properties of the Gowers norms}\label{gowapp}

In this appendix we record some basic properties of the Gowers norms.  We use the normalised $L^p$ norms
$$ \|f\|_{L^p(G)} := (\E_{x \in G} |f(x)|^p)^{1/p}$$
for any finite non-empty set $G$ and any $f: G \to \C$.

\begin{lemma}\label{gow}  Let $G = (G,+)$ be a finite abelian group, and let $d \geq 1$ be an integer.
\begin{itemize}
\item[(i)] The Gowers norm $\| \|_{U^d(G)}$ is a norm on functions $f: G \to \C$ for $d \geq 2$, and a semi-norm for $d=1$.  In particular, we have the \emph{Gowers triangle inequality}
$$ \|f+g\|_{U^d(G)} \leq \|f\|_{U^d(G)} + \|g\|_{U^d(G)}$$
for $f, g: G \to \C$.
\item[(ii)] One has the monotonicity property
$$ \| f \|_{U^d(G)} \leq \|f\|_{U^{d+1}(G)}$$
for all $f: G \to \C$.  In particular
$$ |\E_{x \in V} f(x)| = \|f\|_{U^1(G)} \leq \|f\|_{U^d(G)}.$$
\item[(iii)] One has the bound
$$ \|f\|_{U^d(G)} \leq \|f\|_{L^{2^d/(d+1)}(G)}$$
for all $f: G \to \C$.  
\item[(iv)] One has the \emph{first Cauchy-Schwarz-Gowers inequality}
$$ |\E_{x,h_1,\ldots,h_d \in G} \prod_{\omega \in \{0,1\}^d} f_\omega(x+\omega_1 h_1 + \ldots + \omega_d h_d)| \leq \prod_{\omega \in \{0,1\}^d} \|f_\omega\|_{U^d(G)}$$
for all $\{0,1\}^d$-tuples $(f_\omega)_{\omega \in \{0,1\}^d}$ of functions $f_\omega: G \to \C$, where $\omega := (\omega_1,\ldots,\omega_d)$.
\item[(v)] One has the \emph{second Cauchy-Schwarz-Gowers inequality}
$$ |\E_{x_1,\ldots,x_d \in G} f(x_1+\ldots+x_d) \prod_{j=1}^d F_j(x_1,\ldots,x_d)| \leq \|f\|_{U^d(G)}$$
for all $f: G \to \C$ and $F_j: G^d \to \C$, if each $F_j$ is bounded in magnitude by $1$ and is independent of the $x_j$ variable.
\item[(vi)] If $P \in \Poly_{\leq d-1}(G \to \T)$ and $f: G \to \C$ then 
\begin{equation}\label{modulate}
\|f e(P)\|_{U^d(G)} = \|f\|_{U^d(G)}.
\end{equation}
\end{itemize}
\end{lemma}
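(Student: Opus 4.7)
All six properties are classical results from Gowers's original work on quantitative Szemer\'edi, and the natural plan is to establish (iv) first, then derive (v), (i), (ii), (iii) in that order, and finish with (vi) as an easy direct computation. Throughout, I would work with the \emph{Gowers inner product}
\[ \langle (f_\omega)_{\omega \in \{0,1\}^d} \rangle_{U^d(G)} := \E_{x,h_1,\ldots,h_d \in G} \prod_{\omega \in \{0,1\}^d} \mathcal{C}^{|\omega|} f_\omega(x + \omega_1 h_1 + \ldots + \omega_d h_d), \]
where $\mathcal{C}$ denotes complex conjugation and $|\omega| := \omega_1 + \ldots + \omega_d$, and observe that $\langle (f)_{\omega} \rangle = \|f\|_{U^d(G)}^{2^d}$ when $f$ is real, with the appropriate conjugation convention otherwise. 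With this notation, (iv) becomes the assertion $|\langle (f_\omega) \rangle| \leq \prod_\omega \|f_\omega\|_{U^d(G)}$.

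To prove (iv), I would induct on $d$, applying Cauchy--Schwarz in the variable $h_d$. Splitting the product over $\omega$ into terms with $\omega_d = 0$ and $\omega_d = 1$, one can rewrite $\langle (f_\omega)\rangle$ as an expectation over $x,h_1,\ldots,h_{d-1}$ of a product of two $h_d$-dependent averages. Cauchy--Schwarz in $h_d$ replaces the two factors by two copies of the same factor (with different role for $\omega_d$), giving an upper bound that factors as a product of two Gowers inner products of rank $d-1$ with doubled data, and the induction hypothesis closes the loop. A near-identical argument gives (v): use Cauchy--Schwarz $d$ times, one in each $x_j$, exploiting the independence of $F_j$ from $x_j$ to pull it outside the Cauchy--Schwarz on that coordinate, eventually reducing the expression to $\|f\|_{U^d(G)}^{2^d}$ times factors of size $\le 1$.

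For (i), I would expand $\|f+g\|_{U^d(G)}^{2^d} = \langle (f+g)_\omega \rangle$ by multilinearity into a sum of $2^{2^d}$ inner products $\langle (h_\omega) \rangle$ with $h_\omega \in \{f,g\}$, bound each by (iv), and recognise the result as the binomial expansion of $(\|f\|_{U^d(G)} + \|g\|_{U^d(G)})^{2^d}$; taking $2^d$-th roots gives the triangle inequality (positivity of the $U^d$-norm for $d \ge 2$ follows by invoking (iii) in one direction, or arguing directly that $\|f\|_{U^d(G)}=0$ implies $\hat f \equiv 0$ in the $d=2$ case and iterating). For (ii), I would use the recursive identity $\|f\|_{U^{d+1}(G)}^{2^{d+1}} = \E_{h_{d+1} \in G} \|\mder_{h_{d+1}} f\|_{U^d(G)}^{2^d}$ together with Cauchy--Schwarz applied to the $h_{d+1}$-average, which after rearrangement collapses one inner product back to $\|f\|_{U^d(G)}^{2 \cdot 2^d}$. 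For (iii), I would induct on $d$: the base case $d=1$ is $|\E f| \leq \|f\|_{L^1}$, and for the inductive step apply H\"older in the $h$ variable (with exponents tuned so the induction hypothesis on $\mder_h f$, combined with $\|\mder_h f\|_{L^q} \leq \|f\|_{L^{2q}}^2$, yields the target exponent $2^{d+1}/(d+2)$).

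Finally, (vi) is the only part not requiring the Cauchy--Schwarz--Gowers machinery: writing $f e(P) = f \cdot e(P)$, a direct computation gives $\mder_{h_1}\cdots\mder_{h_d}(fe(P))(n) = \mder_{h_1}\cdots\mder_{h_d} f(n) \cdot e(\ader_{h_1}\cdots\ader_{h_d} P(n))$, and the second factor is identically $1$ because $\ader_{h_1}\cdots\ader_{h_d} P = 0$ for $P \in \Poly_{\le d-1}$. The most delicate step is (iii), whose H\"older bookkeeping requires care with the exponent $2^d/(d+1)$; everything else is a fairly mechanical consequence of the Gowers--Cauchy--Schwarz inequality.
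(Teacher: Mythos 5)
Your outlines for (i), (ii), (iv), (v), (vi) are correct and standard Gowers--Cauchy--Schwarz manipulations, and in fact go further than the paper, which for (i), (ii), (iv), (v) simply cites \cite{gowers}, \cite{tao-vu}, \cite{gt-primes}, and \cite{green-tao-linearprimes}. (Your (ii) works once you notice that the Cauchy--Schwarz step on the $h$-average needs to be combined with the inductively-known monotonicity $\|\mder_h f\|_{U^{d-1}} \le \|\mder_h f\|_{U^d}$, so that
$\E_h \|\mder_h f\|_{U^d}^{2^d} \ge \big(\E_h \|\mder_h f\|_{U^d}^{2^{d-1}}\big)^2 \ge \big(\E_h \|\mder_h f\|_{U^{d-1}}^{2^{d-1}}\big)^2 = \|f\|_{U^d}^{2^{d+1}}$.)

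The one genuine gap is in (iii), which is exactly the part the paper proves in detail. The ingredient you propose, the pointwise-in-$h$ bound $\|\mder_h f\|_{L^q} \le \|f\|_{L^{2q}}^2$, is merely Cauchy--Schwarz in $x$ and does not see the average over $h$ at all. Feeding it into the recursion $\|f\|_{U^{d+1}}^{2^{d+1}} = \E_h \|\mder_h f\|_{U^d}^{2^d}$ with the inductive hypothesis $\|\mder_h f\|_{U^d} \le \|\mder_h f\|_{L^{2^d/(d+1)}}$ yields only
$\|f\|_{U^{d+1}} \le \|f\|_{L^{2\cdot 2^d/(d+1)}} = \|f\|_{L^{2^{d+1}/(d+1)}}$,
which (since $L^p$ norms on a probability space increase in $p$, and $2^{d+1}/(d+1) > 2^{d+1}/(d+2)$) is strictly \emph{weaker} than the claimed bound $\|f\|_{U^{d+1}} \le \|f\|_{L^{2^{d+1}/(d+2)}}$. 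To recover the extra gain one must exploit the $h$-average rather than bound it pointwise: writing $F := |f|^{2^d/(d+1)}$, the quantity $\E_h \|\mder_h f\|_{L^{2^d/(d+1)}}^{2^d} = \E_h (F*\tilde F(h))^{d+1}$ is (up to normalisation) the $L^{d+1}$ norm of a convolution square, and the improvement from $d+1$ to $d+2$ in the denominator is exactly Young's convolution inequality, which is what the paper invokes. An appeal to ``H\"older in $h$'' alone cannot replace this, since H\"older applied to $\E_h$ only reshuffles the powers and offers no gain without a non-trivial bound on the convolution; the convolution structure is essential.
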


\begin{proof}  Claim (i) is proven in \cite[Lemma 3.9]{gowers}, \cite[Section 5.1]{gt-primes} or \cite[Section 11.1]{tao-vu}.  Claim (ii) is proven in \cite[Section 11.1]{tao-vu}.  Claim (iii) (which is also \cite[Exercise 11.1.13]{tao-vu}) follows easily from the recursive formula
$$ \| f\|_{U^{d+1}(V)} = (\E_{h \in V} \| \mder_h f \|_{U^d(V)}^{2^d})^{1/2^{d+1}} $$
and induction on $d$, together with the special case
$$ 
(\E_{h \in V} \| f(\cdot+h) g(\cdot) \|_{L^{2^d/(d+1)}(V)}^{2^d})^{1/2^{d}}
\leq \|f\|_{L^{2^{d+1}/(d+2)}(V)} \|g\|_{L^{2^{d+1}/(d+2)}(V)}$$
of Young's convolution inequality, which follows from the more traditional instance
$$
(\E_{h \in V} |\E_{x \in V} F(x+h) G(h)|^{d+1})^{1/(d+1)}
\leq \|F\|_{L^{2(d+1)/(d+2)}(V)} 
$$
of that inequality by setting $F := |f|^{2^d/(d+1)}$ and $G := |g|^{2^d/(d+1)}$.

Claim (iv) is proven in \cite[Lemma 3.8]{gowers}, \cite[Section 5.1]{gt-primes} or \cite[Section 11.1]{tao-vu}; Claim (v) is proven in \cite[Appendix B]{green-tao-linearprimes}.  Claim (vi) follows immediately from the identity $\mder_{h_1} \ldots \mder_{h_d} e(P) = 1$ for all $h_1,\ldots,h_d \in V$.
\end{proof}

\section{Polynomial algebra}\label{poly-alg}

In this appendix we review the general theory of polynomial maps (and related objects, such as cubes) on (filtered) groups that are not necessarily abelian (in particular, they may be nilpotent).  This theory was initiated by Lazard \cite{lazard} and Leibman \cite{leibman-group-1}, \cite{leibman-group-2} (inspired in part by the classical Hall-Petresco formula \cite{hall}, \cite{petresco}), and further developed by Host and Kra \cite{host-kra}, \cite{host-kra-nil} and by Green and the authors \cite{green-tao-linearprimes}, \cite{gtz}.  Our discussion here is largely drawn from the paper \cite{gtz}.

Polynomial algebra works on both multiplicative groups $G = (G,\cdot)$ and on additive groups $G = (G,+)$.  For sake of concreteness we shall set out the theory here using multiplicative group notation, but one can of course adapt all the definitions here to additive groups in an obvious manner, and in fact most of the applications of this theory in this paper will be in the additive setting.  Our conventions will be that additive groups are always understood to be abelian, whereas multiplicative groups are not necessarily abelian.  

The concepts here can be defined both in the standard and nonstandard setting, but again for concreteness we shall work purely in the standard universe in this appendix.  But all of the results here can easily be phrased in the language of first-order logic (they involve only finitely many quantifiers) and so extend without difficulty to the nonstandard universe also.

\subsection{The category of filtered groups}

The theory of polynomial maps is most naturally expressed in terms of a certain category of filtered groups, which we will now define.

\begin{definition}[Filtered group]  A \emph{filtered group} $G = (G,G_\N)$ is a multiplicative group $G = (G,\cdot)$, together with a nested sequence 
$$ G \geq G_0 \geq G_1 \geq \ldots$$
of subgroups $G_\N = (G_n)_{n \in \N}$, obeying the commutator relation $[G_i,G_j] \subset G_{i+j}$ for all $i,j \in \N$, where $[G_i,G_j]$ is the group generated by the commutators $g_i^{-1} g_j^{-1} g_i g_j$ with $g_i \in G_i, g_j \in G_j$.  We refer to $G_\N$ as a \emph{filtration} of $G$.

A filtered group is said to have \emph{degree $\leq s$} for some natural number $s$ if $G_i$ is trivial for all $i > s$.
\end{definition}

\begin{example}[Abelian case]\label{filt-ab}  When the group $G = (G,+)$ is additive (and thus abelian), a filtered group is simply a nested sequence $G \geq G_0 \geq G_1 \geq \ldots$ of subspaces (since the commutator relation is automatic in this case).  In particular, for any natural number $k \geq 0$, one can give any additive group $G$ the \emph{maximal degree $k$ filtration} $G_\N = G_\N^{(k)}$, defined by setting $G_i$ equal to $G$ when $i \leq k$ and $G_i = \{0\}$ for $i>k$. 
\end{example}

\begin{example}[Lower central series]  Any group $G$ can become a filtered group by taking $G_i$ to be the lower central series of $G$, thus $G_0=G_1 := G$ and $G_{i+1} := [G,G_i]$ for $i \geq 1$.  
\end{example}

\begin{remark}  In this paper we will only filter groups $G$ by the natural numbers $\N$.  However it is sometimes convenient to filter groups by other sets, such as $\N^k$, in order to develop a theory of ``multidegree'' for polynomials of several variables; see \cite{gtz}.  While one could use this notation to describe the multilinear maps that arise in this paper, we have chosen not to do so here in order not to add even more terminology to what is already quite a notation-intensive argument.
\end{remark}

An obvious way to make the class of all filtered groups a category is to use the \emph{filtered homomorphisms} $\phi: H \to G$ between two filtered groups $H = (H,H_\N)$, $G = (G,G_\N)$, defined as a group homomorphism from $H$ to $G$ that maps each $H_i$ to $G_i$.  However, this turns out to be too small a class of morphisms for our purposes, and we will need to use instead the larger class of \emph{polynomial maps} between two filtered groups.  This concept can be defined in a number of different ways.  The quickest way is via differentiation:

\begin{definition}[Polynomial maps via differentiation]\label{diffeo}  Let $H = (H,H_\N)$, $G = (G,G_\N)$ be filtered groups, and let $\phi: H \to G$ be a map.  For any $h \in H$, we define the \emph{derivative} $\partial_h \phi: H \to G$ of $\phi$ in the direction $h$ by the formula
$$ \partial_h \phi(x) := \phi(hx) \phi(x)^{-1}.$$
We say that the map $\phi: H \to G$ is a \emph{polynomial map} if one has
$$ \partial_{h_1} \ldots \partial_{h_m} \phi(x) \in G_{i_1 + \ldots + i_m}$$
whenever $m \geq 0$ and $i_1,\ldots,i_m \in \N$, and $h_j \in H_{i_j}$ for all $1 \leq j \leq m$.  The space of all polynomial maps from $H$ to $G$ will be denoted $\Poly(H \to G)$.
\end{definition}

\begin{example}[Non-classical polynomials as polynomial maps] If $V, G$ are additive groups, with $V$ given the maximal degree $\leq 1$ filtration, and $G$ the maximal degree $\leq k$ filtration for some $k \in \N$, then $\Poly(V \to G)$ corresponds precisely to the space $\Poly_{\leq k}(V \to G)$ defined in Definition \ref{polydef} (this definition was for finite-dimensional vector spaces $V$, but the definition clearly also makes sense for other additive groups).  In particular, a non-classical polynomial $P: V \to \T$ of degree $\leq k$ is also a polynomial map from $V$ (with the maximal degree $\leq 1$ filtration) to $\T$ (with the maximal degree $\leq k$ filtration).
\end{example}

\begin{example} Every filtered homomorphism is a polynomial map.  For any $g \in G$, the left translation maps $x \mapsto gx$ and right translation maps $x \mapsto gx$ are polynomial maps from $G$ to itself.
\end{example}

\begin{remark}
A basic theorem of Lazard and Leibman \cite{lazard}, \cite{leibman-group-1}, \cite{leibman-group-2} asserts that $\Poly(H \to G)$ is a group; see e.g. \cite[Corollary B.11]{gtz}.  This generalises the (obvious) fact that $\Poly_{\leq k}(V \to G)$ is a group in the additive case.
\end{remark}

A convenient fact about polynomiality is that it suffices to check it on generators:

\begin{proposition}[Checking polynomiality on generators]\label{polygen}  Let $H, G$ be filtered groups, and for each $i \in \N$, let $E_i$ be a set of generators for $H_i$.  Then a map $\phi: H \to G$ is polynomial if and only if
$$ \partial_{h_1} \ldots \partial_{h_m} \phi(x) \in G_{i_1 + \ldots + i_m}$$
whenever $m \geq 0$ and $i_1,\ldots,i_m \in \N$, and $h_j \in E_{i_j}$ for all $1 \leq j \leq m$.  
\end{proposition}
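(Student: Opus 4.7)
The ``only if'' direction is immediate from Definition \ref{diffeo}. For the substantive ``if'' direction, my plan is to induct on a word-length parameter, using two pointwise identities for the derivative operator that one verifies by direct computation from the definition $\partial_h\phi(x) = \phi(hx)\phi(x)^{-1}$:
$$\partial_{ab}\phi(x) = \partial_b\partial_a\phi(x) \cdot \partial_a\phi(x) \cdot \partial_b\phi(x), \qquad \partial_{a^{-1}}\phi(x) = (\partial_a\phi(a^{-1}x))^{-1},$$
together with the derivative-of-product formula $\partial_h(fg)(x) = \partial_h f(x) \cdot (f(x)\,\partial_h g(x)\, f(x)^{-1})$. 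A crucial preliminary observation is that the filtration condition $[G_0, G_j] \subseteq G_j$ makes each $G_j$ normal in $G_0$; since the $m=0$ case of the hypothesis forces $\phi(x) \in G_0$ for all $x$, one may without loss of generality replace $G$ by $G_0$, so that every $G_j$ is normal in $G$ and all conjugations preserve filtration levels.

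For each $h_j \in H_{i_j}$, let $L(h_j) \ge 1$ denote the minimal word-length of $h_j$ in the alphabet $E_{i_j} \cup E_{i_j}^{-1}$, set $L := \sum_{j=1}^m L(h_j)$, and let $N_{\mathrm{inv}}$ count the positions $j$ for which $h_j \in E_{i_j}^{-1} \setminus E_{i_j}$. The induction is on the lexicographic pair $(L - m, N_{\mathrm{inv}})$. The base case $L - m = N_{\mathrm{inv}} = 0$ has every $h_j \in E_{i_j}$ and is exactly the hypothesis. When $L - m > 0$, some $h_j$ decomposes optimally as $h_j = ab$ with $a, b \in H_{i_j}$ of strictly smaller word-length; applying the product identity at position $j$ rewrites the expression as $\partial_{h_1}\cdots\partial_{h_{j-1}}(\partial_b\partial_a\Phi \cdot \partial_a\Phi \cdot \partial_b\Phi)(x)$, where $\Phi := \partial_{h_{j+1}}\cdots\partial_{h_m}\phi$. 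Distributing the outer derivatives through this pointwise triple product via the derivative-of-product formula yields a product of conjugated configurations: those with $\partial_a$ or $\partial_b$ replacing $\partial_{h_j}$ have strictly smaller $L$ (hence smaller $L - m$), while the configuration with $\partial_b\partial_a$ has $m$ increased by one and $L$ unchanged (hence strictly smaller $L - m$) and targets $G_{\sum_\ell i_\ell + i_j} \subseteq G_{\sum_\ell i_\ell}$, which suffices. When $L - m = 0$ but $N_{\mathrm{inv}} > 0$, I reduce one inverse position $h_j = g^{-1}$ via the inversion identity, carrying the outer derivatives past the resulting pointwise inverse and left-translation $T_{g^{-1}}$ by means of $\partial_h(F \circ T_{g^{-1}}) = (\partial_{g^{-1}hg} F) \circ T_{g^{-1}}$ and the fact that pointwise inversion preserves filtration levels up to conjugation; every resulting configuration has strictly smaller $N_{\mathrm{inv}}$, and the quantifier ``for all $x$'' absorbs the translations.

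The principal obstacle is the bookkeeping that arises when the outer derivatives $\partial_{h_1}, \ldots, \partial_{h_{j-1}}$ meet the product and inversion identities above: in a non-abelian setting they do not commute cleanly with pointwise products, inversions, or left-translations, but only modulo conjugation by elements of $G$ (specifically, by partial products of the factors being differentiated). The reduction to $G = G_0$ and the normality of each $G_j$ absorb all such conjugations transparently, so the induction needs only to track filtration indices rather than explicit group elements, and every resulting sub-configuration falls under a strictly smaller parameter in the lexicographic induction.
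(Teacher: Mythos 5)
The paper itself supplies no proof of Proposition \ref{polygen} --- it cites \cite[Proposition B.17]{gtz}, where the argument runs through the Host--Kra cube machinery --- so you are attempting a genuinely independent, differentiation-based proof. Your three pointwise identities are all correct, and the observation that one may pass to $G_0$ so that every $G_j$ is normal in $G$ is a good and necessary reduction. The ``split'' half of the induction also has the right shape: if $h_j = ab$ with additive word length, the three resulting configurations $(h_j \mapsto a)$, $(h_j\mapsto b)$, $(h_j\mapsto (b,a))$ all have strictly smaller $L-m$, so the IH would apply, and the extra filtration index $i_j$ in the third configuration gives slack. Two caveats here, though. First, your description ``yields a product of conjugated configurations'' is not accurate in the non-abelian setting: when you push the outer operators $\partial_{h_1},\dots,\partial_{h_{j-1}}$ through the pointwise product $F_1F_2F_3$, the conjugators $F_1, F_1F_2$ are $x$-dependent, and differentiating an $x$-dependent conjugate ${}^{c(x)}F(x)$ produces, besides ${}^{c(x)}\partial_h F(x)$, a genuine \emph{commutator} correction $[\,\partial_h F(hx)^{\pm1},\, c(x)^{-1}c(hx)\,]$. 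Normality of $G_j$ in $G_0$ absorbs the conjugations, but it does not by itself absorb these commutators; what saves you there is the filtration relation $[G_a,G_b]\subseteq G_{a+b}$ together with the IH on the factors. That analysis needs to be carried out explicitly; as written, your induction hypothesis is only about terms of the form $\partial_{g_1}\cdots\partial_{g_\ell}\phi$, which the commutator pieces are not.

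The more serious gap is in the inversion step, and I do not think it is repairable within your lexicographic framework. When $L-m=0$ and you apply the identity $\partial_{g^{-1}}\Psi(x) = (\partial_g\Psi(g^{-1}x))^{-1}$ and then carry the outer derivatives past $T_{g^{-1}}$ using $\partial_h(F\circ T_{g^{-1}}) = (\partial_{g^{-1}hg}F)\circ T_{g^{-1}}$, every outer direction $h_{j'}$ ($j'<j$) gets replaced by the conjugate $g^{-1}h_{j'}g$. These conjugates stay in the correct $H_{i_{j'}}$ (normality of the $H_i$), but their minimal word lengths over $E_{i_{j'}}\cup E_{i_{j'}}^{-1}$ can be arbitrarily large. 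Concretely, take $H$ the discrete Heisenberg group with $E_1 = \{x,y\}$ and $E_2 = \{[x,y]\}$, and $(h_1,h_2) = (y, x^{-1})$: after the inversion step the first direction becomes $x y x^{-1}$, whose word length over $\{x^{\pm1},y^{\pm1}\}$ is at least $3$, so $L-m$ jumps from $0$ to at least $2$. Although $N_{\mathrm{inv}}$ does drop, the lexicographic pair $(L-m, N_{\mathrm{inv}})$ therefore \emph{increases}, and your induction does not close. Swapping the order to $(N_{\mathrm{inv}}, L-m)$ does not help either, since the split step $h_j=ab$ can create pieces $a,b\in E_{i_j}^{-1}\setminus E_{i_j}$ and thus raise $N_{\mathrm{inv}}$. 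To fix this one needs either a substantially more elaborate complexity measure that is genuinely monotone under both operations, or a route that avoids inverses altogether --- which is essentially what the cube-group criterion (Theorem \ref{cube} and \cite[Theorem B.10]{gtz}) buys you, since there the generating face-cubes of $\HK^k(H)$ can be chosen directly from $E_i\cup E_i^{-1}$ and the conjugation problem never arises at the level of directions.
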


\begin{proof} See \cite[Proposition B.17]{gtz}.
\end{proof}

It is not immediately obvious from Definition \ref{diffeo} that the polynomial maps turn the class of filtered groups into a category, because one has to show that the composition of two polynomial maps is still polynomial.  However, this can be achieved via the machinery of \emph{Host-Kra cube groups}, which we now pause to define.

\begin{definition}[Host-Kra cube group]  Let $G = (G,G_\N)$ be a filtered group.  For any $k \in \N$, we define the $k^{\th}$ \emph{Host-Kra cube group} $\HK^k(G) = \HK^k(G,G_\N)$ of this filtration to be the subgroup of $G^{\{0,1\}^k}$ generated by those elements $(g_\omega)_{\omega \in \{0,1\}^k}$ which take the form $g_\omega = g$ for $\omega \in F$ and $g_\omega = \id$ otherwise, where $F$ is a face of $\{0,1\}^k$ of some codimension $i$, and $g$ is an element of $G_i$.  Elements $(g_\omega)_{\omega \in \{0,1\}^k}$ of $\HK^k(G)$ will be referred to as \emph{$k$-dimensional cubes\footnote{These cubes are also referred to as \emph{parallelopipeds} in some literature, e.g. \cite{host-kra-nil}.} in $G$}.
\end{definition}

We have an alternate description of these groups via a ``Taylor expansion'':

\begin{proposition}\label{prop}  Let $G = (G,G_\N)$ be a filtered group, let $k \in \N$, and let $g \in G^{\{0,1\}^k}$.  Then $g \in \HK^k(G)$ if and only if there exist ``Taylor coefficients'' $g_J \in G_{|J|}$ for each subset $J \subset \{1,\ldots,k\}$ such that
\begin{equation}\label{taylo-exp}
 g = (\prod_{J \subset \{1,\ldots,k\}} g_J^{\prod_{j \in J} \omega_j})_{\omega \in \{0,1\}^k} = \prod_{J \subset \{1,\ldots,k\}} (g_J^{\prod_{j \in J} \omega_j})_{\omega \in \{0,1\}^k},
\end{equation}
where the subsets of $\{1,\ldots,k\}$ are ordered lexicographically (i.e. $J < J'$ whenever $\sum_{j \in J} 2^{-j} < \sum_{j \in J'} 2^{-j}$).
Furthermore, the $g_J$ are determined uniquely by $g$.
\end{proposition}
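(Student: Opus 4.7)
The plan is to establish uniqueness and existence separately, with the existence part broken into a closure argument.

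For \textbf{uniqueness}, I would evaluate the expansion \eqref{taylo-exp} at the vertex $\omega_J \in \{0,1\}^k$ defined by $(\omega_J)_j = 1 \iff j \in J$. Observing that $J' \subsetneq J$ implies $\sum_{j \in J'} 2^{-j} < \sum_{j \in J} 2^{-j}$, every proper subset of $J$ precedes $J$ in the lexicographic order; at $\omega_J$ the product collapses to $g_{\omega_J} = \bigl(\prod_{J' \subsetneq J} g_{J'}\bigr)\, g_J$, with the left-hand product taken in lex order. An induction on $|J|$ then determines each $g_J$ uniquely in terms of the values $g_{\omega_J}$ and the previously determined coefficients.

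For \textbf{existence}, I would introduce the set $\Poly^\ast(G) \subset G^{\{0,1\}^k}$ of cubes admitting a Taylor expansion with $g_J \in G_{|J|}$ for all $J$, and show $\Poly^\ast(G) = \HK^k(G)$. The inclusion $\Poly^\ast(G) \subset \HK^k(G)$ is immediate from the definition: each factor $(g_J^{\prod_{j\in J}\omega_j})_{\omega}$ with $g_J \in G_{|J|}$ takes the value $g_J$ on the upper face $\{\omega : \omega_j = 1 \text{ for all } j \in J\}$ of codimension $|J|$ and is $\id$ elsewhere, so is itself a generator of $\HK^k(G)$. For the reverse inclusion it suffices to show that $\Poly^\ast(G)$ contains every generator of $\HK^k(G)$ and is closed under products. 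Generators supported on upper faces are trivially in $\Poly^\ast(G)$ (one sets $g_I = h$ and all other coefficients to $\id$); for a generator on a general face $F$, one can reduce to the upper-face case by translating in the cube variables (which preserves $\HK^k(G)$) or, equivalently, by an inclusion-exclusion unfolding whose intermediate corrections land in the correct filtration subgroups because $[G_i,G_j] \subset G_{i+j}$.

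The \textbf{main obstacle} is closure of $\Poly^\ast(G)$ under multiplication: given $g,g' \in \Poly^\ast(G)$ with Taylor coefficients $\{g_J\}, \{g'_J\}$, one must express $gg'$ as $\prod_J (h_J^{\prod_{j\in J}\omega_j})_\omega$ with $h_J \in G_{|J|}$. At vertex $\omega$ the product equals $\bigl(\prod_{J\subset S(\omega)} g_J\bigr)\bigl(\prod_{J\subset S(\omega)} g'_J\bigr)$ in lex order, and the task is to interleave the two lex-ordered products into a single lex-ordered product whose $J$-indexed factor lies in $G_{|J|}$. I would do this by iteratively moving each $g'_J$ leftward past the intervening factors $g_{J'}$ with $J' > J$; each such transposition introduces a commutator $[g_{J'}, g'_J] \in G_{|J|+|J'|}$, which by the filtration axiom is deeper in the filtration and can be absorbed into a Taylor coefficient indexed by a superset of $J \cup J'$. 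A double induction (on the number of out-of-order adjacent pairs and on $|J|$ from largest to smallest) then produces the required $h_J \in G_{|J|}$, with the uniqueness statement already proven guaranteeing that the coefficients obtained from different vertices are consistent.
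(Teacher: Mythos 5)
Your skeleton is right (evaluate at $\omega_J$ and induct for uniqueness; observe each Taylor factor is itself an $\HK^k$-generator for the easy inclusion; show the Taylor-expandable cubes form a subgroup containing all generators for the converse), but two of the supporting details do not work as written. The ``translating in the cube variables'' reduction for lower or mixed faces is not available: $\Poly^\ast(G)$ is defined relative to the basepoint $0\in\{0,1\}^k$, so it is not a priori invariant under coordinate flips of the cube, and only $\HK^k(G)$ carries that symmetry; the inclusion--exclusion unfolding you offer as the ``equivalent'' alternative already uses closure of $\Poly^\ast(G)$ under products, so the logical order must be closure first, faces second (a clean self-contained face step: if $F$ imposes $\omega_{j_0}=0$, then pointwise $(g^{1_F})_\omega=(g^{1_{F_1}})_\omega\,(g^{-1_{F_2}})_\omega$ where $F_1$ drops the $j_0$-constraint and $F_2$ replaces it by $\omega_{j_0}=1$; both factors have strictly fewer $0$-constraints, so one inducts down to the upper-face case and then invokes closure).

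More substantively, the termination scheme you propose for the collection --- ``a double induction on the number of out-of-order adjacent pairs and on $|J|$ from largest to smallest'' --- runs in the wrong direction. Transposing a $J$-indexed factor past a $J'$-indexed factor produces a commutator Taylor factor at index $J\cup J'$ with coefficient in $G_{|J|+|J'|}\subseteq G_{|J\cup J'|}$; since $|J\cup J'|\geq\max(|J|,|J'|)$, working from large $|J|$ downward keeps injecting new factors at cardinalities you have already supposedly finished, and the number of inversions is not monotone either. The process does terminate if one instead collects the indices $J$ in \emph{increasing lexicographic} order: when moving a $J$-indexed factor left past a $J'$-indexed one with $J'>J$, one has $J'\not\subseteq J$, hence $J\cup J'\supsetneq J$, so $J\cup J'>J$ in lex; all newly created commutator factors therefore land strictly beyond the already-collected prefix and the procedure finishes after at most $2^k$ rounds. (Also, the commutator is absorbed at exactly $J\cup J'$, not a proper superset of it.) Finally, a small point: uniqueness does not ``guarantee consistency'' of coefficients computed at different vertices --- consistency is automatic because every $\omega$ equals $\omega_J$ for exactly one $J$, so the recursion $h_J=\bigl(\prod_{J'\subsetneq J}h_{J'}\bigr)^{-1}g_{\omega_J}$ reproduces $g_\omega$ at every vertex by construction; the substantive thing the collection argument must supply is the membership $h_J\in G_{|J|}$.
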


\begin{proof} See \cite[Lemma 6.4]{green-tao-nilratner}.
\end{proof}

Thus, for instance, $\HK^2(G)$ consists of all tuples of the form $(g_{00}, g_{00} g_{01}, g_{00} g_{10}, g_{00} g_{01} g_{10} g_{11})$, where $g_{00} \in G_0$, $g_{01}, g_{10} \in G_1$, and $g_{11} \in G_2$. 

\begin{theorem}[Polynomial maps via cubes]\label{cube}  Let $G = (G,G_\N)$ and $H = (H,H_\N)$ be filtered groups, and let $\phi: H \to G$ be a map.  Then $\phi$ is a polynomial map if and only if $\phi$ preserves cubes, in the sense that for any $k \in \N$ and $(h_\omega)_{\omega \in \{0,1\}^k} \in \HK^k(H)$, the tuple $(\phi(h_\omega))_{\omega \in \{0,1\}^k}$ lies in $\HK^k(G)$.  (In other words, the map $\phi^{\oplus \{0,1\}^k}: H^{\{0,1\}^k} \to G^{\{0,1\}^k}$ maps $\HK^k(H)$ to $\HK^k(G)$.)
\end{theorem}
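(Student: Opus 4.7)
The plan is to prove both directions by induction on the cube dimension $k$, exploiting the face-decomposition of cubes and the way that differentiation shifts filtration degrees.

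For the direction ``polynomial implies cube-preserving'', I would induct on $k$. The case $k=0$ reduces to $\phi(H_0) \subset G_0$, which is the trivial ($m=0$) instance of Definition \ref{diffeo}. For the inductive step, I would use the standard recursive description of cubes: a tuple $(g_\omega)_{\omega \in \{0,1\}^k}$ lies in $\HK^k(G, G_\bullet)$ if and only if both faces $(g_{\omega',0})_{\omega' \in \{0,1\}^{k-1}}$ and $(g_{\omega',1})_{\omega' \in \{0,1\}^{k-1}}$ lie in $\HK^{k-1}(G, G_\bullet)$ and the ``derivative face'' $(g_{\omega',1} g_{\omega',0}^{-1})_{\omega' \in \{0,1\}^{k-1}}$ lies in $\HK^{k-1}(G, G_{\bullet+1})$ with the shifted filtration. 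Applied to a given cube $(h_\omega) \in \HK^k(H)$ and its image under $\phi$: the face conditions follow directly from the inductive hypothesis, while the derivative-face condition requires the auxiliary fact that if $\phi\colon (H, H_\bullet) \to (G, G_\bullet)$ is polynomial, then the two-variable map $\Phi(h,y) := \phi(hy)\phi(y)^{-1}$ is polynomial from $H \times H$ (with its first factor given the shifted filtration $H_{\bullet+1}$) to $(G, G_{\bullet+1})$; a two-variable analogue of the inductive hypothesis then delivers cube preservation.

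For the direction ``cube-preserving implies polynomial'', I would verify Definition \ref{diffeo} by constructing a specific cube that encodes each iterated derivative. Given $x \in H$ and $h_j \in H_{i_j}$ for $j=1,\ldots,m$, assume each $i_j \geq 1$ (the $i_j=0$ case is automatic since $\partial_{h_j}$ then maps into $G_0=G$), set $k := i_1 + \cdots + i_m$, and partition $\{1,\ldots,k\} = I_1 \sqcup \cdots \sqcup I_m$ with $|I_j|=i_j$. By Proposition \ref{prop}, there is a cube $(h_\omega) \in \HK^k(H)$ whose Taylor coefficients are $h_\emptyset = x$, $h_{I_j} = h_j$, and $h_J = \id$ for all other $J$; explicitly $h_\omega = x \cdot \prod_{j : \omega|_{I_j} \equiv 1} h_j$. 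By the cube-preservation hypothesis, its image lies in $\HK^k(G)$, so again by Proposition \ref{prop} its top Taylor coefficient, indexed by $\{1,\ldots,k\}$, lies in $G_k = G_{i_1+\cdots+i_m}$. A short counting step, grouping the $2^k$ corners according to the set $S(\omega) := \{j : \omega|_{I_j} \equiv 1\}$ and summing the signs $(-1)^{k-|\omega|}$, identifies this top coefficient (in the abelian case) as $\sum_{T \subset \{1,\ldots,m\}} (-1)^{m-|T|} \phi\bigl(x + \sum_{j \in T} h_j\bigr)$, which is exactly $\partial_{h_1} \cdots \partial_{h_m} \phi(x)$.

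The main technical obstacle is the non-abelian case. Extracting the top Taylor coefficient of a general cube in $\HK^k(G)$ requires an iterated alternating product of non-commuting factors rather than a signed sum, and matching this with the iterated derivative $\partial_{h_1} \cdots \partial_{h_m} \phi(x)$ (for some fixed but chosen ordering of the $\partial_{h_j}$) requires careful commutator bookkeeping that ultimately rests on the filtration relation $[G_i, G_j] \subset G_{i+j}$. In the abelian setting, which is the only one needed for the body of this paper, both directions collapse to clean inclusion-exclusion identities, and the inductive face-decomposition can be stated purely additively.
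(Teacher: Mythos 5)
Your proposal takes a genuinely different route from the paper, and this is worth comparing carefully.  The paper does \emph{not} prove Theorem \ref{cube} from scratch: it cites \cite[Theorem B.10]{gtz}, which characterises polynomial maps as exactly those preserving the \emph{generalised} Host--Kra groups $\HK^{i_1,\ldots,i_k}$ for all multi-indices, and then contributes only the reduction that preservation of the ordinary $\HK^k$ already forces preservation of all $\HK^{i_1,\ldots,i_k}$, via the stretching embedding $\eta: G^{\{0,1\}^k}\to G^{\{0,1\}^{k+i_k-1}}$ in \eqref{tgdef}.  You instead aim for a direct, self-contained argument, which is more ambitious.  Your ``cube-preserving implies polynomial'' direction is essentially correct in the abelian case: the choice of Taylor data $h_\emptyset = x$, $h_{I_j}=h_j$, $h_J=\id$ does produce a cube by Proposition \ref{prop}, and I checked that your sign count gives exactly $\sum_{T\subset\{1,\ldots,m\}}(-1)^{m-|T|}\phi(x+\sum_{j\in T}h_j)=\partial_{h_1}\cdots\partial_{h_m}\phi(x)$ after the top Taylor coefficient of the image cube is identified with $\sum_\omega(-1)^{k-|\omega|}\phi(h_\omega)$ via Proposition \ref{hk-dest}.  (Your dismissal of the $i_j=0$ case is too quick as written, but is repairable by applying the argument at translated base points $h_jx$.)

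There are two genuine gaps for the theorem as actually stated, i.e.\ for general filtered groups.  First, in the non-abelian case, the top Taylor coefficient of $(\phi(h_\omega))_\omega$ is not a signed sum but an element extracted via an ordered product decomposition (Proposition \ref{prop}), and the iterated derivative $\partial_{h_1}\cdots\partial_{h_m}\phi(x)$ is itself an ordered product of $2^m$ terms; relating the two modulo higher filtration requires the commutator bookkeeping you allude to, which is precisely what \cite[Appendix B]{gtz} carries out and what the paper uses as a black box.  Second, your ``polynomial implies cube-preserving'' direction relies on two unproved auxiliary claims: the recursive decomposition $\HK^k\cong\HK^1\circ\HK^{k-1}$ (two faces in $\HK^{k-1}$ plus a derivative face in $\HK^{k-1}$ for the shifted filtration), which the paper mentions only as a remark and does not prove; and the two-variable polynomiality of $\Phi(h,y)=\phi(hy)\phi(y)^{-1}$ from $(H_1\times H_0, H_{\bullet+1}\times H_\bullet)$ to $(G,G_{\bullet+1})$.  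Both are true and provable, but neither is formal; the second in particular is not a consequence of the definition without a separate induction, and is roughly equivalent in strength to the Lazard--Leibman type statements already needed.  So your argument, once completed, would essentially reconstruct the content of \cite[Theorem B.10]{gtz} rather than bypass it.  That said, in the abelian setting --- which is all the body of the paper actually uses --- your approach closes up into a clean, elementary proof, and is arguably more illuminating than the reduction-to-reference route that the paper takes.
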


As an immediate corollary of this theorem, we see that the composition of two polynomial maps is again polynomial, and so the class of filtered maps is now a category.  If $\phi: H \to G$ is a polynomial map and $k \in \N$, we use $\HK^k(\phi): \HK^k(H) \to \HK^k(G)$ to denote the restriction of $\phi^{\oplus\{0,1\}^k}: H^{\{0,1\}^k} \to G^{\{0,1\}^k}$ to $\HK^k(H)$ and $\HK^k(G)$.

\begin{proof} When $H$ is additive, this theorem was proven in \cite[Proposition 6.5]{green-tao-nilratner}.  We will give an alternate proof based on \cite[Theorem B.10]{gtz}.  To use this theorem, we need a generalisation of the Host-Kra groups.  For any natural numbers $i_1,\ldots,i_m \in \N$, define the \emph{Host-Kra group} $\HK^{i_1,\ldots,i_k}(G)$ of a filtered group $G$ 
to be the subgroup of $G^{\{0,1\}^k}$ generated by the elements of the form
$$ ( g_\omega )_{\omega \in \{0,1\}^k},$$
where $\omega_0 \in \{0,1\}^k$, $g_{\omega_0} \in G_{\sum_{(\omega_0)_j=1} i_j}$, and $g_\omega$ equals $g_{\omega_0}$ when $\omega_j \geq (\omega_0)_j$ for all $1 \leq j \leq k$, and is the identity otherwise.  Thus for instance, when $i_1=\ldots=i_k=1$, then $\HK^{1,\ldots,1}(G) = \HK^k(G)$.  It is easy to adapt the proof of Proposition \ref{prop} to see that elements of $\HK^{i_1,\ldots,i_k}(G)$ are precisely those tuples of the form \eqref{taylo-exp}, where each Taylor coefficient $g_J$ now lies in $G_{\sum_{j \in J} i_j}$ rather than $G_{|J|}$.

The result \cite[Theorem B.10]{gtz} asserts that $\phi$ is a polynomial map if and only if $\phi$ (or more precisely, $\phi^{\oplus \{0,1\}^k}$) maps $\HK^{i_1,\ldots,i_k}(H)$ to $\HK^{i_1,\ldots,i_k}(G)$ for every $k, i_1,\ldots,i_k \in \N$.  In view of this result, to prove Theorem \ref{cube}, it suffices to show that if $\phi$ maps $\HK^k(H)$ to $\HK^k(G)$, then it maps $\HK^{i_1,\ldots,i_k}(H)$ to $\HK^{i_1,\ldots,i_k}(G)$ for every $k,i_1,\ldots,i_k \in \N$.

For any $k,i_1,\ldots,i_k \in \N$.  Let $P(i_1,\ldots,i_k)$ denote the assertion that $\phi^{\oplus \{0,1\}^k}$ maps $\HK^{i_1,\ldots,i_k}(H)$ to $\HK^{i_1,\ldots,i_k}(G)$.  By hypothesis, $P(1,\ldots,1)$ is true for any number of $1$'s; our task is to then show that $P(i_1,\ldots,i_k)$ is true in general.  The case $k=0$ is trivial, so we may assume that $k \geq 1$.

Suppose first that $i_k = 0$.  An inspection of the definition then shows that $\HK^{i_1,\ldots,i_{k-1},0}(G) = \HK^{i_1,\ldots,i_{k-1}}(G) \times \HK^{i_1,\ldots,i_{k-1}}(G)$, and similarly for $H$.  As a consequence, we see that $P(i_1,\ldots,i_{k-1},0)$ is implied by $P(i_1,\ldots,i_{k-1})$.  From this observation we may assume without loss of generality that $i_k \geq 1$.  By symmetry, we may in fact assume that $i_j \geq 1$ for $j=1,\ldots,k$.

Consider the map $\eta: G^{\{0,1\}^k} \to G^{\{0,1\}^{k+i_k-1}}$ defined by
\begin{equation}\label{tgdef}
\eta( (g_\omega)_{\omega \in \{0,1\}^k} ) := (g_{\omega_1,\ldots,\omega_{k-1},\min(\omega_k,\ldots,\omega_{k+i_k-1})})_{\omega \in \{0,1\}^{k+i_k-1}}.
\end{equation}
This is clearly an injective group homomorphism when $i_k \geq 1$.  We claim that 
\begin{equation}\label{eta-g}
 \eta(\HK^{i_1,\ldots,i_k}(G)) =  \eta(G^{\{0,1\}^k}) \cap \HK^{\tilde i_1,\ldots,\tilde i_{k+i_k-1}}(G),
\end{equation}
where $\tilde i_j$ is equal to $i_j$ for $j<k$ and equal to $1$ for $k \le j \leq k+i_k-1$.  It is easy to see that the left-hand group in \eqref{eta-g} is included in the right-hand side, simply by checking what $\eta$ does to each generator of $\HK^{i_1,\ldots,i_k}(G)$.  The reverse inclusion is a little trickier.  Suppose that $g \in G^{\{0,1\}^k}$ is such that $\eta(g)$ lies in $\HK^{\tilde i_1,\ldots,\tilde i_{k+i_k-1}}(G)$.
From \eqref{taylo-exp} and induction, we see that the Taylor coefficients $\eta(g)_J$ of $\eta(g)$ vanish unless $J$ either contains $\{k,\ldots,k+i_k-1\}$ or is disjoint from $\{k,\ldots,k+i_k-1\}$.  As a consequence, each of the factors $(\eta(g)_J^{\prod_{j \in J} \omega_j})_{\omega \in \{0,1\}^{k+i_k-1}}$ of the Taylor expasion of $\eta(g)$  are equal to $\eta(g_F)$ for some generator $g_F$ of $\HK^{i_1,\ldots,i_k}(G)$, and the claim follows.

From \eqref{eta-g}, we see that $P(i_1,\ldots,i_k)$ is implies by $P(\tilde i_1,\ldots,\tilde i_{k+i_k}) = P(i_1,\ldots,i_{k-1},1,\ldots,1)$, where $i_k$ copies of $1$ appear in the latter expression.  By symmetry and iteration, we conclude that $P(i_1,\ldots,i_k)$ is implied by $P(1,\ldots,1)$ where $i_1+\ldots+i_k$ copies of $1$ appear in the latter expression, and Theorem \ref{cube} follows.
\end{proof}

\begin{remark} The group $\HK^k(G)$ itself comes with a natural filtration, with $\HK^k(G)_i$ defined to be the Host-Kra group $\HK^k(G,(G_{j+i})_{j \in \N})$ of $G$ with the shifted filtration $(G_{j+i})_{j \in \N}$; see \cite[Proposition B.15]{gtz}.  Theorem \ref{cube} can then be used to show that $\HK^k$ can be viewed as a functor from the category of filtered groups to itself.  These functors are related to each other by the pleasant identity $\HK^j \circ \HK^k = \HK^{j+k}$ for all $j,k \in\N$; in particular, one can define $\HK^k$ recursively as an iteration of the functor $\HK^1$.  We will however not adopt this perspective here.
\end{remark}

\subsection{The additive case} 

Let $V = (V,\N)$ be an additive filtered group, and let $k \in \N$.  By Proposition \ref{prop}, $\HK^k(V)$ consists precisely of those tuples of the form
$$
\left(\sum_{J \subset \{1,\ldots,k\}} (\prod_{j \in J} \omega_j) v_J\right)_{\omega \in \{0,1\}^k}$$
with $v_J \in V_{|J|}$ for all $J \subset \{1,\ldots,k\}$.  Thus, for instance, $\HK^2(V)$ is the space of all quadruples
$$ (v_{00}, v_{00} + v_{01}, v_{00} + v_{10}, v_{00} + v_{01} + v_{10} + v_{11} )$$
where $v_{00} \in V_0$, $v_{01}, v_{10} \in V_1$, and $v_{11} \in V_2$.

There is also an equivalent ``dual'' description of this space (which we will need to prove Proposition \ref{equil}):

\begin{proposition}[Description of $\HK^k(V)$]\label{hk-dest}  Let $V = (V,V_\N)$ be an additive filtered group.  Then for any $k \in \N$, $\HK^k(V)$ consists precisely of those tuples $(v_\omega)_{\omega \in \{0,1\}^k} \in V^{\{0,1\}^k}$ such that
\begin{equation}\label{vo}
 \sum_{\omega \in F} (-1)^{|\omega|} v_\omega \in V_i
\end{equation}
whenever $0 \leq i \leq k$ and $F$ is a face of $\{0,1\}$ of dimension $i$.  
\end{proposition}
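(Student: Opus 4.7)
The plan is to leverage the Taylor expansion characterization in Proposition \ref{prop} together with Möbius inversion on the Boolean lattice $\{0,1\}^k$. In the additive setting, Proposition \ref{prop} specializes to the statement: a tuple $(v_\omega)_{\omega \in \{0,1\}^k} \in V^{\{0,1\}^k}$ lies in $\HK^k(V)$ if and only if there exist elements $v_J \in V_{|J|}$ for each $J \subset \{1,\ldots,k\}$ with
$$ v_\omega = \sum_{J \subset \mathrm{supp}(\omega)} v_J, $$
where $\mathrm{supp}(\omega) := \{j : \omega_j = 1\}$. Via this identity, the task becomes relating the alternating face sums \eqref{vo} to the putative Taylor coefficients $v_J$.

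For the forward direction, I would exploit linearity: since the condition \eqref{vo} is linear in the tuple, it suffices to verify it on the "monomial" generators $v_\omega = \mathbf{1}_{J_0 \subset \mathrm{supp}(\omega)} v_{J_0}$ with $J_0 \subset \{1,\ldots,k\}$ and $v_{J_0} \in V_{|J_0|}$. Fix a face $F$ of dimension $i$, parametrized by a free-coordinate set $I \subset \{1,\ldots,k\}$ of size $i$ and fixed values $\epsilon_j \in \{0,1\}$ for $j \notin I$. A direct computation, peeling off the sum over the free coordinates $I \setminus J_0$ and using $\sum_{\tau \in \{0,1\}^S}(-1)^{|\tau|} = 0^{|S|}$, shows that $\sum_{\omega \in F}(-1)^{|\omega|} v_\omega$ vanishes unless $I \subset J_0$ (and $\epsilon_j = 1$ for every $j \in J_0 \setminus I$), in which case it equals $\pm v_{J_0}$. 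The constraint $I \subset J_0$ forces $|J_0| \geq i$, so by nestedness of the filtration $v_{J_0} \in V_{|J_0|} \subset V_i$, as required.

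For the converse, assume the alternating sum constraints \eqref{vo} hold, and for each $J \subset \{1,\ldots,k\}$ let $F_J := \{\omega : \omega_j = 0 \text{ for } j \notin J\}$, a face of dimension $|J|$. Define candidate Taylor coefficients
$$ v_J := (-1)^{|J|} \sum_{\omega \in F_J} (-1)^{|\omega|} v_\omega, $$
which by hypothesis lie in $V_{|J|}$. Standard Möbius inversion on the Boolean lattice (using the identity $\sum_{T : S \subset T \subset S'}(-1)^{|T \setminus S|} = \delta_{S = S'}$) verifies that these coefficients indeed reconstruct the tuple via $v_\omega = \sum_{J \subset \mathrm{supp}(\omega)} v_J$; Proposition \ref{prop} then places $(v_\omega)$ in $\HK^k(V)$.

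The main obstacle is purely the sign-and-support bookkeeping in the forward direction — one must carefully separate the coordinates into the four classes $I \cap J_0$, $I \setminus J_0$, $J_0 \setminus I$, $\{1,\ldots,k\}\setminus(I \cup J_0)$ and track the contribution of each to $(-1)^{|\omega|}$. The backward direction is essentially tautological once one recognizes that $F_J$ is the face designed to isolate $v_J$ via Möbius inversion.
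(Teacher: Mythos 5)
Your proof is correct, but it follows a genuinely different route from the paper's. The paper establishes the reverse inclusion $V^{[k]} \subset \HK^k(V)$ by induction on $k$: it splits an arbitrary $v \in V^{[k]}$ as $v = v' + v''$, where $v'$ is the ``constant extension'' of the bottom face (the tuple $\omega \mapsto v_{\omega_1,\ldots,\omega_{k-1},0}$) and $v''$ is the remainder; each summand is then handled by applying the induction hypothesis (with a degree-shifted filtration in the case of $v''$) and pushing a cube of $\HK^{k-1}(V)$ into $\HK^k(V)$ via an explicit homomorphism. You instead invoke the Taylor-expansion characterisation from Proposition \ref{prop} and run M\"obius inversion on the Boolean lattice in one shot: for the forward direction you verify the face constraints \eqref{vo} on the monomial summands $\omega \mapsto \mathbf{1}_{J_0 \subset \operatorname{supp}(\omega)} v_{J_0}$ (correctly observing that the sum over the free coordinates in $I \setminus J_0$ kills everything unless $I \subset J_0$, in which case $|J_0| \geq i$ and nestedness of the filtration finishes it), while for the converse you \emph{define} the Taylor coefficients $v_J$ as signed alternating sums over the lower faces $F_J$, deduce $v_J \in V_{|J|}$ directly from the hypothesis \eqref{vo} applied to $F_J$, and check reconstruction via $\sum_{J : S \subset J \subset K}(-1)^{|J\setminus S|} = \delta_{S=K}$. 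Your approach is non-inductive and gives an explicit inversion formula for the Taylor coefficients, which is arguably cleaner in the abelian setting; the paper's inductive splitting, while less explicit, is more in the spirit of the nonabelian arguments elsewhere in Appendix~\ref{poly-alg} and avoids committing to a specific Taylor basis. Both are valid proofs of the proposition.
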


Thus, for instance, $\HK^2(V)$ is the space of all quadruples $(v_{00},v_{01}, v_{10}, v_{11})$ such that
\begin{align*}
v_{00}, v_{01}, v_{10}, v_{11} &\in V_0 \\
v_{00} - v_{01}, v_{00} - v_{10}, v_{01}-v_{11}, v_{10}-v_{11} &\in V_1 \\
v_{00} - v_{01} - v_{10} + v_{11} &\in V_2.
\end{align*}
Of course, this is equivalent to the previous description of $\HK^2(V)$ after a change of variables.

\begin{proof}  Let $V^{[k]}$ denote the space of all tuples $(v_\omega)_{\omega \in \{0,1\}^k} \in V^{\{0,1\}^k}$ obeying the constraints \eqref{vo}.  This is clearly a subgroup of $V^{\{0,1\}^k}$.  By checking the generators of $\HK^k(V)$, we see that $\HK^k(V) \subset V^{[k]}$.  Now we prove the reverse inclusion $V^{[k]} \subset \HK^k(V)$.  This claim is obvious for $k=0$, so we may assume inductively that $k \geq 1$ and that the claim has already been proven for $k-1$.

Let $v := (v_\omega)_{\omega \in \{0,1\}^k} \in V^{[k]}$.  We split $v = v' + v''$, where
$$ v' := (v_{\omega_1,\ldots,\omega_{k-1},0})_{\omega \in \{0,1\}^k} $$
and $v'' := v-v'$.  It is easy to see from \eqref{vo}, that $v'$ and $v''$ lie in $V^{[k]}$.  The tuple
$$ (v_{\omega_1,\ldots,\omega_{k-1},0})_{\omega \in \{0,1\}^{k-1}} $$
lies in $V^{[k-1]}$, and hence in $\HK^{k-1}(V)$ by induction hypothesis.  Extending each generator of $\HK^{k-1}(V)$ to $\HK^k(V)$ by the homomorphism
$$(w_{\omega_1,\ldots,\omega_{k-1}})_{\omega \in \{0,1\}^{k-1}} \mapsto (w_{\omega_1,\ldots,\omega_{k-1}})_{\omega \in \{0,1\}^{k}},$$ 
we then see that $v'$ lies in $\HK^k(V)$.  In a similar spirit, we see from \eqref{vo} that the tuple
$$ (v_{\omega_1,\ldots,\omega_{k-1},0} - v_{\omega_1,\ldots,\omega_{k-1},0})_{\omega \in \{0,1\}^{k-1}}$$
lies in the analogue of $V^{[k-1]}$ in which the filtration $(V_n)_{n \in \N}$ is replaced by the shifted filtration $(V_{n+1})_{n \in \N}$.  By induction hypothesis, this tuple thus lies in the analogue of $\HK^{k-1}(V)$; extending each generator of this group to $\HK^k(V)$ by the homomorphism
$$(w_{\omega_1,\ldots,\omega_{k-1}})_{\omega \in \{0,1\}^{k-1}} \mapsto (\omega_k w_{\omega_1,\ldots,\omega_{k-1}})_{\omega \in \{0,1\}^{k}},$$ 
we see that $v''$ lies in $\HK^k(V)$.  Thus $v$ lies in $\HK^k(V)$, and the claim follows.
\end{proof}

\subsection{Equidistribution}

We isolate a special class of polynomial maps:

\begin{definition}[Weak equidistribution on cubes]\label{weak-equi}  A polynomial map $\phi: H \to G$ from one filtered group $H = (H,H_\N)$ to another $G = (G,G_\N)$ is said to be \emph{weakly equidistributed on cubes} if the maps $\HK^k(\phi): \HK^k(H) \to \HK^k(G)$ are surjective for every $k \in 		\N$.
\end{definition}

Informally, a polynomial map that is weakly equidistributed on cubes can attain every possible set of values on a cube in $H$, subject of course to the polynomiality requirement that this set of values must form a cube in $G$.  

The significance of weak equidistribution for us lies in the fact that they can be used to factorise polynomial maps:

\begin{lemma}[Factorisation via weak equidistribution]\label{equi-factor}  Let $G,H,K$ be filtered groups, let $\phi: H \to G$ be a polynomial map that is weakly equidistributed on cubes, and let $\psi: G \to K$ be a map.  Then $\psi$ is a polynomial map if and only if $\psi \circ \phi$ is polynomial.
\end{lemma}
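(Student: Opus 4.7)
The plan is to rely on the cube-theoretic characterisation of polynomiality (Theorem \ref{cube}) at every step, so that the hypothesis of weak equidistribution on cubes can be plugged in directly.

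The forward implication is essentially free: if $\psi$ is a polynomial map, then $\psi\circ\phi$ is a composition of polynomial maps, and since the polynomial maps form a category (as noted after Theorem \ref{cube}, by the Lazard--Leibman theorem), $\psi\circ\phi$ is polynomial. So the content of the lemma lies in the reverse direction.

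For the reverse direction, assume that $\psi\circ\phi:H\to K$ is polynomial; we want to show $\psi:G\to K$ is polynomial. By Theorem \ref{cube}, it suffices to verify that for every $k\in\N$ and every cube $(g_\omega)_{\omega\in\{0,1\}^k}\in\HK^k(G)$, the image tuple $(\psi(g_\omega))_{\omega\in\{0,1\}^k}$ lies in $\HK^k(K)$. Fix such a cube $(g_\omega)$. Because $\phi$ is weakly equidistributed on cubes (Definition \ref{weak-equi}), the map $\HK^k(\phi):\HK^k(H)\to\HK^k(G)$ is surjective, so we may choose a lift $(h_\omega)_{\omega\in\{0,1\}^k}\in\HK^k(H)$ with $\phi(h_\omega)=g_\omega$ for all $\omega$. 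Applying Theorem \ref{cube} to the polynomial map $\psi\circ\phi$, we see that $((\psi\circ\phi)(h_\omega))_{\omega}\in\HK^k(K)$; but this tuple is precisely $(\psi(g_\omega))_\omega$, which therefore lies in $\HK^k(K)$, as required.

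Since cubes of every dimension in $G$ lift, $\psi$ sends every cube in $G$ to a cube in $K$, and invoking Theorem \ref{cube} one more time concludes that $\psi$ is polynomial. There is no real obstacle here once Theorem \ref{cube} and Definition \ref{weak-equi} are in place: weak equidistribution was essentially designed so that cube-preservation descends along $\phi$, and the proof is just the straightforward matching of these two notions.
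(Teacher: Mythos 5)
Your proof is correct and fills in exactly the detail behind the paper's one-line justification (``This is immediate from Theorem \ref{cube}''): the forward direction is functoriality of composition, and the reverse direction lifts cubes in $G$ through $\HK^k(\phi)$ using the surjectivity supplied by weak equidistribution and then pushes forward with the polynomial map $\psi\circ\phi$. Same approach as the paper, just written out.
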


\begin{proof} This is immediate from Theorem \ref{cube}.
\end{proof}

In practice, we will derive weak equidistribution from a stronger equidistribution property, which we formulate in the language of non-standard analysis:

\begin{definition}[Equidistribution]\label{equidef}  Let $A$ be a non-empty limit finite set, and let $B$ be a finite set.  A limit map $f: A \to B$ is said to be \emph{equidistributed} if one has
$$ \frac{1}{|A|} |\{ a \in A: f(a) = b \}| = \frac{1}{|B|} + o(1)$$
for all $b \in B$.  

A finite collection of functions $f_i:A \to B_i$, $i=1,\ldots,k$ into finite sets $B_i$ is said to be \emph{jointly equidistributed} if the combined function $(f_1,\ldots,f_k): A \to B_1 \times \ldots \times B_k$ is jointly equidistributed.
\end{definition}

\begin{definition}[Equidistribution on cubes]\label{strong-equi}  A polynomial map $\phi: H \to G$ from a limit-finite filtered group $H = (H,H_\N)$ to a finite filtered group $G = (G,G_\N)$ is said to be \emph{(strongly) equidistributed on cubes} if the maps $\HK^k(\phi): \HK^k(H) \to \HK^k(G)$ are equidistributed for every $k \in \N$.
\end{definition}

Observe that if $A$ has unbounded cardinality, then every equidistributed limit map from $A$ to a finite set $B$ is automatically surjective; in praticular, strong equidistribution implies weak equdistribution.  As a consequence of this and Lemma \ref{equi-factor}, we obtain

\begin{corollary}[Factorisation via strong equidistribution]\label{equi-factor2}  Let $G,H,K$ be filtered groups with $G$ and $G_0$ limit-finite with unbounded cardinality, and $H$ finite.  Let $\phi: H \to G$ be a polynomial map that is strongly equidistributed on cubes, and let $\psi: G \to K$ be a map.  Then $\psi$ is a polynomial map if and only if $\psi \circ \phi$ is polynomial.
\end{corollary}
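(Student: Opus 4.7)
The plan is to reduce Corollary \ref{equi-factor2} to Lemma \ref{equi-factor} by showing that the strong equidistribution hypothesis upgrades to weak equidistribution, after which the desired factorisation statement is immediate from the already-established lemma.

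First, I would verify that the cube groups $\HK^k(H)$ inherit unbounded cardinality. By the Taylor expansion (Proposition \ref{prop}), every element of $\HK^k(H)$ is given uniquely by a tuple of coefficients $(h_J)_{J \subset \{1,\ldots,k\}}$ with $h_J \in H_{|J|}$. In particular, the $J = \emptyset$ factor contributes a full copy of $H_0$, so $|\HK^k(H)| \geq |H_0|$, which is unbounded by hypothesis. (Here I am reading the statement in the only way consistent with Definition \ref{strong-equi}, namely that the limit-finite side with unbounded cardinality is the domain side of $\phi$.)

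Second, I would use strong equidistribution to obtain surjectivity of each $\HK^k(\phi)\colon \HK^k(H) \to \HK^k(G)$. Since $\HK^k(G)$ is finite (hence of bounded cardinality), Definition \ref{equidef} gives, for each $b \in \HK^k(G)$,
$$ |(\HK^k(\phi))^{-1}(b)| = \frac{|\HK^k(H)|}{|\HK^k(G)|} + o(|\HK^k(H)|),$$
which is positive—indeed unbounded—since $|\HK^k(H)|$ is unbounded. Thus each fibre is non-empty, so $\HK^k(\phi)$ is surjective for every standard $k$. This is exactly the condition that $\phi$ is weakly equidistributed on cubes in the sense of Definition \ref{weak-equi}.

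Finally, I would invoke Lemma \ref{equi-factor} directly: with $\phi$ weakly equidistributed on cubes, the map $\psi$ is polynomial if and only if $\psi \circ \phi$ is polynomial. There is no real obstacle here; the only subtlety is the observation that an equidistributed limit map with unbounded domain and bounded codomain must be surjective, which is the content of the second step and was already flagged in the remark preceding the corollary.
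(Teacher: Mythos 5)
Your argument is correct and is precisely the one the paper gives: the paper records the observation that an equidistributed limit map from a set of unbounded cardinality to a finite set is automatically surjective, so strong equidistribution on cubes upgrades to weak equidistribution on cubes, and then invokes Lemma \ref{equi-factor}. You are also right to flag that the stated hypotheses have $G$ and $H$ transposed relative to Definition \ref{strong-equi}, and your use of Proposition \ref{prop} to see that $|\HK^k(H)| \geq |H_0|$ is the detail the paper leaves implicit.
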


For future reference, we observe a convenient criterion for equidistribution.

\begin{lemma}[Weyl equidistribution criterion]\label{weyl-equi}  Let $A$ be a non-empty limit finite set, and let $B$ be a finite abelian group.  Then a limit map $f: A \to B$ is strongly equidistributed if and only if one has
\begin{equation}\label{ea}
 \E_{a \in A} e( \xi(f(a)) ) = o(1)
\end{equation}
for all non-zero characters (i.e. homomorphisms) $\xi: B \to \T$.
\end{lemma}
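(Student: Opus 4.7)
The plan is to deduce the lemma from Fourier analysis (Pontryagin duality) on the finite abelian group $B$, exploiting the fact that $B$ has standard finite cardinality so sums over $\hat B$ interact well with the $o(1)$ notation.

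For the ``only if'' direction, suppose $f$ is equidistributed. For any character $\xi: B \to \T$, I would write
$$ \E_{a \in A} e(\xi(f(a))) = \sum_{b \in B} \frac{|\{a \in A : f(a) = b\}|}{|A|}\, e(\xi(b)) = \sum_{b \in B} \left(\frac{1}{|B|} + o(1)\right) e(\xi(b)). $$
Since $B$ is standard finite, the $o(1)$ error terms combine to $o(1)$, and $\sum_{b \in B} e(\xi(b)) = 0$ for $\xi \neq 0$ by the standard orthogonality relation for characters of a finite abelian group.

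For the ``if'' direction, I would use Fourier inversion on $B$: for any fixed $b \in B$, the indicator function $1_{\{b\}}: B \to \C$ expands as
$$ 1_{\{b\}}(b') = \frac{1}{|B|} \sum_{\xi \in \hat B} e(\xi(b'))\, \overline{e(\xi(b))}. $$
Averaging this over $a \in A$ with $b' = f(a)$ gives
$$ \frac{|\{a \in A : f(a) = b\}|}{|A|} = \frac{1}{|B|} \sum_{\xi \in \hat B} \overline{e(\xi(b))}\, \E_{a \in A} e(\xi(f(a))). $$
The contribution from $\xi = 0$ is exactly $1/|B|$; each of the finitely many $\xi \neq 0$ contributes $o(1)$ by hypothesis, and again $|\hat B| = |B|$ is standard finite, so the total error is $o(1)$. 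This gives the required equidistribution.

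There is really no main obstacle: the argument is entirely routine Fourier inversion, and the only small subtlety is that one uses the standard finiteness of $|B|$ to absorb a bounded sum of $o(1)$ terms into a single $o(1)$. The apparent wording ``strongly equidistributed'' in the statement should be read as equidistributed in the sense of Definition \ref{equidef}, since the notion of equidistribution on cubes (Definition \ref{strong-equi}) applies to polynomial maps between filtered groups and not to arbitrary limit maps $f: A \to B$.
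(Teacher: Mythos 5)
Your proof is correct and follows the same route as the paper, which also deduces the lemma from Fourier inversion on $B$ together with the observation that $|B|$ is standard finite; you have simply spelled out the details the paper leaves implicit, and your reading of ``strongly equidistributed'' as Definition \ref{equidef} is the intended one.
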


\begin{proof} By the Fourier inversion formula, the condition \eqref{ea} is equivalent to the bound
$$ \E_{a \in A} F(f(a)) = \E_{b \in B} F(b) + o(1)$$
holding for all standard functions $F: B \to \C$.  But this is clearly equivalent in turn to the equidistribution of $f$.
\end{proof}

\section{Properties of non-classical polynomials}\label{nonclass}

In this appendix we prove Lemma \ref{polybasic}.  The arguments here are analogous to those established in the context of dynamical systems in \cite{berg}.

It is convenient to introduce a ring of \emph{formal differential operators}.

\begin{definition}[Differential operators]  Let $V$ be a finite-dimensional or limit finite-dimensional vector space.
A \emph{differential operator} on $V$ is a formal combination (using addition and multiplication) of integers and additive derivatives $\ader_h$ (or equivalently, the shifts $T_h$), thus for instance $3 - 5 \ader_h + 7 \ader_h \ader_k + \ader_h^3$ is a differential operator.  More generally, a \emph{formal differential operator} is a finite or infinite series $\sum_i a_i \ader_{v_{i,1}} \ldots \ader_{v_{i,d_i}}$ where $a_i \in \Z$, $v_{i,1},\ldots,v_{i,d_i} \in V$, and for each $d$ there are at most finitely many $i$ with $d_i \leq d$.  Thus for instance $\sum_{i=0}^\infty \ader_h^i$ is a formal differential operator.  Note that both differential operators and formal differential operators act linearly (over $\Z$) on $\Poly_{\leq d}(V \to G)$ for every $d$ and $G$.  We say that two formal differential operators on $V$ are \emph{equivalent} if they act the same on every space $\Poly_{\leq d}(V \to G)$.  We let $\Diff(V)$ denote the space of formal differential operators on $V$ modulo this equivalence relation; this is clearly a commutative ring (note that $\ader_h \ader_k = \ader_k \ader_h$ for all $h,k$).
\end{definition}

\begin{example}  If $h, k \in V$, then we have the \emph{cocycle equation}
\begin{equation}\label{cocycle-eq}
\ader_{h+k} = \ader_h + T_h \ader_k 
\end{equation}
in $\Diff(V)$, since we have
$$ \ader_{h+k} f(x) = \ader_h f(x) + \ader_k f(x+h)$$
for all $f: V \to G$ and $x,h,k \in V$.  This cocycle equation can also be deduced from the group law
$$
T_{h+k} = T_h T_k
$$
and the identity
$$ \ader_h = T_h - 1.$$
\end{example}

The reason for working with formal differential operators rather than genuine differential operators is that any formal differential operator of the form $1+D$, where $D$ consists of \emph{higher order terms} in the sense that it contains no constant term in its expansion, is invertible in $\Diff(V)$ by formal Neumann series:
$$ (1+D)^{-1} = 1 - D + D^2 - \ldots.$$
To illustrate this, take $h \in V$.  Since $ph = 0$, we clearly have
$$ T_{h}^p = T_{ph} = 1.$$
Expanding $T_h = 1+ \ader_h$ and using the binomial formula, we conclude after some rearrangement that
$$ \ader_h^p = - p \ader_h (1 + \frac{p-1}{2} \ader_h + \ldots + \ader_h^{p-2}).$$
The expression in parentheses can be inverted by formal Neumann series.  We conclude the following fundamental fact:

\begin{lemma}[Multiplication by $p$]\label{pmult}  For any $h \in V$, we have $p \ader_h = \ader_h^p \times I_h$ for some invertible $I_h \in \Diff(V)$.  Furthermore, $I_h$ is equal to $-1$ plus higher order terms.
\end{lemma}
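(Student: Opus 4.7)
The plan is to derive the identity as a direct algebraic consequence of the identity $T_h^p = T_{ph} = 1$, which holds since $ph = 0$ in the $\F_p$-vector space $V$. Writing $T_h = 1 + \ader_h$ and expanding via the binomial theorem in $\Diff(V)$ gives
\[
0 = (1+\ader_h)^p - 1 = p\ader_h + \binom{p}{2}\ader_h^2 + \ldots + \binom{p}{p-1}\ader_h^{p-1} + \ader_h^p.
\]
For $1 \le k \le p-1$ the binomial coefficient $\binom{p}{k}$ is divisible by $p$, so $c_k := \binom{p}{k}/p$ is an integer (with $c_1 = 1$). Factoring $p\ader_h$ out of the first $p-1$ summands yields
\[
p\ader_h \cdot U = -\ader_h^p, \qquad U := 1 + c_2\ader_h + c_3\ader_h^2 + \ldots + c_{p-1}\ader_h^{p-2}.
\]

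The crux is then to invert $U$ in $\Diff(V)$. Since $U = 1 - N$ where $N := -c_2\ader_h - \ldots - c_{p-1}\ader_h^{p-2}$ has no constant term, I would define $U^{-1}$ by the formal Neumann series $\sum_{j \geq 0} N^j$. The lowest-order term of $N^j$ is a nonzero multiple of $\ader_h^j$, so on any fixed $\Poly_{\le d}(V \to G)$ every operator $N^j$ with $j > d$ acts as zero (because $\ader_h^{d+1}$ already annihilates $\Poly_{\le d}$). Hence the series converges in the sense of $\Diff(V)$ and defines a genuine element of the ring. Setting $I_h := -U^{-1}$ and multiplying the boxed identity above by $-U^{-1}$ on the right then gives $p\ader_h = \ader_h^p \cdot I_h$. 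Since $U^{-1}$ has constant term $1$, the operator $I_h$ has constant term $-1$ with all remaining terms involving at least one factor of $\ader_h$, which is exactly the "higher order terms" conclusion.

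There is no substantial obstacle here: the proof is essentially formal manipulation, and the only subtle point is the legitimacy of the Neumann inversion in $\Diff(V)$. That point is handled by the observation above that the equivalence relation defining $\Diff(V)$ allows infinite series to be tested against each space $\Poly_{\le d}(V \to G)$ separately, on which the series is a finite sum. Everything else, including the integrality of the coefficients $c_k$, follows from standard facts about binomial coefficients modulo $p$.
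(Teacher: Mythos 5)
Your proof is correct and matches the paper's argument: both start from $T_h^p = T_{ph} = 1$, expand $(1+\ader_h)^p$ binomially, factor $p\ader_h$ out of the first $p-1$ terms (using that $p \mid \binom{p}{k}$ for $1 \le k \le p-1$, so $\frac{p-1}{2} = \binom{p}{2}/p$ etc.\ are integers), and invert the remaining unit-constant-term operator by a formal Neumann series. Your remark that the Neumann series is well defined in $\Diff(V)$ because it truncates on each $\Poly_{\le d}(V \to G)$ is exactly the justification the paper gives for the ring $\Diff(V)$ in the first place.
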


A heuristic way to interpret this lemma is that the operation of multiplication by $p$ resembles a differential operator of order $p-1$; dually, $\frac{1}{p}$ resembles a polynomial of degree $p-1$.  (This may help explain the condition $i_1+\ldots+i_n+j(p-1) \leq d$ in \eqref{plan}.)

We now begin the proof of Lemma \ref{polybasic}.

The first part of claim (i) is clear by induction on $d$.  To prove the second part, we observe from the cocycle identity \eqref{cocycle-eq} that if $\ader_h P, \ader_k P$ both lie in $\Poly_{\leq d-1}(V \to G)$, then $\ader_{h+k} P$ does also, and so the second part of (i) follows from the first.  (One can also deduce (i) from Proposition \ref{polygen} and induction.)

Next, we establish (ii).  We begin with the one-dimensional case $n=1$.  For $0 \leq d \leq p-1$, the vector space $\Poly_{\leq d}(V \to \F)$ clearly contains the vector space spanned by the monomials $1,x,\ldots,x^d$, which are linearly independent as can be seen from computing a Vandermonde determinant (or using the Newton interpolation formula).  On the other hand, the differential operator $\ader_1: \Poly_{\leq d}(V \to \F) \to \Poly_{\leq d-1}(V \to \F)$ has kernel equal to the constant functions $\F$, and so the dimension of $\Poly_{\leq d}(V \to \F)$ can only exceed that of $\Poly_{\leq d-1}$ by $1$ at most.  By induction we thus see that $\Poly_{\leq d}(V \to \F)$ is equal to the $d+1$-dimensional space spanned by $1,x,\ldots,x^d$ for $0 \leq d \leq p-1$.  In particular, $\Poly_{\leq p-1}(V \to \F)$ must be equal to the $p$-dimensional space of all functions from $V$ to $\F$, and the claim follows.

Now we assume inductively that $n > 1$ and that (ii) has already been proven for smaller dimensions.  We parameterise an element $x \in \F^n$ as $x = (x',x_n)$ where $x' \in \F^{n-1}$ and $x_n \in \F$.  If $P \in \Poly_{\leq d}(V \to \F)$, then clearly the one-dimensional maps $x_n \mapsto P(x',x_n)$ are polyomials of degree $\leq d$ for each fixed $x'$.  Applying the one-dimensional case of (ii), we conclude that
$$ P(x',x_n) = \sum_{0 \leq i_n \leq \min(p-1,d)} P_{i_n}(x') x_n^{i_n}$$
for some functions $P_{i_n}: \F^{n-1} \to \F$ that are uniquely determined by $P$.  Differentiating this identity $i_n$ times in the direction of the $n^{th}$ generator $e_n$ of $\F^n$, and $d-i_n$ times in directions in $\F^{n-1}$, we conclude that each $P_{i_n}$ is a polynomial of degree $\leq d-i_n$.  The claim then follows from the induction hypothesis.

We skip (iii) for the moment and move on to (iv).  If $h \in V$ and $P, Q: V \to R$, we have
$$ T_h(PQ) = (T_h P) (T_h Q);$$
expanding $T_h = 1 + \ader_h$, we conclude the \emph{discrete Leibniz rule}
\begin{equation}\label{leibniz}
\ader_h (PQ) = (\ader_h P) Q + P (\ader_h Q) + (\ader_h P) (\ader_h Q).
\end{equation}
The claim (iv) can now be easily established by an induction on $d+d'$ (noting that the claim is trivial if $d$ or $d'$ is negative).   (Alternatively, one can deduce (iv) from \cite[Example B.18]{gtz}.)

We remark that one should view the final term $(\ader_h P) (\ader_h Q)$ in \eqref{leibniz} to be a lower order error term, so that \eqref{leibniz} becomes a perturbation of the classical Leibniz rule $D(PQ) = (DP) Q + P(DQ)$ for derivations $D$.

Now we prove part of (v).  If $P \in \Poly_{\leq d}(V \to G)$ for some $d \geq p-1$, then 
$$\ader_{h_1}^p \ader_{h_2} \ldots \ader_{h_{d-p+2}} P = 0$$
for any $h_1,\ldots,h_{d-p+2} \in V$, and thus by Lemma \ref{pmult}
$$\ader_{h_1} \ader_{h_2} \ldots \ader_{h_{d-p+2}} pP = 0.$$
We conclude that $pP \in \Poly_{\leq d-p+1}(V \to G)$.  We conclude that for any integer $d$, the map $p: P \mapsto pP$ maps $\Poly_{\leq d}(V \to G)$ to $\Poly_{\leq \max(d-p+1,0)}(V \to G)$.  This proves everything in (v) except for the assertion that this map $P \mapsto pP$ is surjective.

Now we return to (iii). The fact that every  expression $P$ of the form \eqref{plan} is a polynomial of degree $\leq d$, and vice versa follows from the special case of Proposition \ref{erz-class} when all the initial degrees $D_i$ are equal to $1$.  (This argument is non-circular, because Lemma \ref{polybasic}(iii) is not used in the proof of Proposition \ref{erz-class}.  Another proof of this part of Lemma \ref{polybasic}(iii) can be found in \cite[\S 1.12]{tao-poincare}. 

Now we establish the uniqueness claim in (iii).  The claim is trivial for $d=0$, so suppose inductively that $d \geq 1$ and that uniqueness has already been established for smaller values of $d$.  Since $\alpha = P(0)$ from \eqref{plan} we see that the $\alpha$ are unique and can thus be subtracted away.
Applying the uniqueness claim to the lower-degree polynomial $pP$, we see that all the coefficients in \eqref{plan} with $j \geq 1$ are unique.  Subtracting off these terms also, we are left with a classical polynomial expansion \eqref{iot}, and the claim follows from (ii).  This concludes the proof of (iii).

Now that we have (iii), the surjectivity claim of (v) is immediate, since one simply replaces all the $p^{j+1}$ denominators in \eqref{plan} by $p^{j+2}$, and replaces $\alpha$ with a $p^{\th}$ root $\alpha'$ as in the proof of (iii).  This completes the proof of (v).

The claim (vi) follows immediately from (v) and an induction on $d$.


\section{On exact roots in dynamical systems}\label{root-counter}

In \cite{berg}, \cite{tao-ziegler}, the inverse conjecture for the Gowers norms were attacked via an ergodic theory approach, based on a structual analysis of \emph{$\Fw$-systems}.  These systems consisted of a probability space $(X,{\mathcal B},\mu)$, together with a measure-preserving action $(T_g)_{g \in \Fw}$ of the infinite vector space $\Fw := \bigcup_{n=1}^\infty \F^n$ (where we nest $\F^n$ inside $\F^{n+1}$ in the obvious manner).  

A \emph{polynomial of degree $\leq d$} on such a system is a measurable function $P: X \to \T$ such that $\ader_{h_1} \ldots \ader_{h_{d+1}} P = 0$ a.e. for all $h_1,\ldots,h_{d+1} \in \Fw$, where $\ader_h f := f \circ T_h - 1$.  The analogue of the exact roots property from Remark \ref{exact} is then

\begin{claim}\label{root}  Let $X = (X, {\mathcal B}, \mu, (T_g)_{g \in \Fw})$ be a $\Fw$-system, let $d \geq 0$ be an integer, and let $P: X \to \T$ be a polynomial of degree $\leq d$.  Then there exists a polynomial $Q: X \to \T$ of degree $\leq d+p-1$ with $pQ = P$.
\end{claim}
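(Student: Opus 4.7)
The plan is to construct $Q$ in two stages: first pick any measurable $p$-th root of $P$, then correct its failure to be polynomial of the right degree by subtracting a classical polynomial. The input we have available is essentially algebraic: the cocycle identity \eqref{cocycle-eq} and the ``multiplication by $p$'' identity $p\ader_h = \ader_h^p I_h$ of Lemma \ref{pmult} hold in the ergodic setting as well, since they are formal identities in $\Diff(V)$ applied to a measurable function.

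First I would produce an initial root $Q_0 : X \to \T$ with $p Q_0 = P$ a.e., by lifting $P$ through the $p$-fold covering $\T \to \T$ (for instance, choosing the unique representative of $\frac{1}{p}P$ in $[0,1/p)$, then measurably redistributing among the $p$ sheets). This $Q_0$ is automatically well-behaved at high order: for any $h_1,\ldots,h_{d+1}\in\Fw$,
$$ p\cdot \ader_{h_1}\cdots\ader_{h_{d+1}} Q_0 \;=\; \ader_{h_1}\cdots\ader_{h_{d+1}}(p Q_0) \;=\; \ader_{h_1}\cdots\ader_{h_{d+1}} P \;=\; 0 \quad\text{a.e.,}$$
so all derivatives of $Q_0$ of order $\ge d+1$ take values in the classical subgroup $\iota(\F)\subset\T$. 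The rest of the plan is to push this from ``classical-valued'' to ``identically zero'' at order $d+p$.

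The key step I would try is to exploit Lemma \ref{pmult}. Writing $I_h = -1 + \ader_h J_h$ for some $J_h\in\Diff(V)$, the identity $p\ader_h Q_0 = \ader_h^p I_h Q_0$ rearranges to
$$ \ader_h^p Q_0 \;=\; \ader_h^{p+1} J_h Q_0 \;-\; \ader_h P.$$
Applying $\ader_{h_1}\cdots\ader_{h_d}$ kills the last term (since $P$ has degree $\le d$), and iterating along independent directions should, after $d$ rounds, force $\ader_{h_1}\cdots\ader_{h_{d+p}} Q_0$ to be expressible purely in terms of derivatives of $Q_0$ of order strictly greater than $d+p$ — which, combined with the $\iota(\F)$-valuedness above and the explicit ``Taylor'' expansion \eqref{plan} in the vector-space case, is exactly how Lemma \ref{polybasic}(v) closes. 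If that direct iteration does not close, the fallback is to regard the residual obstruction $O(h_1,\ldots,h_{d+p}) := \ader_{h_1}\cdots\ader_{h_{d+p}} Q_0$ as a classical symmetric multilinear cocycle on $\Fw$ with values in the measurable functions $X \to \iota(\F)$, and to find a measurable classical polynomial $R:X\to\iota(\F)$ of degree $\le d+p-1$ with $\ader_{h_1}\cdots\ader_{h_{d+p}} R = O$; the corrected root $Q := Q_0 - R$ then has degree $\le d+p-1$ as required.

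I expect the main obstacle to be precisely this last coboundary step. In the vector-space setting the solvability of the analogous equation is trivial because one has the explicit monomial basis of Lemma \ref{polybasic}(iii) and can match coefficients directly; in an arbitrary $\Fw$-system there is no such coordinate representation, and one instead needs something like a structural decomposition (e.g.\ expressing $X$ as an inverse limit of nilpotent-type extensions) plus an ability to take $p$-th roots on each finite-rank piece in a degree-efficient manner. Making such a decomposition work in low characteristic is exactly the difficulty that forced the authors to adopt the ultralimit framework instead of the ergodic one, so this is where I would expect the most substantial work — and, candidly, where the proposal is most likely to break down if pushed in full detail.
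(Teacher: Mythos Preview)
Your proposal attempts to prove Claim \ref{root}, but this claim is \emph{false}: the paper does not prove it but rather disproves it via Proposition \ref{root-counter-prop}, which exhibits an explicit $\F_2^\omega$-system $X = \F^\N \times_\rho \Z/4\Z$ and a degree $\leq 2$ polynomial $\iota_4(t)$ on $X$ that admits no square root of degree $\leq 3$. So the correct response to this statement is a counterexample, not a proof.

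Your own diagnosis of where the argument is most fragile is exactly right. The initial root $Q_0$ and the observation that its high-order derivatives are $\iota(\F)$-valued are fine, and the formal identity from Lemma \ref{pmult} does transfer. But the ``coboundary step'' --- finding a measurable classical $R$ of degree $\leq d+p-1$ with $\ader_{h_1}\cdots\ader_{h_{d+p}} R$ equal to the residual obstruction --- is not solvable in general. In the vector-space setting this works because of the explicit monomial basis of Lemma \ref{polybasic}(iii); in the ergodic setting no such basis is available, and the paper's counterexample shows the obstruction can be genuinely nontrivial. Concretely, for $P = \iota_4(t)$ any measurable root $Q_0$ satisfies $\Delta_2(2Q_0) = \Delta_2 P = 1/2$, whereas the paper's classification of cubics on $X$ shows that every degree $\leq 3$ polynomial $Q$ satisfies $\Delta_2(2Q) = 0$; no correction by a classical $R$ can repair this, since such $R$ are annihilated by multiplication by $2$. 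The paper remarks that the claim can be salvaged if one is allowed to pass to an \emph{extension} of $X$ (e.g.\ lifting to $\F^\N \times_{\rho_8} \Z/8\Z$), but not within $X$ itself --- and this incompatibility with the inductive reductions in \cite{berg} is precisely why the authors abandon the ergodic route.
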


Using Lemma \ref{polybasic}(v), it is not difficult to verify this claim when $X$ is finite; the claim is also easy when $d=1$, as one can then (up to a constant) express $P = \iota(\tilde P)$ for some linear polynomial $\tilde P: X \to \F$, and one can verify that the polynomial $Q := e( |\tilde P|/p^2 )$ will have the desired properties (note from Lemma \ref{polybasic}(iii) that the map $x \mapsto |x|/p^2 \mod 1$ has degree $\leq p$ on $\F$).  Unfortunately, the claim fails in general.  For instance, we have

\begin{proposition}\label{root-counter-prop}  Claim \ref{root} is false when $p=2$ and $d=2$.
\end{proposition}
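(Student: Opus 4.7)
I plan to prove the proposition by constructing an explicit ergodic $\Fw$-system $X$ and a polynomial $P: X \to \T$ of degree $\leq 2$ for which no measurable polynomial $Q: X \to \T$ of degree $\leq 3$ satisfies $2Q = P$. The obstruction is cohomological, and rests on the characteristic-two identity $\ader_h^2 = -2\ader_h$ in $\Diff(V)$ (a consequence of $T_h^2 = T_{2h} = 1$), which in particular shows $\ader_h^2 g \equiv 0 \pmod 2$ for every $\F_2$-valued function $g$. Any candidate $Q$ must, after iterating this identity, take values in a coset of $\tfrac{1}{8}\Z/\Z$, so $Q$ decomposes as a canonical $\tfrac{1}{8}$-lift of $P$ plus an $\F_2$-valued correction $k/2$; the degree-$\leq 3$ constraint on $Q$ then becomes a differential equation on $k$ that the parity obstruction will prevent us from solving.

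For the system, I would take $X$ to be a $2$-step abelian extension of a Bernoulli base. Concretely, let $Y = \F_2^\N$ with the natural translation action of $\Fw$, and set $X := Y \times \F_2$ with the skew action $T_g(y, s) := (y + g, s + \sigma(g, y))$, where $\sigma: \Fw \times Y \to \F_2$ is a measurable cocycle (satisfying $\sigma(g+h, y) = \sigma(g, y+h) + \sigma(h, y)$). The vertical character $P(y, s) := \iota(s) = s/2 \mod 1$ then satisfies $\ader_g P(y, s) = \iota(\sigma(g, y))$, so the degree of $P$ is controlled by the degree of $\sigma$ in its $Y$-variable: if $\sigma(g, \cdot)$ is linear in $y$ for every fixed $g$, a short computation of $\ader_{h_1}\ader_{h_2}\ader_{h_3} P$ shows $P$ has degree $\leq 2$ on $X$. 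The key requirement is to choose $\sigma$ so that it represents a nontrivial second-order cohomology class — intuitively, with a symmetric bilinear content in $(g,y)$ that is not eliminable by any coboundary supported on $Y$ — so that the ``quadratic'' information in $P$ cannot be encoded within a $2$-step structure alone.

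Supposing $Q$ is a degree-$\leq 3$ root, I write $Q = R + k/2$, with $R$ a canonical $\tfrac{1}{4}\Z/\Z$-valued lift of $P/2$ and $k: X \to \F_2$ measurable. Substituting into $\ader_{h_1}\cdots\ader_{h_4} Q = 0$ and expanding $\ader_{h_i}$ through the skew action reduces, after absorbing the $\tfrac{1}{8}$-valued pieces, to an equation of the form $\ader_{h_1}\ader_{h_2}\ader_{h_3}\ader_{h_4} k \equiv F(h_1, h_2, h_3, h_4) \pmod 2$, where $F$ is an explicit symmetric $4$-linear form on $(\Fw)^4$ built from the cocycle $\sigma$. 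The main obstacle, and the heart of the proof, is to verify that $F$ does not vanish on the ``diagonal'' subset $\{h_1 = h_2\}$: specialising $h_1 = h_2 = h$ turns the left-hand side into $\ader_h^2(\ader_{h_3}\ader_{h_4} k) = -2\ader_h(\ader_{h_3}\ader_{h_4} k)$, which is $\equiv 0 \pmod 2$, while a well-chosen $\sigma$ (for instance a coboundary-modification of a naive symmetric bilinear expression, adjusted to satisfy the cocycle identity exactly) makes $F(h, h, h_3, h_4)$ nonzero for a suitable triple $(h, h_3, h_4)$. This delivers the desired contradiction, with the delicate point being the explicit construction of such a $\sigma$ and the confirmation that it encodes a genuine second-order cohomology obstruction rather than a first-order (linear) one.
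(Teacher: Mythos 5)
Your plan has a fatal gap at precisely the step you flag as ``the heart of the proof.'' You reduce the existence of a cubic root $Q=R+k/2$ (with $R:X\to\tfrac14\Z/\Z$, $2R=P$, and $k:X\to\F_2$) to solving
$\ader_{h_1}\ader_{h_2}\ader_{h_3}\ader_{h_4} k \equiv F(h_1,h_2,h_3,h_4)\pmod 2$,
and correctly note that the identity $\ader_h^2 = T_h^2-2T_h+1 = -2\ader_h$ (from $T_h^2=T_{2h}=1$) forces the left side to vanish when $h_1=h_2=h$, since $-2$ annihilates the $\F_2$-valued function $\ader_{h_3}\ader_{h_4}k$. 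You then hope to choose $\sigma$ so that $F(h,h,h_3,h_4)\neq 0$. But the same identity, applied to $R$ instead of $k$, shows this is impossible for \emph{any} choice of $\sigma$ and any $\tfrac14\Z/\Z$-valued lift $R$. Indeed,
$$\ader_h^2\ader_{h_3}\ader_{h_4} R \;=\; -2\,\ader_h\ader_{h_3}\ader_{h_4} R \;=\; -\,\ader_h\ader_{h_3}\ader_{h_4}(2R) \;=\; -\,\ader_h\ader_{h_3}\ader_{h_4} P \;=\; 0,$$
the last step because $P$ has degree $\leq 2$. Since $\iota(F)=-\ader_{h_1}\ader_{h_2}\ader_{h_3}\ader_{h_4}R$, this gives $F(h,h,h_3,h_4)=0$ identically. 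More conceptually: the constraint $\ader_{h_1}\cdots\ader_{h_4}Q=0$ with $2Q=P$ and $\deg P\leq 2$ is \emph{automatically} satisfied whenever two of the $h_i$ coincide, by the computation $\ader_h^2\ader_{h_3}\ader_{h_4}Q=-\ader_h\ader_{h_3}\ader_{h_4}(2Q)=0$. The diagonal carries no obstruction, so your proposed contradiction cannot materialize.

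A second issue, related to the first, is your choice of system. You take a $\Z/2\Z$-extension $Y\times_\sigma\F_2$ and a \emph{classical} polynomial $P=\iota(s)$; the paper instead uses the $\Z/4\Z$-extension $\F^\N\times_\rho\Z/4\Z$ and the genuinely \emph{non-classical} polynomial $\iota_4(t)$. This is not cosmetic: the paper's obstruction is detected by the vertical derivative $\Delta_2$ (shift by $2$ in the $\Z/4\Z$ fiber), which on cubics satisfies $\Delta_2^2 Q=0$ and hence $\Delta_2(2Q)=0$, contradicting $\Delta_2\iota_4(t)=\tfrac12$. That operator has no analogue in a $\Z/2\Z$-fiber, and the obstruction it detects lives in the extra power of $2$ in $\Z/4\Z$. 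Your argument uses only the horizontal derivatives $\ader_h$, which (as shown above) are blind to the obstruction. To repair the proof you would need to pass to a $\Z/4\Z$-extension and find a non-classical witness; once you do that, the short route is exactly the paper's $\Delta_2$ argument.
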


The purpose of this appendix is to prove this proposition, which explains why we were unable to use the ergodic theory method from \cite{tao-ziegler}, \cite{berg} to establish the main results in this paper.

The reason that the exact roots property holds in the finitary setting but not in the ergodic setting can be explained as follows.  In the finite setting $\F_2^n$, all functions are considered to be measurable; but in the ergodic setting, only a limited number of functions are measurable.  For instance, one may be working in a system generated by a single function $f$ and its shifts $T_h f$, so that every measurable function in the system can be approximated to arbitrary accuracy by a finite combination of shifts $T_h f$ of these functions.  

Now consider the finitary quadratic function $P := \frac{L}{4} \mod 1$ on $\F_2^n$, which is the analogue of the function $\iota_4(t)$ considered above.  This function has a degree $\leq 3$ root, namely the function $Q := \frac{L}{8} \mod 1$.  However, this polynomial $Q$ is not ``measurable'' in the system generated by $P$, in the sense that one cannot express (even approxmiately) $Q$ as a function of a bounded number of shifts $\ader_h P$ of $P$; indeed, one can formulate this precisely and then deduce this from the arguments in the proof of Proposition \ref{root-counter-prop} given below, combined with the Furstenberg correspondence principle, but we will not do this here.

On the other hand, if we allow ourselves the freedom to \emph{extend} the system $X$ to a larger one, then it appears that one can recover the exact roots property.  For instance, in the system $X = \F^\N \times_\rho \Z/4\Z$ defined below, we may extend this system to the system $Y := \F^\N \times_{\rho_8} \Z/8\Z$, where the cocycle $\rho_8$ is defined exactly as with $\rho = \rho_4$ but using the modulus $8$ rather than $4$.  Letting $t_8$ be the vertical coordinate function of $Y$, the degree $\leq 2$ polynomial $\iota_4(t)$ in $X$ then lifts to $2\iota_8(t_8)$ in $Y$, where $\iota_8: \Z/8\Z \to \{0,1\ldots,7\}$ is the obvious map.  This has an obvious root that is of degree $\leq 3$, namely $\iota_8(t_8)$; this function is the ergodic analogue of the finitary function $Q = \frac{L}{8} \mod 1$ considered earlier.  It is likely that this phenomenon generalises, in that Claim \ref{root} becomes true again if we allow $Q$ to take values in an extension of $P$, but we will not pursue this statement here.  (Note though that one can use extensions to simplify the proofs of various multiple recurrence and convergence results in ergodic theory; see for instance \cite{austin}.)

In principle, this weakened form of Claim \ref{root}, in which the root takes values in an extension, may possibly be used to extend the ergodic theory arguments in \cite{berg} to the low characteristic setting, and in particular to recover the ergodic version of the inverse conjecture for the Gowers norms for $\Fw$ (see \cite{berg}, \cite{tao-ziegler}) in that setting.  However, we were not able to achieve this, as the inductive arguments in \cite{berg} rely on reducing the system and are thus not compatible with taking extensions.

We now begin the formal proof of Proposition \ref{root-counter-prop}. Let $\F := \F_2$. Consider the Cantor space $\F^\N = \prod_{n \in \N} \F^\N$ (with the product $\sigma$-algebra and the uniform (Bernoulli) probability measure), which has an obvious action of $\Fw$.  If we let $x_i: \F^\N \to \F$, $i \in \N$ be the coordinate functions, we thus have
$$ \ader_{e_i} x_j = \delta_{ij}$$
for $i,j \in \N$, where $e_1,e_2,\ldots$ are the generators of $\Fw$ and $\delta_{ij}$ is the Kronecker delta.  In particular, the functions $\iota(x_j): X \to \T$ are polynomials of degree $\leq 1$.

We define a cocycle $\rho = \rho_4: \Fw \times \F^\N \to \Z/4\Z$ taking values in the cyclic group $\Z/4\Z$ by the formula
\begin{equation}\label{rhon}
 \rho(\sum_{i \in A} e_i, x) := \sum_{i \in A}(1 - 2 |x_i|) \mod 4
\end{equation}
for any finite set $A$.  One easily verifies the \emph{cocycle equation}
$$ \rho(h+k,x) = \rho(h,x) + \rho(k,T_h x)$$
for any $h,k \in \Fw$.  We can therefore build the cocycle extension $X := \F^\N \times_\rho \Z/4\Z$ of $\F^\N$, defined as the space of pairs $(x,t)$ with $x \in \F^\N$ and $t \in \Z/4\Z$ (with the product probability measure) endowed with the shift
$$ T_h (x,t) := (T_h x, t + \rho(h,x)).$$
This can be easily verified to be a $\Fw$-system.  If we let $\iota_4: \Z/4\Z \to \T$ be the map $\iota_4(i) := i/4$, and $t: (x,t) \mapsto t$ be the coordinate function, then the function $\iota_4(t)$ has derivatives
$$ \ader_h \iota_4(t) = \iota_4( \rho(h,\cdot) ),$$
and thus by \eqref{rhon}, $\iota_4(t)$ is a polynomial of degree $\leq 2$.  Similarly, $2\iota_4(t)$ is a polynomial of degree $\leq 1$.

If Claim \ref{root} was true, then there would exist a polynomial $Q: X \to \T$ of degree $\leq 3$ such that $2Q = \iota_4(t)$.  To show that this is not possible, we need to classify all the polynomials of degree $\leq 3$.

We begin with the polynomials of degree $\leq 0$, which (up to measure zero errors) are simply the $\Fw$-invariant functions $P: X \to \T$.  We claim that $X$ is ergodic, so that the only invariant functions are the constants (up to measure zero errors).  It suffices to show that every invariant set $E$ in $X$ has zero measure or full measure.  Given any $\eps$, we can approximate $E$ to an error of measure $\eps$ by a set $E_\eps$ which depends on only a finite number $x_1,\ldots,x_n$ of the base coordinate functions, together with the vertical coordinate $t$.  The set $E_\eps$ is then invariant up to errors of measure $O(\eps)$.  Inspecting the action of the shift $T_{e_{n+1}}$ on $E_\eps$, one then easily concludes that $E_\eps$ must differ by an error of $O(\eps)$ from a set which does not depend on the vertical coordinate.  Taking $\eps \to 0$, we conclude that $E$ is (up to measure zero errors) independent of the vertical coordinate, and thus descends to an invariant subset of $\F^\N$.  But it is standard from the theory of Bernoulli systems that such sets have either zero measure or full measure.

Now we classify the polynomials of degree $\leq 1$.  If $P$ has degree $\leq 1$, then for each $e_i$, $\ader_{e_i} P$ is degree $\leq 0$ and hence constant.  Since $\ader_{2e_i} P$ is necessarily trivial, we conclude from the cocycle equation $\ader_{h+k} = \ader_h + T_h \ader_k$ with $h = k = e_i$ that $2 \ader_{e_i} P = 0$.  Thus we can find coefficients $c_i \in \F$ such that $\ader_{e_i} P = c_i$ for all $i$.  Also, since $P$ is measurable, it differs by an error of $\eps$ (in measure) from a function which depends on only finitely many of the $x_1,\ldots,x_n$ and $t$.  Note that if $n_1 > n_2>n$ and $(x,t)$ are such that $x_{n_1} \neq x_{n_2}$, then the shift $T_{e_{n_1}-e_{n_2}}$ does not affect the $x_1,\ldots,x_n,t$ coefficients.  On the other hand, $T_{e_{n_1}-e_{n_2}} P = P + c_{n_1} - c_{n_2}$.
and thus (if $\eps$ is small enough) we have $c_{n_1} = c$ independent of $n_1$ for $n_1 > n$.
We now see that $P$ differs from the degree $\leq 1$ polynomial 
$$\sum_{i=1}^n (c_i-c) \iota(x_i) + 2\iota_4(t)$$ 
by a degree $\leq 0$ polynomial, which is thus constant.  Thus all degree $\leq 1$ polynomials take the form
$$ P =  \sum_i c_i \iota(x_i) + 2 d \iota_4(t) + \alpha$$
where $\alpha \in \T$, $d \in \Z$, and at most finitely many of the $c_i \in \Z$ are non-zero.

To classify polynomials of higher degree we employ the method of \emph{vertical differentiation}, which is used extensively in the ergodic theory literature (see for instance \cite{host-kra}, \cite{zieg-jams}, \cite{berg}); for this simple example we use a very concrete instance of this method here.  We define the vertical derivatives $\Delta_s P$ of a function $P: X \to \T$ for any $s \in \Z/4\Z$ by the formula
$$ \Delta_s P(x,t) := P(x,t+s) - P(x,t).$$
Observe that these operators commute with themselves and with the $\Fw$ action, and in particular commute with the ordinary derivatives $\ader_h$.

The key observation is that $\Delta_2$ behaves like a differential operator of order two:

\begin{lemma}\label{verto} Let $s=1,2$.  If $P: X \to \T$ has degree $\leq d$ for some integer $d$, then $\Delta_s P$ has degree $\leq d-s$.
\end{lemma}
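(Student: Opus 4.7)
The plan is to reduce Lemma \ref{verto} to a statement about low-degree polynomials via a commutation argument, and then handle the residual cases using the classification of degree $\leq 1$ polynomials already carried out in the preceding paragraphs.

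First, I would record the key commutation identity $[\Delta_s, \ader_h] = 0$ for every $h \in \Fw$. This is immediate from the definition of the shift $T_h(x,t) = (T_h x, t + \rho(h,x))$: the vertical translation $V_s:(x,t)\mapsto(x,t+s)$ commutes with every $T_h$ because $\rho$ does not depend on the vertical coordinate, so $V_s^* \ader_h = \ader_h V_s^*$ and hence $\Delta_s \ader_h = \ader_h \Delta_s$.

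Second, using this commutation, I would argue by induction on $d$. To show $\Delta_s P$ has degree $\leq d-s$, it suffices to verify that
\[
\ader_{h_1} \cdots \ader_{h_{d-s+1}} \Delta_s P \;=\; \Delta_s \bigl( \ader_{h_1} \cdots \ader_{h_{d-s+1}} P \bigr) \;=\; 0
\]
for all $h_1,\ldots,h_{d-s+1} \in \Fw$. Since $Q := \ader_{h_1} \cdots \ader_{h_{d-s+1}} P$ has degree $\leq s-1$, the whole lemma reduces to the vanishing assertion: \emph{every polynomial $Q: X\to\T$ of degree $\leq s-1$ satisfies $\Delta_s Q = 0$}.

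Third, I would dispose of the two cases. For $s=1$, a polynomial of degree $\leq 0$ is $T_h$-invariant for all $h$, hence constant (almost everywhere) by the ergodicity of $X$ established earlier in the appendix; clearly $\Delta_1$ annihilates constants. For $s=2$, I would invoke the classification of degree $\leq 1$ polynomials proved just above the lemma: any such $Q$ takes the form
\[
Q \;=\; \sum_i c_i \, \iota(x_i) \;+\; 2d \, \iota_4(t) \;+\; \alpha,
\]
with $c_i, d \in \Z$ and $\alpha \in \T$. The summands $c_i \iota(x_i)$ and the constant $\alpha$ depend only on the base variables, so $\Delta_2$ kills them automatically; the vertical piece contributes $\Delta_2(2d\,\iota_4(t)) = 2d \cdot \tfrac{1}{2} = d \equiv 0 \pmod 1$. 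Thus $\Delta_2 Q = 0$, completing the induction.

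The one genuine input is the classification of degree $\leq 1$ polynomials on the cocycle extension $X = \F^\N \times_\rho \Z/4\Z$; everything else is a routine commutation and induction. Since that classification has already been carried out in full in the preceding discussion, no new obstacle arises in the proof of the lemma itself.
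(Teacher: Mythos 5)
Your proposal is correct and follows essentially the same route as the paper's (very terse) proof: reduce by repeated differentiation, using $[\Delta_s,\ader_h]=0$, to the case $d=s-1$, and then dispose of the degree $\leq 0$ and degree $\leq 1$ cases via ergodicity and the classification of linear polynomials on $X$. Your verification of the vertical commutation and the computation $\Delta_2(2d\,\iota_4(t))=d\equiv 0$ are exactly the missing details the paper leaves implicit.
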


\begin{proof} By repeated differentiation it suffices to verify this when $d=s-1$.  But this follows from the classification of polynomials of degree $\leq 0$ and degree $\leq 1$ that has already been established.
\end{proof}

We can now classify polynomials of degree $\leq 2$ (cf. Lemma \ref{polybasic}(iii)):

\begin{lemma}[Classification of quadratics]\label{quadrat} Let $P: X \to \T$ be of degree $\leq 2$.  Then we can write
\begin{equation}\label{pdecomp}
 P = \sum_{i<j} c_{ij} \frac{|x_i| |x_j|}{2} + \sum_i c_i \frac{|x_i|}{4} + \sum_i d_i \frac{|x_i| |S_1|}{2} + d \frac{|S_1|}{4} + e \frac{|t|}{4} + \alpha \mod 1
\end{equation}
where $c_{ij}, c_i, d_i, d, e$ are integers, of which only finitely many are non-zero, $\alpha \in \T$, and $S_1: X \to \F$ is the function $S_1 := t \mod 2$, and $x \mapsto |x|$ for $x \in \Z/4\Z$ is the obvious map to the fundamental domain $\{0,1,2,3\}$.
\end{lemma}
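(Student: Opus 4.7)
The plan is to proceed in two phases: first use vertical differentiation (as set up in Lemma \ref{verto}) to subtract off a ``vertical'' part of $P$ that carries all of its $t$-dependence, reducing to a polynomial that depends only on $x \in \F^\N$; then classify the resulting polynomial on the Bernoulli base using the same style of argument that was used in the text preceding the lemma to classify degree $\le 1$ polynomials.

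First I would apply $\Delta_1$. By Lemma \ref{verto}, $\Delta_1 P$ has degree $\le 1$, so by the classification of degree $\le 1$ polynomials already established in the text, one can write
$$\Delta_1 P = \sum_i a_i \iota(x_i) + b\, \iota(S_1) + \beta$$
with $a_i, b \in \Z$ (finitely many $a_i$ nonzero) and $\beta \in \T$.  Using the cocycle identity $\Delta_2 = \Delta_1 + T^{\mathrm{vert}}_1 \Delta_1$, together with $\iota(S_1+1) - \iota(S_1) = 1/2$ and $2\iota(y) \equiv 0 \mod 1$ for $y \in \F$, one computes $\Delta_2 P = b/2 + 2\beta$.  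Since $\Delta_2 P$ is a constant $\gamma$ satisfying $2\gamma = 0$, the identity $4\beta \equiv 0 \mod 1$ follows, so $\beta \in \tfrac{1}{4}\Z/\Z$.  I would then define
$$P_v := \sum_i a_i\, \frac{|x_i||S_1|}{2} - b\, \frac{|S_1|}{4} + e\, \frac{|t|}{4},$$
where $e := b + 4\beta \in \Z$, and verify by direct computation of $\Delta_1$ on each basis function (using $\Delta_1 \tfrac{|x_i||S_1|}{2} = \iota(x_i)$, $\Delta_1 \tfrac{|S_1|}{4} = \tfrac14 - \iota(S_1)$, and $\Delta_1 \tfrac{|t|}{4} = \tfrac14$) that $\Delta_1 P_v = \Delta_1 P$.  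Hence $P' := P - P_v$ is $\Delta_1$-invariant and so depends only on $x \in \F^\N$.

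For the second phase, I would view $P'$ as a polynomial of degree $\le 2$ on the Bernoulli system $\F^\N$ (with its natural $\Fw$-action, no cocycle), and classify it as
$$P' = \alpha + \sum_{i < j} c_{ij}\, \frac{|x_i||x_j|}{2} + \sum_i c_i\, \frac{|x_i|}{4}$$
with finitely many nonzero integer coefficients.  (Any linear contribution $c \iota(x_i) = c \tfrac{|x_i|}{2}$ is absorbed by replacing the coefficient in the $\tfrac{|x_i|}{4}$ term by $2c$ modulo the relevant torsion.)  To prove this, I would mimic the argument for the degree $\le 1$ case: the derivatives $\ader_{e_i} P'$ are degree $\le 1$ polynomials on $\F^\N$, hence of the form $\sum_k c_k^{(i)} \iota(x_k) + \alpha^{(i)}$; commutativity of the $\ader_{e_i}$ forces $c_j^{(i)} \equiv c_i^{(j)} \mod 2$, and the identity $\ader_{e_i}^2 = -2\ader_{e_i}$ (from $\ader_{2 e_i} = 0$) forces $\alpha^{(i)} \in \tfrac14 \Z/\Z$.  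Approximating $P'$ in measure by a function of $x_1,\ldots,x_n$ and inspecting the action of shifts $T_{e_{n_1} - e_{n_2}}$ with $n_1, n_2$ large then shows that the quadratic interactions $c_j^{(i)}$ and the linear coefficients $c_k^{(i)}$ must stabilise for large indices, reducing the classification to the finite-dimensional case where Lemma \ref{polybasic}(iii) applies.  Combining $P = P_v + P'$ and collecting coefficients yields the decomposition \eqref{pdecomp}.

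The main obstacle is the infinite-dimensional classification of degree $\le 2$ polynomials on $\F^\N$: the measurability/stabilisation argument used for the degree $\le 1$ case has to be run twice (once for the linear part of each $\ader_{e_i} P'$ and once for the resulting cross terms), and one needs to verify carefully that the various ``off-diagonal'' coefficients $c_j^{(i)}$ vanish once both indices are sufficiently large, so that $P'$ really does reduce to a finite-dimensional polynomial up to a constant; the rest of the argument is a direct bookkeeping of the vertical differentiation step.
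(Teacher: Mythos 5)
Your overall strategy matches the paper's: use vertical differentiation to strip off the $t$-dependent part of $P$, reducing to a degree-$\le 2$ polynomial $P'$ that descends to the Bernoulli base $\F^\N$, and then invoke the finite-dimensional classification from Lemma \ref{polybasic}(iii). The paper organises the first phase slightly differently — it first forces $\Delta_2 P = 0$ by subtracting a multiple of $|t|/4$, then makes $\Delta_1 P$ vanish by subtracting terms in $|x_i||S_1|/2$ and $|S_1|/4$ — whereas you package everything into a single $P_v$ after analysing $\Delta_1 P$; this is only a cosmetic reordering and your $\Delta_1$-computations on the basis functions are correct. The one place where the paper is genuinely simpler is precisely the step you flag as the main obstacle, the infinite-dimensional classification on $\F^\N$: rather than tracking and stabilising the doubly-indexed matrix of interaction coefficients $c_j^{(i)}$, the paper approximates $P'$ in measure by a function of $x_1,\dots,x_n$, observes that for $n'>n$ the degree-$\le 1$ polynomial $\ader_{e_{n'}}P'$ is then within $O(\eps)$ of zero in measure and hence \emph{constant} (a nonconstant linear $\sum c_i\iota(x_i)+\alpha$ is spread equally over two values and cannot concentrate near $0$), and uses the identity $\ader_{e_{n'}}P' + T_{e_{n'}}\ader_{e_{n'}}P' = 0$ to force that constant into $\{0,1/2\}$ and hence to equal $0$. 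Thus $\ader_{e_{n'}}P'=0$ for all $n'>n$ in one pass, so $P'$ is exactly a function of finitely many coordinates and the finite-dimensional lemma applies directly. Your two-pass stabilisation of the $c_j^{(i)}$ would also work with care, but the direct argument that the entire derivative $\ader_{e_{n'}}P'$ vanishes is what cleanly disposes of the off-diagonal terms you worry about.
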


\begin{proof}  By a computation, one verifies that all the expressions on the right-hand side of \eqref{pdecomp} are indeed polynomials of degree $\leq 2$.

From Lemma \ref{verto}, $\Delta_2 P$ is constant.  Using the cocycle identity
\begin{equation}\label{coc2}
\Delta_2 P(x,t) + \Delta_2 P(x,t+2) = 0
\end{equation}
we see that this constant is either $0$ or $1/2$.  In the latter case, we can subtract off $|t|/4$ to reduce to the former case (noting that $\Delta_2 |t|/4 = 1/2$); so we may assume without loss of generality that $\Delta_2 P = 0$, thus $P$ descends to the reduced system $\Fw \times_{\rho \mod 2} \Z/2\Z$.  

From Lemma \ref{verto} again, $\Delta_1 P$ is linear, and thus takes the form
$$ \Delta_1 P(x,t) = \sum_i d_i \frac{|x_i|}{2} + \beta$$
for some integers $d_i$ (of which only finitely many are non-zero) and $\beta \in \T$.  With the cocycle identity
\begin{equation}\label{coc1}
\Delta_1 P(x,t) + \Delta_1 P(x,t+1) = \Delta_2 P(x,t) = 0
\end{equation}
we see that $\beta$ is either $0$ or $1/2$, thus $\beta = d/2$ for some integer $d$.  If we then subtract off
$\sum_i d_i \frac{|x_i| |S_1|}{2} + d \frac{|S_1|}{4}$ from $P$ (noting that $\Delta_1 S_1 = 1$) we can reduce to the case $\Delta_1 P = 0$, thus $P$ now descends to a function of $\F^\N$.

For any $\eps$, we may approximate $P$ in measure to error $\eps$ by a function depending only on finitely many $x_1,\ldots,x_n$ of the coefficients.  For any $n' > n$, the linear polynomial $\ader_{e_{n'}} P$ is then within $O(\eps)$ of zero in measure, and is thus constant (by the classification of linear polynomials); using the cocycle identity $0 = \ader_{e_{n'}} P + T_{e_{n'}} \ader_{e_{n'}} P$ we see that $\ader_{e_{n'}} P$ is in fact identically zero, thus $P$ in fact descends to a function of just a finite number of coordinates $x_1,\ldots,x_n$.  The claim now follows from Lemma \ref{polybasic}(ii).
\end{proof}

In a similar vein, we can classify cubics:

\begin{lemma}[Classification of cubics] Let $P: X \to \T$ be of degree $\leq 3$.  Then $P$ is an integer linear combination of a finite number of the following functions:
\begin{itemize}
\item[(i)] Constants $\alpha \in \T$;
\item[(ii)] $|x_i| |x_j| |x_k| / 2$ for natural numbers $i < j < k$;
\item[(iii)] $|x_i| |x_j| / 4$ for natural numbers $i < j$;
\item[(iv)] $|x_i| / 8$ for a natural number $i$;
\item[(v)] $|x_i| |x_j| |S_1| / 2$ for natural numbers $i < j$;
\item[(vi)] $|x_i| |S_1| / 4$ for a natural number $i$;
\item[(vii)] $|S_1| / 8$ for a natural number $i$;
\item[(viii)] $|x_i| |S_2|/2$ for a natural number $i$; 
\item[(ix)] $|S_1| |S_2|/2$;
\item[(x)] $|S_2|/2$.
\end{itemize}
Here $S_2: X \to \F$ is the function such that $S_2(x,t)=1$ when $t=2,3 \mod 4$ and $S_2(x,t) = 0$ otherwise.
\end{lemma}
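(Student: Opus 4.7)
The proof will mirror the strategy that established Lemma \ref{quadrat} for quadratics, applying vertical differentiation one step at a time to peel off the top-order structure.

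First I would verify that each of the ten listed functions is itself a polynomial of degree $\leq 3$ on $X$. For the functions depending only on the base coordinates $x_i$, this is immediate from Lemma \ref{polybasic}(iii) (with $p=2$). For the functions involving $|S_1|$ and $|S_2|$, one computes the horizontal derivatives $\ader_{e_j}$ using the cocycle $\rho$; the key observation is that $T_{e_j}$ sends $S_1$ to $1-S_1$ (reducing the $|S_1|$-weight by one power of $1/2$) and $S_2$ to $S_1 + S_2 + $ an integer (via the formula $\lfloor (t+1)/2\rfloor - \lfloor t/2\rfloor$), so each application of a basis derivative to a function in (v)--(x) reduces the ``vertical complexity'' and produces functions already handled at lower degree.

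Next I would let $P$ be an arbitrary polynomial of degree $\leq 3$ and apply Lemma \ref{verto}: the vertical derivative $\Delta_2 P$ has degree $\leq 1$. By the classification of linear polynomials established just before Lemma \ref{quadrat}, we can write
$$ \Delta_2 P = \sum_i c_i \iota(x_i) + d \tfrac{|S_1|}{2} + \alpha$$
for integers $c_i, d$ (only finitely many $c_i$ non-zero) and $\alpha \in \T$. Applying $\Delta_2$ again gives $\Delta_4 P = 0$, which forces $2\alpha=0$, hence $\alpha \in \{0,1/2\}$. Using the identities $\Delta_2 (|S_2|/2) = 1/2$, $\Delta_2(|x_i||S_2|/2) = |x_i|/2 = \iota(x_i)$, and $\Delta_2(|S_1||S_2|/2) = |S_1|/2$ (all of which hold because $|S_2|$ is the ``height'' function that flips sign under $t \mapsto t+2$ while $x_i, S_1$ are invariant), I subtract an appropriate integer combination of items (viii), (ix), (x) from $P$ so that $\Delta_2 P = 0$. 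The modified $P$ now factors through the quotient $X' = \F^\N \times_{\rho\bmod 2} \Z/2\Z$.

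The same strategy is then applied to $\Delta_1 P$, which is of degree $\leq 2$ by Lemma \ref{verto}. Using Lemma \ref{quadrat} and the cocycle identity $\Delta_1 P(x,t) + \Delta_1 P(x,t+1) = \Delta_2 P = 0$, an elementary case analysis of each type of term in Lemma \ref{quadrat} forces $\Delta_1 P$ to collapse to
$$ \Delta_1 P = \sum_{i<j} c_{ij} \tfrac{|x_i||x_j|}{2} + \sum_i c_i' \iota(x_i) + \alpha''$$
with $\alpha'' \in \{0,1/2\}$ (the $|x_i|/4$, $|x_i||S_1|/2$, $|S_1|/4$ and $|t|/4$ contributions all get forced into either zero or the listed shape after imposing the cocycle relation). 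I would then match this using $\Delta_1(|x_i||x_j||S_1|/2) = |x_i||x_j|/2$, $\Delta_1(2|x_i||S_1|/4) = \iota(x_i)$, and $\Delta_1(4|S_1|/8) = 1/2$, subtracting integer combinations of items (v), (vi), (vii) to make $\Delta_1 P = 0$. At this point $P$ descends to a polynomial of degree $\leq 3$ on the Bernoulli base $\F^\N$.

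The final step handles this base case. An approximation-plus-shift argument identical to the one in the proof of Lemma \ref{quadrat} (approximating $P$ in measure to precision $\eps$ by a function of finitely many coordinates $x_1,\ldots,x_n$, then using $T_{e_{n'}-e_{n''}}$-invariance for large $n',n''$) shows $P$ depends on only finitely many $x_i$. Then Lemma \ref{polybasic}(iii) with $p=2$, $d=3$ gives $P$ as an integer combination of $|x_i||x_j||x_k|/2$, $|x_i||x_j|/4$, $|x_i|/8$ and lower-order monomials $|x_i||x_j|/2$, $|x_i|/4$, $|x_i|/2$, all of which lie in the integer span of (i)--(iv) (e.g.\ $|x_i||x_j|/2 = 2 \cdot |x_i||x_j|/4$, $|x_i|/2 = 4 \cdot |x_i|/8$). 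I expect the main obstacle to be the bookkeeping in the second reduction: verifying rigorously that after imposing the cocycle relation, every degree-$\leq 2$ term in the form given by Lemma \ref{quadrat} either vanishes modulo $1$ or lies in the span of $\Delta_1$-images of the listed cubic generators, since the naive candidate $|x_i||S_1|/4$ has a non-constant $\Delta_1$ and one must exploit that integer multiples kill the anti-symmetric part.
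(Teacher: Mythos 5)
Your overall strategy matches the paper's: verify that each listed generator is a cubic, reduce $\Delta_2 P$ using integer combinations of (viii)--(x), then reduce $\Delta_1 P$ using (v)--(vii), and finish with the approximation argument on the Bernoulli base and Lemma \ref{polybasic}(iii). Your $\Delta_2$-reduction step is correct. However, the $\Delta_1$-reduction step has a genuine gap.

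You claim that the cocycle relation $\Delta_1 P(x,t) + \Delta_1 P(x,t+1) = 0$ forces $\Delta_1 P$ to collapse to $\sum_{i<j} c_{ij}\frac{|x_i||x_j|}{2} + \sum_i c_i'\iota(x_i) + \alpha''$, i.e.\ that all $|S_1|$-dependent and $|x_i|/4$-type contributions are eliminated. This is false. Writing $\Delta_1 P$ in the form of Lemma \ref{quadrat} (without the $|t|/4$ term, which is excluded because $\Delta_2 \Delta_1 P = 0$) and imposing the cocycle relation yields only the parity constraint $c_i + d_i \equiv 0 \pmod 2$ on the coefficients of $\frac{|x_i|}{4}$ and $\frac{|x_i||S_1|}{2}$, together with $\frac{d}{4} + 2\alpha = 0$; it does \emph{not} kill those terms. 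Concretely, take $P = |x_i||S_1|/4$, which is generator (vi), a cubic with $\Delta_2 P = 0$. Then
$$ \Delta_1 P = \frac{|x_i|}{4} - \frac{|x_i||S_1|}{2} \equiv \frac{|x_i|}{4} + \frac{|x_i||S_1|}{2} \pmod 1,$$
which is not of your claimed form.

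As a result, your proposed subtraction cannot succeed: you invoke $\Delta_1(2\cdot|x_i||S_1|/4) = \iota(x_i)$ and $\Delta_1(4\cdot|S_1|/8) = \tfrac12$, but these even multiples are exactly the ones whose $\Delta_1$-images have \emph{lost} the $|S_1|$-dependence, so they cannot cancel the $|S_1|$-dependent terms that survive in $\Delta_1 P$. The paper does the opposite: after the parity constraint it rewrites $\Delta_1 P$ as $\sum_{i<j} c_{ij}\frac{|x_i||x_j|}{2} + \sum_i d_i\bigl(\frac{|x_i||S_1|}{2}-\frac{|x_i|}{4}\bigr) + d\bigl(\frac{|S_1|}{4}-\frac{1}{8}\bigr) + \frac{e}{2}$, and then subtracts the cubic $\sum_{i<j} c_{ij}\frac{|x_i||x_j||S_1|}{2} \pm \sum_i d_i\frac{|x_i||S_1|}{4} \pm d\frac{|S_1|}{8} + e\frac{|S_1|}{2}$ --- using (vi) and (vii) with \emph{odd} coefficients $d_i, d$, precisely so that their $\Delta_1$-images retain the $|S_1|$-dependence needed to cancel. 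Your concluding remark that ``integer multiples kill the anti-symmetric part'' therefore has the mechanism backwards: the odd multiples are needed to \emph{produce} the anti-symmetric part, not to kill it.
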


\begin{remark} The polynomials $S_1, S_2$ can be viewed as the ergodic limit as $n \to \infty$ (using the Furstenberg correspondence principle) of the symmetric polynomials $S_1, S_2$ considered in Example \ref{exam}, where the coordinate functions $x_i$ correspond to the usual coordinate functions on $\F_2^n$, and $t$ corresponds to $L \mod 4$.  The cubic polynomial $\iota_4(t) = |t|/4$ can be expressed as $|t|/4 = |S_2|/2 + |S_1|/4$; cf. \eqref{lam}.
\end{remark}

\begin{proof} By a (somewhat tedious) computation we see that all the above functions are polynomials of degree $\leq 3$ (and that $S_2$ is a polynomial of degree $\leq 2$).

From Lemma \ref{verto}, $\Delta_2 P$ is of degree $\leq 1$, so we can express 
$$\Delta_2 P = \sum_i c_i |x_i|/2 + c |S_1|/2 + \alpha$$
for some integers $c_i, c$ (only finitely many of which are non-zero) and $\alpha \in \T$.  Using \eqref{coc2} we can write $\alpha = d/2$ for some integer $d$.  By subtracting $\sum_i c_i |x_i| |S_2| / 2 + c |S_1| |S_2|/2 + d |S_2|/2$ from $P$ we may thus assume that $\Delta_2 P = 0$.

Next, $\Delta_1 P$ is of degree $\leq 2$, and is annihilated by $\Delta_2$, so by Lemma \ref{quadrat}, we have
$$
\Delta_1 P = \sum_{i<j} c_{ij} \frac{|x_i| |x_j|}{2} + \sum_i c_i \frac{|x_i|}{4} + \sum_i d_i \frac{|x_i| |S_1|}{2} + d \frac{|S_1|}{4} + \alpha \mod 1$$
for some integers $c_{ij}, c_i, d_i, d$ (only finitely many of which are non-zero) and $\alpha \in \T$.  Using \eqref{coc1}, we conclude that the $\frac{c_i+d_i}{2}$ must vanish, and that $\frac{d}{4} + 2\alpha = 0$; thus we can simplify the above expression to
$$
\Delta_1 P = \sum_{i<j} c_{ij} \frac{|x_i| |x_j|}{2} + \sum_i d_i (\frac{|x_i| |S_1|}{2} - \frac{|x_i|}{4}) + d (\frac{|S_1|}{4} - \frac{1}{8} ) + \frac{e}{2} \mod 1$$
for some integer $e$.  If we then subtract off the cubic polynomial
$$ \sum_{i<j} c_{ij} \frac{|x_i| |x_j| |S_1|}{2} + \sum_i d_i \frac{|x_i| |S_1|}{4} + d \frac{|S_1|}{8} + \frac{e |S_1|}{2} $$
from $P$, we can reduce to the case $\Delta_1 P = 0$, thus $P$ descends to $\F^\N$.  Arguing as in Lemma \ref{quadrat}, we conclude that $P$ is a function of finitely many coordinates $x_1,\ldots,x_n$, and the claim follows from Lemma \ref{polybasic}(iii).
\end{proof}

From the above lemma, we see that if $P$ is a polynomial of degree $\leq 3$, then $2P$ is an integer linear combination of the following types of functions:
\begin{itemize}
\item[(i)] Constants $\alpha \in \T$;
\item[(ii)] $|x_i| |x_j| / 2$ for natural numbers $i < j$;
\item[(iii)] $|x_i| / 4$ for a natural number $i$;
\item[(iv)] $|x_i| |S_1| / 2$ for a natural number $i$;
\item[(v)] $|S_1| / 4$ for a natural number $i$.
\end{itemize}
In particular, $\Delta_2(2P)$ must vanish.  On the other hand, $\Delta_2 \iota_4(t) = \frac{1}{2} \neq 0$.  Thus $\iota_4(t)$ is not of the form $2P$ for a polynomial of degree $\leq 3$, thus establishing Proposition \ref{root-counter-prop}.

\begin{remark} One can avoid the full classification of polynomials of degree $\leq 3$ in proving the above proposition.
Indeed, if $P$ is cubic, then from Lemma \ref{verto} $\Delta_2 \Delta_2 P = 0$, and then by \eqref{coc2} $\Delta_2(2P)=0$, and we can conclude as above.  
\end{remark}

\end{document}